\definecolor{darkergreen}{rgb}{0.0, 0.5, 0.0}
\numberwithin{equation}{section}
\def\theequation{\arabic{section}.\arabic{equation}}
\newcommand{\be}{\begin{eqnarray}}
\newcommand{\ee}{\end{eqnarray}}
\newcommand{\ce}{\begin{eqnarray*}}
\newcommand{\de}{\end{eqnarray*}}
\newtheorem{theorem}{Theorem}[section]
\newtheorem{lemma}[theorem]{Lemma}
\newtheorem{remark}[theorem]{Remark}
\newtheorem{definition}[theorem]{Definition}
\newtheorem{proposition}[theorem]{Proposition}
\newtheorem{Examples}[theorem]{Example}
\newtheorem{corollary}[theorem]{Corollary}
\newtheorem{assumption}{Assumption}[section]
\newtheorem*{theorem*}{Theorem}
\newtheorem*{remark*}{Remark}
\newcommand{\assign}{:=}
\newcommand{\mathd}{\mathrm{d}}
\newcommand{\nocomma}{}
\def\eps{\varepsilon}
\def\[{{\Big[}}
\def\]{{\Big]}}
\def\<{{\langle}}
\def\>{{\rangle}}
\def\({{\Big(}}
\def\){{\Big)}}
\def\bx{{\mathbf{x}}}
\def\tr{\mathrm {tr}}
\def\dif{{\mathord{{\rm d}}}}
\def\min{{\mathord{{\rm min}}}}
\def\={&\!\!=\!\!&}
\def\1{{\mathbf{1}}}
\def\geq{\geqslant}
\def\leq{\leqslant}
\def\div{\mathord{{\rm div}}}
\def\eps{\varepsilon}
\def\[{{\Big[}}
\def\]{{\Big]}}
\def\<{{\langle}}
\def\>{{\rangle}}
\def\({{\Big(}}
\def\){{\Big)}}
\def\bx{{\mathbf{x}}}
\def\tr{\mathrm {tr}}
\def\dif{{\mathord{{\rm d}}}}
\def\min{{\mathord{{\rm min}}}}
\def\={&\!\!=\!\!&}
\def\bt{\begin{theorem}}
\def\et{\end{theorem}}
\def\bl{\begin{lemma}}
\def\el{\end{lemma}}
\def\br{\begin{remark}}
\def\er{\end{remark}}
\def\bx{\begin{Examples}}
\def\ex{\end{Examples}}
\def\bd{\begin{definition}}
\def\ed{\end{definition}}
\def\bp{\begin{proposition}}
\def\ep{\end{proposition}}
\def\bc{\begin{corollary}}
\def\ec{\end{corollary}}
\def\geq{\geqslant}
\def\leq{\leqslant}
\def\div{\mathord{{\rm div}}}
\def\<{\langle} \def\>{\rangle}
\begin{document}

\title{quantitative particle approximations  of stochastic 2D Navier-Stokes equation}

\author{Yufei Shao}
\address[Y.Shao]{School of Mathematics and Statistics, Beijing Institute of Technology, Beijing 100081, China
}
\email{yufeishao@bit.edu.cn}

\author{Xianliang Zhao}
\address[X. Zhao]{ Academy of Mathematics and Systems Science,
	Chinese Academy of Sciences, Beijing 100190, China; Fakult\"at f\"ur Mathematik, Universit\"at Bielefeld, D-33501 Bielefeld, Germany}
\email{xzhao@math.uni-bielefeld.de}

\begin{abstract}
	In this article, we investigate an interacting particle system featuring random intensities, individual   noise, and environmental   noise, commonly referred to as stochastic point vortex model. The model serves as an approximation for the stochastic 2-dimensional Navier-Stokes equation. We establish a quantitative mean-field convergence for the stochastic 2-dimensional Navier-Stokes equation in the form of relative entropy. To address challenges posed by environmental noise, random intensities, and  singular kernel, we compare relative entropy for conditional distributions, employing technology of disintegration and the relative entropy method developed by Jabin and Wang in \cite{jabin2018quantitative}.

\end{abstract}

\keywords{ point vortex model, interacting particle system, mean-field limit, stochastic 2-dimensional Navier-Stokes equation. }

\date{\today}
\thanks{ Y.S. is grateful
	to the financial supports by  National Key R\&D Program of China (No. 2022YFA1006300) and the financial supports of the NSFC (No. 12271030). X.Z. is supported by the DFG through the CRC 1283 “Taming uncertainty and profiting from randomness and low regularity in analysis, stochastics and their applications”.}
\maketitle

\setcounter{tocdepth}{1}

\tableofcontents

\section{Introduction}
\subsection{Motivation and main results}
The goal of this work is to show  a quantitative mean-field result that derives  the vorticity formulation  of the stochastic 2-dimensional Navier-Stokes equation:
\begin{equation}\label{eqt:ns}
	\mathd   v=\(\Delta   v-K*  v \cdot \nabla   v\)\mathd t-\sum_{k=1}^{d}\(\sigma_k\cdot\nabla   v \)\circ \mathd W^{k}_t  , \quad (t,x)\in [0,T]\times \mathbb{T}^2,  \quad   v(0,\cdot)=  v_0.
\end{equation} 
Here $\Delta$ denotes the Laplace operator on the two dimensional torus $\mathbb{T}^2=[-\pi,\pi]^{2},$ 
 $T>0$   and   $K$ stands for  the Biot-Savart law on $\mathbb{T}^2$, namely 
\begin{align}
	K(x)=\sum_{m=(m_1,m_2)\in \mathbb{Z}^2\setminus{\{0\}}}\frac{im^{\perp }}{|m|^2}e^{im\cdot x}, \quad m^{\perp }=(m_2,-m_1),\quad x=(x_1,x_2)\in \mathbb{T}^2.
\end{align}
The  random  environmental perturbation in \eqref{eqt:ns}, commonly referred  as transport type noise,  is  described by  smooth divergence-free vector fields $\{\sigma_k, k=1,\cdots ,d\}$  for $d \in \mathbb{N}$  and independent standard 1-dimensional Brownian motions $\{W^{k}, k=1,\cdots ,d\}$ on $\mathbb{T}^2.$   Here $\circ$ denotes the stochastic integral in the Stratonovich sense.

The derivation of macroscopic models from interacting particle systems is a fascinating and active research topic in mathematical physics. It simplifies the complexities of many-body systems considerably. The basic idea is that, as the number of particles increases to infinity, the macroscopic model can effectively describe the universal properties of particles. In this article, we derive the stochastic 2-dimensional Navier-Stokes equation \eqref{eqt:ns} from the stochastic vortex model \eqref{eqt:vortex} with random intensities, individual   noise, and common environmental   noise, given as:
\begin{equation}\label{eqt:vortex}
	X_i(t)=X_i(0)+\frac{1}{N}\sum_{j\neq i}\int_0^t\xi_jK(X_i(s)-X_j(s))\mathd s+\sqrt{2}B_i(t)+\sum_{k=1}^{d}\int_0^t\sigma_k(X_i(s))\circ\mathd W^k_s.
\end{equation}
 Here $\{W^k ,k =1,\cdots, d\}$  denote independent 1-dimensional Brownian motions  on torus $\mathbb{T}^2,$ and $\{B_i,i=1,\cdots,N\}$ are independent 2-dimensional Brownian motions  on torus $\mathbb{T}^2,$  on probability space $(\Omega,\mathcal{F},\mathbb{P}),$ for $d,N  \in \mathbb{N}.$ Additionally,  a class of i.i.d $\mathbb{R} \times \mathbb{T}^2$ valued random variables $\{(\xi_i, X_i(0)), i=1,\cdots,N\}$ exist  on the same probability space $(\Omega,\mathcal{F},\mathbb{P}).$ We denote the identical distributions for $\{X_i(0),i=1,\cdots,N\}$ and $\{\xi_i,i=1,\cdots,N\}$ by $ \mathcal{L}(X(0))\text{ and } \mathcal{L}(\xi)$ respectively. We also suppose the  mutual independence among $\{X_i(0),i=1,\cdots,N\},$ $\{\xi_i,i=1,\cdots,N\},$ $\{B_i,i=1,\cdots,N\},$ and $\{W^k,k=1,\cdots, d\}.$   Let $(\mathcal{F}_t)_{t\in [0,T]}$ denote the normal filtration  generated by
1-dimensional Brownian motions $\{W^k, k=1,\cdots,d\},$ 2-dimensional Brownian motions $\{B_i,i=1,\cdots,N\}$ and $\{(\xi_i, X_i(0)), i=1,\cdots,N\}.$

In the interacting particle system  \eqref{eqt:vortex},  $\sum_{k=1}^{d}\int_0^t\sigma_k(X_i(s))\circ\mathd W^k_s$ represents the common environmental   noise shared by all particles. The individual noise for $i$-th  point vortex is represented by $B_i,$ as well as the i.i.d random variables  $\{\xi_j,j=1,\cdots, N\}$   signify the intensities/magnitudes of the $j$-th point vortex at the position  $X_j.$ 
The interaction between each pair of particles is of the scale $\frac{1}{N}$, causing the impact of any single particle on another to become negligible, as $N\rightarrow \infty.$ This is referred to as {\em weakly interacting}.

The mean field limit for the first-order systems, exemplified by \eqref{eqt:vortex} with $\sigma_k=0, k=1,\cdots,d, $ has been extensively studied over the last decade. In our setting, the very first mathematical investigation can be traced back to McKean in \cite{mckeanpropagation}. Additionally, classical mean field limits from  Newton dynamics towards Vlasov Kinetic PDEs are well-documented in \cite{dobrushin1979vlasov,braun1977vlasov,hauray2007n,lazarovici2016vlasov}  along with a comprehensive review in \cite{jabin2014review}. Recent advancements in the mean field limit for systems, such as interacting SDEs/ODEs with general singular interaction kernels, have been noteworthy. This includes results that specifically focus on the point  vortex model \cite{osada1986propagation,fournier2014propagation} as well as more recent convergence results involving general singular kernels, as seen in  \cite{duerinckx2016mean,jabin2018quantitative, bresch2020mean,serfaty2020mean,rosenzweig2020mean,nguyen2021mean,lacker2021hierarchies,bresch2022new,rosenzweig2023global}.  Further references in these works provide a more comprehensive development of the mean field limit.

Our model falls within the category of the point vortex model. The point vortex approximation towards the determinstic 2-dimensional Navier-Stokes/Euler equations, i.e, $\sigma_k=0,k=1,\cdots,d,$ has garnered significant interest since the 1980s, as documented in  \cite{osada1986propagation,fournier2014propagation,jabin2018quantitative,guillin2021uniform}.  
Notably, a quantitative estimate of propagation of chaos has been established  by Jabin and Wang in \cite{jabin2018quantitative} by evolving  the relative entropy. This work  approximates the stochastic two-dimensional Navier-Stokes equation on the torus by developing Jabin and Wang's approach. Recently, Wang and Feng in \cite{feng2023quantitative} generalized  results in \cite{jabin2018quantitative} to the whole space through detailed analysis of the regularity of solutions to the two-dimensional Navier-Stokes equation. Likewise, our results may also be extended to the whole space, which will be the focus of future work.

It is natural to consider that interacting particle systems are influenced by environmental noise, inspired by mean field games (see e.g. \cite{carmona2018meanfield}),  and the phenomenon called regularization by noise (see e.g.  \cite{flandoli2011random}).
Two types of results emerge in the case of environmental noise. Firstly, the mean field limit equation becomes a stochastic partial differential equation, where stochasticity reflects the random environment that persists in the limit. Another intriguing result is that the mean field limit equation is still a deterministic partial differential equation by simultaneously rescaling the space covariance of the noise with the increasing number of particles. 

In the first case, Coghi and Flandoli \cite{coghi2016propagation}  studied the mean-field problem  for Lipschitz interactions, and recently   Nguyen, Rosenzweig, and Serfaty  \cite{nguyen2021mean}  studied the singular kernel cases.
 When the kernel is  Biot-Savart law, Rosenzweig   \cite{rosenzweig2020mean1} obtained a quantitative convergence of the stochastic Euler equation by the modulated energy method. 
 Additionally, we mention the result \cite{flandoli2021point}  by Flandoli and Luo, in which they approximated the stationary solution of the stochastic 2-dimensional Navier-Stokes equation by the point vortex model with common   noise.
The difference between the stochastic point vortex model \eqref{eqt:vortex} we used and the stochastic point vortex  model used in \cite{rosenzweig2020mean1} and \cite{flandoli2021point} lies in the introduction of individual   noise $\{B_i,i=1,\cdots,N\},$ which is the reason for the appearance of $\Delta  v$  describing the viscosity of the fluid in our final mean field limit equation \eqref{eqt:ns}, i.e. the stochastic 2-dimensional Navier-Stokes equation.  While the Laplacian in the stochastic 2-dimensional Navier-Stokes equation in \cite{flandoli2021point} comes from the rescaled Stratonovich type common   noise. Similiar Stratonovich type common noise without rescaling ($\sum_{k=1}^{d}\sigma_k(X_i(t))\circ\mathd W^k_t$ in this paper)  are completely preserved as the noise term in the mean field limit equation in our work (i.e. $\sum_{k=1}^{d}\(\sigma_k\cdot\nabla   v \)\circ \mathd W^{k}_t$)  and  \cite{rosenzweig2020mean1}'s work. Additionaly, we also mention \cite{coghi2020regularized}  for a scaling limit result for  stochastic 2-dimensional Euler equations on point vortices with regularized Biot-Savart kernel.

In the second case, we highlight the following  results. In \cite{flandoli2021mean},  Flandoli and Luo  approximate the deterministic 2-dimensional Navier-Stokes equation under a suitable scaling of the   noise. In \cite{guo2023scaling}, Guo and Luo obtained the scaling limit of moderately interacting particle systems with suitably smoothed singular interactions, including regularized version of Biot-Savart kernel and repulsive Poisson kernel.  

The primary contribution of this paper lies in establishing a convergence rate from the particle system $\eqref{eqt:vortex}$ to the stochastic 2-dimensional  Navier-Stokes equation $\eqref{eqt:ns}$. To the best of our knowledge, this is the first quantitative convergence result for stochastic 2-dimensional Navier-Stokes equation. 
To articulate our results, we introduce the following random distributions on $\mathbb{T}^{2N}\times\mathbb{R}^N.$ 

Let $F^N_t(\mathd x^N,\mathd z^N)$ denote the conditional distribution of $(X^N(t),\xi^N)$ described by  interacting particle system \eqref{eqt:vortex} given the environmental noise $\{W^k, k=1,\cdots, d\},$ i.e.
\begin{equation}
	F^N_t(\mathd x^N,\mathd z^N)\assign \mathcal{L}((X^N(t),\xi^N)|\mathcal{F}_{T}^{W})(\mathd x^N,\mathd z^N), 	\nonumber
\end{equation}
where $X^N$ is the probabilistically strong  solution to the interacting particle system \eqref{eqt:vortex} and $(\mathcal{F}_t^W)_{t\in [0,T]}$  denotes the natural filtration generated by $\{W^k, k=1,\cdots, d\}.$
Let  $\bar{F}^N_t(\mathd x^N,\mathd z^N)$ defined by
\begin{align*}
	\bar{F}^N_t(\mathd x^N,\mathd z^N)\assign\frac{1}{\mathbb{E}[\xi_1]^N}   v^{\otimes N}_t(\mathd x^N)\mathcal{L}(\xi)^{\otimes N}(\mathd z^N),
\end{align*}
where $(  v_t)_{t\in [0,T]}$ is  the  probabilistically strong  solution to the stochastic 2-dimensional Navier-Stokes  equation \eqref{eqt:ns} with the initial data \begin{align}\label{initial data}
	  v_0(\mathd x)\assign\mathbb{E}[\xi_1]\mathcal{L}(X(0))(\mathd x).
\end{align} 
We finally establish a quantitative mean-field convergence concerning the normalized relative entropy (see the definition for the normalized relative entropy in Section \ref{entropy}) of $F^N_t$ and $\bar{F}^N_t,$ denoted as $H_N(F_t^N|\bar{F}_t^N)$, as stated in Theorem \ref{thm:entropy}.
\begin{theorem}\label{thm:entropy}
Assume that
	\begin{enumerate}
	
	\item The probability measure $\mathcal{L}(X(0)) $  on $\mathbb{T}^2$ has a  density $\bar{\rho}_0\in H^{3}(\mathbb{T}^2)$ and there exists a constant $a>0$ such that $\underset{x\in \mathbb{T}^2 }{\inf}\bar{\rho}_0>a$,
	\item The probability measure $\mathcal{L}(\xi)$ on $\mathbb{R}$  has a density with compact support and   $\int_{\mathbb{R}}x\mathcal{L}(\xi)(\mathd x)>0.$
\end{enumerate} 
It holds that
 \begin{align*}
 	\sup_{[0, T]}H_N(F_t^N|\bar{F}^N_t)\leq \frac{1}{N}\exp\bigg(C_{\xi,0}\int_{0}^{T}(\|  v_t\|_{H^4}^2 +1)\mathd t\bigg)\quad\mathbb{P}-a.s..
 \end{align*}

In particular,
\begin{align*}
	\mathbb{E}\[\exp\bigg(-C_{\xi,0}\int_{0}^{T}(\|  v_t\|_{H^4}^2 +1)\mathd t\bigg)	\sup_{[0, T]}H_N(F_t^N|\bar{F}^N_t)\] \leq \frac{1}{N},
\end{align*}
where $H_N(\cdot|\cdot)$ denotes the nomalized relative entropy functional, $C_{\xi,0}$ is a positive deterministic constant depending on $\|v_0\|_{L^2(\mathbb{T}^2)},\underset{x\in \mathbb{T}^2 }{\inf}\bar{\rho}_0$ and $\mathcal{L}(\xi),$
and  $(  v_t)_{t\in [0,T]}$ is the probabilistically strong solution to the stochastic 2-dimensional Navier-Stokes  equation \eqref{eqt:ns} with the initial data $  v_0=\mathbb{E}[\xi_1]\bar{\rho}_0$  in the sense of Definition \ref{defnss}.
\end{theorem}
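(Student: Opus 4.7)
The plan is to adapt the relative entropy method of Jabin and Wang \cite{jabin2018quantitative} to the conditional setting forced upon us by the common environmental noise and by the random intensities. First I would derive the conditional Fokker--Planck SPDE satisfied by the random density $F^N_t(x^N,z^N)$ on $\mathbb{T}^{2N}\times\mathbb{R}^N$. Conditioning on $\mathcal F_T^W$, the individual Brownian motions $\{B_i\}$ produce a Laplacian $\sum_i \Delta_{x_i}F^N_t$, the interaction drift remains $-\frac1N\sum_{j\neq i}z_j K(x_i-x_j)\cdot\nabla_{x_i}$, and the Stratonovich transport $\sum_k\sigma_k(X_i)\circ \mathd W^k$ becomes an SPDE with Stratonovich differential $-\sum_k\sum_i (\sigma_k\cdot\nabla_{x_i})F^N_t\circ \mathd W^k_t$. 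A parallel computation applied to the tensorized density $\bar F^N_t$, using the SPDE \eqref{eqt:ns} for $v_t$ and the fact that the factor $\mathcal L(\xi)^{\otimes N}/\mathbb E[\xi_1]^N$ is deterministic, shows that the stochastic transport pieces of $F^N_t$ and $\bar F^N_t$ carry \emph{exactly} the same Stratonovich operator. This is the crucial structural observation: when one writes Itô's formula for $\log(F^N_t/\bar F^N_t)$, the common-noise contributions (including the Itô--Stratonovich correction and the cross variation between the two transport operators) cancel, and the $\xi_i$'s appear only as frozen parameters in the drift.

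Next I would differentiate $H_N(F^N_t|\bar F^N_t)$ in time. Standard manipulations yield, on the one hand, a non-positive Fisher information term of the form $-\frac{2}{N}\int\sum_i|\nabla_{x_i}\log(F^N_t/\bar F^N_t)|^2 F^N_t$, and on the other hand a drift error
\begin{equation*}
\frac{1}{N}\int_{\mathbb T^{2N}\times\mathbb R^N}\sum_i\Bigl(K*v_t(x_i)-\frac{1}{N}\sum_{j\neq i}z_j K(x_i-x_j)\Bigr)\cdot\nabla_{x_i}\log\frac{F^N_t}{\bar F^N_t}\,F^N_t\,\mathd x^N\mathd z^N.
\end{equation*}
By Young's inequality, half of the Fisher information absorbs this term at the price of estimating
\begin{equation*}
\frac{1}{N}\int\Bigl|K*v_t(x_i)-\frac{1}{N}\sum_{j\neq i}z_j K(x_i-x_j)\Bigr|^2 F^N_t\,\mathd x^N\mathd z^N.
\end{equation*}

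To control this singular quadratic functional I would invoke the Jabin--Wang large-deviation estimate for the Biot--Savart kernel. The change of measure from $F^N_t$ to $\bar F^N_t$ is carried out with the variational identity
\begin{equation*}
\int f\,\mathd G\le \frac1\eta H(G|\bar G)+\frac1\eta\log\int e^{\eta f}\,\mathd\bar G,
\end{equation*}
applied $\mathbb P$--a.s.\ in $\omega\in\Omega$, which reduces the problem to a uniform-in-$N$ exponential moment of a weighted $U$-statistic under the conditionally tensorized measure $\bar F^N_t$. The intensities $\xi_j$ are harmless because they are i.i.d., compactly supported, and enter the $U$-statistic only through bounded multiplicative factors; the centering is ensured by $\int x\,\mathcal L(\xi)(\mathd x)>0$ together with the normalization $v_0=\mathbb E[\xi_1]\bar\rho_0$. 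The Jabin--Wang combinatorial moment bound, whose hypotheses reduce to $v_t\in H^4$ together with the lower bound $\underset{x}{\inf}\,\bar\rho_0>a$ (which transfers along the maximum principle for \eqref{eqt:ns} to a positive lower bound on $v_t$), then yields a closed $\mathbb P$--a.s.\ Gronwall inequality
\begin{equation*}
\frac{\mathd}{\mathd t} H_N(F^N_t|\bar F^N_t)\le C_{\xi,0}(1+\|v_t\|_{H^4}^2)\Bigl(H_N(F^N_t|\bar F^N_t)+\tfrac{1}{N}\Bigr),
\end{equation*}
which integrates to the stated bound, and the expectation statement follows by multiplication with the corresponding exponential weight.

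The main obstacle is not the singular kernel step, which is essentially \cite{jabin2018quantitative}, but rather the rigorous justification of the conditional Itô calculus: one must show that $F^N_t$ admits a sufficiently regular density, and verify (via a mollification of $K$ and of the initial data, followed by a passage to the limit) that the Stratonovich corrections and cross variations of the common-noise transports in the two SPDEs cancel exactly under conditioning on $\mathcal F_T^W$, so that no $\sigma_k$-dependent remainder pollutes the entropy identity. Once this disintegration machinery is in place, the combinatorial estimate closes the argument.
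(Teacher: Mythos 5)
There is a genuine gap in the core singular-kernel step, and it is precisely the step you dismiss as ``essentially \cite{jabin2018quantitative}.''

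You propose to split the interaction error via Young's inequality, absorbing half of the Fisher information and then controlling
\[
\frac{1}{N}\int \Bigl|K* v_t(x_i)-\tfrac{1}{N}\sum_{j\neq i}z_j K(x_i-x_j)\Bigr|^2 F^N_t\,\mathd x^N\mathd z^N
\]
by the Jabin--Wang exponential-moment estimates. This works for bounded interaction kernels, but it does not close for the Biot--Savart kernel: $K(x)\sim |x|^{-1}$ in 2D, so $|K|^2$ is not in $L^\infty$, while the hypotheses of the Jabin--Wang combinatorial lemmas (Lemma \ref{lemma jw1} and Lemma \ref{lemma jw2} in this paper, Theorems 3 and 4 in \cite{jabin2018quantitative}) require the $U$-statistic kernel $\phi$ to be uniformly bounded. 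The exponential moment of the $U$-statistic built from $|K|^2$ is not finite uniformly in $N$, so the large-deviation step fails exactly where the kernel is most singular. This is the whole reason for the decomposition $K=\div V$ with $V=(V_{\alpha\beta})\in L^\infty$ introduced in Section \ref{sec:relaen}: one first integrates by parts in the entropy evolution so that the singular $K$ never appears squared. Instead, after a careful ``completing the square'' that uses the Fisher information in a structured way (not just Young), one is left with two remainders: one quadratic in $\varphi_{\alpha\beta,t}=z_j V_{\alpha\beta}(x_i-x_j)-V_{\alpha\beta}*v_t(x_i)$ multiplied by $|\nabla\bar\rho/\bar\rho|^2$, and one linear in $\varphi_{2,t}= (z_j V(x_i-x_j)-V* v_t(x_i)): \nabla^2\bar\rho/\bar\rho$. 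Both involve only the bounded $V$ and smooth derivatives of $\bar\rho$, so they fall within the scope of Lemmas \ref{lemma jw1} and \ref{lemma jw2}. Without this integration-by-parts step your Gronwall inequality would never close.

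A more minor structural difference: the paper does not evolve $H_N(F^N_t|\bar F^N_t)$ directly on $\mathbb T^{2N}\times\mathbb R^N$. It first disintegrates over the intensities via Lemma \ref{lem:disinte} and the identity \eqref{con:Frho}, then evolves $H_N(\rho^N_t|\bar\rho^N_t)$ on $\mathbb T^{2N}$ with $z^N$ frozen (Lemmas \ref{lem:entropy solution} and \ref{lem:entro-2}), and finally integrates the resulting estimate against $\mathcal L(\xi^N)$. Your plan to work directly on the joint density could in principle give the same answer, but the paper's disintegration cleanly isolates the intensities as constants during the hardest computation, which is where the $\xi_j$'s would otherwise interfere with the large-deviation cancellations. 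You did correctly identify that the Stratonovich transport terms cancel under conditioning (this is $M^{\eps,N}\equiv 0$ in Lemma \ref{lem:entro-2}), and you correctly flagged the need for mollification and weak-convergence arguments to bypass the low time-regularity of $v_t$; that part of your plan matches the paper.
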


By applying the Csis\'zar-Kullback-Pinsker inequality, we directly arrive at the following quantitative estimate for the total variation distance between weighted $k$-magrinal (see the definition in Notations below) conditional distribution for the interacting particle system \eqref{eqt:vortex} and $k$-tensorized law of the probabilistically strong solution to the stochastic 2-dimensional Navier-Stokes equation \eqref{eqt:ns}. 

\begin{corollary}\label{cor:total}Under the same assumptions  in Theorem \ref{thm:entropy},  the following estimates hold for all $k\leq N$: 
	\begin{align*}
		\sup_{[0, T]}	\left\| \frac{1}{\mathbb{E}[\xi_1]^k}\int_{\mathbb{R}^k}  \prod_{i=1}^k z_i F_t^{N,k} \(\mathd x^k, \mathd z^k\)-\frac{1}{\mathbb{E}[\xi_1]^k}  v_t^{\otimes k}\right\|_{TV}\leq \frac{C_{k,\xi}}{\sqrt{N}}\exp\bigg(C_{\xi,0}\int_{0}^{T}(\|  v_t\|_{H^4}^2 +1)\mathd t\bigg)\quad\mathbb{P}-a.s.,
	\end{align*}
and
\begin{align*}
	\mathbb{E}\bigg[\exp\bigg(-C_{\xi,0}\int_{0}^{T}(\|  v_t\|_{H^4}^2 +1)\mathd t\bigg)\sup_{[0, T]}	\left\| \int_{\mathbb{R}^k}\frac{1}{\mathbb{E}[\xi_1]^k}  \prod_{i=1}^k z_i F_t^{N,k} \(\mathd x^k, \mathd z^k\)-\frac{1}{\mathbb{E}[\xi_1]^k}  v_t^{\otimes k}\right\|_{TV}\bigg] \leq \frac{C_{k,\xi}}{\sqrt{N}},
\end{align*}
where  $F_t^{N,k}$ denotes $k$-marginal  of $F_t^{N},$  $\|\cdot \|_{TV}$ denotes the total variational norm, $C_{\xi,0}$  and $(  v_t)_{t\in [0,T]}$ are the same as in Theorem \ref{thm:entropy} and $C_{k,\xi}$ is a positive deterministic constant  depends on $k$ and $\mathcal{L}(\xi).$

\end{corollary}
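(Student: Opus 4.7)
The plan is to combine three standard ingredients: (i) a reduction from the weighted $k$-marginal TV to the ordinary TV of the $k$-marginal conditional law using the compact support of $\mathcal L(\xi)$; (ii) the Csisz\'ar--Kullback--Pinsker inequality on the $k$-marginal; and (iii) the sub-additivity of relative entropy of an exchangeable law against a tensor-product reference, which together with Theorem~\ref{thm:entropy} converts the $1/N$ entropy bound into the desired $1/\sqrt N$ TV bound. For step (i), since $\bar F_t^N$ factorises as the tensor product of $\tilde v_t(\mathd x):=v_t(\mathd x)/\mathbb E[\xi_1]$ and $\mathcal L(\xi)$, a direct computation gives
\[\frac{1}{\mathbb E[\xi_1]^k}\int_{\mathbb R^k}\prod_{i=1}^k z_i\,\bar F_t^{N,k}(\mathd x^k,\mathd z^k)=\frac{1}{\mathbb E[\xi_1]^k}v_t^{\otimes k}(\mathd x^k),\]
so the quantity in the corollary is really the difference $F_t^{N,k}-\bar F_t^{N,k}$ tested against $\phi(x^k)\prod_i z_i$. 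By Assumption~(2), $\mathrm{supp}\,\mathcal L(\xi)\subset[-M,M]$ for some $M<\infty$, and since the $z^N$-marginals of both $F_t^N$ and $\bar F_t^N$ coincide with $\mathcal L(\xi)^{\otimes N}$ (for $F_t^N$ this uses independence of $\xi^N$ from $\mathcal F_T^W$), both $k$-marginals are supported in $\mathbb T^{2k}\times[-M,M]^k$ where $|\phi(x^k)\prod_i z_i|\le M^k$ whenever $\|\phi\|_\infty\le 1$; it follows that the weighted-projection TV is bounded by $(M^k/\mathbb E[\xi_1]^k)\,\|F_t^{N,k}-\bar F_t^{N,k}\|_{TV}$.

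For step (ii), Csisz\'ar--Kullback--Pinsker yields $\|F_t^{N,k}-\bar F_t^{N,k}\|_{TV}\le\sqrt{2H(F_t^{N,k}|\bar F_t^{N,k})}$. For step (iii), $\bar F_t^N=(\tilde v_t\otimes\mathcal L(\xi))^{\otimes N}$ is a tensor product of probabilities (the factor $v_t$ has total mass $\mathbb E[\xi_1]$, conserved by the vorticity equation) and $F_t^N$ is $\mathbb P$-a.s.\ exchangeable in the particle index; partitioning $\{1,\dots,N\}$ into $\lfloor N/k\rfloor$ disjoint blocks of size $k$ and iterating the chain rule for relative entropy (combined with convexity of $H(\cdot|\nu)$ in its first argument) yields
\[\lfloor N/k\rfloor\, H(F_t^{N,k}|\bar F_t^{N,k})\le H(F_t^N|\bar F_t^N)=N\, H_N(F_t^N|\bar F_t^N),\]
and hence $H(F_t^{N,k}|\bar F_t^{N,k})\le 2k\, H_N(F_t^N|\bar F_t^N)$ whenever $N\ge 2k$ (the finitely many cases $N<2k$ are absorbed into $C_{k,\xi}$). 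Chaining these three steps with Theorem~\ref{thm:entropy} produces the almost-sure estimate with $C_{k,\xi}$ proportional to $M^k\sqrt k/\mathbb E[\xi_1]^k$ and with the square-root factor of $1/2$ in the exponent absorbed into $C_{\xi,0}$. The expectation version is then immediate: the pointwise bound actually reads $\sup_{[0,T]}\|\cdot\|_{TV}\le C_{k,\xi}N^{-1/2}\e^{A_T/2}$ with $A_T:=C_{\xi,0}\int_0^T(\|v_t\|_{H^4}^2+1)\mathd t\ge 0$, so multiplying by $\e^{-A_T}$ and using $\e^{-A_T/2}\le 1$ yields $\e^{-A_T}\sup_{[0,T]}\|\cdot\|_{TV}\le C_{k,\xi}N^{-1/2}$ almost surely, and taking expectation closes the argument.

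The only non-cosmetic ingredient is the sub-additivity bound in step (iii), and within that step the only subtle input is the $\mathbb P$-a.s.\ exchangeability of the random measure $F_t^N$. This holds because the joint law of $(X^N(t),\xi^N,(W^k)_k)$ is invariant under permutations of the particle index --- only the i.i.d.\ data $(X_i(0),\xi_i,B_i)$ carry that index while $(W^k)_k$ does not --- and strong uniqueness for \eqref{eqt:vortex} together with the fact that $\mathcal F_T^W$ is generated by permutation-invariant objects shows that conditioning on $\mathcal F_T^W$ preserves this symmetry. Everything else is a routine chaining of inequalities.
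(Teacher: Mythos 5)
Your proof is correct and follows exactly the route the paper has in mind (the paper gives no explicit proof, stating only ``By applying the Csisz\'ar--Kullback--Pinsker inequality''): reduce the weighted TV to the ordinary TV of the $k$-marginal via the compact support of $\mathcal L(\xi)$, apply CKP, use sub-additivity of relative entropy against a tensor-product reference for an exchangeable law, and chain with Theorem~\ref{thm:entropy}; the $\tfrac12$ in the exponent and the finitely many cases $N<2k$ are absorbed into constants, and the expectation bound follows by the pointwise observation you make. Your justification of the exchangeability of $F_t^N$ (that $(X^N(t),\xi^N,W)$ is jointly exchangeable in the particle index, that $\mathcal F_T^W$ is permutation-invariant, and that pathwise uniqueness then propagates the symmetry to the conditional law) is the right way to verify the hypothesis needed for the $k$-marginal in the Notations and for the sub-additivity step, and it is a detail the paper leaves implicit.
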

A notable distinction between our work and the investigations addressing the (stochastic) Navier-Stokes (Euler) equation in the environmental noise case mentioned earlier,  is their initiation from the standpoint of empirical measures of particle system \eqref{eqt:vortex}, leading to the derivation of the law of large numbers type results, i.e. formally \begin{align*}
	\mu_N(t)\assign\frac{\sum_{i=1}^{N}\xi_i\delta_{X_i(t)}}{N}\rightharpoonup  v_{t},\quad \text{as }N\rightarrow\infty,
\end{align*} 
where $\delta_{x},x\in\mathbb{T}^2$ denotes the Dirac measure at $x$  and $(  v_{t})_{t\in[0,T]}$ is the solution to the mean field limit equation \eqref{eqt:ns}.
Our approach begins with the conditional distribution of particle system \eqref{eqt:vortex}, culminating in a  {\em conditional propagation of chaos} type result, i.e. Theorem \ref{thm:entropy} and Corollary \ref{cor:total}.
Furthermore, we derive the convergence rate of the conditional probability measure of $(X^N,\xi^N)$ from the interacting particle system \eqref{eqt:vortex}.  
\subsection{Methodology and difficulties.}
The results in the paper can be decomposed into three steps: step 1,  well-posedness of the mean field equation \eqref{eqt:ns},  step 2,  well-posedness of the particle system \eqref{eqt:vortex} and step 3, establishing a quantitative mean-field convergence.

In the initial step, our goal is to obtain the well-posedness of the mean field equation \eqref{eqt:ns}.
There have been established results regarding  it, e.g. \cite{brzezniak2016existence}, \cite{flandoli2019rho}, \cite{galeati2023weak}. We will not only establish  well-posedness of  \eqref{eqt:ns}  but also derive regularity  estimates (see \eqref{regu-1} and \eqref{regu-2} below), which are crucial for the application of the relative entropy method.
It is worth mentioning that the required estimates of the solutions are obtained through applying the Lagrangian approach (e.g. \cite{marchioro2012mathematical}, \cite{brzezniak2016existence}), as well as carefully applying the Biot-Savart law  $K$ written as $\div V$ with a potential $2\times2$ matrix $V$ satisfying $V_{\alpha\beta}\in L^{\infty}(\mathbb{T}^2),\alpha ,\beta=1,2,$ introduced in Section \ref{sec:relaen} and the divergence free property for  Biot-Savart law $K.$  

Additionally, we establish a connection between stochastic 2-dimensional Navier-Stokes equation \eqref{eqt:ns} and the conditional McKean-Vlasov equations \eqref{eqt:ncopy} below:
\begin{equation}\label{eqt:ncopy}
	Y_i(t)=X_i(0)+\int_0^tK*  v_s(Y_i(s))\mathd s+\sqrt{2}B_i(t)+\sum_{k=1}^{d}\int_0^t\sigma_k(Y_i(s))\circ\mathd W^k_s, \quad i=1,\cdots,N,
\end{equation}
where $(  v_t)_{t\in [0,T]}$ is the probabilistically strong solution to the stochastic 2-dimensional Navier-Stokes  equation \eqref{eqt:ns} with the initial data $  v_0=\mathbb{E}[\xi_1]\mathcal{L}(X(0))(\mathd x)$  in the sense of Definition \ref{defnss}.
  The connection between  equations \eqref{eqt:ns} and   \eqref{eqt:ncopy} is given by
  \begin{align}
  	\mathbb{P}(\|  v_t-\tilde{  v}_t\|_{H^{-(2+  \eps )}}=0, \forall t\in [0,T])=1,\quad \forall   \eps \in (\frac{1}{2},1),
  \end{align}
  where $\tilde{  v}_t(\mathd x)$ is the conditional distribution of $Y_i(t)$ given the environmental noise, i.e. $$\tilde{  v}_t(\mathd x)\assign\int_{\mathbb{R}} z \mathcal{L}((Y_i(t),\xi_i)| \mathcal{F}^{W}_T)(\mathd x\mathd z)=\mathbb{E}[\xi_i]\mathcal{L}((Y_i(t))| \mathcal{F}^{W}_T)(\mathd x),$$ Here the final equality is a result of the independence $\xi_i \perp\mathcal{F}^W_T$ introduced in  our setting.
  This connection is established by an uniqueness result regarding the linearized stochastic 2-dimensional Navier-Stokes equation.   
Due to the connection between \eqref{eqt:ns} and \eqref{eqt:ncopy}, our final result, i.e. Theorem \ref{thm:entropy}, can be then understood as follows: as the number of particles approaches infinity, the particles within the weakly interacting particle system \eqref{eqt:vortex} tend to behave  like  some identical distributed particle $Y_i$ from \eqref{eqt:ncopy}.

In the subsequent step,  in order to ensure the comprehensiveness of our investigation, we provide a proof of the well-posedness of the particle system \eqref{eqt:vortex} by adopting established methods, as exemplified in works such as \cite{fontbona2007paths} and \cite{flandoli2011full}.

In the final step, we aim to establish a quantitative mean-field convergence. There are three primary challenges in studying mean-field convergence, including the singularity of Biot-Savart law, environmental   noise, random intensities. We will develop the relative entropy method introduced by Jabin and Wang \cite{jabin2018quantitative} to deal with the singularity problem.
In the case without environmental noise, researchers commonly compare the relative entropy between the distribution of particles described by \eqref{eqt:vortex} and the solution of the mean field limit equation \eqref{eqt:ns}. However, when environmental noise is introduced,  it is natural to compare the conditional distribution for the interacting particle system \eqref{eqt:vortex} with the solution to the mean field limit equation \eqref{eqt:ns}. During this process, we encounter two main difficulties. The first challenge arises in calculating the time evolution of the relative entropy, i.e. Lemma \ref{lem:entro-2}. The usual method  requires a certain regularity in time of the solution to the mean limit equation \eqref{eqt:ns} (commonly requiring $W^{1,\infty}$).  However, due to the introduction of environmental noise and the time regularity of Brownian motion being $C^{\frac{1}{2}-}$, the regularity in time of the solution to \eqref{eqt:ns} is at most $C^{\frac{1}{2}-}$. Therefore, this method cannot be applied.  Our idea for this challenge is to  obtain a time evolution of the regularized version at first and  utilize the weak convergence of the conditional distribution of the solution to \eqref{eqt:approvortex} to achieve the final result, which requires the spatial regularity of the solution to \eqref{eqt:ns} and the property of Fisher information (see Lemma \ref{fournier3.3}).  The second difficulty, arising from random intensities $\xi^N,$ necessitates considering the relative entropy of the joint distribution on $\mathbb{R}^N\times\mathbb{T}^{2N}$. By using disintegration and  property of relative entropy \eqref{con:Frho}, we transform the relative entropy of the joint distribution on $\mathbb{R}^N\times\mathbb{T}^{2N},$ i.e. $H_N(F_t^N|\bar{F}_t^N),$  into an integral of the relative entropy of the distribution on $\mathbb{T}^{2N},$ with respect to the distribution of intensities $\xi^N.$ 
By this connection, we obtain the estimate  for the time evolution of $H_N(F_t^N|\bar{F}_t^N)$ by calculating the estimate  for the time evolution of the relative entropy between the  distributions on $\mathbb{T}^{2N}$ and the intensities $\xi^N$ are regarded as constants during this process. Through two critical large-deviation type estimates (see Lemma \ref{lemma jw1} and Lemma \ref{lemma jw2}), we arrive at the final result, i.e. Theorem \ref{thm:entropy}.
\newline

\paragraph{\textbf{Organization of the paper:}}
 Section \ref{sec:pre} provides an introduction to relevant definitions, assumptions, and auxiliary results that are used in the subsequent proofs. The main sections of the paper are Section \ref{sec:conditionalmv} and Section \ref{sec:quant}. In Section \ref{sec:conditionalmv}, we study the stochastic 2-dimensional Navier-Stokes equation \eqref{eqt:ns} and the conditional McKean-Vlasov equation \eqref{eqt:ncopy}, as well as their connections. Moving on to  Section \ref{sec:quant}, we derive a quantitative mean-field convergence by developing the relative entropy method  from  \cite{jabin2018quantitative}. The proofs of well-posedness for  stochastic 2-dimensional Navier-Stokes equation \eqref{eqt:ns} and a regularity result of  \eqref{eqt:ns} are presented in Section \ref{sec:ns}. For simplicity, well-posedness of the interacting system \eqref{eqt:vortex} is detailed in Appendix \ref{appendix:vortex}. We  also provide a disintegration result including a short proof in Appendix \ref{sec:disinte} for the sake of completeness.
\newline
\paragraph{\textbf{Notations:}}

 \begin{enumerate}
	\item Bracket notations: The bracket $\<\cdot, \cdot\>$  denotes integrals when the space and underlying measure are clear from the context. We use a similar bracket $[\cdot ,\cdot]_t$ to denote quadratic variations between local martingales at time $t$. 
	\item  Filtration notations: The notation $\mathcal{F}_1\vee \mathcal{F}_2$ stands for the $\sigma$-algebra generated by $\mathcal{F}_1\cup \mathcal{F}_2$. Given a stochastic process $X_t, t\in [0,T]$ and random variable $\xi$, we use $(\mathcal{F}_t^X)_{t\in [0,T]}((\mathcal{G}_t^X)_{t\in [0,T]})$ to denote the natural (normal) filtration 
	generated by $X$ and $\mathcal{F}^{\xi}$ to denote the $\sigma$-algebra generated by $\xi.$ 
	In particular, we  use $(\mathcal{F}_t^W)_{t\in [0,T]}$ to denote the natural filtration generated by independent  1-dimensional Brownian motions $\{W^{k}, k=1,\cdots ,d\}$ and $(\mathcal{F}_t^{\xi^N,W})_{t\in [0,T]}$ to denote the natural filtration  generated by independent  1-dimensional Brownian motions $\{W^{k}, k=1,\cdots ,d\}$ and $\xi^N$. We also use $(\mathcal{F}_t)_{t\in [0,T]}$ to denote the normal filtration generated by
	1-dimensional Brownian motions $\{W^k, k=1,\cdots,d\},$ 2-dimensional Brownian motions $\{B_i,i=1,\cdots,N\}$ and $(\xi_i, X_i(0))_{i=1,\cdots,N}.$ Finally, given a Polish space $E,$ we use $\mathbb{B}(E)$ to denote the Borel $\sigma$-algebra of $E$.
	\item  Distribution notations: Given a  Polish space $E$ and probability space $(\Omega,\mathcal{F},\mathbb{P}),$ we say $Q(\mathd x)(\omega)$ is a random meausre on $E,$ if $Q(\mathd x)(\omega)$ is a function of two variables $\omega\in \Omega$ and $A\in\mathbb{B}(E),$ satisfying that $Q(\mathd x)(\omega)$ is a $\mathcal{F}-$measurable  in $\omega$ for fixed $A$ and there exists a null set $N\in \mathcal{F}$ such that $Q(\mathd x)(\omega)$ is a measure in $A$ for fixed $\omega\in N^c.$   Given $\sigma$-algebra $\mathcal{F}$ and and random variable $X,$ we use $\mathcal{L}(X),$ $\mathcal{L}(X|\mathcal{F})$ to denote the distribution of $X$ and conditional distribution of $X$ with respect to  $\mathcal{F}.$  In particular, for convention, we may denote the distribution by its density function when the distribution has a density function. Given a symmetric probability measure $\rho^N$ on $E^N$ where $E$ is a Polish space, the $k$-marginal $\rho^{N,k}$ is a probability measure on $E^k$ defined by $\int_{E^{N-k}}\rho^N(\mathd x_1\cdots \mathd x_{N})$ where $k\leq N.$ Finally, we use $\mathcal{P}(E)$ to denote the space of probability measure on $E.$ 
	\item  Independence and Conditional independence: Given three $\sigma$-algebras $\mathcal{F}_1$, $\mathcal{F}_2$, and $\mathcal{F}_3,$  we use $\mathcal{F}_1\perp\mathcal{F}_2|\mathcal{F}_3$ to  indicate that $\mathcal{F}_1$ and $\mathcal{F}_2$ are conditionally independent given $\mathcal{F}_3.$ $\mathcal{F}_1\perp\mathcal{F}_2$ indicates that $\mathcal{F}_1$ and $\mathcal{F}_2$ are mutually independent. 
	\item  Product space and Product function:  Given two measure spaces $(\Omega_1,\mathcal{A}_1,\mu_1)$ and $(\Omega_2,\mathcal{A}_2,\mu_2),$  we denote their product space as $\Omega_1 \times \Omega_2,$ the product $\sigma$-algebra  as $\mathcal{A}_1 \times \mathcal{A}_2,$  and the product measure  as $\mu_1 \times \mu_2.$ For $k\in \mathbb{N},$ we use $(\Omega_1^{\otimes k},\mathcal{A}_1^{\otimes k},\mu_1^{\otimes k})$ to denote the $k$-product measure space for  the measure space $(\Omega_1,\mathcal{A}_1,\mu_1).$ Given a function $f(x),x\in E$ on Polish space $E$ and $k\in \mathbb{N},$ the $k$-tensorized function $f^{\otimes k}(x^k)$ is defined by $\prod_{i=1}^{k}f(x_i),$ where $x^k=(x_1,\cdots,x_k)\in E^k.$
\end{enumerate}

Throughout this paper, we use $C$ to denote universal constants, and we indicate relevant dependencies using subscripts when necessary. We use the notation $a\lesssim b$ if there exists a universal constant
$C > 0$ such that $a\leq Cb.$
We also use some notations about matrices and vectors to simplify the computations, e.g., $\nabla f$  denotes
$\begin{pmatrix}
	\partial_{1} f \\ \partial_{2} f
\end{pmatrix}$, 
$\nabla^{ j} f$ denotes $\nabla\otimes(\nabla^{j-1}f)^T,$ where $A^T$ denotes the transpose of the matrix $A.$ 
We work mostly with Sobolev spaces $H^{k}(\mathbb{T}^2)$ and the norm of Sobolev space $H^{k}(\mathbb{T}^2), k\in \mathbb{N}$, is defined as 
\begin{align*}
	\|f\|_{H^{k}(\mathbb{T}^2)}^2\assign \sum_{0\leq j\leq k}\|\nabla^{j} f \|_{L^2(\mathbb{T}^2)}^2.
\end{align*}
For simplicity, we will also use a similar notation $\mathbb{H}^i, i\in \mathbb{N}$, to denote the $L^2$-norm of $i$-th order derivatives in Section \ref{sec:ns}, i.e, 
\begin{align}
	\|f\|_{\mathbb{H}^i}\assign \|\nabla^{ i}f\|_{L^2(\mathbb{T}^2)}.\nonumber
\end{align}
Finally, $A:B$ denotes $\sum_{\alpha,\beta}A_{\alpha\beta}B_{\beta\alpha}$, where $A$ and $B$ are matrices. 
\section{Preliminary} \label{sec:pre}
In this section, we introduce the assumptions and present several auxiliary results. 
\subsection{Assumptions} \label{sec:assump}
We fix $T>0$ and $d\geq 1$ in this paper.
\begin{assumption}\label{initial}
	We make the following  assumptions on the law for the initial values and intensities: 
	\begin{enumerate}
		
		\item \label{assumption1}The probability measure $\mathcal{L}(X(0)) $  on $\mathbb{T}^2$ has a  density $\bar{\rho}_0\in H^{3}(\mathbb{T}^2)$ and there exists a constant $a>0$ such that $\underset{x\in \mathbb{T}^2 }{\inf}\bar{\rho}_0>a$. 
		\item  The probability measure $\mathcal{L}(\xi)$ on $\mathbb{R}$  has a density with compact support and   $\int_{\mathbb{R}}x\mathcal{L}(\xi)(\mathd x)>0$. 
	\end{enumerate} 
\end{assumption}
By the assumption for the initial data $\mathcal{L}(X_0),$    the solution  $(  v_t)_{t\in[0,T]}$ to the  stochastic 2-dimensional Navier-Stokes equations \eqref{eqt:ns}  satisfies 
\begin{enumerate}
	\item It holds almost surely that  for all $t\in [0,T],$ \begin{align*}
		\underset{x\in \mathbb{T}^2 }{\sup}  v_t\leq \underset{x\in \mathbb{T}^2 }{\sup}  v_0,  \quad  \underset{x\in \mathbb{T}^2 }{\inf}   v_t\geq  \underset{x\in \mathbb{T}^2 }{\inf}   v_0,\quad\|  v_t\|_{L^2}\leq \|  v_0\|_{L^2},\quad \mathbb{P}-a.s..
	\end{align*} 
	\item  It holds that 
	\begin{align*}
		\mathbb{E} \int_0^T \|  v_t\|_{H^4}^2\mathd t <\infty ,\quad\mathbb{E}[\sup_{t\in [0,T ]}\|  v_t\|_{H^2}^2]< \infty.
	\end{align*}
	
\end{enumerate}

In the Section \ref{sec:relaen}, we will see that the regularity of  $(  v_t)_{t\in[0,T]}$ and boundedness of intensity  are  required to apply the relative entropy method.
\begin{assumption}\label{assumption2}
	For the vector fields $\{\sigma_k, k=1,\cdots,d\}$ on $\mathbb{T}^2,$ we assume that 
	$\{\sigma_k, k=1,\cdots,d\}, d \in \mathbb{N},$ are smooth and  divergence free.
\end{assumption}

We emphasize that  the divergence-free property of  $\{\sigma_k, k=1,\cdots,d\}$ is crucial to  well-posedness of the particle system \eqref{eqt:vortex} and is useful to obtain uniform estimates for the solutions  of  the stochastic 2-dimensional Navier-Stokes   equation \eqref{eqt:ns}.

 We restate the setting we finally work in, which is defined as follows:
\begin{assumption}\label{assumptiona}
	Suppose that there exist  independent 1-dimensional Brownian motions $\{W^k,k=1,\cdots, d\}, d\geq 1,$ and 2-dimensional Brownian motions $\{B_i,i=1,\cdots,N\}$ on torus $\mathbb{T}^2,$ on probability space $(\Omega,\mathcal{F},\mathbb{P})$ and a class of i.i.d $\mathbb{R} \times \mathbb{T}^2$ valued random variables $\{(\xi_i, X_i(0)), i=1,\cdots,N\}$ exist  on the same probability space $(\Omega,\mathcal{F},\mathbb{P})$ satisfying  $\mathcal{L}(X_i(0))= \mathcal{L}(X(0))$ and $\mathcal{L}(\xi_i)= \mathcal{L}(\xi).$  We also assume  mutual independence among $\{X_i(0),i=1,\cdots,N\},$ $\{\xi_i,i=1,\cdots,N\},$ $\{B_i,i=1,\cdots,N\},$ and $\{W^k,k=1,\cdots, d\}.$ We define $(\mathcal{F}_t)_{t\in [0,T]}$ as the normal filtration generated by
	1-dimensional Brownian motions $\{W^k,k=1,\cdots, d\},$ 2-dimensional Brownian motions $\{B_i,i=1,\cdots,N\}$ and $\{(\xi_i, X_i(0)), i=1,\cdots,N\}.$

\end{assumption}
It is easy to construct stochastic basis $(\Omega,\mathcal{F},(\mathcal{F}_t)_{t\in [0,T]},\mathbb{P})$ and Brownian motions satisfying Assumption \ref{assumptiona}.

\subsection{Function spaces}
We work mostly with Sobolev spaces $H^{k}$, and the norm of Sobolev space $H^{k}$, $k\in \mathbb{N}$, is defined as 
\begin{align*}
	\|f\|^2_{H^{k}}\assign \sum_{0\leq j\leq k}\|\nabla^{j} f \|^2_{L^2}.
\end{align*} 
We also use some results on Besov spaces. We use $(\Delta_{i})_{i\geq -1}$ to denote the Littlewood-Paley blocks for a dyadic partition of unity. Besov spaces  $B^{\alpha}_{p,q}$ 
on the torus with  $\alpha\in \mathbb{R}$ and $1\leq p,q\leq \infty$,  are defined as the completion of $C^{\infty}$ with respect to the norm
\begin{equation*}
	\|f\|_{B^{\alpha}_{p,q}}\assign \( \sum_{n\geq -1}\left(2^{n\alpha }\|\Delta_{n}f\|_{L^p}^q \right) \) ^{\frac{1}{q}}.
\end{equation*}

We remark that  $B^{\alpha}_{2,2}$ coincides with the Sobolev space $H^{\alpha}$, $\alpha\in \mathbb{R}$. We say $f\in C^\alpha$, $\alpha\in \mathbb{N}$, if $f$ is $\alpha$-times differentiable. For $\alpha \in \mathbb{R}\setminus \mathbb{N} $, we have $C^{\alpha}=B^{\alpha}_{\infty,\infty}$. We will often write $\|\cdot\|_{C^{\alpha}}$ instead of $\|\cdot\|_{B^{\alpha}_{\infty,\infty}}$. In the case $\alpha\in \mathbb{R}^+\setminus \mathbb{N}$, $C^{\alpha}$ coincides with the usual H\"older space. We use $C^{\infty}$ to denote the space of infinitely differentiable functions on $\mathbb{T}^2$.

We quote the following result about  Besov spaces.
\begin{lemma}
	\label{lemma triebel}	
	(i) For $\alpha, \beta \in \mathbb{R}$ and $p, p_1, p_2, q \in [1, \infty]$ where $\frac{1}{p} = \frac{1}{p_1} + \frac{1}{p_2}$, the bilinear map $(u, v) \mapsto u\cdot v$ can be extended to a continuous map from $B_{p_1, q}^{\alpha} \times B_{p_2, q}^{\beta}$ to $B_{p, q}^{\alpha \wedge \beta}$, provided that $\alpha + \beta > 0$. (cf. {\cite[Corollary 2]{mourrat2017global}}).
	
	(ii) (Duality) Consider $\alpha \in (0,1),$ $p,q\in[1,\infty],$ $p'$ and $q'$ as their respective conjugate exponents. The bilinear form $(u, v) \mapsto \<u,v\>=\int uv \dif x$ can be extended to a continuous map on $B^\alpha_{p,q}\times B^{-\alpha}_{p',q'}$ and satisfies the inequality $|\<u,v\>| \lesssim \|u\|_{B^\alpha_{p,q}}\|v\|_{B^{-\alpha}_{p',q'}}$ (cf. \cite[Proposition 7]{mourrat2017global}).
	
	(iii) Let $\alpha \leq \beta \in \mathbb{R},$ $q\in [1,\infty],$ $p_1 \geq p_2 \in [1,\infty]$ such that $\beta \geq \alpha + d\left(\frac{1}{p_2}-\frac{1}{p_1}\right)$. Then the embedding $B^{\beta}_{p_2,q}\hookrightarrow B^{\alpha}_{p_1,q}$ is continuous. (cf. \cite[Proposition 2]{mourrat2017global}).

\end{lemma}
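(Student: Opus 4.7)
The plan is to treat each of the three parts via the Littlewood--Paley decomposition; all of them are standard consequences of Bernstein's inequalities and the Bony paraproduct decomposition, so the task is really to organize the argument rather than to discover anything new.

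For part (i), I would introduce Bony's paraproduct decomposition
\begin{equation*}
uv \;=\; T_u v + T_v u + R(u,v), \qquad T_u v \assign \sum_{j\geq -1}\!\sum_{i\leq j-2}\Delta_i u\,\Delta_j v, \qquad R(u,v)\assign \sum_{|i-j|\leq 1}\Delta_i u\,\Delta_j v.
\end{equation*}
The two paraproducts $T_u v$ and $T_v u$ are the easy pieces: since the frequency support of $\Delta_i u \,\Delta_j v$ (for $i\leq j-2$) sits in an annulus of scale $2^j$, one uses Bernstein and H\"older in the form $\|\Delta_i u\,\Delta_j v\|_{L^p}\leq \|\Delta_i u\|_{L^{p_1}}\|\Delta_j v\|_{L^{p_2}}$ together with $\|\Delta_i u\|_{L^{p_1}}\lesssim \|u\|_{B^{\alpha}_{p_1,\infty}}2^{-i\alpha}$ to bound the $B^{\beta}_{p,q}$-norm of $T_u v$ by $\|u\|_{B^{\alpha}_{p_1,q}}\|v\|_{B^{\beta}_{p_2,q}}$ when $\alpha\leq 0$, and similarly to bound $T_u v$ in $B^{\alpha+\beta}_{p,q}$ when $\alpha>0$. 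This already gives a $B^{\alpha\wedge\beta}_{p,q}$ bound for both paraproducts with no sign condition on $\alpha+\beta$. For the resonance $R(u,v)$ the frequency support of $\Delta_i u\,\Delta_j v$ lies in a ball of radius $2^j$, so one has to sum over $j\geq k-2$ when projecting onto the $k$-th block; this sum converges in the $B^{\alpha+\beta}_{p,q}$-norm (which embeds in $B^{\alpha\wedge\beta}_{p,q}$) precisely when $\alpha+\beta>0$, which is the only place this hypothesis is used.

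For part (ii), I would dualize block by block. With the LP blocks one has the identity
\begin{equation*}
\langle u,v\rangle \;=\; \sum_{i,j\geq -1, |i-j|\leq 1}\langle \Delta_i u,\Delta_j v\rangle,
\end{equation*}
modulo the usual quasi-orthogonality of Littlewood--Paley blocks. Apply H\"older's inequality with conjugate exponents $(p,p')$ inside each bracket, multiply and divide by $2^{i\alpha}$, and apply H\"older with conjugate exponents $(q,q')$ to the resulting sum over $i$. This produces the bound $|\langle u,v\rangle|\lesssim \|u\|_{B^{\alpha}_{p,q}}\|v\|_{B^{-\alpha}_{p',q'}}$ directly from the definition of the Besov norms. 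The restriction $\alpha\in(0,1)$ is what makes the definition of $B^{-\alpha}_{p',q'}$ unambiguous via the standard (non-homogeneous) LP decomposition without having to worry about constants and polynomial ambiguities.

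Part (iii) is purely Bernstein. For each $j\geq -1$ the block $\Delta_j f$ is frequency-localized in an annulus of size $2^j$, so
\begin{equation*}
\|\Delta_j f\|_{L^{p_1}}\;\lesssim\; 2^{jd(1/p_2 - 1/p_1)}\|\Delta_j f\|_{L^{p_2}},
\end{equation*}
(with the natural modification for $j=-1$). Multiplying by $2^{j\alpha}$ and using $\beta\geq\alpha+d(1/p_2-1/p_1)$ yields $2^{j\alpha}\|\Delta_j f\|_{L^{p_1}}\lesssim 2^{j\beta}\|\Delta_j f\|_{L^{p_2}}$, and taking the $\ell^q$-norm over $j$ gives $\|f\|_{B^{\alpha}_{p_1,q}}\lesssim \|f\|_{B^{\beta}_{p_2,q}}$.

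I expect the only non-routine step to be the bookkeeping for part (i): carefully tracking which paraproduct contributes $\alpha$, which contributes $\beta$, and which contributes $\alpha+\beta$, and then collapsing the three contributions into the single exponent $\alpha\wedge\beta$ while ensuring that the resonance sum converges under the hypothesis $\alpha+\beta>0$. Since these are classical results (they are quoted from \cite{mourrat2017global}), for brevity the cleanest presentation would be to state the three bounds, give the Bony decomposition once, verify the paraproduct and resonance estimates in one line each via Bernstein plus H\"older, and cite the reference for the detailed algebra.
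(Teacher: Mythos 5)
The paper gives no proof of this lemma at all: all three parts are quoted verbatim from the cited reference, so your sketch can only be measured against the standard Littlewood--Paley argument from the literature, which is exactly what you reproduce (Bony decomposition for (i), block-wise quasi-orthogonality plus H\"older for (ii), Bernstein for (iii)). Parts (ii) and (iii) are fine as sketched; the only inaccuracy in (ii) is your closing remark, since the restriction $\alpha\in(0,1)$ is merely inherited from the cited statement --- the block-wise duality bound you describe works for every $\alpha\in\mathbb{R}$.

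There is, however, one concrete error in (i): you have swapped the two paraproduct cases. As written, the claim that $T_u v$ is bounded in $B^{\beta}_{p,q}$ by $\|u\|_{B^{\alpha}_{p_1,q}}\|v\|_{B^{\beta}_{p_2,q}}$ ``when $\alpha\leq 0$'' is false, because for $\alpha<0$ one has $\|S_{j-1}u\|_{L^{p_1}}\lesssim \sum_{i\leq j-2}2^{-i\alpha}\|u\|_{B^{\alpha}_{p_1,\infty}}\sim 2^{-j\alpha}\|u\|_{B^{\alpha}_{p_1,\infty}}$, so the paraproduct only lands in $B^{\alpha+\beta}_{p,q}$; conversely, for $\alpha>0$ the low-frequency factor cannot add regularity, so $T_u v$ lands in $B^{\beta}_{p,q}$ and not in $B^{\alpha+\beta}_{p,q}$. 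Once the two cases are interchanged, your argument goes through: for $\alpha>0$ the target $B^{\beta}_{p,q}$ embeds into $B^{\alpha\wedge\beta}_{p,q}$, for $\alpha\leq 0$ the target $B^{\alpha+\beta}_{p,q}$ embeds into $B^{\alpha\wedge\beta}_{p,q}$ because the standing hypothesis $\alpha+\beta>0$ rules out $\beta\leq 0$ (and also absorbs the logarithmic loss in the borderline case $\alpha=0$), and the resonance term is, as you say, the only place where $\alpha+\beta>0$ enters. So the plan is correct, but the case bookkeeping you yourself flag as the delicate point is precisely where the write-up must be fixed.
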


\subsection{ Conditional probability}\label{conditional}

We prove the following result about  Fubini's theorem between deterministic integral and conditional expectation. 
 \begin{lemma}\label{lem:con3}
 	Under Assumption \ref{assumptiona},
 	for a  given   $(\mathcal{F}_t)_{t\in[0,T]}$ progressive measurable stochastic process $\{h_t,t\in[0,T]\}$ with $\mathbb{E}\int_{0}^{T}|h_{s}|\mathd s<\infty,$
 	we have	for each $t\in [0,T],$
 	\begin{align*}
 		\mathcal{L}(h_t|\mathcal{F}^W_T)&= \mathcal{L} (h_t|\mathcal{F}^W_t), \qquad 
 		\mathcal{L}(h_t|\mathcal{F}^{\xi^N,W}_T)= \mathcal{L} (h_t|\mathcal{F}^{\xi^N,W}_t), \quad a.s.,
 	\end{align*}
 	 and
 	\begin{align*}
 		&\mathbb{E}\bigg[\int_0^th_s \mathd s| \mathcal{F}^W_T\bigg]=\int_0^t\mathbb{E}[ h_s|\mathcal{F}^W_T]\mathd s=\int_0^t\mathbb{E}[ h_s|\mathcal{F}^W_s]\mathd s,
 		\\&\mathbb{E}\bigg[\int_0^th_s \mathd s| \mathcal{F}^{\xi^N,W}_T\bigg]=\int_0^t\mathbb{E}[ h_s|\mathcal{F}^{\xi^N,W}_T]\mathd s=\int_0^t\mathbb{E}[ h_s|\mathcal{F}^{\xi^N,W}_s]\mathd s, \quad a.s..
 	\end{align*}
 \end{lemma}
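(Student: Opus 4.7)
The statement is driven by a single structural fact: conditional on $\mathcal{F}^W_t$, the additional information in $\mathcal{F}^W_T$ consists solely of the future Brownian increments $W^k_s-W^k_t$ for $s\in[t,T]$, and by Assumption \ref{assumptiona} these are independent of $\mathcal{F}_t$. My plan is therefore first to convert this independence into the desired equality of conditional laws, and then to combine it with a classical conditional Fubini theorem.

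For the first equality, set $\mathcal{H}_t\assign\sigma(W^k_s-W^k_t:s\in[t,T],\ k=1,\ldots,d)$, so that $\mathcal{F}^W_T=\mathcal{F}^W_t\vee\mathcal{H}_t$ up to null sets. Since $h_t$ is $\mathcal{F}_t$-measurable by progressive measurability evaluated at the fixed time $t$, and $\mathcal{H}_t$ is independent of $\mathcal{F}_t$ by the independent-increment property of Brownian motion together with Assumption \ref{assumptiona}, in particular $\mathcal{H}_t$ is independent of $\sigma(h_t)\vee\mathcal{F}^W_t$. The standard consequence of conditional expectation for such conditional independence (``useless information'' can be discarded) yields
\begin{equation*}
    \mathbb{E}[\phi(h_t)\mid\mathcal{F}^W_T]=\mathbb{E}[\phi(h_t)\mid\mathcal{F}^W_t]\quad\mathbb{P}\text{-a.s.}
\end{equation*}
for every bounded measurable $\phi$, which is exactly $\mathcal{L}(h_t\mid\mathcal{F}^W_T)=\mathcal{L}(h_t\mid\mathcal{F}^W_t)$. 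The analogue for $\mathcal{F}^{\xi^N,W}$ follows identically, since $\xi^N$ is $\mathcal{F}_0$-measurable and hence independent of $\mathcal{H}_t$, so that $\mathcal{F}^{\xi^N,W}_T=\mathcal{F}^{\xi^N,W}_t\vee\mathcal{H}_t$ and $\mathcal{H}_t$ remains independent of $\sigma(h_t)\vee\mathcal{F}^{\xi^N,W}_t$.

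For the Fubini identity, because $h$ is progressive with $\mathbb{E}\int_0^T|h_s|\,\mathd s<\infty$, the classical conditional Fubini theorem gives
\begin{equation*}
    \mathbb{E}\Big[\int_0^t h_s\,\mathd s\,\Big|\,\mathcal{F}^W_T\Big]=\int_0^t\mathbb{E}[h_s\mid\mathcal{F}^W_T]\,\mathd s\quad\mathbb{P}\text{-a.s.},
\end{equation*}
and similarly with $\mathcal{F}^{\xi^N,W}_T$ in place of $\mathcal{F}^W_T$. Applying the first step inside the integrand then replaces $\mathcal{F}^W_T$ by $\mathcal{F}^W_s$ (resp.\ $\mathcal{F}^{\xi^N,W}_s$). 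The main obstacle, and the only subtlety of the argument, is a measurability upgrade: the first step produces the identification of conditional expectations $\mathbb{P}$-a.s.\ for each fixed $s$, whereas to integrate against $\mathrm{d}s$ one needs a jointly measurable (ideally progressive) version of $s\mapsto\mathbb{E}[h_s\mid\mathcal{F}^W_s]$. I would handle this in the standard way by selecting the optional projection of $h$ with respect to $(\mathcal{F}^W_s)_{s\in[0,T]}$ (and likewise $(\mathcal{F}^{\xi^N,W}_s)_{s\in[0,T]}$); once a progressive version is fixed, an application of Fubini in the product measure $\mathrm{d}s\otimes\mathbb{P}$ identifies it $(\mathrm{d}s\otimes\mathbb{P})$-a.e.\ with the fixed-$s$ conditional expectations from the first step, which closes the argument.
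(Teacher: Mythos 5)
Your proof is correct. For the first part (equality of conditional laws) your argument is essentially the same as the paper's: the paper phrases it as the conditional independence $\mathcal{F}_t\perp\mathcal{F}^W_T\mid\mathcal{F}^W_t$ following from $W^k(t+\cdot)-W^k(t)\perp\mathcal{F}_t$, while you make the same structure explicit by introducing $\mathcal{H}_t=\sigma(W^k_s-W^k_t:s\in[t,T])$ and invoking independence of $\mathcal{H}_t$ from $\sigma(h_t)\vee\mathcal{F}^W_t$; these are the same fact in two phrasings. Where the two arguments genuinely diverge is the Fubini step. The paper reduces, ``by linearity and a routine approximation,'' to elementary processes $h_s=\varphi_r I_{(r,u]}(s)$ with $\varphi_r$ $\mathcal{F}_r$-measurable, for which the identity can be verified by a direct one-line computation; the measurability of $s\mapsto\mathbb{E}[h_s\mid\mathcal{F}^W_s]$ is implicitly handled by the approximation. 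You instead apply conditional Fubini against the fixed $\sigma$-algebra $\mathcal{F}^W_T$ (which is legitimate since $\mathbb{E}\int_0^T|h_s|\,\mathd s<\infty$), substitute the first part inside the integrand, and then explicitly address the joint-measurability of $s\mapsto\mathbb{E}[h_s\mid\mathcal{F}^W_s]$ by selecting the optional projection of $h$ along $(\mathcal{F}^W_s)_{s}$ and identifying it $(\mathd s\otimes\mathbb{P})$-a.e.\ with the fixed-$s$ conditional expectations. Both routes are valid; the approximation route is more elementary and self-contained, whereas yours isolates cleanly where the only real subtlety lies (measurability of the conditioned process across the moving filtration) and resolves it via a standard projection theorem rather than smuggling it through a limit.
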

\begin{proof}
	We will  present the proofs for the outcomes pertaining to $\mathcal{F}^W_{\cdot}$. The results for $\mathcal{F}^{\xi^N,W}_{\cdot}$ can be derived using the same methodology.
	Using  the fact that $$W_k(t+\cdot)-W_k(t) \perp \mathcal{F}_t , \quad \forall k\in\{1,\cdots,d\},$$ we have $\mathcal{F}_t\perp \mathcal{F}^W_T|\mathcal{F}^W_t$ for $t\in [0,T],$  i.e. for every $\mathcal{F}_t$-measurable integrable random variable $\phi,$ it holds that $\mathbb{E}[\phi |\mathcal{F}^W_T]=\mathbb{E}[\phi | \mathcal{F}^W_t].$ We then have  $$\forall t\in [0,T],\quad  \mathcal{L}(h_t|\mathcal{F}^W_T)= \mathcal{L} (h_t|\mathcal{F}^W_t)\quad \mathbb{P}-a.s.$$  By the linearity and using a routine approximation,  it suffices to prove the desire result for elementary process $h_s=\varphi_rI_{(r,u]}(s),$ with $0\leq r\leq u\leq t$ and $\varphi_r$ is an $\mathcal{F}_r-$measurable integrable random variable. Using the property of conditional independence gives
	\begin{align*}
\mathbb{E}\bigg[\int_0^t\varphi_rI_{(r,u]}(s) \mathd s| \mathcal{F}^W_T\bigg]=\mathbb{E}[\varphi_r(u-r)|\mathcal{F}^W_T]=\int_{0}^{t}	\mathbb{E}[\varphi_rI_{(r,u]}(s)|\mathcal{F}^W_T] \mathd s=	\int_{0}^{t}	\mathbb{E}[\varphi_rI_{(r,u]}(s)|\mathcal{F}^W_s] \mathd s.
	\end{align*}
\end{proof}
We also recall the following lemma (\cite[Lemma B.1]{lacker2022superposition}) for stochastic integral. 
\begin{lemma}\label{lem:con1}
	Under Assumption \ref{assumptiona},
	for a  given   $(\mathcal{F}_t)_{t\in[0,T]}$ progressive measurable stochastic process $\{h_t,t\in[0,T]\}$ with $\mathbb{E}\int_{0}^{T}h_{s}^{2}\mathd s<\infty,$ we have
	
	\begin{align*}
		\mathbb{E}\bigg[\int_0^th_s \mathd B_s| \mathcal{F}^W_t\bigg]=0,\qquad \mathbb{E}\bigg[\int_0^th_s \mathd B_s| \mathcal{F}^{\xi^N,W}_t\bigg]=0,
	\end{align*}
and 
\begin{align*}
&\mathbb{E}\bigg[\int_0^th_s \mathd W_s| \mathcal{F}^W_t\bigg]=\int_0^t \mathbb{E}[h_s|\mathcal{F}^W_s]\mathd W_s,
\\&\mathbb{E}\bigg[\int_0^th_s \mathd W_s| \mathcal{F}^{\xi^N,W}_t\bigg]=\int_0^t \mathbb{E}[h_s|\mathcal{F}^{\xi^N,W}_s]\mathd W_s
,\quad t\in[0,T].
\end{align*}

\end{lemma}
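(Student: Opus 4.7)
The identities are a standard Fubini-type property for stochastic integrals against Brownian motions that are respectively independent of, or generating, the conditioning $\sigma$-algebra, and the paper cites \cite[Lemma B.1]{lacker2022superposition}. I would handle both the $\mathcal{F}^W_t$ and the $\mathcal{F}^{\xi^N,W}_t$ cases by the same argument: for the latter it suffices to observe that since $\xi^N \perp W$ by Assumption \ref{assumptiona}, $W$ remains a Brownian motion in the initially enlarged filtration $\mathcal{F}^W_\cdot \vee \sigma(\xi^N)$, so It\^o's martingale representation theorem still applies there. By linearity and the $L^2$-continuity of both sides (It\^o's isometry together with Jensen's inequality applied inside the conditional expectation on the right-hand side of the $W$-identity), it suffices to establish both identities on elementary integrands of the form $h_s = \varphi_r \1_{(r,u]}(s)$ with $0 \le r < u \le t$ and $\varphi_r$ bounded and $\mathcal{F}_r$-measurable.

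For the $B$-integral, $\int_0^t h_s \mathd B_s = \varphi_r(B_u - B_r)$. I would condition further on the larger $\sigma$-algebra $\mathcal{F}_r \vee \mathcal{F}^W_t$, which contains $\varphi_r$: the mutual independence in Assumption \ref{assumptiona} guarantees that $B_u - B_r$ is independent of this enlarged $\sigma$-algebra, hence $\mathbb{E}[B_u - B_r | \mathcal{F}_r \vee \mathcal{F}^W_t] = 0$, and the tower property delivers $\mathbb{E}[\varphi_r(B_u - B_r)|\mathcal{F}^W_t] = 0$. The argument with $\mathcal{F}^{\xi^N,W}_t$ is identical since $B \perp \mathcal{F}^{\xi^N,W}_T$.

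For the $W$-integral, introduce the $\mathcal{F}^W$-martingale $N_s \assign \mathbb{E}[\varphi_r | \mathcal{F}^W_s]$. Applying the tower property over $\mathcal{F}^W_T$, writing the It\^o representation $N_T = N_t + \int_t^T \psi_v \mathd W_v$, and using that $W_u - W_r \in \mathcal{F}^W_t$ since $u \le t$, I obtain
\[
\mathbb{E}[\varphi_r(W_u - W_r)|\mathcal{F}^W_t] = \mathbb{E}[(W_u-W_r)N_T | \mathcal{F}^W_t] = (W_u - W_r) N_t,
\]
the stochastic integral term $\int_t^T \psi_v \mathd W_v$ dropping out after taking $\mathcal{F}^W_t$-conditional expectation. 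The decisive observation is now that $N_s$ is constant for $s \ge r$: because $\varphi_r$ is $\mathcal{F}_r$-measurable and the Brownian increments $\{W_v - W_r : v \ge r\}$ are independent of $\mathcal{F}_r$ under Assumption \ref{assumptiona}, one has $N_s = \mathbb{E}[\varphi_r|\mathcal{F}^W_r] = N_r$ for every $s \ge r$, so
\[
\int_0^t \mathbb{E}[h_s | \mathcal{F}^W_s]\mathd W_s = \int_r^u N_s \mathd W_s = N_r(W_u - W_r) = N_t(W_u - W_r),
\]
which matches the expression computed above.

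The main obstacle is precisely this stationarity of $N_s$ on $[r,\infty)$; without the full independence structure of Assumption \ref{assumptiona} one would have to carry along the representation $N_s = N_r + \int_r^s \psi_v \mathd W_v$ on $[r,T]$, an It\^o-product computation would yield a nontrivial $[W,N]$-correction from $\int_r^u \psi_v \mathd v$, and one would need an extra argument to show it cancels. Here, independence forces $\psi \equiv 0$ on $[r,T]$ and the correction is absent automatically. Once the identity is established on elementary processes, the $L^2$-density of simple processes extends it to the whole class $\mathbb{E}\int_0^T h_s^2 \mathd s < \infty$ described in the statement, completing the proof.
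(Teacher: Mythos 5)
Your proof is correct, and since the paper recalls this statement purely by citation (to \cite[Lemma~B.1]{lacker2022superposition}) rather than proving it, you are supplying detail the paper omits; there is no in-paper argument to compare against. The route you take is the natural one: an $L^2$-density reduction to elementary integrands $h_s=\varphi_r\mathbf 1_{(r,u]}(s)$, independence of $B$ from $\mathcal F_r\vee\mathcal F^W_t$ for the $B$-integral, and for the $W$-integral the crucial observation that $N_s:=\mathbb E[\varphi_r|\mathcal F^W_s]$ freezes at its value $N_r$ for all $s\ge r$, because the post-$r$ Brownian increments are independent of $\mathcal F_r\supset\mathcal F^W_r$ under Assumption~\ref{assumptiona}. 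That constancy is exactly what makes $\int_r^u N_s\,\mathrm dW_s=N_r(W_u-W_r)=N_t(W_u-W_r)$ match the left-hand side, and the same conditional-independence chain goes through verbatim for $\mathcal F^{\xi^N,W}_\cdot$ because $\xi^N$ is $\mathcal F_0$-measurable.

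Two small stylistic remarks. First, the appeal to the It\^o representation $N_T=N_t+\int_t^T\psi_v\,\mathrm dW_v$ is superfluous: once you know $N$ is an $\mathcal F^W_\cdot$-martingale, $\mathbb E[N_T|\mathcal F^W_t]=N_t$ is immediate and you never need $\psi$. Dropping it streamlines the argument and also spares you the parenthetical claim that martingale representation persists in the initially enlarged filtration $\mathcal F^W_\cdot\vee\sigma(\xi^N)$ — that claim is true (independent initial enlargement preserves the representation property), but it is a slightly heavier fact than anything else you use, and it is not needed: the martingale property of $\mathbb E[\varphi_r|\mathcal F^{\xi^N,W}_\cdot]$ is enough. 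Second, when you invoke $L^2$-continuity of the map $h\mapsto\int_0^t\mathbb E[h_s|\mathcal F^W_s]\,\mathrm dW_s$ you should note, as part of the density argument, that $(s,\omega)\mapsto\mathbb E[h_s|\mathcal F^W_s]$ admits a progressively measurable version (optional projection), so that the right-hand side is a bona fide It\^o integral; this is standard but worth a word since you quantify over the whole class $\mathbb E\int_0^T h_s^2\,\mathrm ds<\infty$.
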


\subsection{ Relative entropy, entropy and Fisher information functional}\label{entropy}

   We start this section by recalling the definitions of (relative)  entropy and Fisher information associated to any two probability measures $\rho^N$ and $\eta^N$ on $\mathbb{T}^{2N},N\geq 1$.  
   For convention, we also write $\rho^N$ and $\eta^N$ for their density functions whenever they exist. 
   Firstly, we recall the \textit{entropy functional} for $\rho^N$ defined by
	
	\begin{align*} 
			H(\rho^N)\assign
			\begin{cases}
					{\displaystyle \int_{\mathbb{T}^{2N}} \rho^N\log\rho^N\mathd x^N},    &\rho^N\in  L^1(\mathbb{T}^{2N});\vspace{5pt}
					 \\
					+\infty, & otherwise.
				\end{cases} 
	\end{align*} 
	The {\it relative entropy} $H(\rho^N|\eta^N )$\footnote{The definition of (normalized) relative entropy associated with any two probability measures, denoted as $\rho^N$ and $\eta^N$ (we maintain this notation for simplicity), on $\mathbb{R}^N\times\mathbb{T}^{2N}$, where $N\geq 1$, is entirely identical to the definition of relative entropy for two probability measures on $\mathbb{T}^{2N}$—with the sole modification being the replacement of the state space $\mathbb{T}^{2N}$ by $\mathbb{R}^N\times\mathbb{T}^{2N}.$} is defined as 
	\begin{align*} 
			H(\rho^N|\eta^N )\assign 
			\begin{cases}
			\displaystyle{	\int_{\mathbb{T}^{2N}} \log\frac{d\rho^N}{d\eta^N}\mathd \rho^N, }   &\rho^N\ll\eta^N; \vspace{5pt}\\
			+\infty, &otherwise.
				\end{cases} 
	\end{align*}  
 Here $\frac{d\rho^N}{d\eta^N}$ represents the Radon–Nikodym derivative of $\rho^N$ with respect to $\eta^N$.  
 In particular, when both $\rho^N$ and $\eta^N$ have densities, a simple calculation shows that
 \begin{align*}
 	H(\rho^N|\eta^N ) = \int_{\mathbb{T}^{2N}}\rho^N \log \rho^N\mathd x^N-\int_{\mathbb{T}^{2N}}\rho^N\log\eta^N \mathd x^N. 
 	\end{align*}
  Furthermore, the \textit{normalized relative entropy} for any two probability measures $\rho^N$ and $\eta^N$ on $\mathbb{T}^{2N}$ is defined as follows:
 \begin{align}\label{defrela}
 	H_N(\rho^N|\eta^N )\assign \frac{1}{N} H(\rho^N|\eta^N).
 	\end{align}
 Finally, we recall the \textit{Fisher information} of probability measures  $\rho^N$ on $\mathbb{T}^{2N}$  is defined by 
 \begin{align*}  
		I(\rho^N)\assign
		\begin{cases}
			\displaystyle{\int_{\mathbb{T}^{2N}}\frac{|\nabla\rho^N|^2}{\rho^N}\mathd x^N, }   &\rho^N\in W^{1,1}(\mathbb{T}^{2N}); \vspace{5pt} \\
		 +\infty, &otherwise.
		\end{cases} 
\end{align*} 
	Here $W^{1,1}(\mathbb{T}^{2N})$ denotes the usual Sobolev space 
	$W^{1,1}(\mathbb{T}^{2N})\assign \{f\in L^1(\mathbb{T}^{2N}): \nabla f\in L^1(\mathbb{T}^{2N})\}.$

 When Fisher information is finite, an important result from \cite[Lemma 3.3]{fournier2014propagation} states that the distance between different particles cannot be too close in the sense of the following Lemma \ref{fournier3.3}. The same arguments also works for $\mathbb{T}^2$ as in \cite[Lemma 3.3]{fournier2014propagation}.\footnote{The difference between Lemma \ref{fournier3.3} and Lemma 3.3 in \cite{fournier2014propagation} lies  in the space $\mathbb{T}^2$ of the probability measure $\rho$, which is  $\mathbb{R}^2$ in \cite{fournier2014propagation}.  }  

 \begin{lemma}\label{fournier3.3}
  For any $r\in(0,2)$ and $\beta>\frac{r}{2},$ there exists a constant $C_{r,\beta}>0$ depends on $r,\beta$ such that for any probability measure $\rho$ on $\mathbb{T}^2\times \mathbb{T}^2$
  with finite Fisher information,
	\begin{align*}\int_{\mathbb{T}^{2}}\frac{1}{|x_1-x_2|^{r}}\rho (\mathd x_1\mathd x_2)\leq C_{r,\beta}(I^{\beta}(\rho)+1).
		\end{align*}
 \end{lemma}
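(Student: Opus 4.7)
The plan is to adapt the Fournier--Hauray--Mischler argument \cite{fournier2014propagation} to the torus, where the boundedness of the domain makes things a bit cleaner (no moment condition is needed). The core idea is that finite Fisher information, combined with a disintegration, reduces the bound to a two-dimensional Gagliardo--Nirenberg estimate applied slice-by-slice.

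First I would make two easy reductions. We may assume $\rho$ has a density $f$ on $\mathbb{T}^2\times\mathbb{T}^2$ and that $I(\rho)<\infty$, otherwise the bound is vacuous. Moreover, since $a^{\beta_1}\le a^{\beta_2}+1$ for $a\ge 0$ and $\beta_1\le\beta_2$, it suffices to prove the estimate for each $\beta\in(r/2,1)$. Next, I would disintegrate $f(x_1,x_2)=h(x_1)\,g(x_2\mid x_1)$, where $h$ is the first marginal of $\rho$ and $g(\cdot\mid x_1)$ is a probability density on $\mathbb{T}^2$ for $h$-a.e.\ $x_1$. Dropping the $x_1$-derivatives in $I(\rho)$ yields the partial Fisher information bound
\begin{align*}
I(\rho)\ge\int_{\mathbb{T}^4}\frac{|\nabla_{x_2}f|^2}{f}\,\mathd x_1\mathd x_2=\int_{\mathbb{T}^2}h(x_1)\,I_2\bigl(g(\cdot\mid x_1)\bigr)\,\mathd x_1,
\end{align*}
where $I_2$ denotes the Fisher information on $\mathbb{T}^2$.

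Given $\beta\in(r/2,1)$, set $q=1/(1-\beta)$, so that $q>2/(2-r)$ and its conjugate $q'=1/\beta$ satisfies $rq'<2$; in particular $|\cdot|^{-r}\in L^{q'}(\mathbb{T}^2)$. Hölder's inequality in the $x_2$ variable then gives, for each $x_1$,
\begin{align*}
\int_{\mathbb{T}^2}\frac{g(x_2\mid x_1)}{|x_1-x_2|^r}\,\mathd x_2\le C_{r,q}\,\|g(\cdot\mid x_1)\|_{L^q(\mathbb{T}^2)}.
\end{align*}
The critical step is to apply Gagliardo--Nirenberg in two dimensions to $u=\sqrt{g(\cdot\mid x_1)}\in H^1(\mathbb{T}^2)$, which satisfies $\|u\|_{L^2}^2=1$ and $\|\nabla u\|_{L^2}^2=I_2(g(\cdot\mid x_1))/4$; this produces
\begin{align*}
\|g(\cdot\mid x_1)\|_{L^q(\mathbb{T}^2)}=\|u\|_{L^{2q}(\mathbb{T}^2)}^2\le C_q\bigl(I_2(g(\cdot\mid x_1))^{(q-1)/q}+1\bigr).
\end{align*}

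Finally I would integrate against $h$ in $x_1$ and apply Hölder with exponents $q/(q-1)$ and $q$, using $\int h\,\mathd x_1=1$ and the partial Fisher information bound above:
\begin{align*}
\int_{\mathbb{T}^4}\frac{f(x_1,x_2)}{|x_1-x_2|^r}\,\mathd x_1\mathd x_2\lesssim\int_{\mathbb{T}^2}h\bigl(I_2^{(q-1)/q}+1\bigr)\mathd x_1\lesssim I(\rho)^{(q-1)/q}+1=I(\rho)^{\beta}+1.
\end{align*}
The main technical point to verify carefully is the Gagliardo--Nirenberg step with the sharp exponent $(q-1)/q$; on the torus this follows from the interpolation $\|u\|_{L^{2q}}^{2q}\le C(\|\nabla u\|_{L^2}^{2(q-1)}\|u\|_{L^2}^{2}+\|u\|_{L^2}^{2q})$, where the correction term $\|u\|_{L^2}^{2q}$ (coming from the bounded domain) is precisely what produces the harmless $+1$ in the final inequality. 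Everything else is routine Hölder-type bookkeeping.
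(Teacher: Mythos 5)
Your proof is correct and is exactly the Fournier--Hauray--Mischler argument that the paper cites without reproducing: disintegrate, drop the $x_1$-derivatives to get a partial Fisher information bound, then H\"older in $x_2$ with $q=1/(1-\beta)$ (so $rq'=r/\beta<2$) followed by the torus Gagliardo--Nirenberg inequality for $u=\sqrt{g(\cdot\mid x_1)}$, and finally H\"older in $x_1$ to pull the exponent $(q-1)/q=\beta$ outside. The exponent bookkeeping, the reduction to $\beta\in(r/2,1)$, and the observation that compactness of $\mathbb{T}^2$ makes $|\cdot|^{-rq'}$ integrable without any moment hypothesis all check out.
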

 To achieve the main theorem in this paper, we also need the following three auxiliary lemmas, which can be proved similarly as in the proofs for Lemma~1 and Theorems~3, 4 in   \cite{jabin2018quantitative}\footnote{The  distinction lies in that we take integration in the spaces $(\mathbb{R}\times\mathbb{T}^2)^{ \nocomma N}$, $(\mathbb{R}\times\mathbb{T}^2)^{ \nocomma N},$ and $\mathbb{R}\times\mathbb{T}^{2},$ while the state spaces are  $\mathbb{T}^{2N}$, $\mathbb{T}^{2N}$, and $\mathbb{T}^{2}$  in \cite{jabin2018quantitative}.}. 
\begin{lemma}\label{lemma jw0} 

 For  any  two probability densities $\rho_N,\bar{\rho}_N$ on $(\mathbb{R}\times\mathbb{T}^2)^{N},N\geq 1$ and any function $\phi \in L^{\infty} \big((\mathbb{R}\times\mathbb{T}^2)^{N} \big) $, one has that  for any constant $\eta>0,$
 $$\int_{(\mathbb{R}\times\mathbb{T}^2)^{N}}\phi \rho_N \mathd x^{N}\leq \frac{1}{\eta} \bigg(H_N(\rho_N|\bar{\rho}_N)+\frac{1}{N}\log \int_{(\mathbb{R}\times\mathbb{T}^2)^{N}}\bar{\rho}_N\exp\{N\eta\phi\}\mathd x^{N}\bigg).$$
\end{lemma}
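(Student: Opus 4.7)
This is the classical Donsker--Varadhan (Gibbs) variational bound, so my plan is to reduce the statement to the fundamental inequality
\[
\int \psi \, d\mu \;\leq\; H(\mu|\nu) + \log \int e^{\psi}\, d\nu,
\]
valid for any probability measures $\mu,\nu$ on a measurable space and any $\psi \in L^\infty$. Once this is in hand, the lemma follows by taking $\mu = \rho_N \, dx^N$, $\nu = \bar\rho_N\, dx^N$, and $\psi = N\eta \phi$, then dividing by $N\eta$ and using the definition \eqref{defrela} of the normalized relative entropy.

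\textbf{Step 1: the variational inequality.} I would first note that if $H(\rho_N|\bar\rho_N)=\infty$ the bound is trivial since $\phi$ is bounded (the right-hand side is $+\infty$). Otherwise we may assume $\rho_N \ll \bar\rho_N$ and set $f = \rho_N/\bar\rho_N$. The key identity is
\[
N\eta \int \phi\, \rho_N \, dx^N - H(\rho_N|\bar\rho_N)
= \int \log\!\Bigl(\frac{e^{N\eta \phi}}{f}\Bigr) f \, \bar\rho_N \, dx^N
= \int \log\!\Bigl(\frac{e^{N\eta \phi}}{f}\Bigr) \rho_N \, dx^N,
\]
on the support of $\rho_N$ (where $f>0$). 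Since $\rho_N \, dx^N$ is a probability measure and $\log$ is concave, Jensen's inequality gives
\[
\int \log\!\Bigl(\frac{e^{N\eta \phi}}{f}\Bigr) \rho_N \, dx^N
\;\leq\; \log \int \frac{e^{N\eta \phi}}{f}\, \rho_N\, dx^N
= \log \int e^{N\eta \phi}\, \bar\rho_N\, dx^N.
\]
Combining the two displays yields
\[
N\eta \int \phi\, \rho_N\, dx^N \;\leq\; H(\rho_N|\bar\rho_N) + \log \int e^{N\eta \phi}\, \bar\rho_N\, dx^N.
\]

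\textbf{Step 2: rescaling.} Dividing the previous inequality by $N\eta$ and recalling $H_N = H/N$ gives exactly the asserted bound. The boundedness $\phi \in L^\infty$ ensures that $e^{N\eta\phi}$ is integrable against $\bar\rho_N$, so every quantity is well-defined.

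\textbf{Expected difficulty.} There is essentially no obstacle: the argument is a textbook consequence of Jensen's inequality (equivalently, of Young's inequality $xy \leq x\log x - x + e^y$). The only minor care is to justify restricting the integral to $\{f>0\}$ (which is fine since $\rho_N$ vanishes outside this set) and to record the trivial case $H(\rho_N|\bar\rho_N)=\infty$. The state space being $(\mathbb{R}\times\mathbb{T}^2)^N$ rather than $\mathbb{T}^{2N}$ plays no role whatsoever in the argument, which is exactly why this is stated as an almost verbatim transcription of Lemma~1 in \cite{jabin2018quantitative}.
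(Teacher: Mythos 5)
Your proof is correct and is essentially the same argument the paper invokes: the Gibbs/Donsker--Varadhan variational bound via Jensen's inequality, which is precisely the content of Lemma~1 in \cite{jabin2018quantitative} that the paper cites and says transfers verbatim to the state space $(\mathbb{R}\times\mathbb{T}^2)^N$. The only points worth being careful about---the trivial case $H(\rho_N|\bar\rho_N)=\infty$ and restricting to the support of $\rho_N$ where $f>0$---you have both flagged correctly.
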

\begin{lemma}
	\label{lemma jw1}For any
	probability density $\bar{\rho}$ on $\mathbb{R}\times\mathbb{T}^2$,
	and any $\phi (x, y) \in
	L^{\infty} ((\mathbb{R}\times\mathbb{T}^2)^{2 })$ with

	\begin{equation}
	 \| \phi \|_{L^{\infty}((\mathbb{R}\times\mathbb{T}^2)^{2 })} < \frac{1}{2e}, \nonumber
	\end{equation}
	Assume that $\phi$ satisfies the
	following cancellation

	 $$\int_{\mathbb{R}\times\mathbb{T}^2} \phi (y,z)  \bar{\rho} (z) \mathd \nocomma z = 0 \quad y \in \mathbb{R}\times\mathbb{T}^2,$$
Then it holds that
		\begin{equation}
		\sup_{N \geq 2 }\, 	\int_{(\mathbb{R}\times\mathbb{T}^2)^{ \nocomma N}} \bar \rho^{\otimes N} \exp \big(N\<\phi_{x_1}^{\otimes2},\mu_{x^N}\otimes\mu_{x^N}\>\big)\mathd x^N\leqslant C,\nonumber
	\end{equation}
	where 
	$x^N\assign(x_1,..,x_N)\in (\mathbb{R}\times\mathbb{T}^2)^{ \nocomma N}, \phi_{x_1}^{\otimes2}(z_1,z_2)\assign \phi(x_1,z_1)\phi(x_1,z_2),\mu_{x^N}\assign \frac{1}{N}\sum_{i=1}^{N}\delta_{x_i}$ and $C$ is a constant independent of $\phi.$
\end{lemma}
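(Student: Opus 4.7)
The plan is to adapt the combinatorial proof of Theorem~3 in \cite{jabin2018quantitative} to the present setting. That argument relies only on the pointwise bound on $\phi$ and on the cancellation hypothesis, so replacing the state space $\mathbb{T}^{2}$ by $\mathbb{R}\times\mathbb{T}^{2}$ is essentially cosmetic. A useful preliminary observation is that
$$
N\langle \phi_{x_1}^{\otimes 2},\mu_{x^N}\otimes\mu_{x^N}\rangle = \frac{1}{N}\Big(\sum_{i=1}^{N}\phi(x_1,x_i)\Big)^{2} \geqslant 0,
$$
so, after conditioning on $x_1$, the quantity in the exponent is $N^{-1}$ times the square of a sum of bounded, mean-zero i.i.d.\ random variables (the mean-zero property coming from the cancellation hypothesis). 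Standard Gaussian heuristics already suggest that uniform exponential moments will exist only when the variance is sufficiently small, which is consistent with the stated threshold $\|\phi\|_\infty<\frac{1}{2e}$.

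The proof proper proceeds by expanding the exponential as a power series and integrating term by term:
$$
\int \bar\rho^{\otimes N}\,e^{N\langle\phi_{x_1}^{\otimes 2},\mu_{x^N}\otimes\mu_{x^N}\rangle}\,\mathd x^N = \sum_{k=0}^{\infty}\frac{1}{k!\,N^{k}}\sum_{(i_1,j_1,\ldots,i_k,j_k)}\int\bar\rho^{\otimes N}\prod_{\ell=1}^{k}\phi(x_1,x_{i_\ell})\phi(x_1,x_{j_\ell})\,\mathd x^N,
$$
where the inner sum runs over $\{1,\ldots,N\}^{2k}$. The cancellation hypothesis kills every term in which some index $m\in\{2,\ldots,N\}$ appears \emph{exactly once}: integrating first in $x_m$ produces the factor $\int\phi(x_1,z)\bar\rho(z)\,\mathd z = 0$. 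Only multi-indices in which every non-one label has multiplicity $0$ or $\geqslant 2$ contribute. Parametrising the survivors by the number $r\in\{0,\ldots,2k\}$ of positions equal to $1$ and the number $p\leqslant k-r/2$ of distinct non-one labels used, a standard combinatorial count together with the bounds $\binom{N-1}{p}\leqslant N^{p}/p!\leqslant N^{k-r/2}/p!$ and $\prod_{\ell}|\phi(x_1,x_{i_\ell})\phi(x_1,x_{j_\ell})|\leqslant\|\phi\|_\infty^{2k}$ shows that the factor $N^{-k}$ is precisely absorbed and the $k$-th summand of the series is dominated by $(Ck)^{k}\|\phi\|_\infty^{2k}/k!$; by Stirling's formula, this is summable in $k$ exactly under the threshold $(2e\|\phi\|_\infty)^{2}<1$.

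The main obstacle is the combinatorial bookkeeping needed to guarantee that the denominator factor $N^{k}$ is fully absorbed by the $N^{p}$ coming from the choice of distinct indices; any careless estimate produces spurious positive powers of $N$ and destroys uniformity in $N$. Once the per-$k$ estimate is in place, the hypothesis $\|\phi\|_\infty<\frac{1}{2e}$ is precisely what is needed to make the series converge uniformly in $N\geqslant 2$, yielding the desired bound.
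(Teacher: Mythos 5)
Your proposal reproduces the series-expansion plus combinatorial-cancellation argument used to prove Theorem~3 in Jabin--Wang \cite{jabin2018quantitative}, which is precisely what the paper invokes here (the authors note only that replacing $\mathbb{T}^{2N}$ by $(\mathbb{R}\times\mathbb{T}^2)^{N}$ is a cosmetic change and otherwise defer to that reference). So this is essentially the same approach as the paper's.
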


\begin{lemma}
	\label{lemma jw2}For any
	probability measure $\bar{\rho}$ on $\mathbb{R}\times\mathbb{T}^2$,
	and any $\phi (x, y) \in
	L^{\infty} ((\mathbb{R}\times\mathbb{T}^2)^{2 })$ with
	\begin{equation}
	 \tilde{C}\| \phi \|_{L^{\infty} \big((\mathbb{R}\times\mathbb{T}^2)^{2} \big)}^2< \frac{1}{2}, \nonumber
	\end{equation}
	where $ \tilde{C}$ is a universal constant  independent of $\bar{\rho}$. Assume that $\phi$ satisfies the
	following cancellations

  $$\int_{\mathbb{R}\times\mathbb{T}^2} \phi (z,x)  \bar{\rho} (z) \mathd \nocomma z = 0,$$ and $$\int_{\mathbb{R}\times\mathbb{T}^2} \phi (x,z)  \bar{\rho} (z) \mathd \nocomma z = 0, \quad x\in \mathbb{R}\times\mathbb{T}^2,$$
	Then  it holds that
		\begin{equation}
		\sup_{N \geq 2 }\, 	\int_{(\mathbb{R}\times\mathbb{T}^2)^{\nocomma N}} \bar \rho^{\otimes N} \exp \big(N \<\phi,\mu_{x^N}\otimes \mu_{x^N}\>\big) \mathd x^N \leq C,\nonumber
	\end{equation}
	where 
	$x^N\assign(x_1,\cdots, x_N)\in (\mathbb{R}\times\mathbb{T}^2)^{N},  \mu_{x^N}=\frac{1}{N}\sum_{i=1}^{N}\delta_{x_i}$ and $C$ is a constant independent of $\phi.$
\end{lemma}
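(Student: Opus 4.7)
The plan is to expand the exponential in its Taylor series, use the double cancellation to kill most terms, and then bound the surviving contributions to yield a convergent series, following the strategy of Theorem~4 in \cite{jabin2018quantitative}. Writing $[N]=\{1,\dots,N\}$ and identifying a multi-index $\boldsymbol{\alpha}=(\alpha_1,\ldots,\alpha_{2k})\in [N]^{2k}$ with the list of ordered pairs $(\alpha_{2\ell-1},\alpha_{2\ell})=(i_\ell,j_\ell)$, I would first write
$$\int_{(\mathbb{R}\times\mathbb{T}^2)^{N}} \bar{\rho}^{\otimes N}\, \exp\bigl(N \langle \phi, \mu_{x^N}\otimes\mu_{x^N}\rangle\bigr)\, \mathd x^N = \sum_{k=0}^\infty \frac{1}{k!\,N^k} \sum_{\boldsymbol{\alpha}\in [N]^{2k}} \int \prod_{\ell=1}^k \phi(x_{\alpha_{2\ell-1}},x_{\alpha_{2\ell}})\, \bar{\rho}^{\otimes N}(\mathd x^N).$$

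Next, I would apply the two cancellation assumptions to identify the surviving multi-indices. If some index $m$ appears in $\boldsymbol{\alpha}$ at exactly one position $p$, integration in $x_m$ annihilates the term: via $\int\phi(z,\cdot)\bar\rho(\mathd z)=0$ if $p$ is odd (so $m$ plays the role of a first argument of a single $\phi$), and via $\int\phi(\cdot,z)\bar\rho(\mathd z)=0$ if $p$ is even. Thus only those $\boldsymbol{\alpha}$ for which every distinct index appears at least twice among $(\alpha_1,\ldots,\alpha_{2k})$ contribute. Each such $\boldsymbol{\alpha}$ is determined by a set partition $\pi$ of $\{1,\ldots,2k\}$ into $p\le k$ blocks of size $\ge 2$, together with an injective assignment of blocks to indices in $[N]$, whose cardinality is at most $N^p$. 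Since each surviving integral is pointwise bounded by $\|\phi\|_\infty^k$, the order-$k$ contribution is at most
$$\frac{\|\phi\|_\infty^k}{k!\,N^k}\sum_{p=1}^{k} N^p\, \Pi_{2k,p},$$
where $\Pi_{2k,p}$ denotes the number of set partitions of $\{1,\ldots,2k\}$ into $p$ blocks of size $\ge 2$.

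The dominant contribution comes from $p=k$ (perfect matchings), for which $\Pi_{2k,k}=(2k)!/(2^k k!)$. Together with the prefactor, this term simplifies (via $\binom{2k}{k}\le 4^k$) to at most $C_0^k\|\phi\|_\infty^k$ for a universal constant $C_0$. For $p<k$ the factor $N^{-(k-p)}\le 1$ suppresses sub-leading contributions, which should fit into the same geometric series after combinatorial bookkeeping. Summing yields a total bound of the form $\sum_{k\ge 0}(\tilde C\|\phi\|_\infty^2)^k$, which converges uniformly in $N\ge 2$ under the smallness assumption $\tilde C\|\phi\|_\infty^2<1/2$.

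The main obstacle is precisely this combinatorial bookkeeping for $p<k$: one must verify that partitions with fewer but larger blocks do not accumulate a factorial-type blow-up that spoils the geometric convergence, and that the surviving terms can ultimately be organized into a series controlled by $\|\phi\|_\infty^2$ (rather than just $\|\phi\|_\infty$). This step depends only on combinatorics and on pointwise bounds of $\phi$ together with $\bar\rho$ being a probability measure, so the argument from \cite{jabin2018quantitative} transfers unchanged upon replacing the underlying state space $\mathbb{T}^2$ there by $\mathbb{R}\times\mathbb{T}^2$ here.
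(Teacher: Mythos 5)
Your proposal follows exactly the route the paper itself indicates: the paper does not give a self-contained proof of this lemma but explicitly notes (Section 2.4 and its footnote) that Lemmas~\ref{lemma jw0}–\ref{lemma jw2} can be proved as in Lemma~1 and Theorems~3,~4 of \cite{jabin2018quantitative}, the only change being the substitution of the state space $\mathbb{T}^2$ by $\mathbb{R}\times\mathbb{T}^2$. Your sketch correctly identifies the mechanism — Taylor expansion of the exponential, elimination of multi-indices with a singleton index via the two cancellations (the first cancellation when the singleton occupies an odd slot, the second when it occupies an even slot), reduction to set partitions of $\{1,\dots,2k\}$ with all blocks of size at least two, and combinatorial control of the resulting series — and you rightly observe that the only nontrivial part is the bookkeeping for partitions with blocks of size $>2$, which is precisely the content of the argument in \cite{jabin2018quantitative} and is insensitive to the underlying Polish space. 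Since the paper itself defers to Jabin–Wang at this point, your treatment is consistent with it.
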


\subsection{Definitions of solutions  } \label{solutiondef}

We now present several distinct definitions for solutions to the stochastic 2-dimensional Navier-Stokes equation \eqref{eqt:ns},  the stochasitc vortex model \eqref{eqt:vortex} and the  conditional Mckean-Vlasov equation \eqref{eqt:ncopy}. 

\begin{definition}\label{defnsw}
	A probabilistically weak solution $(\Omega,\mathcal{G},(\mathcal{G}_t)_{t\in [0,T]},\mathbb{P},W,(  v_t)_{t\in [0,T]})$   to \eqref{eqt:ns} with initial value $  v_0\in H^{3}(\mathbb{T}^2)$ is defined as  a stochastic basis $(\Omega,\mathcal{G},(\mathcal{G}_t)_{t\in [0,T]},\mathbb{P})$ supporting   independent $(\mathcal{G}_t)_{t\in [0,T]}$-Brownian motions $\{W^{k},k=1,\cdots,d\}$ (denoted by $W$) and a continuous $L^2(\mathbb{T}^2)$ valued $(\mathcal{G}_t)_{t\in [0,T]}$-adapted stochastic process $(  v_t)_{t\in [0,T]}$  such that 
	\begin{enumerate}
		\item For all $t\in [0,T]$, \begin{align}\label{regu-1}
			\|  v_t\|_{L^{\infty}}\leq \|  v_0\|_{L^{\infty}},    \quad\underset{x\in \mathbb{T}^2 }{ess\inf}   v_t\geq  \underset{x\in \mathbb{T}^2 }{ess\inf}   v_0,  \quad\|  v_t\|_{L^2}\leq \|  v_0\|_{L^2},\quad \mathbb{P}-a.s..
		\end{align}  
		\item  It holds that 
		\begin{align}\label{regu-2}
				&\mathbb{E} \int_0^T \|  v_t\|_{H^4}^2\mathd t <\infty , \quad \mathbb{E}[\sup_{t\in [0,T ]}\|  v_t\|_{H^2}^2]< \infty.
		\end{align}
		\item   For all $\varphi\in C^{\infty}(\mathbb{T}^2)$,  it holds almost surely that for all $t\in [0,T],$ 
		\begin{align*}
			\left\langle  \varphi,  v_t\right\rangle  =  \left\langle \varphi,   v_0\right\rangle +\int_0^t \left\langle \Delta\varphi,   v_s\right\rangle \mathd s +\int_0^t\left\langle  \nabla \varphi,  K*  v_s   v_s\right\rangle \mathd s+ \sum_{k=1}^{d}\int_0^t  \left\langle \nabla \varphi,    v_s\sigma_k\right\rangle\circ \mathd W_s^{k} .
		\end{align*}  
	\end{enumerate}

\end{definition}
\begin{definition}\label{defnss}
	Given   independent 1-dimensional Brownian motions $\{W^{k},k=1,\cdots,d\}$  (denoted by $W$) on probability space $(\Omega,\mathcal{G},\mathbb{P}),$  we say that 
	$(  v_t)_{t\in [0,T]}$  is a probabilistically strong solution  to \eqref{eqt:ns} with initial value $  v_0\in H^{3}(\mathbb{T}^2)$   if $(\Omega,\mathcal{G},(\mathcal{G}^{W}_t)_{t\in [0,T]},\mathbb{P},(  v_t)_{t\in [0,T]})$  is a  probabilistically weak solution to \eqref{eqt:ns} with initial value $  v_0\in H^{3}(\mathbb{T}^2),$ where $(\mathcal{G}^{W}_t)_{t\in [0,T]}$ is the normal filtration generated by Brownian motions $\{W^{k},k=1,\cdots,d\}.$  
\end{definition}
\begin{definition}\label{unins}
	We say that pathwise uniqueness holds for \eqref{eqt:ns} if two probabilistically weak solutions $(  v_t)_{t\in [0,T]}$ and $(\tilde{  v}_t)_{t\in [0,T]}$  on the same probability space $(\Omega,\mathcal{G},(\mathcal{G}_t)_{t\in [0,T]},\mathbb{P})$ with the same noise $\{W^{k},k=1,\cdots,d\}$ and same initial data $v_0$ implies $$\mathbb{P}\(\|  v_t-\tilde{  v}_t\|_{L^2}=0, \forall t\in [0,T] \)=1.$$
\end{definition}
\begin{definition}\label{defpart}
Given  independent 1-dimensional Brownian motions $\{W^{k},k=1,\cdots,d\}$ and 2-dimensional Brownian motions $\{B_i,i=1,\cdots,N\}$ on  probability space $(\Omega,\mathcal{F},\mathbb{P}),$  a  probabilistically strong solution  to \eqref{eqt:vortex} 
is defined as a continuous $(\mathcal{F}_t)_{t\in [0,T]}$-adapted stochastic process $(X^N(t))_{t\in [0,T]},$  where $(\mathcal{F}_t)_{t\in [0,T]}$ is the normal filtration generated by $\mathbb{R} \times \mathbb{T}^2$ valued random variables $\{(\xi_i, X_i(0)), i=1,\cdots,N\},$ Brownian motions $\{W^{k},k=1,\cdots,d\}$ and $\{B_i,i=1,\cdots,N\},$ 
such that   it holds almost surely that for every $0<t\leq T,$ \begin{equation*}
		X^N_i(t)=X^N_i(0)+\frac{1}{N}\sum_{j\neq i}\int_0^t\xi_jK(X^N_i(s)-X^N_j(s))\mathd s+\sqrt{2}B_i(t)+\sum_{k=1}^{d}\int_0^t\sigma_k(X^N_i(s))  \circ \mathd W^k_s \quad \forall i=1,\cdots, N.
	\end{equation*}
The definition for probabilistically strong solution $(Y_i(t))_{t\in[0,T]}$  to \eqref{eqt:ncopy} is given similarly.
\end{definition}

\begin{definition}\label{unipart}
	Given  independent 1-dimensional Brownian motions $\{W^{k},k=1,\cdots,d\}$ and 2-dimensional Brownian motions $\{B_i,i=1,\cdots,N\}$ on  stochastic basis $(\Omega,\mathcal{F},(\mathcal{F}_t)_{t\in [0,T]},\mathbb{P})$, where $(\mathcal{F}_t)_{t\in [0,T]}$ is the normal filtration generated by Brownian motions $\{W^{k},k=1,\cdots,d\}$ and $\{B_i,i=1,\cdots,N\},$ we say that pathwise uniqueness holds for \eqref{eqt:vortex} 
	if two probabilistically strong solutions  $(X^N(t))_{t\in [0,T]},$ $(\tilde{X}^N(t))_{t\in [0,T]}$  to \eqref{eqt:vortex} with $\mathbb{P}(\mid X^N_0-\tilde{X}^N_0\mid=0)=1$  
	 implies
	$$\mathbb{P}\(\mid X^N(t)-\tilde{X}^N(t)\mid=0, \forall t\in [0,T] \)=1,$$
	Pathwise uniqueness for \eqref{eqt:ncopy} is defined similarly.

\end{definition}
\section{Conditional Mckean-Vlasov equation and Stochastic Fokker-Planck equation}\label{sec:conditionalmv}
This section aims to derive  well-posedness of the  conditional McKean-Vlasov equation \eqref{eqt:ncopy} and the mean field limit equation \eqref{eqt:ns} and to demonstrate their interrelationship. More precisely, it is shown that the probabilistically strong solution $(  v_t)_{t\in [0,T]}$ to the stochastic 2-dimensional Navier-Stokes equation  \eqref{eqt:ns} is equivalent to the weighted distribution of some identical distributed particles $Y_i, i=1,\cdots,N,$ which are the solutions to \eqref{eqt:ncopy}, i.e. 
\begin{align}\label{weighted}
\mathbb{P}(\|  v_t-\tilde{  v}_t\|_{H^{-(2+  \eps)}}=0, \forall t\in [0,T])=1,\quad \forall   \eps\in (\frac{1}{2},1).
\end{align}
 Here $$\tilde{  v}_t(\mathd x)\assign\int_{\mathbb{R}} z \mathcal{L}((Y_i(t),\xi_i)| \mathcal{F}^{W}_T)(\mathd x\mathd z)=\mathbb{E}[\xi_1]\mathcal{L}((Y_i(t))| \mathcal{F}^{W}_T)(\mathd x),$$ and $\xi_i,i=1,\cdots,N,$ denote the intensities in the particle system \eqref{eqt:vortex} with the same law $\mathcal{L}(\xi).$ The final equality is a result of the independence $\xi_i \perp\mathcal{F}^W_T$ introduced in  Assumption \ref{assumptiona}.

We begin by stating well-posedness of  \eqref{eqt:ns} and analyzing the regularity of the solutions. Subsequently, we present a result on the uniqueness of the linearized stochastic 2-dimensional Navier-Stokes equation \eqref{ptsobol3} in the final part of Section \ref{sec2dns}. This uniqueness result plays a pivotal role in establishing the connection between the conditional McKean-Vlasov equation \eqref{eqt:ncopy} and the stochastic 2-dimensional Navier-Stokes equation \eqref{eqt:ns}. In Section \ref{sec3.1}, we demonstrate  well-posedness of the conditional McKean-Vlasov equation \eqref{eqt:ncopy}. Through It\^o's formula and the results about conditional expectation in Section \ref{conditional}, we find that the conditional distribution  $$\mathcal{L}((Y_i(t))| \mathcal{F}^{W}_T)(\mathd x),i=1,\cdots,N,$$ where $Y_i, i=1,\cdots,N,$ are the solutions to \eqref{eqt:ncopy}, satisfying the linearized stochastic 2-dimensional Navier-Stokes  equation. Building upon the uniqueness of the linearized stochastic 2-dimensional Navier-Stokes equation (see \eqref{ptsobol3} below), we then finalize the connection expressed in \eqref{weighted}.

\subsection{Well-posedness of SPDEs}\label{sec2dns}
To evolve the relative entropy between particle system \eqref{eqt:vortex} and the mean field limit equation \eqref{eqt:ns}, it is essential to ensure  suitable regularity of the solution to \eqref{eqt:ns}, as stated in Theorem \ref{thm:spde} and Lemma \ref{lem:reg}. To focus on the mean-field convergence, we will omit the proof of Theorem \ref{thm:spde} through  the standard martingale argument and present only the derivation of uniform estimates in Section \ref{sec:ns}. The proof for Lemma \ref{lem:reg} is also presented in Section \ref{sec:ns}. 

\begin{theorem}\label{thm:spde}
	Given  independent Brownian motions  $\{W^{k},k=1,\cdots ,d\}$ on  probability space $(\Omega,\mathcal{F},\mathbb{P}),$
	for each $  v_0\in H^{3}(\mathbb{T}^2)$,	there exists a unique probabilistically strong solution $(  v_t)_{t\in [0,T]}$ to \eqref{eqt:ns} in the sense of Definitions \ref{defnss} and \ref{unins}. 
\end{theorem}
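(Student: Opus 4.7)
The plan is a classical two-step program: first construct a probabilistically weak solution through a smoothing/Galerkin approximation and a martingale problem limit; then prove pathwise uniqueness; then invoke Yamada--Watanabe to upgrade to probabilistically strong existence and uniqueness. The focus of the work will be on obtaining the a priori bounds \eqref{regu-1}--\eqref{regu-2} uniformly along the approximating sequence, since those are what will carry over to the limit and are required later for the relative entropy argument.

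\textbf{Approximation and a priori estimates.} I would mollify the Biot--Savart kernel to $K_\varepsilon = K * \rho_\varepsilon$ and consider the approximating equation with drift $K_\varepsilon * v^\varepsilon \cdot \nabla v^\varepsilon$. For each $\varepsilon>0$ the drift is Lipschitz on bounded sets of $L^2$, so standard variational SPDE theory (or a fixed-point argument) gives a unique probabilistically strong solution $v^\varepsilon$. The core estimates come from converting Stratonovich to It\^o, which since each $\sigma_k$ is divergence-free produces the corrector $\tfrac12\sum_k(\sigma_k\cdot\nabla)^2 v^\varepsilon$; writing this as $\tfrac12\sum_k\mathrm{div}(\sigma_k\,\sigma_k\cdot\nabla v^\varepsilon)$ lets one check that, when tested against $\varphi(v^\varepsilon)$, it cancels exactly against the martingale quadratic variation up to controllable boundary-free remainders. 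In particular, because both $K_\varepsilon * v^\varepsilon$ and the $\sigma_k$ are divergence-free, the maximum principle yields the pointwise envelope in \eqref{regu-1}, and pairing with $v^\varepsilon$ yields
\begin{equation*}
\|v^\varepsilon_t\|_{L^2}^2 + 2\int_0^t\|\nabla v^\varepsilon_s\|_{L^2}^2\,\mathd s = \|v_0\|_{L^2}^2\quad \mathbb{P}\text{-a.s.}
\end{equation*}
For the higher-regularity bounds in \eqref{regu-2}, apply It\^o's formula to $\|\nabla^j v^\varepsilon_t\|_{L^2}^2$ for $j=2$ and $j=4$, then handle the commutator terms $[\nabla^{j},\sigma_k\cdot\nabla]$ by Leibniz and the smoothness of $\sigma_k$, and treat the nonlinearity using $K_\varepsilon * v^\varepsilon$ divergence-free together with Gagliardo--Nirenberg and the already-established $L^\infty$ bound; this should close a Gr\"onwall-type inequality independent of $\varepsilon$ thanks to the dissipative $\Delta$.

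\textbf{Compactness and existence.} With uniform bounds on $\mathbb{E}\sup_t\|v^\varepsilon_t\|_{H^2}^2$ and $\mathbb{E}\int_0^T\|v^\varepsilon_t\|_{H^4}^2\,\mathd t$, and a routine Kolmogorov-type time-increment estimate (of order $|t-s|^{1/2}$ from the Brownian integral, $|t-s|$ from the drift terms), the laws of $v^\varepsilon$ in $C([0,T];H^{-\delta})\cap L^2([0,T];H^{2-\delta})$ are tight. Applying Skorokhod's representation together with the Jakubowski/martingale-problem machinery (e.g.\ as in Flandoli--Galeati and references therein) produces, on a new probability space, a limit process $v$ together with Brownian motions $\widetilde W^k$ satisfying \eqref{eqt:ns} in the sense of Definition \ref{defnsw}; the bounds \eqref{regu-1}--\eqref{regu-2} pass to the limit by lower semicontinuity.

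\textbf{Pathwise uniqueness and Yamada--Watanabe.} For two probabilistically weak solutions $v,\tilde v$ on the same stochastic basis driven by the same $W$, set $w=v-\tilde v$ and apply It\^o to $\|w_t\|_{L^2}^2$. The Stratonovich transport term contributes, after Stratonovich-to-It\^o conversion, a second-order piece that cancels exactly with the quadratic variation $\sum_k\|\sigma_k\cdot\nabla w\|_{L^2}^2$ because $\mathrm{div}\,\sigma_k=0$. Splitting the difference of nonlinearities as $K*w\cdot\nabla v+K*\tilde v\cdot\nabla w$, the second term integrates to zero against $w$ (divergence-free transport), and the first is bounded by $\|K*w\|_{L^4}\|\nabla v\|_{L^4}\|w\|_{L^2}\lesssim \|w\|_{L^2}\|v\|_{H^2}\|w\|_{L^2}$ using the smoothing of $K$ and the Sobolev embedding $H^1\hookrightarrow L^4$ on $\mathbb{T}^2$. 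Gr\"onwall with the almost sure integrability $\int_0^T\|v_s\|_{H^2}^2\,\mathd s<\infty$ from \eqref{regu-2} gives $w\equiv 0$, proving pathwise uniqueness in the sense of Definition \ref{unins}. Combining weak existence with pathwise uniqueness, the Yamada--Watanabe theorem (in the form adapted to SPDEs) yields the probabilistically strong solution asserted in the theorem.

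\textbf{Main obstacle.} The delicate point I expect to dominate the work is the higher-order Sobolev estimate for $\nabla^4 v^\varepsilon$: there one must simultaneously control commutators with the transport noise (where the Stratonovich correction contributes genuine second-order terms that only cancel against the It\^o quadratic variation after careful integration by parts and use of $\mathrm{div}\,\sigma_k=0$), and the singular nonlinearity $K*v^\varepsilon\cdot\nabla v^\varepsilon$, all while keeping the right-hand side linear in $\|v^\varepsilon\|_{H^4}^2$ modulo lower-order terms so that Gr\"onwall applies.
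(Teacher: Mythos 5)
Your overall architecture (mollify $K$, obtain uniform $L^\infty$ and Sobolev bounds, pass to the limit via tightness and a martingale argument, prove pathwise uniqueness, close with Yamada--Watanabe) matches the paper's, but the two middle steps are realized quite differently. For the regularized model the paper does not invoke a variational/fixed-point SPDE framework plus a maximum principle; it builds $v^\varepsilon$ explicitly from a stochastic Lagrangian flow $p^\varepsilon_t$ in the spirit of Brz\'e\'zniak et al.: on an enlarged space carrying an auxiliary Brownian motion $B$, one sets $v^\varepsilon_t(x)=\int_{\Omega^0}v^\varepsilon_0\bigl((p^\varepsilon_t)^{-1}(x)\bigr)\,\mathd\mathbb{P}^0$. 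Because $p^\varepsilon_t$ is Lebesgue-measure-preserving, this single formula immediately delivers both $\|v^\varepsilon_t\|_{L^\infty}\leq\|v_0\|_{L^\infty}$ \emph{and} the strict lower bound $\inf v^\varepsilon_t\geq\inf v_0$, which the relative-entropy argument later requires; your ``maximum principle for the Stratonovich SPDE'' route should give the same conclusion but would need to be justified separately, whereas the Lagrangian representation makes it automatic. Second, in the $H^3$ and $H^4$ estimates the paper does not rely on Gagliardo--Nirenberg together with the $L^\infty$ bound alone; it systematically exploits the decomposition $K=\operatorname{div}V$ with $V\in L^\infty(\mathbb{T}^2)$, integrating by parts so that every derivative lands on $v^\varepsilon$ and the convolution kernel in the estimates is the bounded matrix $V$ rather than the singular $K$, yielding bounds of the form $\|V^\varepsilon_{ij}*f\|_{L^\infty}\lesssim\|f\|_{L^2}$. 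In both approaches the resulting differential inequality still contains products such as $\|v^\varepsilon\|_{\mathbb H^2}^4$ or $\|v^\varepsilon\|_{\mathbb H^3}^2\|v^\varepsilon\|_{\mathbb H^1}^2$ that must be absorbed using the lower-order estimates already proved, so neither method gives a literally linear Gr\"onwall right-hand side; the paper closes the loop via Lemmas \ref{lem:12high} and \ref{lem:23higher}. Finally, your $L^2$ pathwise-uniqueness argument using the smoothing of $K*w$ is a legitimate and arguably more direct alternative; the paper instead recycles the $H^{-(2+\eps)}$ machinery from Lemma \ref{lem:linearunique}, which it has to prove anyway for the conditional McKean--Vlasov identification.
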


\begin{lemma}\label{lem:reg}
	 For a probabilistically weak solution $(\Omega,\mathcal{G},(\mathcal{G}_t)_{t\in [0,T]},\mathbb{P},W,(  v_t)_{t\in [0,T]})$ to  \eqref{eqt:ns} in the sense of Definition \ref{defnsw}, it holds almost surely that
	 $  v\in C([0,T], H^{2}(\mathbb{T}^2)).$
	 In particular,  it holds almost surely that  for all $t\in [0,T],$
	  \begin{align}\label{eq:re1}
	 	\underset{x\in \mathbb{T}^2 }{\sup}  v_t\leq \underset{x\in \mathbb{T}^2 }{\sup}  v_0,  \quad  \underset{x\in \mathbb{T}^2 }{\inf}   v_t\geq  \underset{x\in \mathbb{T}^2 }{\inf}   v_0,
	 \end{align}
	and  it holds almost surely that  for all $t\in [0,T],x \in \mathbb{T}^2,$ 
	 \begin{align}\label{eq:re2}
	 	  v_t(x) -   v_0(x) =\int_0^t  \Delta  v_s(x)\mathd s -\int_0^t  K*  v_s(x) \cdot\nabla   v_s(x)\mathd s- \sum_{k=1}^{d}\int_0^t  \sigma_k(x)\cdot \nabla   v_s(x)\circ \mathd W_s^{k}.  
	 \end{align}
\end{lemma}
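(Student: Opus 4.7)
The plan is to upgrade the weak formulation in Definition~\ref{defnsw} by exploiting the additional almost-sure regularity $v\in L^\infty([0,T];H^2)\cap L^2([0,T];H^4)$ it provides. I would proceed in three stages: first establish strong continuity $v\in C([0,T];H^2)$; then use the two-dimensional Sobolev embedding $H^2(\mathbb{T}^2)\hookrightarrow C(\mathbb{T}^2)$ to obtain joint continuity in $(t,x)$; and finally integrate by parts in the weak formulation to recover the pointwise-in-$x$ Stratonovich equation \eqref{eq:re2}. Once joint continuity is in hand, \eqref{eq:re1} is immediate from the essential bounds \eqref{regu-1} built into Definition~\ref{defnsw}, because continuous functions attain their essential suprema and infima pointwise.

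For the first stage I would convert the Stratonovich noise to It\^o form. Since $\operatorname{div}\sigma_k=0$, the Stratonovich correction is a second-order operator of the form $\tfrac12\sum_k\sigma_k\cdot\nabla(\sigma_k\cdot\nabla v)$, whose bad part is precisely compensated by half of the martingale bracket produced by the same noise (the classical It\^o-Stratonovich cancellation for divergence-free transport noise). Formally testing the It\^o equation against $(1-\Delta)^2 v_t$ and applying It\^o's formula yields an energy identity for $\|v_t\|_{H^2}^2$: the transport nonlinearity $(K*v)\cdot\nabla v$ is controlled by Sobolev multiplication $H^3(\mathbb{T}^2)\cdot H^3(\mathbb{T}^2)\hookrightarrow H^3$, which holds in dimension two, while the surviving lower-order remainder of the Stratonovich correction is absorbed by the viscous dissipation $-2\|\nabla\Delta v_s\|_{L^2}^2$. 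This realises $\|v_t\|_{H^2}^2$ as a continuous semimartingale; combined with the weak continuity in $H^2$ inherited from the weak formulation (each $\langle\varphi,v_\cdot\rangle$, $\varphi\in C^\infty$, is a continuous semimartingale, and $\sup_t\|v_t\|_{H^2}<\infty$ a.s.), continuity of the norm upgrades weak to strong continuity in $H^2$.

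For the pointwise equation I would integrate by parts in Definition~\ref{defnsw}(3). Using the almost-sure $H^4$-regularity together with $\operatorname{div}(K*v_s)=0$ and $\operatorname{div}\sigma_k=0$:
\begin{align*}
\langle\Delta\varphi,v_s\rangle &= \langle\varphi,\Delta v_s\rangle,\\
\langle\nabla\varphi,(K*v_s)\,v_s\rangle &= -\langle\varphi,(K*v_s)\cdot\nabla v_s\rangle,\\
\langle\nabla\varphi,v_s\sigma_k\rangle &= -\langle\varphi,\sigma_k\cdot\nabla v_s\rangle.
\end{align*}
Taking $\varphi$ to be a smooth approximate identity centred at $x$ and passing to the limit using the continuity in $x$ of each resulting integrand — including the semimartingale $s\mapsto\sigma_k(x)\cdot\nabla v_s(x)$, which inherits its semimartingale structure from the $H^2$-semimartingale property of $v$ established in the previous stage — yields \eqref{eq:re2} pointwise in $x$ for all $t\in[0,T]$ almost surely. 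The pointwise bounds \eqref{eq:re1} then follow from $v\in C([0,T]\times\mathbb{T}^2)$ a.s.\ and \eqref{regu-1}.

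The main obstacle is the $H^2$-time continuity of the first stage: the drift $(K*v)\cdot\nabla v$ is a quadratic nonlinearity in $v$ and the Stratonovich correction is a non-skew second-order operator depending on the divergence-free fields $\sigma_k$, so abstract semilinear SPDE theory does not apply off the shelf. The delicate bookkeeping is the verification that the bad second-order part of the Stratonovich correction is killed exactly by the martingale bracket and that the surviving zeroth- and first-order remainders, together with the transport nonlinearity, are controlled by the dissipation $-2\|\nabla\Delta v_s\|_{L^2}^2$. Once this quantitative energy identity is in hand, the pathwise continuity in $H^2$, the pointwise equation, and the maximum/minimum principles follow by the standard arguments outlined above.
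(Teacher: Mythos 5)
Your overall architecture --- upgrade the weak formulation to strong $H^2$-continuity, pass to joint continuity in $(t,x)$ by Sobolev embedding, and then localize the weak formulation to recover the pointwise Stratonovich equation --- matches the paper's, but both the first and third stages differ in implementation, and the third has a genuine gap. In the first stage your plan is to derive an energy identity for $\|v_t\|_{H^2}^2$ and combine it with weak continuity; this is not circular, but making it rigorous requires the variational (Krylov--Rozovskii/Pardoux) It\^o formula, and the cancellation between the Stratonovich correction and the martingale bracket that you emphasise is not actually needed here. Because Definition \ref{defnsw} already gives $v\in L^2([0,T];H^4)$ a.s., each of $\Delta v$, $(K*v)\cdot\nabla v$, $\sigma_k\cdot\nabla(\sigma_k\cdot\nabla v)$ and $\sigma_k\cdot\nabla v$ lies in $L^2([0,T];H^2)$, so the paper simply builds the $H^2$-valued process $V_1(t)$ as a Bochner integral plus a Hilbert-space stochastic integral (hence automatically continuous in $H^2$) and identifies $V_1=v-v_0$ by testing against the Fourier basis; no energy identity or dissipation absorption is required.

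The genuine gap is in your third stage. After integrating by parts, for each fixed test function $\varphi_n$ the resulting identity holds for all $t$ outside a $\varphi_n$-dependent null set, and passing to the limit $\varphi_n\to\delta_x$ in the stochastic integral $\int_0^t\langle\nabla\varphi_n,v_s\sigma_k\rangle\,\mathd W^k_s$ gives convergence in probability (or $L^2(\Omega)$) for each fixed $t$ and $x$, not almost-sure convergence simultaneously in $(t,x)$. As written, your argument establishes \eqref{eq:re2} only for fixed $(t,x)$ up to a null set depending on $(t,x)$, not for all $(t,x)$ on a single full-measure event. The paper avoids this by first noting $v\in L^2([0,T];C^2)$ a.s. (so the integrands $\sigma_k(x)\cdot\nabla v_s(x)$ are pointwise meaningful for a.e. $s$) and then invoking Kunita's theory of stochastic integrals depending on a spatial parameter (Theorem 3.1.1 of \cite{kunita1997stochastic}) to construct a jointly continuous version $V_2(t,x)$ of the right-hand side, which is finally identified with $v_t(x)-v_0(x)$ via Fourier expansion and stochastic Fubini. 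Your argument needs a Kunita or Kolmogorov-continuity ingredient of this type to produce a single exceptional null set.
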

Our goal in this section is to establish a connection between the stochastic 2-dimensional Navier-Stokes  equation \eqref{eqt:ns} and the conditional Mckean-Vlasov equation \eqref{eqt:ncopy}. To accomplish this, we first define  the random measures on $\mathbb{T}^2,$   \begin{align}\label{barrhot}
	\bar{\rho}_t(\mathd x)\assign \frac{1}{\mathbb{E}[\xi_1]} v_t(\mathd x),
\end{align} for each $t\in[0,T]$
on the stochastic basis $(\Omega,\mathcal{F},(\mathcal{F}_t)_{t\in [0,T]},\mathbb{P})$ introduced in Assumption \ref{assumptiona}, 
where $\{\xi_i, i=1,\cdots,N\}$  denote the intensities in the weakly interacting system \eqref{eqt:vortex} with the same distribution $\mathcal{L}(\xi)$ introduced in   Assumption \ref{initial}  and $( v_t)_{t\in [0,T]}$ is  the  unique probabilistically strong solution  to  \eqref{eqt:ns} obtained in Theorem \ref{thm:spde} with the initial data $ v_0\assign\mathbb{E}[\xi_1]\bar{\rho_0}\in H^{3}(\mathbb{T}^2).$ Here $\bar{\rho_0}$ is the density of $\mathcal{L}(X(0))$ given in Assumption \ref{initial}.

 We then find that $(\bar{\rho}_t)_{t\in[0,T]}$ solve the linearized stochastic 2-dimensional Navier-Stokes  equation, having the same regularity as $( v_t)_{t\in [0,T]}.$ More precisely, $(\bar{\rho}_t)_{t\in[0,T]}$ is a continuous $L^2(\mathbb{T}^2)$ valued $(\mathcal{F}_t)_{t\in [0,T]}$ adapted stochastic process such that 
\begin{enumerate}
	\item It holds almost surely that for all $t\in [0,T]$, 
	\begin{align}\label{ptsobol1}
		\underset{x\in \mathbb{T}^2 }{\sup}\bar{\rho}_t\leq \underset{x\in \mathbb{T}^2 }{\sup}\bar{\rho}_0,\quad \underset{x\in \mathbb{T}^2 }{\inf} \bar{\rho}_t\geq  \underset{x\in \mathbb{T}^2 }{\inf} \bar{\rho}_0,\quad \|\bar{\rho}_t\|_{L^2}\leq \|\bar{\rho}_0\|_{L^2}.
	\end{align} 
	\item  It holds that 
	\begin{align}\label{ptsobol2}
		\mathbb{E} \int_0^T \|\bar{\rho}_t\|_{H^4}^2\mathd t <\infty,\quad \mathbb{E}[\sup_{t\in [0,T ]}\|\bar{\rho}_t\|_{H^2}^2]< \infty.
	\end{align}
	\item   For all $\varphi\in C^{\infty}(\mathbb{T}^2)$,  it holds almost surely that for every $t\in [0,T],$
	\begin{align}\label{ptsobol3}
		\left\langle  \varphi,\bar{\rho}_t\right\rangle  =  \left\langle \varphi, \bar{\rho}_0\right\rangle +\int_0^t \left\langle \Delta\varphi, \bar{\rho}_s\right\rangle \mathd s +\int_0^t\left\langle  \nabla \varphi,  K*  v_s \bar{\rho}_s\right\rangle \mathd s+ \sum_{k=1}^{d}\int_0^t  \left\langle \nabla \varphi,  \bar{\rho}_s\sigma_k\right\rangle\circ \mathd W_s^{k} .
	\end{align}  
\end{enumerate} 
In the subsequent Section \ref{sec3.1},  we will observe that the conditional distribution  $$\mathcal{L}((Y_i(t))| \mathcal{F}^{W}_T)(\mathd x),\quad i=1,\cdots,N,$$ where $Y_i, i=1,\cdots,N$ are the solutions to \eqref{eqt:ncopy}, also satisfying the linearized stochastic 2-dimensional Navier-Stokes  equation \eqref{ptsobol3}.
To establish the  connection  \eqref{weighted}, we will then rely on the following uniqueness of the linearized stochastic 2-dimensional Navier-Stokes equation \eqref{ptsobol3}.
 
\begin{lemma}\label{lem:linearunique}	Given $(\bar{\rho}_t)_{t\in [0,T]}$ as above, for each $\tilde{f}_0(x)\assign\bar{\rho}_0(x),$   if there exists a solution $\tilde{f}$ to \eqref{ptsobol3}  satisfying the following conditions for some $\eps\in (0,1)$
	\begin{enumerate}
		\item $\tilde{f}$ is a continuous  $H^{-(1+\eps)}$-valued $(\mathcal{F}_t)_{t\in [0,T]}$-adapted process. 
		\item There exists a positive deterministic constant $C$ such that $
		\sup_{t\in [0,T]}  \|\tilde{f}_t\|_{H^{-(1+\eps)}}^2 <C$ almost surely.
		\item For all $\varphi\in C^{\infty}(\mathbb{T}^2),$ it holds almost surely  that for every $t\in [0,T],$
		\begin{align}\label{1}
			\left\langle  \varphi, \tilde{f}_t\right\rangle  =  \left\langle \varphi, \tilde{f}_0\right\rangle +\int_0^t \left\langle \Delta\varphi, \tilde{f}_s\right\rangle \mathd s +\int_0^t\left\langle  \nabla \varphi,  K*  v_s \tilde{f}_s\right\rangle \mathd s+ \sum_{k=1}^{d}\int_0^t  \left\langle \nabla \varphi,  \tilde{f}_s\sigma_k\right\rangle\circ \mathd W_s^{k} .
		\end{align}  
	\end{enumerate}
	Then $(\tilde{f}_t)_{t\in [0,T]}$ coincides with $(\bar{\rho}_t)_{t\in [0,T]}$  almost surely, i.e. $\mathbb{P}(\|\bar{\rho}_t-\tilde{f}_t\|_{H^{-(2+  \eps)}}=0, \forall t\in [0,T])=1.$
\end{lemma}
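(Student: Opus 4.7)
Setting $g_t := \bar{\rho}_t - \tilde{f}_t$, we obtain an $(\mathcal{F}_t)$-adapted, continuous $H^{-(1+\eps)}$-valued process with $g_0 \equiv 0$ which satisfies the linearized SPDE \eqref{ptsobol3} in weak form against every $\varphi \in C^\infty(\mathbb{T}^2)$. The plan is to derive a pathwise energy estimate for $\|g_t\|_{H^{-(2+\eps)}}$ after spatial mollification; the one-derivative loss from $-(1+\eps)$ down to $-(2+\eps)$ accommodates the integrations by parts appearing in the transport and noise terms.

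First I would convert the Stratonovich noise to Itô form. Since each $\sigma_k$ is smooth and divergence-free (Assumption \ref{assumption2}), the Stratonovich--Itô correction contributes the extra drift $\tfrac{1}{2}\sum_k (\sigma_k\cdot\nabla)^2 g$ to the equation, leaving the Itô martingale $\sum_k \langle \nabla\varphi\cdot\sigma_k, g_t\rangle \mathd W^k_t$. Next I would regularize by a standard symmetric mollifier $\chi^\delta$: setting $g^\delta_t := g_t * \chi^\delta$ and testing the weak equation against $\chi^\delta(x-\cdot)$, each $g^\delta_t(x)$ satisfies pointwise the Itô SPDE
\begin{equation*}
\mathd g^\delta_t = \Bigl(\Delta g^\delta_t - K*v_t\cdot\nabla g^\delta_t + \tfrac{1}{2}\sum_k(\sigma_k\cdot\nabla)^2 g^\delta_t + r^{0,\delta}_t + \sum_k r^{k,\delta}_t\Bigr)\mathd t + \sum_k \bigl(\sigma_k\cdot\nabla g^\delta_t + R^{k,\delta}_t\bigr)\mathd W^k_t,
\end{equation*}
with commutator remainders $r^{0,\delta}_t, r^{k,\delta}_t, R^{k,\delta}_t$ that I expect to converge to zero in $L^2([0,T]; H^{-(2+\eps)})$ as $\delta\to 0$, by DiPerna--Lions-type estimates exploiting the $C^\infty$ smoothness of $\sigma_k$ and the $L^2_t H^5$-regularity of $K*v$ (inherited from $v\in L^2_t H^4$ via the one-derivative gain of the Biot--Savart kernel).

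Then I would apply Itô's formula to $\|g^\delta_t\|^2_{H^{-(2+\eps)}} = \|(1-\Delta)^{-(2+\eps)/2} g^\delta_t\|^2_{L^2}$. The viscous term contributes the non-positive dissipation $-2\|g^\delta_t\|^2_{H^{-(1+\eps)}}$; the transport term, after commuting $(1-\Delta)^{-(2+\eps)/2}$ through $K*v_t\cdot\nabla$ and using $\nabla\cdot K*v_t=0$, is bounded by $C(1+\|v_t\|^2_{H^4})\|g^\delta_t\|^2_{H^{-(2+\eps)}}$ (via $H^4\hookrightarrow C^2$ in 2D). Most importantly, the Itô correction $\tfrac{1}{2}\sum_k(\sigma_k\cdot\nabla)^2 g^\delta_t$ combined with the quadratic variation $\sum_k\|\sigma_k\cdot\nabla g^\delta_t\|^2_{H^{-(2+\eps)}}$ of the martingale part cancels exactly in $L^2$ norm thanks to the skew-adjointness of $\sigma_k\cdot\nabla$ (from $\nabla\cdot\sigma_k = 0$); in the negative Sobolev norm this cancellation holds up to a bounded pseudodifferential commutator with $(1-\Delta)^{-(2+\eps)/2}$, leaving only the harmless contribution $C_\sigma\|g^\delta_t\|^2_{H^{-(2+\eps)}}$.

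Finally I would pass $\delta\to 0$ using vanishing of the commutator remainders, and localize via the stopping times $\tau_n := \inf\{t: \int_0^t \|v_s\|^2_{H^4}\,\mathd s > n\}\wedge T$. Burkholder--Davis--Gundy on the martingale part yields the stochastic Gronwall inequality
\begin{equation*}
\mathbb{E}\sup_{s\in[0,t\wedge\tau_n]}\|g_s\|^2_{H^{-(2+\eps)}} \leq C\int_0^t \mathbb{E}\Bigl[(1+\|v_{s\wedge\tau_n}\|^2_{H^4})\|g_{s\wedge\tau_n}\|^2_{H^{-(2+\eps)}}\Bigr]\mathd s,
\end{equation*}
whence Gronwall forces $g\equiv 0$ up to each $\tau_n$; letting $\tau_n\uparrow T$ almost surely (which holds by $\int_0^T\|v_s\|^2_{H^4}\,\mathd s < \infty$ a.s.\ from Lemma \ref{lem:reg}) gives the claim $\mathbb{P}\bigl(\|g_t\|_{H^{-(2+\eps)}}=0,\ \forall t\in[0,T]\bigr)=1$. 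The hard part will be the commutator analysis in the negative Sobolev scale--both the DiPerna--Lions mollification errors and the pseudodifferential commutators with $(1-\Delta)^{-(2+\eps)/2}$--which must be controlled uniformly using only the limited regularity $g\in L^\infty_t H^{-(1+\eps)}$ together with the smoothness of $K*v$ and $\sigma_k$.
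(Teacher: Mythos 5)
Your proposal takes a genuinely different technical route from the paper while aiming at the same energy estimate in $H^{-(2+\eps)}$. The paper works directly in Fourier modes: it expands $|\langle\bar f_t,e_m\rangle|^2$ via It\^o's formula, sums the series $\sum_m (1+|m|^2)^{-(2+\eps)}|\langle\bar f_t,e_m\rangle|^2$, controls the transport term $K*v\,\bar f$ through the Besov product estimate $\|K*v\,\bar f\|_{H^{-(2+\eps)}}\lesssim\|v\|_{H^4}\|\bar f\|_{H^{-(2+\eps)}}$, and crucially cites a known Krylov estimate (from \cite{krylov2015filtering}) to absorb the combined Stratonovich correction $\langle\sigma_k\cdot\nabla(\sigma_k\cdot\nabla\bar f),\bar f\rangle_{H^{-(2+\eps)}}$ and quadratic variation $\|\sigma_k\cdot\nabla\bar f\|^2_{H^{-(2+\eps)}}$ into $C\|\bar f\|^2_{H^{-(2+\eps)}}$; it then closes with the stochastic Gronwall inequality from \cite{geiss2021sharp}, which handles the random, merely $L^1_t$-integrable drift coefficient $1+\|v_t\|^2_{H^4}$ without localization. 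Your route instead mollifies in space and controls DiPerna--Lions-type commutator remainders together with pseudodifferential commutators with $(1-\Delta)^{-(2+\eps)/2}$, and you localize with stopping times $\tau_n$ before invoking Burkholder--Davis--Gundy and ordinary Gronwall. Both approaches rest on the same structural pillars (viscosity dissipation of $-\|g\|^2_{H^{-(1+\eps)}}$, divergence-free cancellation, the almost-sure bound $\sup_t\|\tilde f_t\|_{H^{-(1+\eps)}}<C$ to make the stochastic integral a martingale, and $v\in L^2_t H^4$); the paper's route is shorter because the Krylov lemma packages your pseudodifferential commutator cancellation in one step, and the pathwise stochastic Gronwall inequality avoids the stopping-time argument. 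Your acknowledged ``hard part''---commutator estimates for a solution with only $H^{-(1+\eps)}$ regularity---is indeed the nontrivial content that the cited Krylov estimate already delivers for the noise part; for the transport part, you would need the $C^2$ bound on $K*v$ and symbol calculus exactly as you indicate, so your sketch is plausible but would require somewhat more work to flesh out than the paper's citation-based route.
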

\begin{proof}Let  $\bar{f}\assign \tilde{f}-\bar{\rho}$,  therefore  $\bar{f}$ solves \eqref{1} in  the distributional sense.
	
	Now we evolve $\|\tilde{f}\|_{H^{-(2+\eps)}},$ for $\eps\in (0,1)$.  
	More precisely, we evolve $|\<\bar{f}_t,e_{m}\>|^2$ for each $e_m$ of  the Fourier basis $\{e_m\assign e^{imx}, m\in \mathbb{Z}^2\}$,
	then sum up  $ (1+|m|^2)^{-2-\eps}  |\<\bar{f}_t,e_{m}\>|^2$ over   $m\in \mathbb{Z}^2$.
	By It\^o's formula, we have
	\begin{align}
		|\<\bar{f}_t,e_{m}\>|^2=\sum_{j=1}^{5}\int_{0}^{t}J_{j}^m(s)\mathd s +\sum_{j=6}^{7}\int_{0}^{t}J_{j}^m(s)\mathd W_s^{k}, \label{3.1-ito}
	\end{align}
where 
\begin{align*}
	J_1^m(s)=& \<\bar{f}_s,e_{-m}\>\< \bar{f}_s,\Delta e_{m}\>  -  \<\bar{f}_s,e_{-m}\>\< K*  v_s \bar{f}_s,\nabla e_{m}\> ,\nonumber
	\\J_2^m(s)=&\<\bar{f}_s,e_{m}\>\< \bar{f}_s,\Delta e_{-m}\> ,
	- \<\bar{f}_s,e_{m}\>\< K*  v_s \bar{f}_s,\nabla e_{-m}\> , \nonumber
	\\J_3^m(s)=&\sum_{k=1}^{d} | {\<\sigma_k\cdot \nabla\bar{f}_s,e_{m}\>} |^2, \nonumber
	\\J_4^m(s)=&\frac{1}{2}\sum_{k=1}^{d}\<\bar{f}_s,e_{-m}\> \left\langle \sigma_k \cdot \nabla \(\sigma_k \cdot \nabla \bar{f}_s\), e_m\right\rangle , \nonumber
	\\J_5^m(s)=&\frac{1}{2}\sum_{k=1}^{d}\<\bar{f}_s,e_{m}\> \left\langle \sigma_k \cdot \nabla \(\sigma_k \cdot \nabla \bar{f}_s\), e_{-m}\right\rangle , \nonumber
	\\J_6^m(s)=&-\sum_{k=1}^{d}\<\bar{f}_s,e_{-m}\> \left\langle \sigma_k \cdot \nabla \bar{f}_s, e_m \right\rangle , \nonumber
	\\J_7^m(s)=&-\sum_{k=1}^{d}\<\bar{f}_s,e_{m}\> \left\langle \sigma_k \cdot \nabla \bar{f}_s, e_{-m} \right\rangle. \nonumber
\end{align*}

Here we expressed the Stratonovich integral into It\^o's form.  $J_4^m$ and $J_5^m$ represent the terms derived from the transformation of the Stratonovich integral.

We then find 

	 \begin{align*}
	 	&\sum_{m\in \mathbb{Z}^2}\frac{1}{(1+|m|^2)^{2+\eps}}J_1^m(s)
			\\&=\sum_{m\in \mathbb{Z}^2}\frac{1}{(1+|m|^2)^{2+\eps}} \<\bar{f}_s,e_{-m}\>\<\bar{f}_s,\Delta e_{m}\>- \sum_{m\in \mathbb{Z}^2}\frac{1}{(1+|m|^2)^{2+\eps}} \<\bar{f}_s,e_{-m}\>\< K*  v_s \bar{f}_s,\nabla e_{m}\>
			\\&=-\sum_{m\in \mathbb{Z}^2}\frac{|m|^2}{(1+|m|^2)^{2+\eps}} |\<\bar{f}_s,e_{m}\>|^2- \sum_{m\in \mathbb{Z}^2}\frac{im}{(1+|m|^2)^{2+\eps}} \<\bar{f}_t,e_{-m}\>\< K*  v_s \bar{f}_s,e_{m}\>.
		\end{align*}
	Furthermore, we deduce from the Lemma \ref{lemma triebel}-(3) that
	\begin{align*}
		\|K*  v\|_{B^{2+2\eps}_{\infty,2}}= \|\nabla^{\perp} (-\Delta)^{-1}  v\|_{B^{2+2\eps}_{\infty,2}}
		\lesssim\|  v\|_{H^4}.
	\end{align*}
Then we have 
 \begin{align}\label{unilineaK}
 		\|K*  v\bar{f}\|_{H^{-(2+\eps)}}\lesssim& \|K*  v\|_{B^{2+2\eps}_{\infty,2}}\|\bar{f}\|_{H^{-(2+\eps)}}\lesssim \|  v\|_{H^4}\|\bar{f}\|_{H^{-(2+\eps)}}.
 \end{align}
	by the Lemma \ref{lemma triebel}-(1).
	Therefore, we  obtain
	\begin{align}
	&\sum_{m\in \mathbb{Z}^2}\frac{1}{(1+|m|^2)^{2+\eps}}J_1^m(s)  \nonumber
	\\\leq & -\sum_{m\in \mathbb{Z}^2}\frac{|m|^2}{(1+|m|^2)^{2+\eps}} |\<\bar{f}_s,e_{m}\>|^2+\sum_{m\in \mathbb{Z}^2}\frac{1 }{2(1+|m|^2)^{2+\eps}}\left( |m|^2 |\<\bar{f}_t,e_{-m}\>|^2+  |\< K*  v_s \bar{f}_s,e_{m}\>|^2\right)  \nonumber
		\\\leq & -\frac{1}{2}\|\bar{f}_s\|_{H^{-(1+\eps)}}^2 +\frac{1}{2}\|\bar{f}_s\|_{H^{-(2+\eps)}}^2 
	+	\frac{1}{2}\|K*  v_s\bar{f}_s\|_{H^{-(2+\eps)}}^2 \nonumber
	\\\leq & -\frac{1}{2}\|\bar{f}_s\|_{H^{-(1+\eps)}}^2  
	+C\(1+\|  v_s\|_{H^4}\)\|\bar{f}\|_{H^{-(2+\eps)}}, \nonumber
	\end{align}
where we use Cauchy-Schwarz inequality to get the first inequality and  \eqref{unilineaK} to get the last inequality. Here $C$ is a positive universal constant.

Similarly,  we have 
\begin{align*}
	\sum_{m\in \mathbb{Z}^2}\frac{1}{(1+|m|^2)^{2+\eps}}J_2^m(s)	\leq  -\frac{1}{2}\|\bar{f}_s\|_{H^{-(1+\eps)}}^2 + C  \(1+	\|  v_s\|_{H^4}\)\|\bar{f}\|_{H^{-(2+\eps)}}.
\end{align*}
We then have 
\begin{align}
	\sum_{m\in \mathbb{Z}^2}\frac{1}{(1+|m|^2)^{2+\eps}}(J_1^m(s)+J_2^m(s))\leq-\|\bar{f}_s\|_{H^{-(1+\eps)}}^2 + C  \(1+	\|  v_s\|_{H^4}\)\|\bar{f}\|_{H^{-(2+\eps)}},\label{3.1-est1}
\end{align}
where $C$ is a positive universal constant.

	On the other hand, we find that 
	\begin{align*}
	\sum_{m\in \mathbb{Z}^2}\frac{1}{(1+|m|^2)^{2+\eps}}J_3^m(s)=& \sum_{k=1}^{d}\sum_{m\in \mathbb{Z}^2} \frac{1}{(1+|m|^2)^{2+\eps}}    | {\<\sigma_k\cdot \nabla\bar{f}_s,e_{m}\>} |^2
		\\=&\sum_{k=1}^{d} \|\sigma_k \cdot \nabla \bar{f}_s\|_{H^{-(2+\eps)}}^2.
	\end{align*}
	Furthermore,  
	\begin{align*}
	\sum_{m\in \mathbb{Z}^2}\frac{1}{(1+|m|^2)^{2+\eps}}J_4^m(s)=&\frac{1}{2}\sum_{k=1}^{d}\sum_{m\in \mathbb{Z}^2} \frac{1}{(1+|m|^2)^{2+\eps}}    \<\bar{f}_s,e_{-m}\> \left\langle \sigma_k \cdot \nabla \(\sigma_k \cdot \nabla \bar{f}_s\), e_m\right\rangle
	\\=&\frac{1}{2}\sum_{k=1}^{d} \left\langle \sigma_k \cdot \nabla \(\sigma_k \cdot \nabla \bar{f}_s\),\bar{f}_s \right\rangle_{H^{-(2+\eps)}}.
	\end{align*}
Similarly, we have
	\begin{align*}
	\sum_{m\in \mathbb{Z}^2}\frac{1}{(1+|m|^2)^{2+\eps}}J_5^m(s)=&\frac{1}{2}\sum_{k=1}^{d} \left\langle \sigma_k \cdot \nabla \(\sigma_k \cdot \nabla \bar{f}_s\),\bar{f}_s \right\rangle_{H^{-(2+\eps)}}.
\end{align*}
Using  the estimate (2.4) in	\cite[Lemma 2.3 ]{krylov2015filtering}   (with $n$ and $\sigma(t,x,  v)$ in	\cite[Lemma 2.3 ]{krylov2015filtering}   defined as $-(2+\eps)$ and $\sigma_k(x)$ respectively),  we have 
	\begin{align}
	&\sum_{m\in \mathbb{Z}^2}\frac{1}{(1+|m|^2)^{2+\eps}}\(J_3^m(s)+J_4^m(s)+J_5^m(s)\)\\=&\sum_{k=1}^{d} \left( 	\|\sigma_k \cdot \nabla \bar{f}_s\|_{H^{-(2+\eps)}}^2+ \left\langle \sigma_k \cdot \nabla \(\sigma_k \cdot \nabla \bar{f}_s\),\bar{f}_s \right\rangle_{H^{-(2+\eps)}}  \right) \nonumber
		 \\ \leq &C
		 \|\bar{f}_s\|_{H^{-(2+\eps)}}^2, \label{est345}
	\end{align}
where $C$   depends on $\{\sigma_k, k=1,\cdots ,d\}$. 

By Cauchy- Schwarz inequality, we have
\begin{align*}
	&\int_{0}^{T}\bigg(\sum_{m\in \mathbb{Z}^2}\frac{1}{(1+|m|^2)^{2+\eps}}J_6^m(s)\bigg)^2\mathd s\\=&\int_{0}^{T}\bigg(\sum_{k=1}^{d}\sum_{m\in \mathbb{Z}^2}\frac{1}{(1+|m|^2)^{2+\eps}}\<\bar{f}_s,e_{-m}\> \left\langle \sigma_k \cdot \nabla \bar{f}_s, e_m \right\rangle \bigg)^2\mathd s\\\lesssim&\int_{0}^{T}\biggl(\sum_{k=1}^{d}\sum_{m\in \mathbb{Z}^2}\frac{1}{(1+|m|^2)^{2+\eps}}\<\bar{f}_s,e_{-m}\> ^2\biggr)\biggl(\sum_{k=1}^{d}\sum_{m\in \mathbb{Z}^2}\frac{1}{(1+|m|^2)^{2+\eps}}\left\langle \sigma_k \cdot \nabla \bar{f}_s, e_m \right\rangle^2\biggr)\mathd s
	\\\lesssim&\int_{0}^{T}	\|\bar{f}_s\|_{H^{-(2+\eps)}}^2\|\sigma_k\cdot\nabla\bar{f}_s\|_{H^{-(2+\eps)}}^2\mathd s
	\\ \lesssim& \sup_{t\in [0,T]}  \|\bar{f}_t\|_{H^{-(1+\eps)}}^4.
\end{align*}
Similarly, we have 
\begin{align*}
	&\int_{0}^{T}\bigg(\sum_{m\in \mathbb{Z}^2}\frac{1}{(1+|m|^2)^{2+\eps}}J_6^m(s)\bigg)^2 \mathd s\lesssim \sup_{t\in [0,T]}  \|\bar{f}_t\|_{H^{-(1+\eps)}}^4.
\end{align*}
We then set $M_t \assign \int_{0}^{t}\sum_{m\in \mathbb{Z}^2}\frac{1}{(1+|m|^2)^{2+\eps}}\(J_6^m(s)+J_7^m(s)\)\mathd W^k_s,$ which is a continuous martingale.

Therefore, by merging  \eqref{3.1-ito}, \eqref{3.1-est1}, and \eqref{est345},  we arrive at 
\begin{align*}
	\|\bar{f}_t\|_{H^{-(2+\eps)}}^2\lesssim \int_0^t \(1+\|  v_s\|_{H^{4}}^2\)\|\bar{f}_s\|_{H^{-(2+\eps)}}^2\mathd s + M_t.
\end{align*}
 
Then   applying  the stochastic Gronwall's inequality (see \cite[Corollary 5.4]{geiss2021sharp} 
leads to the conclusion that $\sup_{t\in [0,T]}\|\bar{f}\|_{H^{-(2+\eps)}}=0$ almost surely. The result then follows. 
\end{proof}

\subsection{Conditional Mckean-Vlasov equation}\label{sec3.1}

In this section, we will consider the following  stochastic differential equations,  with the same initial value $X_i(0)$ as the weakly interacting system \eqref{eqt:vortex}, 
\begin{equation}\label{ncopy}
	Y_i(t)=X_i(0)+\int_0^tK*  v_s(Y_i(s))\mathd s +\sqrt{2}B_i(t)+\sum_{k=1}^{d}\int_0^t\sigma_k(Y_i(s))\circ\mathd W^k_s,
\end{equation} 
where $ i\in\{1,\cdots, N\}$ and $(  v_t)_{t\in [0,T]}$ is the probabilistically strong solution to the stochastic 2-dimensional Navier-Stokes  equation \eqref{eqt:ns} with the initial data \eqref{initial data}. 
Notably,  \eqref{ncopy} is a stochastic differential equation featuring the drift term with random Lipschitz coefficient depending on time $t.$
We will use  stopping times and the regularity of $  v$, i.e. $\mathbb{E} \int_0^T \|  v_t\|_{H^4}^2\mathd t <\infty $, to obtain well-posedness of \eqref{ncopy} through a standard contraction argument.
\begin{lemma}\label{lem:uniquesde}
For each $ i\in\{1,\cdots, N\}$,  given the independent 1-dimensional Brownian motions $\{W^{k},k=1,\cdots, d\}$  and 2-dimensional Brownian motions $\{B_i, i=1,\cdots,N\}$ on  probability space $(\Omega,\mathcal{F},(\mathcal{F}_t)_{t\in [0,T]},\mathbb{P})$, there exists a unique probabilistically strong solution $Y_i$ to  \eqref{ncopy} in the sense of Definition \ref{defpart}. 
\end{lemma}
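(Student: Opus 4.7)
The plan is to convert the equation to It\^o form, establish a random but square-integrable-in-time Lipschitz bound on the drift, and then run a standard Picard iteration localized by stopping times. First, since each $\sigma_k$ is smooth and divergence-free (Assumption \ref{assumption2}), the Stratonovich corrector $\tfrac{1}{2}\sum_{k=1}^d (\sigma_k\cdot\nabla)\sigma_k$ is a smooth, bounded vector field, and the equation can be rewritten as the It\^o SDE
\begin{align*}
Y_i(t)=X_i(0)+\int_0^t b_s(Y_i(s))\mathd s+\sqrt{2}B_i(t)+\sum_{k=1}^d\int_0^t\sigma_k(Y_i(s))\mathd W^k_s,
\end{align*}
with $b_s(x)\assign K*v_s(x)+\tfrac12\sum_{k=1}^d(\sigma_k\cdot\nabla)\sigma_k(x)$, deterministic globally Lipschitz diffusion coefficients, and drift depending on the random function $v$.

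Second, by the same Besov embedding used in Lemma \ref{lem:linearunique},
$\|\nabla(K*v_s)\|_{L^\infty(\mathbb{T}^2)}\lesssim\|K*v_s\|_{B^{2+2\eps}_{\infty,2}}\lesssim\|v_s\|_{H^4}$,
so $b_s$ is Lipschitz in $x$ with (random) constant $L(s)\leq C(\|v_s\|_{H^4}+1)$. By Theorem \ref{thm:spde} and \eqref{regu-2}, $\int_0^T L(s)^2\mathd s<\infty$ almost surely, i.e.\ the drift has a pathwise square-integrable-in-time Lipschitz constant.

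Third, I would introduce the $(\mathcal{F}_t^W)$-stopping times $\tau_M\assign\inf\bigl\{t\in[0,T]:\int_0^t L(s)^2\mathd s\geq M\bigr\}\wedge T$, which increase to $T$ almost surely. On the Banach space of continuous $(\mathcal{F}_t)$-adapted processes on $[0,\tau_M]$ equipped with $\|Z\|\assign\bigl(\mathbb{E}\sup_{t\leq\tau_M}|Z_t|^2\bigr)^{1/2}$, let $\Phi$ denote the map sending $Z$ to the right-hand side of the It\^o SDE above. Burkholder-Davis-Gundy, Cauchy-Schwarz and the Lipschitz bound give, for any $u\in[0,T]$,
\begin{align*}
\mathbb{E}\sup_{t\leq u\wedge\tau_M}|\Phi(Z^1)_t-\Phi(Z^2)_t|^2\leq C\,\mathbb{E}\int_0^{u\wedge\tau_M}\bigl(L(s)^2+C_\sigma\bigr)|Z^1_s-Z^2_s|^2\mathd s.
\end{align*}
Partitioning $[0,\tau_M]$ into finitely many sub-intervals on each of which $\int(L^2+C_\sigma)\mathd s$ is small enough turns $\Phi$ into a contraction on each piece; gluing the fixed points yields a solution on $[0,\tau_M]$, and sending $M\to\infty$ produces a global probabilistically strong solution. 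Pathwise uniqueness follows from the same estimate applied directly to the difference of two solutions, followed by stochastic Gronwall on $[0,\tau_M]$ and $\tau_M\uparrow T$.

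The main obstacle is exactly this random, time-dependent Lipschitz constant of the drift, which is only square-integrable in time rather than bounded; this precludes a single contraction on $[0,T]$ and forces the stopping-time localization together with the inductive gluing of contraction fixed points, which is the mechanism that brings classical Picard theory to bear in this random-coefficient setting.
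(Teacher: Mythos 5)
Your proposal is correct and follows essentially the same strategy as the paper's proof: convert to Itô form, localize via stopping times that cap $\int_0^t\|v_s\|_{H^4}^2\mathd s$ (equivalently $\int L(s)^2\mathd s$), run a Banach fixed-point argument for the Picard map, and let the stopping times increase to $T$. The only cosmetic difference is that the paper cuts off the drift integral at $\tau_R$ so the capped Lipschitz mass is deterministically $\leq R$ and one can then iterate over deterministic short time steps, whereas you propose partitioning $[0,\tau_M]$ at random stopping times where the accumulated Lipschitz mass is small — both mechanisms deliver the contraction.
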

\begin{proof}	The result would follow by a standard  contraction argument, c.f. \cite{cao2003type}. For fixed $i\in\{1,\cdots,N\},$ let $S_i$ be the closed set (with respect to the topology of $L^2(\Omega;C([0,T];\mathbb{T}^2))$) $$\left\{ Y\in L^2(\Omega;C([0,T];\mathbb{T}^2));  Y \text{ is adapted to the filtration }(\mathcal{G}^i_t)_{t\in [0,T]}  \right\}.$$
	Here $(\mathcal{G}^i_t)_{t\in [0,T]}$ is the normal filtration generated by $X_i(0), B_i, \text{and }\{W^{k},k=1,\cdots,d\}.$
Define  the following $(\mathcal{G}^i_t)_{t\in[0,T]}$-stopping times $\tau_R\assign\inf\{t\in (0,T], \int_0^t \|  v_s\|_{H^4}^2 \mathd s >R\}\wedge T,$ $R>0$.  
Furthermore, we define the  map $\Phi_R^i:S_i\rightarrow S_i$ by 
	\begin{align*}
		\Phi_R^i(Y)(t)\assign X_{i}(0)+\int_0^{t\wedge\tau_R}K*{  v}_s(Y(s))\mathd s+\sqrt{2}B_i(t)+\sum_{k=1}^{d}\int_0^t\sigma_k(Y(s))\circ\mathd W^k_s.
	\end{align*}

Next we rewrite $\Phi_R(Y_i)$ in It\^o's formulation, 
\begin{align*}
	\Phi_R^i(Y)(t)=&X_{i}(0)+ \int_0^{t\wedge \tau_R}K*{  v}_s(Y(s))\mathd s+\sqrt{2}B_i(t)+\sum_{k=1}^{d}\int_0^t\sigma_k(Y(s))\cdot\mathd W^k_s\nonumber
	\\&+\frac{1}{2}\sum_{k=1}^{d}\int_0^t \sigma_k(Y(s))\cdot \nabla \sigma_k(Y(s))\mathd s.
\end{align*}
Given $Y^1$ and $Y^2$ in $S_i$, we have 
\begin{align*}
	&\mathbb{E} \sup_{s\in [0,t]} \left| \Phi_R^i(Y^{1})(s)-\Phi_R^i(Y^2)(s)\right| ^2
	\\\lesssim & t\mathbb{E}\int_0^{t\wedge\tau_R} \left| K*{  v}_s(Y^{1}(s))-K*{  v}_s(Y^{2}(s))\right|^2 \mathd s
	\\&+ t\mathbb{E}\int_0^t\left| \sum_{k=1}^{d} \sigma_k(Y^{1}(s))\cdot \nabla \sigma_k(Y^{1}(s))-\sigma_k(Y^{2}(s))\cdot \nabla \sigma_k(Y^{2}(s)) \right|^2 \mathd s
	\\&+  \mathbb{E} \sup_{s\in [0,t]}\left| \sum_{k=1}^{d}\int_0^s\sigma_k(Y^{1}(r))-\sigma_k(Y^{2}(r))\cdot\mathd W^k_r\right| ^2
	\\\lesssim &t\mathbb{E}\int_0^{t\wedge\tau_R}\|\nabla (K*{  v}_s)\|_{L^{\infty}}^2|Y^{1}(s)-Y^{2}(s)|^2\mathd s+t\mathbb{E}\int_0^t  \left( \sum_{k=1}^{d}\|\sigma_k\|_{C^2}^2 \right) ^2 |Y^{1}(s)-Y^{2}(s)|^2 \mathd s
		\\&+ \left( \sum_{k=1}^{d}\|\sigma_k\|_{C^2}^2\right) \mathbb{E} \int_0^t|Y^{1}(s)-Y^{2}(s)|^2 \mathd s
			\\\lesssim &\mathbb{E} \int_0^t \(1+ t\| {  v}_s\|_{H^4}^21_{\{s<\tau_R\}}\)|Y^{1}(s)-Y^{2}(s)|^2 \mathd s,
\end{align*}
where we used  Assumption \ref{assumption2}  $\sigma_k\in C^{\infty},k=1,\cdots,d,$ and Burkholder-Davis-Gundy inequality to get the second inequality, and use
\begin{equation}\label{eqt:conest}
	\|\nabla (K*{  v})\|_{L^{\infty}}\lesssim \|K \|_{L^1}\|\nabla   v\|_{L^{\infty}}\lesssim \|  v\|_{H^4}
\end{equation} to get the final inequality. 
We  employed  Sobolev embedding theorem to establish the second inequality in \eqref{eqt:conest}. Furthermore, we have 
\begin{align*}
	&\mathbb{E} \sup_{s\in [0,t]} \left| \Phi_R^i(Y^{1})(s)-\Phi_R^i(Y^2)(s)\right| ^2
	\\\lesssim&\mathbb{E}\left[ \left( \sup_{s\in [0,t]}|Y^{1}(s)-Y^{2}(s)|^2\right)   \left(t+t \int_0^{t\wedge\tau_R}\| {  v}_s\|_{H^4}^2\mathd s\right) \right] 
	\\\lesssim& t(1+R)\mathbb{E} \left(\sup_{s\in [0,t]}|Y^{1}(s)-Y^{2}(s)|^2\right),
\end{align*}
where the last inequality follows by the definition of $\tau_R$.
Therefore, there exists a sufficiently small $t_0\leq T$ such that $\Phi_R^i$ is a contraction map on $L^2(\Omega;C([0,t_0];\mathbb{T}^2)).$ By the fixed point theorem and applying this argument for finite times, for every $i\in\{1,\cdots, N\},$ we find a unique $Y^R_i\in S_i$ such that $\Phi_R(Y^R_i)= Y^R_i.$  That is, 
\begin{align*}
	Y^R_i(t)= X_{i}(0)+\int_0^{t\wedge\tau_R}K*{  v}_s(Y^R_i(s))\mathd s+\sqrt{2}B_i(t)+\sum_{k=1}^{d}\int_0^t\sigma_k(Y^R_i(s))\circ\mathd W^k_s.
\end{align*}
Notice that $Y^R_i$ solves \eqref{ncopy} when $t< \tau_R$. Since $\tau_R\uparrow T$ almost surely as $R$ goes to infinity by $\mathbb{E} \int_0^T \|  v_t\|_{H^4}^2\mathd t <\infty $, we find   $Y_i\assign \lim_{R\rightarrow\infty}Y^R_i, $ solves the  stochastic differential equation \eqref{ncopy}. The uniqueness follows by the fact that every solution to \eqref{ncopy} coincides with  $Y^R_i$ before $\tau_R$. 

\end{proof}

Subsequently, we establish a connection between the SDE \eqref{ncopy} and the stochastic 2-dimensional Navier-Stokes equation \eqref{eqt:ns} via the stochastic Fokker-Planck equation, which is represented by the linearized stochastic 2-dimensional Navier-Stokes equation \eqref{ptsobol3}. More precisely, we have
\begin{align*}
	\mathbb{P}(\|  v_t-\tilde{  v}_t\|_{H^{-(2+  \eps)}}=0, \forall t\in [0,T])=1, \quad \forall   \eps\in(\frac{1}{2},1),
\end{align*}
where the random distribution on torus $\mathbb{T}^2,$ $\tilde{  v}_t(\mathd x_i)(\omega)$ is defined by
\begin{align}\label{eqt:tildev}
	\tilde{  v}_t(\mathd x_i)(\omega)\assign&\mathbb{E}[\xi_1]\tilde{\rho}_t(\mathd x_i)(\omega),\quad \omega\in \Omega.
\end{align} Here the random distribution $ \tilde{\rho}_t(\mathd x_i)$ on torus $\mathbb{T}^2,$ is defined as follows:
\begin{align}\label{def-rhomec}
	 \tilde{\rho}_t(\mathd x_i)(\omega)\assign\mathcal{L}((Y_i(t))| \mathcal{F}^{W}_T)(\mathd x_i)(\omega),\quad x_i\in\mathbb{T}^2,\omega\in \Omega,
\end{align}
 which describes  the trajectory  $Y_i(t)$ of  the vortice described by  \eqref{ncopy}  given  enviornmental noise  $\mathcal{F}^{W}_T.$ 

We thus have that  the stochastic differential equation \eqref{ncopy} can be interpreted as  a conditional Mckean Vlasov equation. 

\begin{lemma}\label{lem:unirho}
For each $i\in \{1,\cdots ,N\},$ the  processes of conditional distributions  $\tilde{\rho}_t$ defined in \eqref{def-rhomec} coincides with $\bar{\rho}_t=\frac{1}{\mathbb{E}[\xi_1]}  v_t$, i.e. $$\mathbb{P}(\|\bar{\rho}_t-\tilde{\rho}_t\|_{H^{-(2+  \eps)}}=0, \forall t\in [0,T])=1, \quad \forall   \eps\in(\frac{1}{2},1),$$
where $(  v_t)_{t\in [0,T]}$ is the probabilistically strong solution to  the stochastic 2-dimensional Navier-Stokes  equation \eqref{eqt:ns}.
In particular, $\tilde{  v}_t(\mathd x_i)$ defined in \eqref{eqt:tildev} coincides with $v_t(\mathd x_i),$
i.e. $$\mathbb{P}(\|  v_t-\tilde{  v}_t\|_{H^{-(2+  \eps)}}=0, \forall t\in [0,T])=1, \quad \forall  \eps\in(\frac{1}{2},1).$$
\end{lemma}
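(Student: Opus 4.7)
The plan is to verify that $\tilde{\rho}_t = \mathcal{L}(Y_i(t)\,|\,\mathcal{F}^W_T)$ is a solution of the linearized stochastic 2D Navier--Stokes equation \eqref{ptsobol3} having just enough regularity to invoke the uniqueness statement of Lemma \ref{lem:linearunique}, which, since $\tilde{\rho}_0 = \bar\rho_0$ by the prescribed initial data, will force $\tilde{\rho}_t = \bar{\rho}_t$ in $H^{-(2+\eps)}$ almost surely. The scaling by $\mathbb{E}[\xi_1]$ then yields the statement for $\tilde v_t$ and $v_t$, using the independence $\xi_i \perp \mathcal{F}^W_T$ imposed in Assumption \ref{assumptiona}.

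\textbf{Derivation of the SPDE for $\tilde\rho_t$.} Fix $i$ and $\varphi\in C^\infty(\mathbb{T}^2)$. I would first rewrite \eqref{ncopy} in It\^o form, picking up the usual Stratonovich correction $\frac12\sum_k \sigma_k\cdot\nabla\sigma_k$, and then apply It\^o's formula to $\varphi(Y_i(t))$. This produces a drift involving $\Delta\varphi + K*v_s\cdot\nabla\varphi + \frac12\sum_k(\sigma_k\cdot\nabla)^2\varphi$, a martingale $\sqrt2\int_0^t\nabla\varphi(Y_i)\cdot \mathd B_i$, and a stochastic integral $\sum_k\int_0^t\sigma_k\cdot\nabla\varphi(Y_i)\,\mathd W^k_s$. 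Taking conditional expectation with respect to $\mathcal{F}^W_T$ and using Lemma \ref{lem:con3} to commute conditional expectation with the deterministic $\mathd s$ integrals, together with Lemma \ref{lem:con1} to kill the $B_i$ integral and to convert the $W^k$ integral into $\int_0^t \mathbb{E}[\sigma_k\cdot\nabla\varphi(Y_i)\,|\,\mathcal{F}^W_s]\,\mathd W^k_s$, gives
\begin{align*}
\langle\varphi,\tilde\rho_t\rangle
&= \langle\varphi,\tilde\rho_0\rangle
+ \int_0^t\langle\Delta\varphi + K*v_s\cdot\nabla\varphi + \tfrac12{\textstyle\sum_k}(\sigma_k\cdot\nabla)^2\varphi,\tilde\rho_s\rangle\,\mathd s \\
&\quad + \sum_k\int_0^t\langle\sigma_k\cdot\nabla\varphi,\tilde\rho_s\rangle\,\mathd W^k_s.
\end{align*}
Using the divergence-free property of each $\sigma_k$ from Assumption \ref{assumption2}, integration by parts transfers derivatives off $\varphi$ and the It\^o formulation above matches the It\^o form of \eqref{ptsobol3} after converting the Stratonovich noise there back to It\^o (the $\frac12\sum_k(\sigma_k\cdot\nabla)^2$ correction appears on both sides and cancels).

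\textbf{Regularity of $\tilde\rho_t$.} I would then show $\tilde\rho_t$ satisfies the hypotheses of Lemma \ref{lem:linearunique} for some $\eps\in(\frac12,1)$. Since $\tilde\rho_t$ is a probability measure on $\mathbb{T}^2$, the bound $\|\tilde\rho_t\|_{H^{-(1+\eps)}}\leq C$ follows from the Besov embedding $L^1 \hookrightarrow B^{-(1+\eps)}_{2,2}=H^{-(1+\eps)}$ for $\eps$ large enough (specifically $\eps > 0$ is enough here thanks to 2D Sobolev embedding), giving a deterministic a.s.\ bound uniform in $t$. Continuity in $H^{-(1+\eps)}$ follows from the SDE display above tested against a countable dense set of smooth $\varphi$, since the drift and noise expressions are a.s.\ continuous in $t$. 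Adaptedness of $\tilde\rho_t$ to $(\mathcal{F}_t)$ is immediate, as $\tilde\rho_t = \mathcal{L}(Y_i(t)\,|\,\mathcal{F}^W_t)$ by Lemma \ref{lem:con3} and $Y_i(t)$ is $\mathcal{F}_t$-measurable.

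\textbf{Conclusion and main obstacle.} With these two ingredients, Lemma \ref{lem:linearunique} applied to $\tilde f = \tilde\rho$ yields $\mathbb{P}(\|\bar\rho_t-\tilde\rho_t\|_{H^{-(2+\eps)}}=0,\forall t\in[0,T])=1$, and multiplying through by $\mathbb{E}[\xi_1]$ gives the statement for $\tilde v_t$ and $v_t$; the identity $\tilde v_t = \mathbb{E}[\xi_1]\mathcal{L}(Y_i(t)|\mathcal{F}^W_T)$ uses $\xi_i\perp\mathcal{F}^W_T$. The main technical obstacle I anticipate is justifying the commutation of conditional expectation with the stochastic integrals, in particular verifying that $\mathbb{E}[\sigma_k\cdot\nabla\varphi(Y_i(s))\,|\,\mathcal{F}^W_s] = \langle\sigma_k\cdot\nabla\varphi,\tilde\rho_s\rangle$, which requires the version of Lemma \ref{lem:con3} and Lemma \ref{lem:con1} adapted to the particular progressively-measurable integrand built from the solution $Y_i$; this reduces to checking square-integrability $\mathbb{E}\int_0^T|\sigma_k\cdot\nabla\varphi(Y_i(s))|^2\,\mathd s<\infty$, which is immediate by the smoothness of $\sigma_k$ and $\varphi$ on the torus.
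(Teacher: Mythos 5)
Your proposal is correct and follows essentially the same route as the paper: apply It\^o's formula to $\varphi(Y_i(t))$, commute conditional expectation with the time and stochastic integrals via Lemmas \ref{lem:con3} and \ref{lem:con1} to show $\tilde\rho_t$ solves \eqref{ptsobol3}, verify the uniform $H^{-(1+\eps)}$ bound using the fact that $\tilde\rho_t$ is a probability measure, and invoke the uniqueness Lemma \ref{lem:linearunique}. The only cosmetic difference is that the paper conditions directly on $\mathcal{F}^W_t$ and then upgrades to a continuous $\mathcal{F}^W_T$-version, while you condition on $\mathcal{F}^W_T$ and use Lemma \ref{lem:con3} to reduce to $\mathcal{F}^W_s$ inside the integrals; these are equivalent.
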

\begin{proof}
	By It\^o's formula and  taking conditional expectations with respect to $\mathcal{F}^W_t$ gives that 
	\begin{align*}
	\mathbb{E} \left(	\varphi(	Y_i(t))|\mathcal{F}^W_t\right)=	&\mathbb{E} \left(	\varphi(	X_i(0))|\mathcal{F}^W_t\right)+\mathbb{E} \left( \int_0^t\nabla\varphi(Y_i(s))\cdot K*  v_s(Y_i(s))\mathd s | \mathcal{F}_t^W\right)
	\\&+ \mathbb{E} \left( \int_0^t \Delta \varphi(Y_i(s))\mathd s| \mathcal{F}_t^W\right)
+\sqrt{2}\mathbb{E} \left( \int_0^t\nabla\varphi(Y_i(s))\cdot\mathd B_i(s) | \mathcal{F}_t^W\right)
	\\&+\sum_{k=1}^{d}\mathbb{E} \left( \int_0^t\nabla\varphi(Y_i(s))\cdot \sigma_k(Y_i(s))\circ \mathd W^k_s| \mathcal{F}_t^W\right).
	\end{align*}
	Using Lemma \ref{lem:con3} and  Lemma  \ref{lem:con1},   we find that 
	\begin{align*}
		\mathbb{E} \left(	\varphi(	Y_i(t))|\mathcal{F}^W_t\right)=	&\mathbb{E} \left(	\varphi(	X_i(0))\right)+\int_0^t \mathbb{E} \left( \nabla\varphi(Y_i(s))\cdot K*  v_s(Y_i(s))| \mathcal{F}_s^W\right)\mathd s 
		\\&+ \int_0^t  \mathbb{E} \left(\Delta \varphi(Y_i(s))| \mathcal{F}_s^W\right)\mathd s
+\sum_{k=1}^{d} \int_0^t\mathbb{E} \left(\nabla\varphi(Y_i(s))\cdot \sigma_k(Y_i(s))| \mathcal{F}_s^W\right)\circ \mathd W^k_s.
	\end{align*}

	Therefore   $\left(\mathcal{L}(Y_i(t)|\mathcal{F}^{W}_T)\right)$, which is a continuous version of $\left(\mathcal{L}(Y_i(t)|\mathcal{F}^{W}_t)\right)$, i.e. $\left(\mathcal{L}(Y_i(t)|\mathcal{F}^{W}_T)\right)\in C([0,T],\mathcal{P}(\mathbb{T}^2)) ,$\footnote{$\mathcal{P}(\mathbb{T}^2)$ denote the space of probability measures on $\mathbb{T}^2.$}   solves the equation $ \eqref{ptsobol3}$ in the distributional sense with the initial value $\bar{\rho}_0(x)$.  
	Since $\mathcal{P}(\mathbb{T}^2)\subset H^{-(1+\eps)}$ for any $  \eps\in(\frac{1}{2},1)$ in
	 \cite[Lemma 2.1]{hauray2014kac} and the fact that the norm $\|\cdot\|_{H^{-(1+  \eps)}}$ for a probability measure can be controlled by a positive constant $C\assign(\sum_{m\in \mathbb{Z}^2}(1+|m|^2)^{-(1+  \eps)})^{\frac{1}{2}},$ we deduce the result from Lemma \ref{lem:linearunique}. 
	
\end{proof}
Before proceeding, we introduce several random measures which characterize the motion of vortices given the information about  environment.
Recall that $X^N$ is the probabilistically strong solution to the interacting particle system \eqref{eqt:vortex} and  $\{\xi_i, i=1,\cdots,N\}$  denote the intensities in the  interacting particle system \eqref{eqt:vortex} with the same distribution $\mathcal{L}(\xi)$ introduced in  Assumption \ref{initial}. We first introduce the  random distribution $F^N_t(\mathd x^N,\mathd z^N)$  on $\mathbb{T}^{2N}\times\mathbb{R}^N,$ characterizing the trajectory and circulation $(X^N(t),\xi^N)$ of the vortices described by the interacting particle system \eqref{eqt:vortex},  given the  enviornmental noise  $\mathcal{F}^{W}_T,$ is defined as follows:
\begin{equation}\label{def-Fparticle}
	F^N_t(\mathd x^N,\mathd z^N)( \omega)\assign \mathcal{L}((X^N(t),\xi^N)|\mathcal{F}_{T}^{W})(\mathd x^N,\mathd z^N)(   \omega), \quad x^N\in\mathbb{T}^{2N},z^N\in\mathbb{R}^N,   \omega\in\Omega.
\end{equation}
We then introduce the random measure $\tilde{F}_t(\mathd x_i,\mathd z_i)$ on $\mathbb{T}^2\times\mathbb{R},$ 
the conditional distribution of $(Y_i(t),\xi_i)$  described by the conditional Mckean-Vlasov equation \eqref{ncopy},  given the  enviornmental noise  $\mathcal{F}^{W}_T,$ is defined by
\begin{align}\label{def-Fmckean}
	\tilde{F}_t(\mathd x, \mathd z)( \omega)&\assign \mathcal{L}((Y_i(t),\xi_i)| \mathcal{F}^{W}_T)(\mathd x_i,\mathd z_i)(   \omega), \quad x_i\in\mathbb{T}^2,z_i\in\mathbb{R},   \omega\in\Omega.
\end{align} 

 Based on the mean field  limit equation \eqref{eqt:ns}, we construct the  random distribution $\bar{F}^N_t(\mathd x^N,\mathd z^N)$  on $\mathbb{T}^{2N}\times\mathbb{R}^N,$ as follows:
\begin{align}\label{def-Fmean}
	\bar{F}^N_t(\mathd x^N,\mathd z^N)(\omega)\assign \bar{\rho}^{N}_{t}(\mathd x^N)(\omega)\mathcal{L}(\xi^N)(\mathd z^N), \quad x^N\in\mathbb{T}^{2N},z^N\in\mathbb{R}^N,   \omega\in\Omega,
\end{align}
where the random measure 	$\bar{\rho}^{N}_{t}(\mathd x^N)$ on torus $\mathbb{T}^{2N}$ is given by
 \begin{equation}\label{eqt:disinmean}
	\bar{\rho}^{N}_{t}(\mathd x^N)\assign\bar{\rho}^{\otimes N}_{t}(\mathd x^N)=\frac{1}{\mathbb{E}[\xi_1]^N}  v_t^{\otimes N}(\mathd x^N).
\end{equation} We now show that the random measure $\bar{F}^N_t(\mathd x^N,\mathd z^N)$  constructed from \eqref{eqt:ns} actually characterizes the conditional law of the solutions to the conditional Mckean-Vlasov equation \eqref{ncopy},  given the  enviornmental noise  $\mathcal{F}^{W}_T,$ i.e.  Corollary \ref{cor}.
	In Lemma \ref{lem:unirho},	we have shown that $$\mathbb{P}(\|\bar{\rho}_t-\tilde{\rho}_t\|_{H^{-(2+  \eps)}}=0, \forall t\in [0,T])=1, \quad \forall   \eps\in(\frac{1}{2},1),$$
	where $\bar{\rho}_t$ and $\tilde{\rho}_t$ are defined in \eqref{barrhot} and \eqref{def-rhomec} respectively. 
Applying  disintegration from Lemma \ref{lem:disinte}, we directly have the following result, which means our final result Theorem \ref{thm:entropy}  states that  as the number of particles approaches infinity, the particles within the weakly interacting particle system \eqref{eqt:vortex} tend to behave  like  some identical distributed particle $Y_i$ from \eqref{ncopy}. 
\begin{corollary}\label{cor}
	Under Assumption \ref{assumptiona}, we have 
	\begin{align*}
		\tilde{F}_t(\mathd x_i, \mathd z_i)&= \tilde{\rho}_t(\mathd x_i)\mathcal{L}(\xi)(\mathd z_i),\quad\forall t\in[0,T], \quad {\mathbb{P}}-a.s.,
	\end{align*}
	where the random measure $\tilde{F}_t(\mathd x_i, \mathd z_i)$ on $\mathbb{T}^{2}\times\mathbb{R},$ is defined in \eqref{def-Fmckean}, and the random measure $\tilde{\rho}_t(\mathd x_i)$ on $\mathbb{T}^{2},$ is defined in \eqref{def-rhomec}.\footnote{We notice the independence $Y_i\perp \xi_i$ as established in the proof of Lemma \ref{lem:uniquesde}.
		Consequently, we have $\tilde{\rho}_t(\mathd x)=\mathcal{L}((Y_i(t))| \mathcal{F}^{W}_T)(\mathd x)=\mathcal{L}((Y_i(t))| \mathcal{F}^{\xi_i,W}_T)(\mathd x)$ } 
	In particular, 
	\begin{align*}
	\bar{F}^N_t(\mathd x^N, \mathd z^N)=\tilde{F}^{\otimes N}_t(\mathd x^N, \mathd z^N),\quad\forall t\in[0,T], \quad {\mathbb{P}}-a.s.,
	\end{align*}
and
	\begin{align*}
		H_N(F^N_t|\bar{F}^N_t)=H_N(F^N_t|\tilde{F}^{\otimes N}_t), \quad\forall t\in[0,T],\quad \mathbb{P}-a.s.,
	\end{align*}
where	the  random distribution $\bar{F}^N_t(\mathd x^N,\mathd z^N)$  on $\mathbb{T}^{2N}\times\mathbb{R}^N,$  is defined in \eqref{def-Fmean}.
\end{corollary}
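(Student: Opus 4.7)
}
The plan is to chain together three ingredients: the independence structure baked into Assumption \ref{assumptiona}, the disintegration result of Lemma \ref{lem:disinte}, and the already-established identification $\bar{\rho}_t = \tilde{\rho}_t$ from Lemma \ref{lem:unirho}. The statement decomposes naturally into three claims (factorization of $\tilde F_t$, tensorization producing $\bar F^N_t$, and the resulting equality of relative entropies), which I would prove in that order.

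\emph{Step 1 (factorization of $\tilde F_t$).} First I would show that $\xi_i$ is independent of the $\sigma$-algebra generated by $Y_i(t)$ and $\mathcal{F}^W_T$. This is immediate from the construction in the proof of Lemma \ref{lem:uniquesde}: the process $Y_i$ is a measurable functional of $X_i(0)$, $B_i$, $W$, and the (deterministic-given-$W$) field $v$; by Assumption \ref{assumptiona} the random variable $\xi_i$ is independent of all of these. Consequently, for any bounded Borel $f$ and $g$,
\begin{align*}
  \mathbb{E}[f(Y_i(t))g(\xi_i)\mid \mathcal F^W_T]
  = \mathbb{E}[f(Y_i(t))\mid \mathcal F^W_T]\,\mathbb{E}[g(\xi_i)],
\end{align*}
so the disintegration in Lemma \ref{lem:disinte} gives $\tilde F_t(\mathd x_i,\mathd z_i) = \tilde\rho_t(\mathd x_i)\,\mathcal{L}(\xi)(\mathd z_i)$ almost surely, simultaneously for all rational $t$; continuity in $t$ (both of $\tilde\rho_t$ in a suitable negative Sobolev space by Lemma \ref{lem:unirho}, and of $\mathcal{L}(\xi)$ in the trivial sense) then extends the identity to all $t\in[0,T]$.

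\emph{Step 2 (tensorization to match $\bar F^N_t$).} Next I would argue that conditionally on $\mathcal F^W_T$ the collection $\{(Y_i,\xi_i)\}_{i=1}^N$ is independent and each coordinate has law $\tilde F_t$. Conditional independence of the $Y_i$'s given $\mathcal F^W_T$ is a standard consequence of the structure of \eqref{eqt:ncopy}: the coefficients depend only on $v$ (hence $\mathcal F^W_T$-measurable) and on the $i$-th particle, while the driving data $(X_i(0), B_i)$ are mutually independent across $i$ and independent of $\mathcal F^W_T$; a Picard-iteration argument (or a direct appeal to the construction used in Lemma \ref{lem:uniquesde}) shows that each $Y_i$ is a measurable function of $(X_i(0),B_i,W)$, so conditional on $\mathcal F^W_T$ they are independent. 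Combining this with the independence of $\{\xi_i\}$ from everything else yields
\begin{align*}
  \mathcal{L}\bigl((Y^N(t),\xi^N)\bigm|\mathcal F^W_T\bigr)
  = \tilde F_t^{\otimes N}
  = \tilde\rho_t^{\otimes N}\otimes \mathcal{L}(\xi)^{\otimes N}.
\end{align*}
Invoking Lemma \ref{lem:unirho} to replace $\tilde\rho_t$ by $\bar\rho_t = v_t/\mathbb{E}[\xi_1]$ (the replacement is valid since both sides are probability measures and the almost sure identification in $H^{-(2+\varepsilon)}$ upgrades to equality as measures) gives $\tilde F_t^{\otimes N} = \bar\rho_t^{\otimes N}\otimes\mathcal{L}(\xi^N) = \bar F^N_t$ almost surely, uniformly in $t$.

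\emph{Step 3 (relative entropies).} The last assertion is purely algebraic: by definition $H_N(F^N_t|\bar F^N_t) = \frac{1}{N}\int \log\frac{dF^N_t}{d\bar F^N_t}\,dF^N_t$, and since $\bar F^N_t = \tilde F_t^{\otimes N}$ $\mathbb{P}$-a.s.\ by Step~2, the two Radon--Nikodym derivatives coincide on a set of full probability, hence so do the integrals. I do not foresee a serious obstacle here; the genuinely delicate point is the measure-theoretic care needed in Step~1 to make the disintegration work simultaneously at every $t$, which is handled by the continuity of $\tilde\rho_t$ supplied by Lemma \ref{lem:unirho} together with the fact that $\mathcal{P}(\mathbb{T}^2)$ embeds into the separable space $H^{-(1+\varepsilon)}$ used there.
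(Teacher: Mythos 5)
Your proof is correct and follows essentially the same route as the paper: use the independence of $\xi_i$ from $(X_i(0),B_i,W)$ to factor the conditional law (the paper records this in the footnote to the corollary and delegates the rigor to Lemma~\ref{lem:disinte}), then identify $\tilde\rho_t$ with $\bar\rho_t$ via Lemma~\ref{lem:unirho}, then tensorize and read off the entropy identity.

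One remark on Step~2: the conditional independence of $\{Y_i\}_{i=1}^N$ given $\mathcal F^W_T$, and the resulting identity $\mathcal L((Y^N(t),\xi^N)\mid\mathcal F^W_T)=\tilde F_t^{\otimes N}$, are true and illuminating, but they are not needed to prove the assertion $\bar F^N_t=\tilde F_t^{\otimes N}$. By its definition, $\tilde F_t^{\otimes N}$ is simply the $N$-fold product of the single-particle measure $\tilde F_t$, and $\bar F^N_t=\bar\rho_t^{\otimes N}\otimes\mathcal L(\xi)^{\otimes N}$ by \eqref{def-Fmean}--\eqref{eqt:disinmean}. So once Step~1 gives $\tilde F_t=\tilde\rho_t\otimes\mathcal L(\xi)$ and Lemma~\ref{lem:unirho} gives $\tilde\rho_t=\bar\rho_t$, the equality $\bar F^N_t=\tilde F_t^{\otimes N}$ follows by taking tensor powers of both sides; no probabilistic argument about the joint law of $(Y^N,\xi^N)$ is required. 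That extra observation is a bonus, not a dependency. Everything else, including your handling of the $t$-uniform null set via continuity, is sound and matches what Lemma~\ref{lem:disinte} is built to deliver.
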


\section{Quantitative convergence}\label{sec:quant}
In this section, we will  establish a quantitative mean-field convergence concerning the relative entropy $H_N(F_t^N|\bar{F}_t^N)$ defined in Section \ref{entropy} between the particle system  \eqref{eqt:vortex} and the stochastic 2-dimensional Navier-Stokes  equation \eqref{eqt:ns}, as stated in Theorem \ref{thm:entropy}.
Here  the random measure $ F_t^N(\mathd x^N,\mathd z^N)$  on $\mathbb{T}^{2N}\times \mathbb{R}^{N}$ is defined in \eqref{def-Fparticle} and the random measure $\bar{F}_t^N(\mathd x^N,\mathd z^N)$  on $\mathbb{T}^{2N}\times \mathbb{R}^{N}$ is defined in \eqref{def-Fmean}.

Firstly, we establish the well-posedness of the particle system \eqref{eqt:vortex}, also known as the stochastic point vortex model, as a stochastic variant of the classical point vortex model. Our model is inspired by the model presented in \cite{flandoli2011full}. The primary distinction is the introduction of individual   noise, denoted as $\{B_i, i=1,\cdots,N\}$ which implies the appearance of $\Delta  v$ describing the viscosity of the fluid in the mean field limit equation \eqref{eqt:ns}.

\subsection{Stochastic point vortex model}\label{sec:sto vortex}

The well-posednesss of the point vortex model in various settings has been  established in \cite{osada1985stochastic,marchioro2012mathematical,takanobu1985existence,fontbona2007paths,flandoli2011full}.  The common idea is  to show that $X_i(t)\neq X_j(t)$ for all $t\in [0,T]$ and $i\neq j$ almost surely. Thus, the singularity of the kernel will never  be visited almost surely.

Let $G$ be the Green function induced by the Laplace operator on torus $\mathbb{T}^2$, with logarithmic divergence at $x=0$ and satisfies 
\begin{equation*}
	C_1\log|x|-C_3\leq G(x) \leq  C_2\log|x|+C_3,\quad 
	|\nabla G(x)|\leq C_4\frac{1}{|x|},\quad|\nabla^{ 2}G(x)|\leq C_4 \frac{1}{|x|^2},
\end{equation*}
where $C_i$, $i=1,2,3,4,$ are   positive universal  constants and Biot-Savart law $K=\nabla^{\perp}G.$ \footnote{$\nabla^{\perp}$ denotes $(\partial_2,-\partial_1).$}
As done in \cite[Section 3.2]{flandoli2011full}, we then regularize $G$ by a smooth periodic function $G_{\eps}$ satisfying $G_{\eps}(x)= G(x)$ for any $|x|>\eps$ and 
\begin{align*}
	C_1\log(|x|\vee \eps)-C_3\leq G_{\eps}(x) \leq  C_2\log(|x|\vee \eps)+C_3,\\ 
	|\nabla G_{\eps}(x)|\leq C_4\frac{1}{(|x|\vee \eps)},\quad |\nabla^{ 2}G_{\eps}(x)|\leq C_4 \frac{1}{(|x|\vee \eps)^2},
\end{align*}
where $C_i$, $i=1,2,3,4,$ are   positive universal  constants. 
Thus, for fixed $\eps>0$ and fixed $N>0,$ the following regularized system 
\begin{equation}
	X_{i,\eps}(t)=X_{i}(0)+ \frac{1}{N}\sum_{j\neq i}\int_0^t \xi_j\nabla^{\perp}G_{\eps}(X_{i,\eps}(s)-X_{j,\eps}(s))\mathd s+\sqrt{2}B^i_t+\sum_{k=1}^d\int_0^t\sigma_k(X_{i,\eps}(s))\circ\mathd W^k_s.\label{eqt:approvortex}
\end{equation} 
has a unique probabilistically strong solution $X^{\eps,N}\assign(X_{1,\eps},...,X_{N,\eps})$, which solves the original stochastic point vortex model \eqref{eqt:vortex} before the stopping time  $\tau_{\eps}$:
\begin{equation}
	\tau_{\eps}=\inf\{t\in [0,T], \exists i\neq j, |X_{i,\eps}(t)-X_{j,\eps}(t)|\leq \eps\}\nonumber.
\end{equation}
By showing  $\mathbb{P}(\lim_{\eps\rightarrow 0 }\tau_{\eps}\leq T)=0$, we obtain the following well-posedness of  the  stochastic point vortex model  in the form of \eqref{eqt:vortex}, which is driven by both individual and common   noise.  The  detailed proof is  postponed into Appendix   \ref{appendix:vortex}.
\begin{proposition}\label{thm:vortex}
Given  independent 1-dimensional Brownian motions $\{W^{k},k=1,\cdots,d\}$ and 2-dimensional Brownian motions $\{B_i, i=1,\cdots,N\}$ on  probability space $(\Omega,\mathcal{F},(\mathcal{F}_t)_{t\in [0,T]},\mathbb{P}),$	
then  there exists  a unique probabilistically strong solution  $X^N\assign(X_1,\cdots,X_N)$ to $\eqref{eqt:vortex}$ in the sense of Definition \ref{defpart}. Furthermore, it holds almost surely that $\forall t\in [0,T],$ $X^{\eps,N}(t)\rightarrow X^N(t),\text{ as }  \eps \rightarrow0.$ 
\end{proposition}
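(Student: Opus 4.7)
The plan follows the three-part regularization scheme sketched just above the statement. Step one is well-posedness of the smoothed system \eqref{eqt:approvortex}. Since $G_\eps\in C^\infty(\mathbb{T}^2)$ and the $\sigma_k$ are smooth and divergence-free (Assumption \ref{assumption2}), after converting the Stratonovich common noise into It\^o form the coefficients of \eqref{eqt:approvortex} are smooth and bounded on the compact state space $\mathbb{T}^{2N}$; classical SDE theory then produces a unique strong solution $X^{\eps,N}$ adapted to $(\mathcal{F}_t)_{t\in[0,T]}$. Since $G_\eps\equiv G$ on $\{|x|>\eps\}$ and the initial data are common, pathwise uniqueness gives $X^{\eps',N}=X^{\eps,N}$ on $[0,\tau_\eps]$ for every $\eps'<\eps$; in particular $\tau_\eps$ is non-decreasing as $\eps\downarrow 0$, and the whole task reduces to proving $\mathbb{P}(\tau_\eps\leq T)\to 0$.

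The heart of the argument is the no-collision estimate. Following \cite{flandoli2011full,fontbona2007paths}, I would apply It\^o's formula to a Lyapunov functional built from the regularized Green function, of the form
$$\mathcal{H}_\eps(t):=-\frac{1}{N^2}\sum_{i\neq j}\xi_i\xi_j\, G_\eps\bigl(X_{i,\eps}(t)-X_{j,\eps}(t)\bigr)+C_*,$$
with the constant $C_*$ chosen so that $\mathcal{H}_\eps\geq 0$ using the compact support of $\mathcal{L}(\xi)$ and the upper bound on $G$. On $[0,\tau_\eps]$ three drift contributions must be controlled. The interaction term generates a triple sum $\xi_i\xi_j\xi_k\,\nabla^\perp G_\eps(X_{i,\eps}-X_{k,\eps})\cdot\nabla G_\eps(X_{i,\eps}-X_{j,\eps})$ whose ``diagonal'' $k=j$ contributions vanish by the pointwise orthogonality $\nabla^\perp G_\eps\cdot\nabla G_\eps=0$, the remaining off-diagonal pieces being integrable thanks to the bound $|\nabla G_\eps(x)|\lesssim 1/(|x|\vee\eps)$ combined with the separation $|X_{i,\eps}-X_{j,\eps}|>\eps$ before $\tau_\eps$. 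The individual Brownian contributions produce $\Delta G_\eps(X_{i,\eps}-X_{j,\eps})$, evaluated on $\{|x|>\eps\}$ where $\Delta G_\eps$ equals the universal off-origin value of $\Delta G$ and is therefore bounded. Finally, the common Stratonovich noise, once converted to It\^o, yields a local martingale plus a drift involving $(\sigma_k(X_{i,\eps})-\sigma_k(X_{j,\eps}))\cdot\nabla G_\eps(X_{i,\eps}-X_{j,\eps})$ and second-order terms of size $|\nabla^2 G_\eps|\cdot|\sigma_k(X_{i,\eps})-\sigma_k(X_{j,\eps})|^2$; the Lipschitz regularity of $\sigma_k$ supplies two factors of $|X_{i,\eps}-X_{j,\eps}|$ that absorb the $|x|^{-1}$ and $|x|^{-2}$ singularities of $\nabla G_\eps$ and $\nabla^2 G_\eps$, while $\div\sigma_k=0$ eliminates the remaining spurious first-order drift. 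These bounds combine into $\mathbb{E}[\mathcal{H}_\eps(t\wedge\tau_\eps)]\leq \mathcal{H}_\eps(0)+C_N T$ uniformly in $\eps$.

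To close step two, observe that on $\{\tau_\eps\leq T\}$ there exists a pair $(i,j)$ with $|X_{i,\eps}(\tau_\eps)-X_{j,\eps}(\tau_\eps)|=\eps$, so the contribution of that single pair to $\mathcal{H}_\eps$ is of order $|\log\eps|$ while the remaining $O(N^2)$ pair contributions are bounded by a constant depending on $N$, the compact support of $\mathcal{L}(\xi)$, and the upper bound on $G$. Pigeonholing on the index and sign of the colliding pair yields $\mathbb{P}(\tau_\eps\leq T)\lesssim 1/|\log\eps|\to 0$. For step three I would then define $X^N(t):=X^{\eps,N}(t)$ on $\{t\leq\tau_\eps\}$: this is consistent by the monotonicity above, produces a continuous $(\mathcal{F}_t)$-adapted process on $[0,T]$ almost surely, and solves \eqref{eqt:vortex} because $G_\eps\equiv G$ before $\tau_\eps$. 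Pathwise uniqueness for \eqref{eqt:vortex} is inherited from uniqueness of each regularized system combined with the non-collision statement, and the convergence $X^{\eps,N}(t)\to X^N(t)$ is immediate from the construction.

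The main obstacle is the sign-indefiniteness of $\mathcal{H}_\eps$: Assumption \ref{initial}(2) only requires $\mathbb{E}[\xi]>0$ and compact support, so intensities of both signs are allowed and the Lyapunov functional is not automatically bounded below by a large multiple of $|\log\eps|$ at collision. One must therefore isolate the logarithmic blow-up from the specific colliding pair while merely using boundedness (not a favorable sign) for all other pairs. This pigeonholing, together with the careful bookkeeping of the transport-noise drift using $\div\sigma_k=0$, is the non-trivial adaptation of the arguments in \cite{flandoli2011full,fontbona2007paths} to the present setting with both individual and common noise and mixed-sign intensities.
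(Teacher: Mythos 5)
Your proposed Lyapunov functional $\mathcal{H}_\eps$ and the It\^o expansion of it are essentially the same as the paper's computation, and you correctly identify the individual terms, including the $\Delta G_\eps$ contribution from the individual noise and the first/second order common-noise contributions with their $\div\sigma_k=0$ cancellations. However, there is a real gap in how you close the drift estimate, and you have also introduced an avoidable sign difficulty.

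First, the crucial mechanism is missing. After It\^o's formula the interaction drift produces a triple sum of the schematic form $\sum_{i\neq j\neq l}\nabla G_\eps(X_{i,\eps}-X_{j,\eps})\cdot\nabla^\perp G_\eps(X_{i,\eps}-X_{l,\eps})$, which is bounded pointwise only by $\phi(X^{\eps,N}_s)=\sum_{i\neq j\neq l}|X_{i,\eps}-X_{j,\eps}|^{-1}|X_{i,\eps}-X_{l,\eps}|^{-1}$. Your justification---``integrable thanks to $|\nabla G_\eps|\lesssim(|x|\vee\eps)^{-1}$ and the separation $>\eps$ before $\tau_\eps$''---only yields a bound of order $\eps^{-2}$ on each term, which blows up as $\eps\downarrow 0$ and does not give the uniform-in-$\eps$ drift control needed for the conclusion $\mathbb{E}[\mathcal{H}_\eps(t\wedge\tau_\eps)]\leq\mathcal{H}_\eps(0)+C_NT$. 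The paper closes this by \emph{averaging over the initial data}: it first considers deterministic starting points distributed according to normalized Lebesgue measure $l^\eta$ on the $\eta$-separated set $\mathbb{T}^{2N}_\eta$, and uses the fact that the flow $\varphi^\eps_t$ of the regularized system is Lebesgue-measure-preserving (because both the mollified Biot--Savart kernel and the $\sigma_k$ are divergence-free). Then $\mathbb{E}^\eta\int_0^{t\wedge\bar\tau_\eps}\phi(\bar X^{\eps,N}_s)\mathd s\lesssim\mathbb{E}\int_0^t\int_{\mathbb{T}^{2N}}\phi(y^N)\mathd y^N\mathd s\leq C_{N,T}$, since $\phi$ is $L^1$ on $\mathbb{T}^{2N}$. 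This step, and the accompanying enlargement of the probability space to $\mathbb{T}^{2N}\times\Omega$ followed by integrating the a.e.\ statement over $\eta$ and then over $\mathcal{L}(X(0))^{\otimes N}$, is the heart of the proof and is absent from your proposal.

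Second, your weighting by $\xi_i\xi_j$ creates the sign problem you flag but do not resolve. The paper sidesteps it entirely by using the \emph{unweighted} potential $\Phi_\eps(x^N)=\sum_{i\neq j}G_\eps(x_i-x_j)$. With this choice the two-sided bound $|\mathbb{E}^\eta\Phi_\eps(\bar X^{\eps,N}_{t\wedge\bar\tau_\eps})|\leq C_{N,T,\sigma,\eta}$ holds uniformly in $\eps$, and since $G_\eps$ is bounded above on the torus while the colliding pair contributes $\sim C_1\log\eps\to-\infty$, one obtains $\mathbb{P}^\eta(\bar\tau_\eps\leq T)\lesssim 1/|\log\eps|$ with no pigeonholing over signs. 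The intensities $\xi_j$ then appear only multiplicatively inside the interaction drift (where their compact support gives a uniform bound), not in the Lyapunov functional itself. You should replace $\mathcal{H}_\eps$ by $\Phi_\eps$ and add the measure-preserving flow argument with the auxiliary integrable function $\phi$; the remainder of your outline---monotonicity of $\tau_\eps$, the definition of $X^N$ on $\{t\leq\tau_\eps\}$, inherited pathwise uniqueness, and convergence $X^{\eps,N}\to X^N$---is correct.
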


 Our objective in this section is to evolve the normalized relative entropy $H_N(F_t^N|\bar{F}_t^N)$ between the conditional joint measure $ F_t^N(\mathd x^N,\mathd z^N)$ about $(X^N,\xi^N)$   on $\mathbb{T}^{2N}\times \mathbb{R}^{N}$  defined in \eqref{def-Fparticle} and the random measure $\bar{F}_t^N(\mathd x^N,\mathd z^N)$  on $\mathbb{T}^{2N}\times \mathbb{R}^{N},$ defined in \eqref{def-Fmean}. In contrast to \cite{jabin2018quantitative}, we now have to consider two additional sources of randomness:  random intensities $\{\xi_1,\cdots,\xi_N\}$ and environmental noise $\{W_1,\cdots,W_d\}.$ On one hand, we  focus on  the conditional distribution with respect to  $\mathcal{F}_T^{W},$ which  can be roughly understood as fixing the random perturbation related to environmental noise. On the other hand, we aim to employ the technique of disintegration to address another random perturbation, which can be seen as fixing the random perturbation associated with random intensities. More precisely,   using Lemma \ref{lem:disinte}, we can decompose $F^N_t(\mathd x^N,\mathd z^N)$ into two distinct probability measures, one on torus $\mathbb{T}^{2N}$ and the other on $\mathbb{R}^N.$  
 \begin{align}\label{eqt:disparti}
	F^N_t\(\mathd x^N,\mathd z^N\)(\omega)=\rho^N_t(\mathd x^N, z^N,\omega)\mathcal{L}(\xi^N)(\mathd z^N), \quad\forall t\in[0,T],\omega\in \Omega, \quad {\mathbb{P}}-a.s,
\end{align}
where the random measure $\rho_t^N(\mathd x^N, z^N,\omega)$ on $\mathbb{T}^{2N},$ \footnote{Here $\rho_t^N(\mathd x^N, z^N,\omega)$ is a function of two variables $(z^N, \omega)\in \tilde{\Omega}$ and $B\in \mathbb{B}(\mathbb{T}^{2N}),$ such that  $\rho_t^N(B, z^N,\omega)$ is $\tilde{\mathcal{F}}$-measurable in  $(z^N, \omega)$ for fixed $B$ and a measure in $B$ for fixed $(z^N, \omega).$ We may use $\rho_t^{N}(z^N,\omega)$ or $\rho_t^{N}$ to denote $\rho^N_t(\mathd x^N, z^N,\omega)$ in the following content for simlicity.}  denotes $\mathcal{L}((X_t^N)|\mathcal{F}_T^{W,\xi^N})(\mathd x^N,z^N,W(  \omega)),$  and $(z^N, \omega)$ belongs to  the extended probability space, 
\begin{align}\label{eqt:extenpro}
\(\tilde{\Omega},\tilde{\mathcal{F}},\tilde{\mathbb{P}}\)\assign \(\mathbb{R}^N\times\Omega,\overline{\mathbb{B}(\mathbb{R}^N)\otimes \mathcal{F}},\mathcal{L}(\xi^{ N})\times \mathbb{P}\).
\end{align}
The $\sigma$-algebra $ \overline{\mathbb{B}(\mathbb{R}^N)\otimes \mathcal{F}}$ denotes the completion of ${\mathbb{B}(\mathbb{R}^N)\otimes \mathcal{F}}$ with respect to $\mathcal{L}(\xi^{ N})\times \mathbb{P}.$

Based on the decompositions of the random measures  $F^N(\mathd x^N,\mathd z^N)$ and $\bar{F}^N(\mathd x^N,\mathd z^N),$ i.e. \eqref{eqt:disparti} and \eqref{def-Fmean},
applying Lemma 1.4.3 in \cite{dupuis1993weak}, we have
the following relationship between $	H_N(F_t^N|\bar{F}_t^N)$ and $		H_N(\rho_t^N|\bar{\rho}_t^N),$ 
\begin{align}	H_N(F_t^N|\bar{F}_t^N)=\int_{\mathbb{R}^N}H_N(\rho_t^N|\bar{\rho}_t^N)\mathcal{L}(\xi^N)(\mathd z^N),\quad \forall t\in[0,T],\quad \mathbb{P}-a.s..\label{con:Frho}
\end{align}

 We start with the relative entropy  $ H_N(\rho_t^N|\bar{\rho}_t^{ N}) $ conditioning on the intensities, and write it as 
\begin{align*}
	H_N(\rho_t^N|\bar{\rho}_t^{ N}) = \frac{1}{N}\int_{\mathbb{T}^{2N}} \rho^N_t \log \rho^N_t \mathd x^N - \frac{1}{N}\int_{\mathbb{T}^{2N}}\rho^N_t \log \bar{\rho}_t^N \mathd x^N.
\end{align*} 
We then evolve the entropy $H(\rho_t^N)= \int_{\mathbb{T}^{2N}} \rho^N_t \log \rho^N_t \mathd x^N$ for $\rho_{t}^N$, which is also the first term in the relative entropy $		H_N(\rho_t^N|\bar{\rho}_t^N).$ The idea is to show that  an uniform estimate for the entropy and  Fisher information for the regularized system \eqref{eqt:approvortex}  in $\eps>0$ (see \eqref{est:unifishereps} below).  The conclusion then follows by the lower semicontinuity of Fisher information and entropy functionals.
\begin{lemma}\label{lem:entropy solution}
	Given the random measures $( \rho_t^{N}(z^N,\omega))_{t\in[0,T]}$ on $\mathbb{T}^{2N}$ defined in \eqref{eqt:disparti}, where $(z^N,\omega)\in \tilde{\Omega}$ defined in \eqref{eqt:extenpro}, for each nonnegative $(\mathcal{F}_t)_{t\in [0,T]}$-progressive measurable procress $C(t)\in L^1(\Omega\times[0,T]),$ it holds $\tilde{\mathbb{P}}$-almost surely for each  $t\in [0,T]$  that  
	\begin{align}\label{eqt:scaentr}
&e^{- \int_0^{t}C(s)\mathd s} \int_{\mathbb{T}^{2N}} \rho_t^{N}\log {\rho}_t^{N }  \mathd x^N 
-  \int_{\mathbb{T}^{2N}} \rho_0^{N}\log {\rho}_0^{N }  \mathd x^N 
\\\leq &- \int_0^t 	C(s)e^{- \int_0^{s}C(r)\mathd r}  \int_{\mathbb{T}^{2N}} \rho_s^{N}\log {\rho}_s^{N }  \mathd x^N \mathd s \nonumber
-\int_0^t 	e^{- \int_0^{s}C(r)\mathd r} \int_{\mathbb{T}^{2N}}\frac{|\nabla\rho_s^{N}|^2}{\rho^{N}_s}   \mathd x^N  \mathd s .
	\end{align}
where $\tilde{\mathbb{P}}$ is defined in \eqref{eqt:extenpro}.
In particular,  
	\begin{align}
	\int_{\mathbb{T}^{2N}}\rho_t^{N}\log \rho_t^{N} \mathd x^N +\int_0^t\int_{\mathbb{T}^{2N}}\frac{|\nabla\rho_s^{N}|^2}{\rho^{N}_s} \mathd x^N \mathd s\leq \int_{\mathbb{T}^{2N}}\rho_0^N\log \rho_0^N \mathd x^N . \label{est:unirela}
\end{align}
\end{lemma}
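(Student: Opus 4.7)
The plan is to derive the entropy identity for the regularized particle system $X^{\eps,N}$ from Section~\ref{sec:sto vortex} and then pass to the limit $\eps\to 0$ using lower semicontinuity. Let $\rho^{N,\eps}_t$ denote the conditional density of $X^{\eps,N}(t)$ given $\mathcal{F}_T^{W,\xi^N}$. Since $K_\eps=\nabla^\perp G_\eps$ is smooth and bounded, and both $K_\eps$ and $\sigma_k$ are divergence-free, one can check (by testing against $\varphi(x^N)\in C^\infty(\mathbb{T}^{2N})$, applying It\^o's formula to $\varphi(X^{\eps,N}(t))$ and using Lemmas~\ref{lem:con3} and \ref{lem:con1} to handle the conditional expectations, following the argument leading to \eqref{ptsobol3}) that $\rho^{N,\eps}_t$ is a pathwise distributional solution of the conditional stochastic Fokker-Planck equation
\[
\mathd \rho^{N,\eps}_t=\Delta\rho^{N,\eps}_t\,\mathd t-\sum_{i=1}^N\frac{1}{N}\sum_{j\neq i}z_j\, K_\eps(x_i-x_j)\cdot\nabla_{x_i}\rho^{N,\eps}_t\,\mathd t-\sum_{k=1}^d\sum_{i=1}^N \sigma_k(x_i)\cdot\nabla_{x_i}\rho^{N,\eps}_t\circ \mathd W^k_t.
\]

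Next I would apply It\^o's formula to the smooth functional $\rho\mapsto\int \rho\log\rho\,\mathd x^N$. The Laplacian produces the dissipation $-I(\rho^{N,\eps}_t)\,\mathd t$. Every first-order drift/transport term takes the form $\int(1+\log\rho)\,v\cdot\nabla\rho\,\mathd x^N=\int v\cdot\nabla(\rho\log\rho)\,\mathd x^N$, which vanishes after a further integration by parts thanks to $\nabla_{x_i}\cdot K_\eps(x_i-x_j)=0$ and $\nabla\cdot\sigma_k=0$. A direct computation shows that the It\^o-to-Stratonovich correction at the density level associated with the $\sigma_k\circ\mathd W^k$ noise exactly cancels the $\frac12\sum_k\int|\nabla\cdot(\sigma_k\rho)|^2/\rho\,\mathd x^N$ contribution coming from the quadratic variation in It\^o's formula for $\rho\log\rho$, while the stochastic martingale term vanishes by the same divergence-free identity. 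One therefore obtains the pathwise identity
\[
H(\rho^{N,\eps}_t)+\int_0^t I(\rho^{N,\eps}_s)\,\mathd s=H(\rho_0^N),\qquad \tilde{\mathbb P}\text{-a.s.}
\]
Multiplying by $g(t)\assign e^{-\int_0^t C(r)\,\mathd r}$ and applying the product rule in $t$ (legitimate because $t\mapsto H(\rho^{N,\eps}_t)$ is absolutely continuous by the identity above) yields the weighted equality corresponding to \eqref{eqt:scaentr} at the $\eps$-regularized level.

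Finally, Proposition~\ref{thm:vortex} yields $X^{\eps,N}(t)\to X^N(t)$ a.s. for every $t\in[0,T]$, and conditional bounded convergence applied to $\varphi\in C(\mathbb{T}^{2N})$ delivers the weak convergence $\rho^{N,\eps}_t\rightharpoonup\rho^N_t$, $\tilde{\mathbb P}$-a.s. Since $H$ and $I$ are both lower semicontinuous under narrow convergence on the compact space $\mathbb{T}^{2N}$ (with $H$ bounded below by $-2N\log(2\pi)$, providing the integrability needed to invoke Fatou against the nonnegative weights $g$ and $Cg$), each of the three terms $g(t)H(\rho^{N,\eps}_t)$, $\int_0^t C(s)g(s)H(\rho^{N,\eps}_s)\,\mathd s$, $\int_0^t g(s)I(\rho^{N,\eps}_s)\,\mathd s$ has $\liminf_\eps$ bounded below by its counterpart with $\rho^N$ in place of $\rho^{N,\eps}$. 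Because their sum equals the constant $H(\rho_0^N)$ for every $\eps$, the inequality $\liminf_\eps(A_\eps+B_\eps+C_\eps)\ge \liminf_\eps A_\eps+\liminf_\eps B_\eps+\liminf_\eps C_\eps$ yields \eqref{eqt:scaentr}; the particular case $C\equiv 0$ is \eqref{est:unirela}. The main technical obstacle is the rigorous derivation of the conditional SPDE for $\rho^{N,\eps}$ with conditioning on $\mathcal{F}_T^{W,\xi^N}$ rather than on the natural filtration at time $t$, together with the subsequent It\^o chain-rule computation on the full stochastic basis rather than pointwise in $\omega$; the conditioning lemmas of Section~\ref{conditional} combined with the duality approach underlying \eqref{ptsobol3} address this, but the bookkeeping is nontrivial.
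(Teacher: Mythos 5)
Your proof is correct and follows essentially the same route as the paper's: regularize the kernel, derive the conditional stochastic Fokker--Planck equation for $\rho^{\eps,N}$ from the particle system, obtain the $\eps$-level entropy equality by It\^o's formula and the divergence-free structure of $K_\eps$ and $\sigma_k$, multiply by the weight $e^{-\int_0^\cdot C}$, and then pass to the limit via the weak convergence $\rho^{\eps,N}_t\rightharpoonup\rho^N_t$ together with lower semicontinuity of entropy and Fisher information (plus the lower bound on entropy needed for Fatou). The one point you leave informal (and explicitly flag as nontrivial) is the justification of the It\^o computation on $\int\rho\log\rho$ and of the pointwise form of the regularized SPDE, which the paper supplies by invoking Krylov's $L^2$ regularity theory \cite{krylov1999analytic} for the regularized Fokker--Planck equation to place $\rho^{\eps,N}\in C([0,T];H^3(\mathbb{T}^{2N}))$ with $\tilde{\mathbb{E}}\int_0^T\|\rho^{\eps,N}_t\|_{H^4}^2\,\mathd t<\infty$.
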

\begin{proof}
	The proof consists of two steps. 
	
Step 1: In this step, we deduce a similar control as \eqref{eqt:scaentr} for the regularized particle system \eqref{eqt:approvortex}, i.e. \eqref{est:unifishereps}. 

For fixed $\eps>0,$	we proceed the disintegration about random measure on  $\mathbb{T}^{2N}\times \mathbb{R}^{N},$ $$F^{  \eps,N}_t(\mathd x^N \mathd z^N)\assign\mathcal{L}((X^{\eps,N}_t,\xi^N)|\mathcal{F}_T^{W}),$$ and define random measure $\rho^{\eps,N}_t$ on torus $\mathbb{T}^{2N}$ analogously,  i.e. \begin{align}
		F^{\eps,N}_t\(\mathd x^N,\mathd z^N\)(\omega)=\rho^{\eps,N}_t(\mathd x^N)(z^N,\omega)\mathcal{L}(\xi^N)(\mathd z^N), \quad \forall t\in[0,T],\quad \mathbb{P}-a.s.,\nonumber
	\end{align}
	 where $(X^{\eps,N}(t))_{t\in[0,T]}$ is the probabilistically strong solution to the regularized particle system \eqref{eqt:approvortex} and $\rho_t^{\eps,N}$ denotes  
	 \begin{align}\label{def:regparti}
	 \mathcal{L}((X_t^{\eps,N})|\mathcal{F}_T^{W,\xi^N})(\mathd x^N,z^N,W(  \omega)).
	 \end{align}  Here $(z^N, \omega)$ belongs to  the extended probability space $\(\tilde{\Omega},\tilde{\mathcal{F}},\tilde{\mathbb{P}}\)$ defined in \eqref{eqt:extenpro}.
	 
	 As in  Lemma \ref{lem:unirho}, we use  It\^o's formula and take the conditional expectations for the solution of particle system with respect to $\mathcal{F}_T^{\xi^N,W},$  we have 
	 for all $\varphi\in C^{\infty}(\mathbb{T}^{2N}),$ it holds $\mathbb{P}$-almost surely that for all $t\in[0,T],$
	 \begin{align*}
	 	&\left\langle \mathcal{L}((X_t^{\eps,N})|\mathcal{F}_T^{W,\xi^N})(\mathd x^N,\xi^N,W),\varphi\right\rangle-\left\langle \mathcal{L}((X_0^{\eps,N})|\mathcal{F}_T^{W,\xi^N})(\mathd x^N,\xi^N,W),\varphi\right\rangle\\&=\sum_{i=1}^{N}\int_{0}^{t}\left\langle \mathcal{L}((X_s^{\eps,N})|\mathcal{F}_T^{W,\xi^N})(\mathd x^N,\xi^N,W),\Delta\varphi\right\rangle \mathd s\\&+\frac{1}{N}\sum_{i\neq j}^{N}\int_{0}^{t}\left\langle z_j\nabla^{\perp}G_{\eps}(x_i-x_j)\cdot\nabla_{x_i}\varphi, \mathcal{L}((X_s^{\eps,N})|\mathcal{F}_T^{W,\xi^N})(\mathd x^N,\xi^N,W)\right\rangle \mathd s\\&+\sum_{i=1}^{N}\sum_{k=1}^{d}\int_{0}^{t}\left\langle \sigma_k(x_i)\cdot\nabla_{x_i}\varphi,\mathcal{L}((X_s^{\eps,N})|\mathcal{F}_T^{W,\xi^N})(\mathd x^N,\xi^N,W)\right\rangle \circ\mathd W^k_s.
	 \end{align*}
 This implies that 
 for all $\varphi\in C^{\infty}(\mathbb{T}^{2N}),$ it holds $\tilde{\mathbb{P}}$-almost surely that for all $t\in[0,T],$
 \begin{align*}
 	\left\langle \rho^{\eps,N}_t,\varphi\right\rangle-\left\langle \rho^{N}_0,\varphi\right\rangle&=\sum_{i=1}^{N}\int_{0}^{t}\left\langle \rho^{\eps,N}_s,\Delta\varphi\right\rangle \mathd s+\frac{1}{N}\sum_{i\neq j}^{N}\int_{0}^{t}\left\langle z_j\nabla^{\perp}G_{\eps}(x_i-x_j)\cdot\nabla_{x_i}\varphi, \rho^{\eps,N}_s\right\rangle \mathd s\\&+\sum_{i=1}^{N}\sum_{k=1}^{d}\int_{0}^{t}\left\langle \sigma_k(x_i)\cdot\nabla_{x_i}\varphi,\rho^{\eps,N}_s\right\rangle \circ\mathd W^k_s.
 \end{align*}
where the random measures $(\rho^{\eps,N}_t)_{t\in [0,T]}$ on $\mathbb{T}^{2N}$
	  denote $(\mathcal{L}((X_t^{\eps,N})|\mathcal{F}_T^{W,\xi^N})(\mathd x^N,z^N,W(  \omega)))_{t\in[0,T]}.$
	Briefly, $(\rho^{\eps,N}_t)_{t\in [0,T]}$ solves  the following stochastic Fokker-Planck equation \eqref{eqt:louappro} in the distributional sense $\tilde{\mathbb{P}}$-almost surely with initial value $ \rho^{N}_0=\bar{\rho}_0^{\otimes N}\in H^3(\mathbb{T}^{2N})$ given in Assumption \ref{initial},
	\begin{align}\label{eqt:louappro}
		\mathd f=\sum_{i=1}^N\Delta_{x_i}f\mathd t -\frac{1}{N}\sum_{i\neq j}\div_{x_i}\(z_j\nabla^{\perp}G_{\eps}(x_i-x_j)f\)\mathd t-\sum_{i=1 }^N\sum_{k=1}^d\div_{x_i}\(\sigma_k(x_i)f\)\circ\mathd W^k_s.
	\end{align}

	For this stochastic Fokker-Planck equation with smooth coefficients and  initial value $\rho^{N}_0=\bar{\rho}_0^{\otimes N}\in H^3(\mathbb{T}^{2N}),$  according to Theorem 5.1  and Theorem 7.1 in \cite{krylov1999analytic},
	$(\rho^{\eps,N}_t)_{t\in[0,T]}$ belongs to    $$C([0,T],H^3(\mathbb{T}^{2N})),$$ satisfying $\tilde{\mathbb{E}}\int_{0}^{T}\|\rho^{\eps,N}_t\|_{H^4(\mathbb{T}^{2N})}^2\mathd t<\infty.$  As in  Lemma \ref{lem:reg}, we can also obtain that it holds $\tilde{\mathbb{P}}$-almost surely that for all $t\in[0,T]$  and $x^N\in\mathbb{T}^{2N},$
		\begin{align}\label{eq:entropy}
		 \rho^{\eps,N}_t(x^N)- \rho^{\eps,N}_0(x^N)&=\sum_{i=1}^N\int_{0}^{t}\Delta_{x_i}\rho^{\eps,N}_s(x^N)\mathd s -\frac{1}{N}\sum_{i\neq j}\int_{0}^{t}\div_{x_i}\(z_j\nabla^{\perp}G_{\eps}(x_i-x_j)\rho^{\eps,N}_s(x^N)\)\mathd s\nonumber\\&-\sum_{i=1 }^N\sum_{k=1}^d\int_{0}^{t}\div_{x_i}\(\sigma_k(x_i)\rho^{\eps,N}_s(x^N)\)\circ\mathd W^k_s.
	\end{align} 
	 
	 We then evolve the entropy of $(\rho^{\eps,N}_t)_{t\in [0,T]}$ by using It\^o's formula  
	\begin{align}
		&\int_{\mathbb{T}^{2N}}\rho_t^{\eps,N}\log \rho_t^{\eps,N} \mathd x^N\nonumber
		\\=&\int_{\mathbb{T}^{2N}}\rho_0^{\eps,N}\log \rho_0^{\eps,N} \mathd x^N -\int_0^t\int_{\mathbb{T}^{2N}}\frac{|\nabla\rho_s^{\eps,N}|^2}{\rho^{\eps,N}_s} \mathd x^N \mathd s\nonumber
		\\&-\frac{1}{N}\sum_{i\neq j}\int_0^t\int_{\mathbb{T}^{2N}}\(1+\log\rho^{\eps,N}_s\)\div_{x_i}\(z_j\nabla^{\perp}G_{\eps}(x_i-x_j)\rho_s^{\eps,N}\)\mathd x^N \mathd s\nonumber
		\\&-\sum_{i=1 }^N\sum_{k=1}^d\int_0^t\left(\int_{\mathbb{T}^{2N}}\(1+\log\rho^{\eps,N}_s\)\div_{x_i}\(\sigma_k(x_i)\rho_s^{\eps,N}\)\mathd x^N \right)\circ\mathd W^k_s.\nonumber
	\end{align}

	Since  $\nabla\cdot\nabla^{\perp}=0$ and $\{\sigma_k, k=1,\cdots,d\}$ are divergence free by Assumption \ref{assumption2}, integrating by parts leads to 
	\begin{align}
		\int_{\mathbb{T}^{2N}}\rho_t^{\eps,N}\log \rho_t^{\eps,N} \mathd x +\int_0^t\int_{\mathbb{T}^{2N}}\frac{|\nabla\rho_s^{\eps,N}|^2}{\rho^{\eps,N}_s} \mathd x \mathd s=\int_{\mathbb{T}^{2N}}\rho_0^N\log \rho_0^N \mathd x .\label{estiuni}
	\end{align}
	We then have 
	\begin{align}
		&	e^{- \int_0^{t}C(s)\mathd s}  \int_{\mathbb{T}^{2N}} \rho_t^{\eps,N}\log {\rho}_t^{\eps,N }  \mathd x^N	+\int_0^t 	e^{- \int_0^{s}C(r)\mathd r} \left( C(s)\int_{\mathbb{T}^{2N}} \rho_s^{\eps,N}\log {\rho}_s^{\eps,N }  \mathd x^N\right)\mathd s \nonumber
		\\+& \int_0^t 	e^{- \int_0^{s}C(r)\mathd r} \left(\int_{\mathbb{T}^{2N}}\frac{|\nabla\rho_s^{\eps,N}|^2}{\rho^{\eps,N}_s}   \mathd x^N   \right)\mathd s  
		=\int_{\mathbb{T}^{2N}} \rho_0^{N}\log {\rho}_0^{N }  \mathd x^N. \label{est:unifishereps}
	\end{align}

Step 2: In this step, we  deduce the final result, i.e.  \eqref{eqt:scaentr}, based on  the weak convergence of measure for $\rho^{\eps,N}_t.$ 

 In the following, we want to  let $\eps \rightarrow 0$ in \eqref{est:unifishereps}. To achieve this, we first show a weak convergence of measure result.
Since $X_t^{\eps,N}\rightarrow X_t^N$ for all $t\in [0,T]$ almost surely, 
we obtain that it holds $\tilde{\mathbb{P}}-$ almost surely that $$ \rho^{\eps,N}_t \rightharpoonup  \rho^{N}_t, \quad \eps \rightarrow 0,\quad \forall t\in [0,T],$$
 in the sense of weak convergence in $\mathcal{P}(\mathbb{T}^{2N}).$
By  the lower semicontinuity of Fisher information and entropy functionals,  we arrive at 
\begin{align*}
	&e^{- \int_0^{t}C(s)\mathd s} \int_{\mathbb{T}^{2N}} \rho_t^{N}\log {\rho}_t^{N }  \mathd x^N  + \int_0^t e^{- \int_0^{s}C(r)\mathd r} C(s)  \int_{\mathbb{T}^{2N}} \rho_s^{N}\log {\rho}_s^{N }  \mathd x^N   \mathd s 
	\\&+\int_0^t 	e^{- \int_0^{s}C(r)\mathd r} \int_{\mathbb{T}^{2N}}\frac{|\nabla\rho_s^{N}|^2}{\rho^{N}_s}   \mathd x^N  \mathd s \leq  \int_{\mathbb{T}^{2N}} \rho_0^{N}\log {\rho}_0^{N }  \mathd x^N .
\end{align*}
This completes the proof.
\end{proof}

\subsection{Envolving the relative entropy}\label{sec:relaen}

Recall that \begin{align*}
	H_N(\rho_t^N|\bar{\rho}_t^{ N}) = \frac{1}{N}\int_{\mathbb{T}^{2N}} \rho^N_t \log \rho^N_t \mathd x^N - \frac{1}{N}\int_{\mathbb{T}^{2N}}\rho^N_t \log \bar{\rho}_t^N \mathd x^N,
\end{align*} 
where the random measures  $\rho_t^N(\mathd x^N)$  and $ \bar{\rho}_t^N(\mathd x^N)$ on $\mathbb{T}^{2N}$  are defined in \eqref{eqt:disparti} and \eqref{eqt:disinmean}.

Our next objective is to evolve the second term $\frac{1}{N}\int_{\mathbb{T}^{2N}}\rho^N_t \log \bar{\rho}_t^N \mathd x^N$ in the normalized relative entropy $H_N(\rho_t^N|\bar{\rho}_t^{ N})$. The typical approach involves employing Itô's formula to derive the Liouville equation satisfied by $(\rho^{N}_t)_{t\in[0,T]}$ in the distribution sense 
 and $\log \bar{\rho}^N_t,$ often be modeled as a smooth function (such as \cite{jabin2016mean}, \cite{jabin2018quantitative}), is recarded as a test function to calculate the second term $\frac{1}{N}\int_{\mathbb{T}^{2N}}\rho^N_t \log \bar{\rho}_t^N \mathd x^N$. However, due to  the restriction of the regularity in time of $\bar{\rho}_t^N$ caused by the introduction of common   noise $\{W_k, k=1,\cdots,d\},$ it is difficult to calculate the second term $\frac{1}{N}\int_{\mathbb{T}^{2N}}\rho^N_t \log \bar{\rho}_t^N \mathd x^N$ in the normalized relative entropy $H_N(\rho_t^N|\bar{\rho}_t^{ N})$ directly by regarding $\log \bar{\rho}^N_t$ as a test function. The idea here is similiar to the approach employed for the calculation of the first term $\frac{1}{N}\int_{\mathbb{T}^{2N}} \rho^N_t \log \rho^N_t \mathd x^N$ in Lemma \ref{lem:entropy solution}. Initially, we address the regularized system \eqref{eqt:approvortex}, i.e. we calculate $\frac{1}{N}\int_{\mathbb{T}^{2N}}\rho^{\eps,N}_t \log \bar{\rho}_t^N \mathd x^N$ at first. We then obtain $\frac{1}{N}\int_{\mathbb{T}^{2N}}\rho^{N}_t \log \bar{\rho}_t^N \mathd x^N$ by the weak convergence of $\rho^{\eps,N}_t$. It's important to note that we utilize the estimate provided in Lemma \ref{fournier3.3} (Lemma 3.3 in \cite{fournier2014propagation}) to handle the interacting part $J_4^N$ (see below), which is arguably the most challenging part.

\begin{lemma}\label{lem:entro-2}Given the random measures $( \rho_t^{N}(z^N,\omega))_{t\in[0,T]}$ and $( \bar{\rho}_t^{N}(\omega))_{t\in[0,T]}$  on $\mathbb{T}^{2N}$ defined in \eqref{eqt:disparti} and \eqref{eqt:disinmean}, where $(z^N,\omega)\in \tilde{\Omega}=\mathbb{R}^N\times\Omega$ is defined in \eqref{eqt:extenpro} and $\omega \in\Omega,$  it holds $\tilde{\mathbb{P}}$-almost surely for each $t\in [0,T]$ that 
	\begin{align}	\label{eqt:sceond}
		  \int_{\mathbb{T}^{2N}}\rho^N_t \log \bar{\rho}_t^N \mathd x^N =\int_{\mathbb{T}^{2N}}\rho^N_0 \log \bar{\rho}_0^N \mathd x^N+\int_{0}^{t}J_1^N (s)\mathd s +\int_{0}^{t}J_2^N(s) \mathd s+\int_{0}^{t}J_3^N (s)\mathd s+\int_{0}^{t}J_4^N (s)\mathd s,
	\end{align} 
where $J_i,i=1,2,3,4,$ are defined by
	\begin{align*}
		J_1^N(s)\assign&\sum_{i=1}^N\int_{\mathbb{T}^{2N}} \rho_s^N\frac{\Delta \bar{\rho}_s(x_i)}{\bar{\rho}_s(x_i)} \mathd x^N,
		\\J_{2}^N(s)\assign& \sum_{i=1}^N\int_{\mathbb{T}^{2N}}  \rho^N_s(x^N) \Delta_{x_i}\log \bar{\rho}_s^N (x^N)\mathd x^N,
		\\J_{3}^N(s)\assign&-\sum_{i=1}^N \int_{\mathbb{T}^{2N}} \rho^N_s(x^N) K*  v_s(x_i)\cdot \nabla\log \bar{\rho}_s(x_i) \mathd x^N,
\\J_{4}^N(s)\assign&	\frac{1}{N}\sum_{i=1}^{N}\sum_{j\neq i}\int_{\mathbb{T}^{2N}}\left(\rho^{N}_s (x^N)z_jK(x_i-x_j)\right)\cdot\nabla_{x_i} \log \bar{\rho}_s^N(x^N) \mathd x^N.
\end{align*}
Here $\tilde{\mathbb{P}}$ is defined in \eqref{eqt:extenpro} and $x^N=(x_1,\cdots,x_N)\in \mathbb{T}^{2N}, z^N=(z_1,\cdots,z_N)\in\mathbb{R}^N.$
\end{lemma}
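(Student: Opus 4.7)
The plan is a regularization argument: first derive the analogue of \eqref{eqt:sceond} for the regularized density $\rho^{\eps,N}_t$ from \eqref{def:regparti} (with $\nabla^{\perp}G_\eps$ replacing $K$ in the interaction), and then pass to the limit $\eps\to 0$ by combining the weak convergence $\rho^{\eps,N}_t\rightharpoonup\rho^N_t$ with the uniform Fisher-information estimate \eqref{est:unirela} and the concentration bound of Lemma~\ref{fournier3.3}.

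In the regularized setting, both processes are well behaved: $\rho^{\eps,N}_t\in C([0,T];H^3(\mathbb{T}^{2N}))$ solves \eqref{eqt:louappro} in the strong sense, and by Lemma~\ref{lem:reg} and Assumption~\ref{initial} the field $\bar\rho_t$ stays bounded below by $a>0$, so $\log\bar\rho^N_t$ is bounded and belongs to $H^2$ in space. First I would apply the Stratonovich chain rule pointwise in $x$ to \eqref{ptsobol3} to obtain
\begin{align*}
\mathd \log\bar\rho_t(x) = \frac{\Delta\bar\rho_t(x) - K*v_t(x)\cdot\nabla\bar\rho_t(x)}{\bar\rho_t(x)}\,\mathd t -\sum_{k=1}^{d}\sigma_k(x)\cdot\nabla\log\bar\rho_t(x)\circ\mathd W^k_t,
\end{align*}
then use the Stratonovich product rule on $\rho^{\eps,N}_t \log\bar\rho^N_t$, integrate over $\mathbb{T}^{2N}$, and integrate by parts in $x^N$ (exploiting $\nabla\cdot\nabla^{\perp}=0$ and $\div\,\sigma_k=0$ from Assumption~\ref{assumption2}). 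This produces exactly the drift terms $J_1^{\eps,N},J_2^{\eps,N},J_3^{\eps,N},J_4^{\eps,N}$ with $\rho^{\eps,N}$ and $\nabla^{\perp}G_\eps$ in place of $\rho^N$ and $K$. The two Stratonovich stochastic integrals cancel exactly, because the identity $\nabla_{x_i}\log\bar\rho^N(x^N)=\nabla\log\bar\rho(x_i)$ makes the noise contribution from $\mathd\rho^{\eps,N}$ tested against $\log\bar\rho^N$ and that from $\rho^{\eps,N}$ tested against $\mathd\log\bar\rho^N$ opposite in sign.

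For the limit $\eps\to 0$, Proposition~\ref{thm:vortex} gives $X^{\eps,N}(t)\to X^N(t)$ almost surely and hence $\rho^{\eps,N}_t\rightharpoonup\rho^N_t$ in $\mathcal{P}(\mathbb{T}^{2N})$, $\tilde{\mathbb{P}}$-a.s. Since $\log\bar\rho^N_t$ is bounded and continuous, the left-hand side of the identity converges. For $J_1^{\eps,N},J_2^{\eps,N},J_3^{\eps,N}$ the relevant test functions $\Delta\bar\rho/\bar\rho$, $\Delta\log\bar\rho$, and $K*v\cdot\nabla\log\bar\rho$ are bounded and continuous (using $H^2\hookrightarrow C^0$ on $\mathbb{T}^2$, the lower bound on $\bar\rho$, and the fact that $v\in H^2$ gives $K*v\in H^3\hookrightarrow C^0$), so weak convergence together with dominated convergence in $s$ suffice.

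The hard part will be the interaction term $J_4$, where the singularity of $K$ must be tamed. I would split
\begin{align*}
J_4^{\eps,N}(s)-J_4^N(s)
&=\tfrac{1}{N}\sum_{i\neq j}\int \rho^{\eps,N}_s\, z_j(\nabla^{\perp}G_\eps-K)(x_i-x_j)\cdot\nabla\log\bar\rho(x_i)\,\mathd x^N\\
&\quad+\tfrac{1}{N}\sum_{i\neq j}\int(\rho^{\eps,N}_s-\rho^N_s)\,z_j\, K(x_i-x_j)\cdot\nabla\log\bar\rho(x_i)\,\mathd x^N.
\end{align*}
The first piece is supported in $\{|x_i-x_j|\leq\eps\}$ with integrand of size $C/\eps$; writing $\mathbf 1_{|x_i-x_j|\leq\eps}\leq\eps^r|x_i-x_j|^{-r}$ and applying Lemma~\ref{fournier3.3} to the two-marginal $\rho^{\eps,N,2}_s$ with $r\in(1,2)$ and $\beta>r/2$ bounds this piece by $C\eps^{r-1}(I^{\beta}(\rho^{\eps,N,2}_s)+1)$, which vanishes in $L^1([0,t])$ using the uniform Fisher-information bound \eqref{est:unirela} (inherited by marginals) together with the compact support of $\mathcal{L}(\xi)$ from Assumption~\ref{initial}. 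The second piece is handled by truncating $K$ at scale $\eta>0$: on the bounded truncated kernel the weak convergence $\rho^{\eps,N,2}_s\rightharpoonup\rho^{N,2}_s$ applies, while the tail near the diagonal is controlled uniformly in $\eps$ again by Lemma~\ref{fournier3.3} (as $|K(x)|\lesssim 1/|x|$), and finally $\eta\to 0$ closes the argument. Combining these estimates yields $\int_0^t J_4^{\eps,N}(s)\,\mathd s\to\int_0^t J_4^N(s)\,\mathd s$, completing the proof of \eqref{eqt:sceond}.
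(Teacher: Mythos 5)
Your proposal follows the paper's proof in both structure and technique: regularize, derive the drift-only balance for $\rho^{\eps,N}_t$ after observing that the two Stratonovich noise contributions cancel upon integration by parts using $\div\,\sigma_k=0$, and pass to the limit $\eps\to0$ via the weak convergence $\rho^{\eps,N}_t\rightharpoonup\rho^N_t$, the uniform Fisher-information bound of Lemma~\ref{lem:entropy solution}, and the singular-integral estimate of Lemma~\ref{fournier3.3}. The only cosmetic difference is in the $J_4$ limit, where you truncate $K$ with a sharp indicator at scale $\eta$ while the paper uses the smooth $\nabla^{\perp}G_\delta$ (which sidesteps testing weak convergence against a discontinuous kernel); the decompositions and Fisher-information controls are otherwise equivalent.
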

\begin{proof}
	The proof consists of two steps. 
	
	Step 1: In the first step, we deduce a similar equality as in \eqref{eqt:sceond} for a regularized version, i.e. \eqref{eqt:secondre}. 
		
		We start with  the approximation model \eqref{eqt:louappro}, i.e.
		\begin{align*}
			\mathd \rho^{\eps,N}_t=\sum_{i=1}^N\Delta_{x_i}\rho^{\eps,N}_t\mathd t -\frac{1}{N}\sum_{i\neq j}\div_{x_i}\(z_j\nabla^{\perp}G_{\eps}(x_i-x_j)\rho^{\eps,N}_t\)\mathd t-\sum_{i=1 }^N\sum_{k=1}^d\div_{x_i}\(\sigma_k(x_i)\rho^{\eps,N}_t\)\circ\mathd W^k_s,
		\end{align*} 
	where the random measures $\rho_t^{\eps,N}(\mathd x^N)$ on $\mathbb{T}^{2N}$ is defined in \eqref{def:regparti}.
	
	Based on \eqref{eq:re2} and \eqref{eq:entropy}, using 	It\^o's formula to $\rho_t^{\eps,N}(x^N)\log\bar{\rho}_t^N(x^N) $ and integrating  on $\mathbb{T}^{2N}$, 
we arrive at
	\begin{align}
	&	\mathd  \int_{\mathbb{T}^{2N}}\rho^{\eps,N}_t(x^N) \log \bar{\rho}_t^N(x^N) \mathd x^N\nonumber
\\=&J_1^{\eps,N}(t) \mathd t +J_2^{\eps,N}(t)\mathd t +J_3^{\eps,N}(t)\mathd t+J_4^{\eps,N}(t)\mathd t+ \mathd M^{\eps,N}(t),\nonumber
\end{align}
where  $J_1^{\eps,N}, J_2^{\eps,N},J_3^{\eps,N}$ and $J_4^{\eps,N}$ denote 
\begin{align*}
	J_1^{\eps,N}(t)=& \sum_{i=1}^N \int_{\mathbb{T}^{2N}}\rho_t^{\eps,N}(x^N)\frac{\Delta \bar{\rho}_t(x_i)}{\bar{\rho}_t(x_i)}\mathd x^N,
	\\J_2^{\eps,N}(t)=&\int_{\mathbb{T}^{2N}} \Delta \rho^{\eps,N}_t (x^N)\log \bar{\rho}_t^N(x^N) \mathd x^N=\sum_{i=1}^N\int_{\mathbb{T}^{2N}}  \rho^{  \eps,N}_s(x^N) \Delta_{x_i}\log \bar{\rho}_s^N (x^N)\mathd x^N,
	\\J_3^{\eps,N}(t)=&-\sum_{i=1}^N \int_{\mathbb{T}^{2N}} \rho^{\eps,N}_t(x^N)K*  v_t(x_i)\cdot  \nabla\log \bar{\rho}_t(x_i)\mathd x^N,
	\\J_4^{\eps,N}(t)= &\frac{1}{N}\sum_{i=1}^{N}\sum_{j\neq i}\int_{\mathbb{T}^{2N}}\left(\rho^{\eps,N}_t (x^N)z_j\nabla^{\perp}G_{\eps}(x_i-x_j)\right)\cdot \nabla_{x_i} \log \bar{\rho}_t^N(x^N)\mathd x^N,
\end{align*}
and 
$M^{\eps,N}(t)$ denotes 
\begin{align*}
	& M^{\eps,N}(t)
	\\=&- \sum_{i=1}^N\sum_{k=1}^{d}\int_0^t\left(\int_{\mathbb{T}^{2N}} \div_{x_i}\left( \sigma_k(x_i)\rho_s^{\eps,N}(x^N)\right)\log \bar{\rho}_s^N(x^N)+ \rho_s^{\eps,N}(x^N)\frac{\sigma_k(x_i)\cdot \nabla \bar{\rho}_s(x_i)}{\bar{\rho}_s(x_i)}\mathd x^N\right) \circ \mathd W^{k}_s.
\end{align*}
Here we used integration by parts to get the second equality in $J_2^{\eps,N}$  term.

Observe the cancellation  in $ M^{\eps,N}(t)$ by integration by parts, 
\begin{align}
	\int_{\mathbb{T}^{2N}} \div_{x_i} \left( \sigma_k(x_i)\rho_s^{\eps,N}\right)\log \bar{\rho}_s^N(x^N)\mathd x^N=&- \int_{\mathbb{T}^{2N}} \rho_s^{\eps,N}(x^N)\sigma_k(x_i)\cdot \nabla _{x_i}\log \bar{\rho}^N_s(x^N)\mathd x^N\nonumber
	\\=&-\int_{\mathbb{T}^{2N}} \rho_s^{\eps,N}(x^N)
	\frac{\sigma_k(x_i)\cdot \nabla \bar{\rho}_s(x_i)}{\bar{\rho}_s(x_i)}\mathd x^N,\quad \forall i,k.\nonumber
\end{align}
We thus have 
\begin{align*}
	M^{\eps,N}(t)=0.
\end{align*}
We then conclude that it holds $\tilde{\mathbb{P}}$-almost surely that for each $t\in [0,T]$ 
\begin{align}\label{eqt:secondre}
&\int_{\mathbb{T}^{2N}}\rho^{\eps,N}_t(x^N) \log \bar{\rho}_t^N(x^N) \mathd x^N-\int_{\mathbb{T}^{2N}}\rho^{N}_0(x^N) \log \bar{\rho}_0^N(x^N) \mathd x^N\nonumber\\&=\int_{0}^{t}J_1^{\eps,N} (s)\mathd s +\int_{0}^{t}J_2^{\eps,N}(s) \mathd s+\int_{0}^{t}J_3^{\eps,N} (s)\mathd s+\int_{0}^{t}J_4^{\eps,N} (s)\mathd s.
\end{align} 

Step 2: Based the regularity of $(\bar{\rho}_t^{N})_{t\in[0,T]}$ given in  \eqref{ptsobol1} and \eqref{ptsobol2}, and Lemma \ref{fournier3.3}, we let $\eps\rightarrow 0$ in \eqref{eqt:secondre} and obtain the final result.

Recall that $$\bar{\rho}=\frac{1}{\mathbb{E}[\xi_1]}  v\in C([0,T];C(\mathbb{T}^2)), \quad \tilde{\mathbb{P}}-a.s.,$$ and  \begin{align*}
	\underset{x\in \mathbb{T}^2}{\inf}\bar{\rho}_t\geq\underset{x\in \mathbb{T}^2}{\inf}\bar{\rho}_0>0, \quad \forall t\in[0,T],\quad\tilde{\mathbb{P}}-a.s.,
\end{align*} from Lemma \ref{lem:reg}\footnote{We can  observe that  the outcome of  Lemma \ref{lem:reg} is considered under $\mathbb{P}.$ However, it's important to note that $\bar{\rho}(t,x^N)(\omega)$ is independent of $z^N$. Consequently, all the results in Theorem \ref{thm:spde} and Lemma \ref{lem:reg} can be considered under the joint probability measure $\tilde{\mathbb{P}} (\mathd z^N, \mathd   \omega)= \mathcal{L}(\xi^N)(\mathd z^N) \otimes \mathbb{P}(\mathd   \omega)$. We will not explicitly mention this point.}, by $\rho^{\eps,N}_t \rightarrow \rho^{N}_t$ in $\mathcal{P}(\mathbb{T}^{2N}),$
we have 
\begin{align*}
	\int_{\mathbb{T}^{2N}}\rho^N_t \log \bar{\rho}_t^N \mathd x^N&=\lim\limits_{\eps\rightarrow 0 } \int_{\mathbb{T}^{2N}}\rho^{\eps,N}_t \log \bar{\rho}_t^N \mathd x^N, \quad \forall t\in[0,T],\quad\tilde{\mathbb{P}}-a.s..
\end{align*}
From \eqref{ptsobol2}, by Sobolev embedding theorem,  we have 
 \begin{align*}
	\int_0^T \|\bar{\rho}(s)\|_{C^2}^2 \mathd s\lesssim	\int_0^T \|\bar{\rho}(s)\|_{H^4}^2 \mathd s<\infty\quad\tilde{\mathbb{P}}-a.s..
\end{align*} 
We then deduce that there exists a null set $\tilde{N}$ such that for every
$\tilde{\omega}=(z^N,\omega)\in\tilde{N}^c,$
	$\frac{\Delta \bar{\rho}_t}{\bar{\rho}_t}\in C(\mathbb{T}^2)$ for $a.e.t\in[0,T]$
and
\begin{align*}
	\lim_{\eps \rightarrow 0}J_1^{\eps,N}(t)&=\sum_{i=1}^N \int_{\mathbb{T}^{2N}}\rho_t^N(x^N)\frac{\Delta \bar{\rho}_t(x_i)}{\bar{\rho}_t(x_i)}\mathd x^N. 
\end{align*}
 for $a.e.t\in[0,T]$. (We omit $z^N,\omega$ in $\rho_t^N$ and $\bar{\rho}_t^N$ for simplicity. )

 Furthermore, since $\rho_t^{\eps,N}$ is a probability measure,
 \begin{align*}
 	\mid J_1^{\eps,N}(t)\mid=\bigg| \int_{\mathbb{T}^{2N}}\rho_t^{  \eps,N}(x^N)\frac{\Delta \bar{\rho}_t(x_i)}{\bar{\rho}_t(x_i)}\mathd x^N \bigg|
 	&\leq \frac{1}{\underset{x\in \mathbb{T}^2}{\inf}\bar{\rho}_0} \| \Delta \bar{\rho}_t\|_{L^\infty}
 	\\&\lesssim\frac{\| \bar{\rho}_t\|_{H^4}}{\underset{x\in \mathbb{T}^2}{\inf}\bar{\rho}_0}.
 \end{align*}
Here we used Sobolev embedding theorem to get the second inequality.
Recall  $\int_0^T \|\bar{\rho}(s)\|_{H^4}^2 \mathd s$ is finite $\tilde{\mathbb{P}}-$almost surely, by the dominated convergence theorem,
we have 
\begin{align*}
	\lim_{\eps \rightarrow 0}\int_{0}^{t}J_1^{\eps,N}(s)\mathd s=\int_{0}^{t}J_1^{N}(s)\mathd s,\quad \forall t\in[0,T],\quad\tilde{\mathbb{P}}-a.s..
\end{align*}

Similiarly, we also have
\begin{align*}
\lim_{\eps \rightarrow 0}\int_{0}^{t}J_2^{\eps,N}(s)\mathd s&=\int_{0}^{t}J_2^{N}(s)\mathd s,\quad \forall t\in[0,T],\quad\tilde{\mathbb{P}}-a.s.,
\\\lim_{\eps \rightarrow 0}\int_{0}^{t}J_3^{\eps,N}(s)\mathd s&=\int_{0}^{t}J_3^{N}(s)\mathd s,\quad \forall t\in[0,T],\quad\tilde{\mathbb{P}}-a.s..
\end{align*}

The most challenging part is the fourth term $J_4^{N,\eps}.$
Recall that
\begin{align}\label{J4}
&\bigg|\int_{0}^{t}J_4^{  \eps,N}(s)\mathd s-\int_{0}^{t}J_4^{N}(s)\mathd s\bigg|\nonumber\\=&\frac{1}{N}\bigg|\sum_{i=1}^{N}\sum_{j\neq i}\int_{0}^{t}\int_{\mathbb{T}^{2N}}\left(\rho^{\eps,N}_s \nabla^{\perp}G_{\eps}(x_i-x_j)\right) \cdot z_j\nabla_{x_i}\log \bar{\rho}_s^N\mathd x^N \mathd s\nonumber\\&-\sum_{i=1}^{N}\sum_{j\neq i}\int_{0}^{t}\int_{\mathbb{T}^{2N}}\left(\rho^{N}_sK(x_i-x_j)\right) \cdot z_j\nabla_{x_i}\log \bar{\rho}_s^N\mathd x^N \mathd s\bigg|.
\end{align}
We define \begin{align*}
	h^j_s(x_i)\assign z_j\nabla_{x_i}\log \bar{\rho}_s^N=z_j\frac{\nabla_{x_i} \bar{\rho}_s(x_i)}{\bar{\rho}_s(x_i)}.
\end{align*}
Recall there exists some $M>0,$ such that $|z_j|\leq M,\quad  \forall j=1,\cdots,N,$ $\tilde{\mathbb{P}}-$ almost surely, given by   Assumption \eqref{initial}. By Sobolev embedding theorem, we have
\begin{align*}
	\|h^j_s\|_{L^{\infty}(\mathbb{T}^2)}\leq \frac{M}{\underset{x\in \mathbb{T}^2}{\inf}\bar{\rho}_0}\|\nabla\bar{\rho}_s\|_{L^{\infty}}\lesssim\|\bar{\rho}_s\|_{H^{2+\frac{1}{2}}}.
\end{align*}
By Theorem 1.52 in \cite{bahouri2011fourier}, we   deduce that 
\begin{align*}
	\int_{0}^{T}	\|h^j_s\|_{L^{\infty}}^8\mathd s&\lesssim \int_{0}^{T}\|\bar{\rho}_s\|^6_{H^2}\|\bar{\rho}_s\|_{H^4}^2\mathd s
	\\ &\lesssim \sup_{t\in [0,T ]}\|\bar{\rho}_t\|_{H^2}^6\int_{0}^{T}\|\bar{\rho}_s\|_{H^4}^2\mathd s.
\end{align*}
Recall that $\sup_{t\in [0,T ]}\|\bar{\rho}_t\|_{H^2}$ and $\int_{0}^{T}\|\bar{\rho}_s\|_{H^4}^2\mathd s$ are finite $\tilde{\mathbb{P}}-$almost surely, given by \eqref{ptsobol2} in Section \ref{sec2dns}, we then have $\int_{0}^{T}	\|h^j_s\|_{L^{\infty}}\mathd s$ and $\int_{0}^{T}	\|h^j_s\|_{L^{\infty}}^8\mathd s$ are finite $\tilde{\mathbb{P}}-$almost surely.

We now denote $\delta$ a constant number  belongs to $(0,1)$ and decompose \eqref{J4} into the following form.
\begin{align*}
	&\bigg|\int_{0}^{t}J_4^{\eps,N}(s)\mathd s-\int_{0}^{t}J_4^{N}(s)\mathd s\bigg|\\=&\frac{1}{N}\bigg|\sum_{i=1}^{N}\sum_{j\neq i}\int_{0}^{t}\int_{\mathbb{T}^{2N}}\left(\rho^{\eps,N}_s \nabla^{\perp}G_{\eps}(x_i-x_j)\right) \cdot h^j_s\mathd x^N \mathd s\\&-\sum_{i=1}^{N}\sum_{j\neq i}\int_{0}^{t}\int_{\mathbb{T}^{2N}}\left(\rho^{N}_sK(x_i-x_j)\right) \cdot h^j_s\mathd x^N \mathd s\bigg|
	\\\leq&\frac{1}{N}\sum_{i=1}^{N}\sum_{j\neq i} \int_{0}^{t}\bigg|\int_{\mathbb{T}^{2N}}\nabla^{\perp}G_{\delta}\cdot h^j_s(\rho^{\eps,N}_s -\rho^{N}_s )\mathd x^N\bigg|\mathd s\\&+\frac{1}{N}\sum_{i=1}^{N}\sum_{j\neq i}\int_{0}^{t}\|h^j_s\|_{L^{\infty}}\int_{\mathbb{T}^{2N}}\rho^{\eps,N}_s |\nabla^{\perp}G_{\delta}-K|\mathd x^N \mathd s\\&+\frac{1}{N}\sum_{i=1}^{N}\sum_{j\neq i}\int_{0}^{t}\|h^j_s\|_{L^{\infty}}\int_{\mathbb{T}^{2N}}\rho^{\eps,N}_s |\nabla^{\perp}G_{\eps}-K|\mathd x^N \mathd s
	\\&+\frac{1}{N}\sum_{i=1}^{N}\sum_{j\neq i}\int_{0}^{t}\|h^j_s\|_{L^{\infty}}\int_{\mathbb{T}^{2N}}\rho^{N}_s |\nabla^{\perp}G_{\delta}-K|\mathd x^N \mathd s.
\end{align*}

Due to the properties of Biot Savart law $K=\nabla^{\perp} G$ and the regularized version of Biot Savart law $\nabla^{\perp} G_{  \eps}$ introduced in Section \ref{sec:sto vortex}, i.e. $|K|\leq\frac{C}{|x|}$, $|\nabla^{\perp} G_{\eps}|\leq\frac{C}{|x|}$ and $K=\nabla^{\perp} G_  \eps$ when $|x|> \eps,$ where $C$ is a positive constant, we have
\begin{align*}
		&\bigg|\int_{0}^{t}J_4^{\eps,N}(s)\mathd s-\int_{0}^{t}J_4^{N}(s)\mathd s\bigg|
		\\&\leq\frac{1}{N} \sum_{i=1}^{N}\sum_{j\neq i}\int_{0}^{t}\bigg|\int_{\mathbb{T}^{2N}}\nabla^{\perp}G_{\delta}\cdot h^j_s(\rho^{\eps,N}_s -\rho^{N}_s )\mathd x^N\bigg|\mathd s\\&+\frac{C}{N}\sum_{i=1}^{N}\sum_{j\neq i}\int_{0}^{t}\|h^j_s\|_{L^{\infty}}\int_{\mathbb{T}^{2}}\frac{1}{|x_i-x_j|}I_{\{|x_i-x_j|\leq \delta\}}\rho^{\eps,N,2}_s\mathd x_i\mathd x_j \mathd s\\&+\frac{C}{N}\sum_{i=1}^{N}\sum_{j\neq i}\int_{0}^{t}\|h^j_s\|_{L^{\infty}}\int_{\mathbb{T}^{2}}\frac{1}{|x_i-x_j|}I_{\{|x_i-x_j|\leq \eps\}}\rho^{\eps,N,2}_s\mathd x_i\mathd x_j \mathd s
		\\&+\frac{C}{N}\sum_{i=1}^{N}\sum_{j\neq i}\int_{0}^{t}\|h^j_s\|_{L^{\infty}}\int_{\mathbb{T}^{2}}\frac{1}{|x_i-x_j|}I_{\{|x_i-x_j|\leq \delta\}}\rho^{N,2}_s\mathd x_i\mathd x_j \mathd s.
\end{align*}
 Here two random measures $\rho^{N,2}_t(\mathd x_i\mathd x_j), \rho^{\eps,N,2}_t(\mathd x_i\mathd x_j)$ on $\mathbb{T}^{4}$ denote the $2$-marginals for the two random measures $\rho^{N}_t(\mathd x^N), \rho^{\eps,N}_t(\mathd x^N)$ on $\mathbb{T}^{2N}$ (see the definition of marginal in Notations) and $C$ is a positive deterministic constant.
 
 The following inequality follows from the property that the  normalized Fisher information of the marginals is bounded by  the total normalized  Fisher information (Lemma 3.7 in \cite{hauray2014kac}), i.e. $I(\rho^{N,k})\leq \frac{k}{N}I(\rho^N), k\leq N,$ and Lemma \ref{fournier3.3}:
 \begin{align*}
 	&\bigg|\int_{0}^{t}J_4^{\eps,N}(s)\mathd s-\int_{0}^{t}J_4^{N}(s)\mathd s\bigg|\\&\leq  \sum_{i=1}^{N}\sum_{j\neq i}\int_{0}^{t}\bigg|\int_{\mathbb{T}^{2N}}\nabla^{\perp}G_{\delta}\cdot h^j_s(\rho^{\eps,N}_s -\rho^{N}_s )\mathd x^N\bigg|\mathd s\\&+\frac{C_{\eta,\beta}}{N}\sum_{i=1}^{N}\sum_{j\neq i}(\eps^{\eta}+\delta^{\eta})\int_{0}^{t}\(I^\beta(\rho^{  \eps,N}(s))+C_{\eta,\beta}\)\|h^j_s\|_{L^{\infty}}\mathd s\\&+\frac{C_{\eta,\beta}}{N}\sum_{i=1}^{N}\sum_{j\neq i}\delta^{\eta}\int_{0}^{t}\(I^\beta(\rho^{N}(s))+C_{\eta,\beta}\)\|h^j_s\|_{L^{\infty}}\mathd s.
 \end{align*}
 Here $\eta$ represent a constant number  belongs to $(0,1),$ $\beta>\frac{1+\eta}{2},$ and $C_{\eta,\beta}$ is a positive deterministic constant depends on $\beta$ and $\eta$.
 We now choose $\eta=\frac{1}{2},\beta=\frac{7}{8}$ and use Holder's inequality to obtain that 
 \begin{align*}
 	&\bigg|\int_{0}^{t}J_4^{\eps,N}(s)\mathd s-\int_{0}^{t}J_4^{N}(s)\mathd s\bigg|\\&\leq  \sum_{i=1}^{N}\sum_{j\neq i}\int_{0}^{t}\bigg|\int_{\mathbb{T}^{2N}}\nabla^{\perp}G_{\delta}\cdot h^j_s(\rho^{\eps,N}_s -\rho^{N}_s )\mathd x^N\bigg|\mathd s\\&+\frac{C}{N}\sum_{i=1}^{N}\sum_{j\neq i}(\eps^{\frac{1}{2}}+\delta^{\frac{1}{2}})\bigg[\(\int_{0}^{t}I(\rho^{\eps,N}(s))\mathd s\)^{\frac{7}{8}}\(\int_{0}^{T}\|h^j_s\|^8_{L^{\infty}}\mathd s\)^{\frac{1}{8}}+C\int_{0}^{T}\|h^j_s\|_{L^{\infty}}\mathd s\bigg]\\&+\frac{C}{N}\sum_{i=1}^{N}\sum_{j\neq i}\delta^{\frac{1}{2}}\bigg[\(\int_{0}^{t}I(\rho^{N}(s))\mathd s\)^{\frac{7}{8}}\(\int_{0}^{T}\|h^j_s\|^8_{L^{\infty}}\mathd s\)^{\frac{1}{8}}+C\int_{0}^{T}\|h^j_s\|_{L^{\infty}}\mathd s\bigg].
 \end{align*}
 Since there exists a positive deterministic constant $C$ such that
 \begin{align*}
 	&\int_{\mathbb{T}^{2N}}\rho_t^{N}\log \rho_t^{N} \mathd x^N>-C,\quad \forall t\in[0,T],\quad \tilde{\mathbb{P}}-a.s.,
 \\	&\int_{\mathbb{T}^{2N}}\rho_t^{\eps,N}\log \rho_t^{\eps,N} \mathd x^N>-C,\quad \forall\eps>0 ,\quad \forall t\in[0,T],\quad \tilde{\mathbb{P}}-a.s..
 \end{align*}
 (see Lemma 3.1 in \cite{hauray2014kac}),
 by  \eqref{eqt:scaentr} and  \eqref{est:unifishereps} in Lemma \ref{lem:entropy solution} with $C(t)=0, \forall t\in[0,T],$ we have
 \begin{align*}
 	\bigg|\int_{0}^{t}J_4^{\eps,N}(s)\mathd s-\int_{0}^{t}J_4^{N}(s)\mathd s\bigg|&\leq  \sum_{i=1}^{N}\sum_{j\neq i}\int_{0}^{t}\bigg|\int_{\mathbb{T}^{2N}}\nabla^{\perp}G_{\delta}\cdot h^j_s(\rho^{\eps,N}_s -\rho^{N}_s )\mathd x^N\bigg|\mathd s\\&+\frac{C_{\rho_0}}{N}\sum_{i=1}^{N}\sum_{j\neq i}\bigg[\(\int_{0}^{T}\|h^j_s\|^8_{L^{\infty}}\mathd s\)^{\frac{1}{8}}+\int_{0}^{T}\|h^j_s\|_{L^{\infty}}\mathd s\bigg](\eps^{\frac{1}{2}}+\delta^{\frac{1}{2}})\\&+\frac{C_{\rho_0}}{N}\sum_{i=1}^{N}\sum_{j\neq i}\bigg[\(\int_{0}^{T}\|h^j_s\|^8_{L^{\infty}}\mathd s\)^{\frac{1}{8}}+\int_{0}^{T}\|h^j_s\|_{L^{\infty}}\mathd s\bigg]\delta^{\frac{1}{2}},
 \end{align*}
where $C_{\rho_0}$ is a positive deterministic constant depends on $\rho_0.$
Let $\eps \rightarrow 0,$  
we can deduce that the first term converges to $0$ by the similar method employed in proving $\lim_{\eps \rightarrow 0}\int_{0}^{t}J_1^{\eps,N}(s)\mathd s=\int_{0}^{t}J_1^{N}(s)\mathd s.$
We then  have
\begin{align*}
	&\limsup_{\eps \rightarrow 0}\bigg|\int_{0}^{t}J_4^{\eps,N}(s)\mathd s-\int_{0}^{t}J_4^{N}(s)\mathd s\bigg|  \leq\frac{C_{\rho_0}}{N}\sum_{i=1}^{N}\sum_{j\neq i}\bigg[\(\int_{0}^{T}\|h^j_s\|^8_{L^{\infty}}\mathd s\)^{\frac{1}{8}}+\int_{0}^{T}\|h^j_s\|_{L^{\infty}}\mathd s\bigg]\delta^{\frac{1}{2}}.
\end{align*}
Recall that $\int_{0}^{T}	\|h^j_s\|_{L^{\infty}}\mathd s$ and $\int_{0}^{T}	\|h^j_s\|_{L^{\infty}}^8\mathd s$ are finite $\tilde{\mathbb{P}}-$almost surely, by the arbitrariness of $\delta$, we finally have \begin{align*}
	\lim_{\eps \rightarrow 0}\int_{0}^{t}J_4^{\eps,N}(s)\mathd s=\int_{0}^{t}J_4^{N}(s)\mathd s,\quad \forall t\in[0,T],\quad\tilde{\mathbb{P}}-a.s..
\end{align*} The proof is then  completed.
\end{proof}

	\begin{proof}[Proof of Theorem \ref{thm:entropy}]
		The proof consists of three steps. 
	
	Step 1: Based on Lemma \ref{lem:entropy solution} and Lemma \ref{lem:entro-2}, we  deduce a rudimentary control of the evolution of relative entropy in this step, i.e. \eqref{inq:reentropy1}, through another formulation of Biot-Savart law and \eqref{con:Frho}. 
	
	 As we mentioned before, we firstly deal with  $H_N(\rho_t^N|\bar{\rho}_t^{ N})$.
			Recall that 
	\begin{align*}
		H_N(\rho_t^N|\bar{\rho}_t^{ N}) = \frac{1}{N}\int_{\mathbb{T}^{2N}} \rho^N_t \log \rho^N_t \mathd x^N - \frac{1}{N}\int_{\mathbb{T}^{2N}}\rho^N_t \log \bar{\rho}_t^N \mathd x^N,
	\end{align*} 
	where the random measures $( \rho_t^{N})_{t\in[0,T]}$ and $( \bar{\rho}_t^{N})_{t\in[0,T]}$  on $\mathbb{T}^{2N}$ are defined in \eqref{eqt:disparti} and \eqref{eqt:disinmean}.
Utilizing the estimates for the first term $\frac{1}{N}\int_{\mathbb{T}^{2N}} \rho^N_t \log \rho^N_t  \mathrm{d} x^N$ in Lemma \ref{lem:entropy solution} and the second term $\frac{1}{N}\int_{\mathbb{T}^{2N}}\rho^N_t \log \bar{\rho}_t^N  \mathrm{d} x^N$ in  Lemma \ref{lem:entro-2},  we can deduce an estimate for the scaled relative entropy $e^{-\int C(\cdot)}H_N(\rho^N|\bar{\rho}^N)$ for any nonnegative  $(\mathcal{F}_t)_{t\in[0,T]}$-progressive measurable process $C(\cdot)\in L^1(\Omega\times[0,T]):$
\begin{align*}
	& \mathd \left( 	e^{- \int_0^{t}C(s)\mathd s}	H_N(\rho_t^N|\bar{\rho}_t^{ N})\right) 
	\\=& \mathd \left( 	e^{- \int_0^{t}C(s)\mathd s}\frac{1}{N}\int_{\mathbb{T}^{2N}} \rho^N_t \log \rho^N_t \mathd x^N \right) - \mathd \left( 	e^{- \int_0^{t}C(s)\mathd s}\frac{1}{N}\int_{\mathbb{T}^{2N}} \rho^N_t \log \bar{\rho}_t^{ N} \mathd x^N \right) 
	\\\leq &- 	C(t)e^{- \int_0^{t}C(s)\mathd s}  	H(\rho_t^N|\bar{\rho}_t^{N}) \mathd t  - 	e^{- \int_0^{t}C(s)\mathd s} (J_1^N(t)+J_2^N(t)) \mathd t,
\end{align*}
where \begin{align*}
	J_1^N(t)=&\sum_{i=1}^N\int_{\mathbb{T}^{2N}} \left(  \frac{|\nabla_{x_i}\rho_t^{N}|^2}{\rho^{N}_t} +\rho^N_t(x^N) \Delta_{x_i}\log \bar{\rho}_t^N (x^N)+ \rho_t^N\frac{\Delta \bar{\rho}_t(x_i)}{\bar{\rho}_t(x_i)}\right) \mathd x^N ,
	\\J_2^N(t)=&	\frac{1}{N}\sum_{i=1}^{N}\sum_{j\neq i}\int_{\mathbb{T}^{2N}}\left(\rho^{N}_t (x^N)z_jK(x_i-x_j)\right)\cdot\nabla_{x_i} \log \bar{\rho}_t^N(x^N) \mathd x^N
	\\&-\sum_{i=1}^N \int_{\mathbb{T}^{2N}} \rho^N_t(x^N) K*  v_t(x_i)\cdot \nabla\log \bar{\rho}_t(x_i) \mathd x^N. 
\end{align*}

In Section \ref{sec:sto vortex}, we introduced the Biot-Savart law $K$, which can be expressed as
$$K=\nabla^{\perp} G.$$ Here $G$ represents the Green function induced by the Laplace operator on  torus $\mathbb{T}^2$.  We now introduce another formulation of Biot-Savart law  given in  \cite{jabin2018quantitative} , \begin{align*}
	K =\div V, \quad V=\begin{pmatrix}
		-\phi \arctan\frac{x_1}{x_2}+ \psi_1&0\\0 &\phi\arctan\frac{x_2}{x_1}+\psi_2
	\end{pmatrix},
\end{align*}which means that for each component $\alpha\in\{1,2\}$, we have $K_{\alpha}=\sum_{\beta}\partial_{\beta}V_{\alpha,\beta}, \beta=1,2.$ Here $\phi $ is a smooth function with compact support in torus $\mathbb{T}^2=[-\pi,\pi]^2$ and $(\psi_1,\psi_2)$ is a corrseponding smooth correction to periodize $V.$ We notice that $2\times2$ matrix  $V=(V_{\alpha\beta})$ satisfying $V_{\alpha\beta}\in L^{\infty},\forall \alpha,\beta=1,2.$ 
 Following the approach  in the proof \footnote{The only distinction between the two cases lies in the intensities $z^N=(z_1, \ldots, z_N)\in \mathbb{R}^N$. However, there is no fundamental disparity between them because our calculations are conducted on $\mathbb{T}^2$ at this stage, and the intensities $z^N=(z_1, \ldots, z_N)$  thus can be considered constants during this process.} 
 of in Theorem 1 in \cite{jabin2018quantitative}, we has the following estimate
 \begin{align*}
 	J_1^N(t)+J_2^N(t)\leq &\frac{1}{N}\sum_{i=1}^N \sum_{\alpha,\beta=1,2}\int_{\mathbb{T}^{2N}}\rho_t^N(x^N) \frac{|\nabla\bar{\rho}_t(x_i) |^2}{(\bar{\rho}_t(x_i))^2} \left| \frac{1}{N} \sum_{j=1}^N\varphi_{\alpha\beta,t}((x_i,z_i),(x_j,z_j))\right| ^2 \mathd x^N
 	\\&  + \frac{1}{N^2}\sum_{i,j=1}^N\int_{\mathbb{T}^{2N}} \rho_t^N(x^N)\varphi_{2,t}((x_i,z_i),(x_j,z_j))\mathd x^N,
 \end{align*}
where 
\begin{align*}
	&\varphi_{\alpha\beta,t}((x_i,z_i),(x_j,z_j))\assign z_jV_{\alpha \beta}(x_i-x_j)-V_{\alpha \beta}*  v_t(x_i),
	\\&\varphi_{2,t}((x_i,z_i),(x_j,z_j))\assign z_jV(x_i-x_j)-V*  v_t(x_i): \frac{\nabla^{2} \bar{\rho}_t(x_i)}{\bar{\rho}_t(x_i)},
\end{align*}
 for $ (x_i,z_i)\in \mathbb{T}^2\times\mathbb{R}, (x_j,z_j)\in \mathbb{T}^2\times\mathbb{R}.$
 
  As a result, we obtain the following estimate
  \begin{align*}
  	& \mathd \left( 	e^{- \int_0^{t}C(s)\mathd s}	H_N(\rho_t^N|\bar{\rho}_t^{N})\right) 
  	\\\leq &- 	C(t)e^{- \int_0^{t}C(s)\mathd s}  	H_N(\rho_t^N|\bar{\rho}_t^{ N}) \mathd t  
  	\\&+ e^{- \int_0^{t}C(s)\mathd s}  \frac{1}{N}\sum_{i=1}^N \sum_{\alpha,\beta=1,2}\int_{\mathbb{T}^{2N}}\rho_t^N(x^N) \frac{|\nabla\bar{\rho}_t(x_i) |^2}{(\bar{\rho}_t(x_i))^2} \left| \frac{1}{N} \sum_{j=1}^N\varphi_{\alpha\beta,t}((x_i,z_i),(x_j,z_j))\right| ^2 \mathd x^N
  	\\&  +e^{- \int_0^{t}C(s)\mathd s}  \frac{1}{N^2}\sum_{i,j=1}^N\int_{\mathbb{T}^{2N}} \rho_t^N(x^N)\varphi_{2,t}((x_i,z_i),(x_j,z_j))\mathd x^N.
  \end{align*}

Furthermore,  by \eqref{con:Frho}, we  integrate the above inequality with respect to $\mathcal{L}({\xi}^{ N})$ and deduce that
 \begin{align*}
	& \mathd \left( 	e^{- \int_0^{t}C(s)\mathd s}	H_N(F_t^N|\bar{F}_t^{ N})\right) 
	\\\leq &- 	C(t)e^{- \int_0^{t}C(s)\mathd s}  	H_N(F_t^N|\bar{F}_t^{ N}) \mathd t  
	\\&+ e^{- \int_0^{t}C(s)\mathd s}  \frac{1}{N}\sum_{i=1}^N \sum_{\alpha,\beta=1,2}\int_{\mathbb{T}^{2N}} \frac{|\nabla\bar{\rho}_t(x_i) |^2}{(\bar{\rho}_t(x_i))^2} \left| \frac{1}{N} \sum_{j=1}^N\varphi_{\alpha\beta,t}((x_i,z_i),(x_j,z_j))\right| ^2F_t^N(\mathd x^N,\mathd z^N)
	\\&  +e^{- \int_0^{t}C(s)\mathd s}  \frac{1}{N^2}\sum_{i,j=1}^N\int_{\mathbb{T}^{2N}} \varphi_{2,t}((x_i,z_i),(x_j,z_j))F_t^N(\mathd x^N,\mathd z^N).
\end{align*}
Since $\varphi_{2,t}=0$ when $\|\nabla^2\bar{\rho}_t\|_{L^{\infty}(\mathbb{T}^2)}=0$ and \eqref{ptsobol1}, we then have
\begin{align}
	 \mathd \left( 	e^{- \int_0^{t}C(s)\mathd s}	H_N(F_t^N|\bar{F}_t^{ N})\right)&\leq  -	C(t)e^{- \int_0^{t}C(s)\mathd s}	H_N(F_t^N|\bar{F}_t^{N})\mathd t\nonumber\\&+ e^{- \int_0^{t}C(s)\mathd s}\left( \mathcal{A}^1_N(t)+\mathcal{A}^2_N(t)\right) \mathd t 
	,\label{inq:reentropy1}
\end{align}
where $\mathcal{A}_N^i$, $i=1,2,$ 
  are defined by 
\begin{align*}
		\mathcal{A}^1_N(t)\assign&	\frac{\|\nabla \bar{\rho}_t\|_{L^{\infty}(\mathbb{T}^2)}^2}{\underset{x\in \mathbb{T}^2 }{\inf}\bar{\rho}_0^2N} \sum_{\alpha,\beta=1,2}\sum_{i=1}^N\int_{\mathbb{R}^N} \int_{\mathbb{T}^{2N}}  \left| \frac{1}{N} \sum_{j=1}^N\varphi_{\alpha\beta,t}((x_i,z_i),(x_j,z_j))\right| ^2 F_t^N(\mathd x^N,\mathd z^N),
	\\ \mathcal{A}^2_N(t)\assign&	I_{\{\|\nabla^2\bar{\rho}_t\|_{L^{\infty}(\mathbb{T}^2)}\neq 0\}}(\omega)
\frac{1}{N^2}\sum_{i,j=1}^N\int_{\mathbb{R}^N}\int_{\mathbb{T}^{2N}} \varphi_{2,t}((x_i,z_i),(x_j,z_j))F_t^N(\mathd x^N,\mathd z^N) .
\end{align*}

Step 2: In the second step,  we aim to estimate $\mathcal{A}^1_N(t)$ and $\mathcal{A}^2_N(t).$  To complete this, we replace the problem of controlling integrals with respect to  the random measure $F_t^N(\mathd x^N,\mathd z^N)$ on $\mathbb{T}^{2N}\times \mathbb{R}^N$ into a problem of controlling exponential integrals with repsect to the random measure $\bar{F}_t^N(\mathd x^N,\mathd z^N)$ on $\mathbb{T}^{2N}\times \mathbb{R}^N$ by Lemma \ref{lemma jw0}. Furthermore, through  Lemma \ref{lemma jw1} and Lemma \ref{lemma jw2}, we finally get the inequality \eqref{inq:laplace}.


We first notice that \begin{align}\label{V}
	\|V_{\alpha \beta}*  v_t\|_{L^{\infty}(\mathbb{T}^2)}\leq\|  v_{t}\|_{L^1(\mathbb{T}^2)}\|V_{\alpha\beta}\|_{L^{\infty}(\mathbb{T}^2)}\lesssim\|  v_{t}\|_{L^2(\mathbb{T}^2)}\|V_{\alpha\beta}\|_{L^{\infty}(\mathbb{T}^2)}\leq\|  v_{0}\|_{L^2}\|V\|_{L^{\infty}(\mathbb{T}^2)},
\end{align}
where we use H\"older inequality and compactness of $\mathbb{T}^{2N}$  to get the second inequality, \eqref{regu-1} to get the final inequality.
In addition, since $\mathcal{L}(\xi)$ has a density with compact support, i.e, there exists $M>0$ such that $\mathcal{L}(\xi)([-M,M])=1.$
 We then find that  there exists a positive deterministic constant $\eta_1$ depending on $\|v_0\|_{L^2(\mathbb{T}^2)}$ and $\mathcal{L}(\xi)$ such that $$\eta_1 \|\varphi^1_{\alpha\beta,t}\|_{L^\infty(\mathbb{T}^{2}\times\mathbb{R})} < \frac{1}{2e},$$ for every $\alpha,\beta=1,2, t\in[0,T] ,$
 where \begin{align*}
 	&\varphi^1_{\alpha\beta,t}((x_i,z_i),(x_j,z_j))\assign \(z_jV_{\alpha \beta}(x_i-x_j)-V_{\alpha \beta}*  v_t(x_i)\)I_{[-M,M]}(z_j), 
 \end{align*}
 for $(x_i,z_i)\in \mathbb{T}^2\times\mathbb{R}, (x_j,z_j)\in \mathbb{T}^2\times\mathbb{R}.$
 
 We then define  
 \begin{align*}
 	\Phi^1_{\alpha\beta,t}(x^N,z^N)\assign \left| \frac{1}{N} \sum_{j=1}^N\varphi^1_{\alpha\beta,t}((x_i,z_i),(x_j,z_j))\right| ^2\in L^{\infty}(\mathbb{T}^{2N}\times\mathbb{R}^N),
 \end{align*}
 and have
\begin{align}
		\mathcal{A}^1_N(t)&=	\frac{\|\nabla \bar{\rho}_t\|_{L^{\infty}(\mathbb{T}^2)}^2}{\underset{x\in \mathbb{T}^2 }{\inf}\bar{\rho}_0^2N} \sum_{\alpha,\beta=1,2}\sum_{i=1}^N\int_{\mathbb{R}^N} \int_{\mathbb{T}^{2N}}  \Phi^1_{\alpha\beta,t}(x^N,z^N) F_t^N(\mathd x^N,\mathd z^N),\nonumber
		\\&\leq \frac{\|\nabla\bar{\rho}_t\|_{L^{\infty}(\mathbb{T}^2)}^2}{\underset{x\in \mathbb{T}^2 }{\inf}\bar{\rho}_0^2}\frac{1}{\eta_1}\sum_{\alpha,\beta=1,2} H_N(F^N_t|\bar{F}^N_t)\nonumber
	+\frac{\|\nabla\bar{\rho}_t\|_{L^{\infty}(\mathbb{T}^2)}^2}{\underset{x\in \mathbb{T}^2 }{\inf}\bar{\rho}_0^2}\frac{1}{\eta_1N}\\&\sum_{\alpha,\beta=1,2}\log \int_{\mathbb{R}^N}\int_{\mathbb{T}^{2N}}   \exp \left(  \frac{\eta_1}{N} \sum_{j_1,j_2=1}\varphi^1_{\alpha\beta,t}((x_1,z_1),(x_{j_1},z_{j_1}))\varphi^1_{\alpha\beta,t}((x_1,z_1),(x_{j_2},z_{j_2})) \right)\bar{F}_t^N(\mathd x^N, \mathd z^N),\nonumber
\end{align}
where we use the Lemma \ref{lemma jw0} and the symmetry of $\bar{F}^N_t$ to get the second inequality.

Notice that  
\begin{align*}
&\int_{\mathbb{R}}\int_{\mathbb{T}^{2}}\varphi^1_{\alpha\beta,t}((x_i,z_i),(x_j,z_j))\bar{F}_t(\mathd x_j, \mathd z_j)	
	\\=&\int_{\mathbb{R}}\int_{\mathbb{T}^{2}}	 (z_jV_{\alpha \beta}(x_i-x_j) -V_{\alpha \beta}*  v_t(x_i))\bar{F}_t(\mathd x_j, \mathd z_j)=0,
\end{align*}
by Lemma \ref{lemma jw1} with $\varphi_{\alpha\beta}$ playing the role of $\phi,$  we have 
\begin{align}
\mathcal{A}^1_N(t)\leq C_1\|\nabla \bar{\rho}_t\|_{L^{\infty}(\mathbb{T}^2)}^2 H_N(F^N_t|\bar{F}^N_t)+ \frac{C_1}{N}\|\nabla \bar{\rho}_t\|_{L^{\infty}(\mathbb{T}^2)}^2,\label{911}
\end{align}
where the random measure $
	\bar{F}_t(\mathd x,\mathd z)\assign \frac{1}{\mathbb{E}[\xi_1]}  v_t(\mathd x)\mathcal{L}(\xi)(\mathd z)$ and $C_1$ is a positive deterministic constant depending on $\|v_0\|_{L^2(\mathbb{T}^2)},\underset{x\in \mathbb{T}^2 }{\inf}\bar{\rho}_0$ and $\mathcal{L}(\xi).$


By using  Lemma \ref{lemma jw0} similarly, we have 
\begin{align*}
		\mathcal{A}^2_N(t)&\leq 	I_{\{\|\nabla^2\bar{\rho}_t\|_{L^{\infty}(\mathbb{T}^2)}\neq 0\}}(\omega)\frac{\|\nabla^2\bar{\rho}_t\|_{L^{\infty}(\mathbb{T}^2)}}{ \underset{x\in \mathbb{T}^2 }{\inf}\bar{\rho}_0}\frac{1}{\eta_2}
	H_N(F^N_t|\bar{F}^N_t)\nonumber+			I_{\{\|\nabla^2\bar{\rho}_t\|_{L^{\infty}(\mathbb{T}^2)}\neq 0\}}(\omega)	\frac{\|\nabla^2\bar{\rho}_t\|_{L^{\infty}(\mathbb{T}^2)}}{ \underset{x\in \mathbb{T}^2 }{\inf}\bar{\rho}_0 }\frac{1}{\eta_2N}\\&\log \int_{\mathbb{R}^N}\int_{\mathbb{T}^{2N}}  \exp \left(\frac{\eta_2}{N}   \sum_{i,j=1}^N\varphi^2_t\((x_i,z_i),(x_j,z_j)\)\right)\bar{F}_t^N(\mathd x^N,\mathd z^N), \nonumber
\end{align*}
where \begin{align}
	\varphi^2_t\((x_i,z_i),(x_j,z_j)\)\assign \frac{\underset{x\in \mathbb{T}^2 }{\inf}\bar{\rho}_0}{\|\nabla^{2} \bar{\rho}_t\|_{L^{\infty}(\mathbb{T}^2)}}I_{[-M,M]}(z_j)\(z_jV(x_i-x_j)-V*  v_t(x_i): \frac{\nabla^{2} \bar{\rho}_t(x_i)}{\bar{\rho}_t(x_i)}\),\nonumber
\end{align} 
for $ (x_i,z_i)\in \mathbb{T}^2\times\mathbb{R}, (x_j,z_j)\in \mathbb{T}^2\times\mathbb{R},$
and $\eta_2$ is a positive deterministic constant depending on $\|v_0\|_{L^2(\mathbb{T}^2)}$ and $\mathcal{L}(\xi),$ such that $$\tilde{C}\eta_2^2\|\varphi^{2}_t\|^2_{L^\infty(\mathbb{T}^2)} < \frac{1}{2}.$$ where $\tilde{C}$ is the universal constant in Lemma \ref{lemma jw2}.

The cancellation with respect to $(x_j,z_j)$ for $\varphi^2_t\((x_i,z_i),(x_j,z_j)\)$ is similar to $\varphi_{\alpha\beta}$, i.e.
\begin{align*}
	&\int_{\mathbb{R}}\int_{\mathbb{T}^{2}}\varphi^2_{t}((x_i,z_i),(x_j,z_j))\bar{F}_t(\mathd x_j, \mathd z_j)	=0,
\end{align*} while the cancellation with respect to $(x_i,z_i)$ follows by 
\begin{align}
	&\int_{\mathbb{R}}\int_{\mathbb{T}^{2}}\varphi^2_{t}((x_i,z_i),(x_j,z_j))\bar{F}_t(\mathd x_i, \mathd z_i)\nonumber
	\\=&\int_{\mathbb{R}}\int_{\mathbb{T}^{2}} \frac{\underset{x\in \mathbb{T}^2 }{\inf}\bar{\rho}_0}{\|\nabla^{2} \bar{\rho}_t\|_{L^{\infty}(\mathbb{T}^2)}}I_{[-M,M]}(z_j)\left(z_jV(x_i-x_j)-V*  v_t(x_i): \frac{\nabla^{2} \bar{\rho}_t(x_i)}{\bar{\rho}_t(x_i)}\right) \bar{\rho}_t(x_i)  \mathd x_i\mathcal{L}(\xi)(\mathd z_i)\nonumber
	\\=&\int_{\mathbb{R}}\int_{\mathbb{T}^{2}}\frac{\underset{x\in \mathbb{T}^2 }{\inf}\bar{\rho}_0}{\|\nabla^{2} \bar{\rho}_t\|_{L^{\infty}(\mathbb{T}^2)}}I_{[-M,M]}(z_j)\left(z_jV(x_i-x_j)-V*  v_t(x_i): {\nabla^{2} \bar{\rho}_t(x_i)}\right)\mathd x_i\mathcal{L}(\xi)(\mathd z_i)\nonumber
	\\=&\int_{\mathbb{R}}\int_{\mathbb{T}^{2}}\frac{\underset{x\in \mathbb{T}^2 }{\inf}\bar{\rho}_0}{\|\nabla^{2} \bar{\rho}_t\|_{L^{\infty}(\mathbb{T}^2)}}I_{[-M,M]}(z_j)\left(z_j\div K_{x_i}(x_i-x_j)-\div_{x_i} K*  v_t(x_i) \right)\bar{\rho}_t(x_i)\mathd x_i\mathcal{L}(\xi)(\mathd z_i)=0\nonumber,
\end{align}
where we  use integration by parts to get the third equality and use $\div K=0$ to get the final equality.  Using Lemma \ref{lemma jw2} with $\varphi^2_{t}$ playing the role of $\phi,$ we find 
\begin{align}
		\mathcal{A}^2_N(t)&\leq I_{\{\|\nabla^2\bar{\rho}_t\|_{L^{\infty}(\mathbb{T}^2)}\neq 0\}}(\omega)\(C_2\|\nabla^2\bar{\rho}_t\|_{L^{\infty}(\mathbb{T}^2)}H_N(F^N_t|\bar{F}^N_t)+\frac{C_2}{N}\|\nabla^2\bar{\rho}_t\|_{L^{\infty}(\mathbb{T}^2)}\)\nonumber
	\\&	\leq C_2\|\nabla^2\bar{\rho}_t\|_{L^{\infty}(\mathbb{T}^2)}H_N(F^N_t|\bar{F}^N_t)+\frac{C_2}{N}\|\nabla^2\bar{\rho}_t\|_{L^{\infty}(\mathbb{T}^2)},\label{911911}
\end{align}
where $C_2$ is a positive deterninsitic constant depending on $\|v_0\|_{L^2(\mathbb{T}^2)},\underset{x\in \mathbb{T}^2 }{\inf}\bar{\rho}_0$ and $\mathcal{L}(\xi).$

Step 3: Through some elementary calculations, we finally get the final result in this step. 

Now we set $C(t)= C_1 \|\nabla\bar{\rho}_t\|_{L^{\infty}}^2   +C_2\|\nabla^2\bar{\rho}_t\|_{L^{\infty}}$, where $C_1, C_2$ are the universal constants in \eqref{911} and \eqref{911911} respectively. We then derive from  \eqref{inq:reentropy1} that 
	\begin{align}
	&	e^{- \int_0^{t}C(s)\mathd s}	H_N(F_t^N|\bar{F}_t^{N})-  H_N(F_0^N|\bar{F}^N_0)\nonumber
	\\\leq & -	\int_0^t C(s)e^{- \int_0^{s}C(r)\mathd r}	H_N(F_s^N|\bar{F}_s^{ N})\mathd s+  \int_0^t\frac{C(s) }{N}e^{- \int_0^{s}C(r)\mathd r}\mathd s\nonumber	\\&+ \int_0^t C(s)e^{- \int_0^{s}C(r)\mathd r}	H_N(F_s^N|\bar{F}_s^{N})\mathd s \nonumber
	\\\leq &  \int_0^t\frac{C(s) }{N}e^{- \int_0^{s}C(r)\mathd r}\mathd s.\label{inq:laplace}
\end{align}
Since $F_0^N(\mathd x^N,\mathd z^N)=\bar{F}^N_0(\mathd x^N,\mathd z^N)=\bar{\rho_0}(\mathd x^N)\mathcal{L}(\xi^N)(\mathd z^N),$
we have $$H_N(F_0^N|\bar{F}^N_0)=0.$$
By Sobolev  embedding theorem and the fact that $\bar{\rho}_t=\frac{1}{\mathbb{E}[\xi_1]}  v_t,   \forall t\in [0,T],$ we then have
\begin{align*}
	\sup_{[0, T]}H_N(F_t^N|\bar{F}^N_t)\leq \frac{1}{N}\exp\{C_{\xi,0}\int_{0}^{T}(\|  v_t\|_{H^4}^2 +1)\mathd t\}\quad\mathbb{P}-a.s.
\end{align*}
where $C_{\xi,0}$ is a positive deterministic constant depending on $\|v_0\|_{L^2(\mathbb{T}^2)},\underset{x\in \mathbb{T}^2 }{\inf}\bar{\rho}_0$ and $\mathcal{L}(\xi).$
 The proof is then completed.
\end{proof}

\section{Stochastic 2-dimensional Navier-Stokes  equation}\label{sec:ns}
In this section, we begin with establishing the well-posedness of  stochastic 2-dimensional Navier-Stokes  equation \eqref{eqt:ns},  as stated in Theorem \ref{thm:spde}. The proof follows a standard  martingale method in the literature on stochastic partial differential equations, e.g.   \cite{dabrock2021existence}. A distinctive feature of our approach is the meticulous utilization of the properties of  Biot-Savart law $K$ when deriving uniform estimates. More precisely, we emphasize the representation
 $K=\div V$ mentioned in Section \ref{sec:relaen}, i.e. $$K_{\alpha}=\sum_{\beta}\partial_{\beta}V_{\alpha,\beta},\alpha,\beta=1,2,$$ where $V$ is a $2\times2$ matrix and every element $V_{\alpha\beta},\alpha,\beta=1,2$ in matrix $V$ belongs to $L^{\infty}.$ For brevity, we omit the proof of Theorem \ref{thm:spde} through the standard martingale argument and focus on presenting the derivation of uniform estimates. 

Moving forward, we will  provide the proof for the regularity of the probabilistically weak solution to stochastic 2-dimensional Navier-Stokes equation \eqref{eqt:ns}, as stated in Lemma \ref{lem:reg}.

\begin{proof}[Proof of Theorem \ref{thm:spde}]
	To begin with, we  establish  existence of the probabilistically weak solution for the stochastic 2-dimensional Navier-Stokes  equation \eqref{eqt:ns}. For fixed $\eps>0,$   given independent 1-dimensional Brownian motions $\{W^{k},k=1,\cdots ,d\}$ on torus $\mathbb{T}^2$ on  probability space $(\Omega,\mathcal{F},(\mathcal{F}_t)_{t\in [0,T]},\mathbb{P}),$  we start with considering the  solution to the regularized stochastic 2-dimensional Navier-Stokes  equation with the regularized initial data $  v_0^  \eps,$ 
	  \begin{equation}\label{eqt:rens}
		\mathd   v=\(\Delta   v-K_{\eps }*  v \cdot \nabla   v\)\mathd t-\sum_{k=1}^{d}\(\sigma_k\cdot\nabla   v \)\circ \mathd W^{k}_t  , \quad (t,x)\in [0,T]\times \mathbb{T}^2,  \quad   v(0,\cdot)=  v_0^\eps.
	\end{equation} 
where $K_{\eps}\assign \div V^{\eps}=\div (V*J_{\eps})$ and $ v_0^{  \eps}\assign  v_0*J_{\eps}.$ \footnote{ Here $J_{\eps}\assign \eps^{-2}J(\eps^{-1}x),$ where $J$ be a nonnegative $C_{c}^{\infty}(\mathbb{R}^2)$ function satisfying $\int_{\mathbb{R}^2} J(x)\mathd x=1$ and $J(x)=0$ if $|x|\geq 1.$ The convolution here should be understood on the whole $\mathbb{R}^2,$ where $V $ (or $v_0$) is extended by periodicity. }

Let $B$ be a standard 2-dimensional Brownian motion on torus $\mathbb{T}^2,$ constructed on an auxilary probability space $(\Omega_0,\mathcal{F}^0,\mathbb{P}^0).$  The probability space  $(\bar{\Omega},\bar{\mathcal{F}},\bar{\mathbb{P}})$ is defined as follows: $$(\bar{\Omega},\bar{\mathcal{F}},\bar{\mathbb{P}})\assign (\Omega_0\times\Omega,\mathcal{F}^0\times\mathcal{F},\mathbb{P}^0\times\mathbb{P}),$$  and let $(\bar{\mathcal{F}}_t)_{t\in[0,T]}$ denote the normal filtration  generated by Brownian motions $B$ and $\{W_1,\cdots,W_d\}.$
The basic idea is to consider the following SDE \eqref{stoflow} on $(\bar{\Omega},\bar{\mathcal{F}},\bar{\mathbb{P}})$ and we can construct a solution to \eqref{eqt:rens} through the solution to \eqref{stoflow}. More precisely,  as presented  in \cite{brzezniak2016existence}, we can construct a measurable map $p^{  \eps}(t,\bar{\omega},x):[0,T]\times\bar{\Omega}\times\mathbb{T}^2\rightarrow\mathbb{T}^2$ (for simplicity, we use the notation $p^{  \eps}_t(x)$ in following content,) satisfying that
\begin{enumerate}
	\item $p^{  \eps}$ is measurable with respect to $\bar{\mathcal{P}}\otimes\mathbb{B}(\mathbb{T}^2),$ where $\bar{\mathcal{P}}$ is the predictable $\sigma$-algebra assssociated with the filtration $(\bar{\mathcal{F}}_t)_{t\in[0,T]}.$
	\item It holds $\bar{\mathbb{P}}$-almost surely that for all $t\in[0,T],$ $p^{  \eps}_t(x)$ is a homeomorphism on torus $\mathbb{T}^2$ and we denote $(p^{  \eps}_t)^{-1}(x)$ denote the inverse map of  $p^{  \eps}_t(x).$
	\item   It holds $\bar{\mathbb{P}}$-almost surely that for all $t\in[0,T],$ $p^{  \eps}_t(x)$ is Lebesgue measure preserveing, i.e. for any integrable function $h$ on torus $\mathbb{T}^2,$ it holds
	\begin{align*}
		\int_{\mathbb{T}^2}h(p^{\eps}_t(x))\mathd x=	\int_{\mathbb{T}^2}h(y)\mathd y.
	\end{align*}
\item  For every $x\in\mathbb{T}^2$,  $X_t=X_t(x)\assign p^{  \eps}_{t}(x)$ solves the SDE 
\begin{align}\label{stoflow}
	X_t=x+ \sqrt{2}B_t+\sum_{k=1}^d\int_0^t\sigma_k(X_s)\circ\mathd W^{k}_s+\int_0^t u^{p }_{  \eps}(t,X_s)\mathd s,
\end{align}
where $$u^{p }_{  \eps}(t,z)\assign \int_{\Omega^0}\int_{\mathbb{T}^{2}}K_{  \eps}(z-p ^{\eps}_{t}(y))  v_0^{  \eps}(y)\mathd y\mathd \mathbb{P}^0, \quad z\in\mathbb{T}^2.$$ 
\end{enumerate}

We then define \begin{align}\label{constr}
	v^{  \eps}_t(x)\assign\int_{\Omega^0}  v_0^{  \eps}((p^{  \eps}_{t})^{-1}(x))\mathd \mathbb{P}^0,\quad\forall t \in[0,T], \quad \forall x\in\mathbb{T}^2.
\end{align}
and  notice that 
\begin{align}\label{bound}
	\|  v^{  \eps}_t\|_{L^{\infty}}\leq \|  v_0\|_{L^{\infty}} \quad \text{and} \quad \underset{x\in \mathbb{T}^2 }{\inf}   v^{  \eps}_t \geq  \underset{x\in \mathbb{T}^2 }{\inf}  v_0,,\quad\forall t \in[0,T].
\end{align}
By It\^o's formula, we deduce that 
	for all $\varphi\in C^{\infty}(\mathbb{T}^2),$  it holds almost surely that for all $t\in [0,T],$ 
	\begin{align}\label{eqregula}
		\left\langle  \varphi,  v^{  \eps}_t\right\rangle  =  \left\langle \varphi,   v_0^{  \eps}\right\rangle +\int_0^t \left\langle \Delta\varphi,   v^{  \eps}_s\right\rangle \mathd s +\int_0^t\left\langle  \nabla \varphi,  K_{  \eps}*  v^{  \eps}_s   v^{  \eps}_s\right\rangle \mathd s+ \sum_{k=1}^{d}\int_0^t  \left\langle \nabla \varphi,    v^{  \eps}_s\sigma_k\right\rangle\circ \mathd W_s^{k} .
	\end{align} 
Therefore, for fixed $\eps>0,$ we construct a solution to \eqref{eqt:rens}. 
Similar to \eqref{barrhot}, $(	v^{  \eps}_t)_{t\in[0,T]}$ is also a solution to linearized  regularized stochastic stochastic 2-dimensional Navier-Stokes  equation with  smooth initial data $  v_0^  \eps$ and smooth coefficients,
\begin{equation}\label{eqt:linearrens}
	\mathd   f=\(\Delta   f-K_{\eps }*  v^{\eps} \cdot \nabla   f\)\mathd t-\sum_{k=1}^{d}\(\sigma_k\cdot\nabla   f \)\circ \mathd W^{k}_t  
\end{equation} 
According to Theorem 5.1  and Theorem 7.1 in \cite{krylov1999analytic}, for fixed $\eps>0,$ we have 
  \begin{align}\label{regregv}
		(  v^{  \eps}(t))_{t\in [0,T]}\in \cap_{k\in \mathbb{N}}  C([0,T];C^k(\mathbb{T}^2)),\quad \mathbb{P}-a.s..
	\end{align} 

	  Based on the uniform estimates in Lemma  \ref{lem:uni01}-Lemma \ref{suph2} for $(v^{  \eps}(t))_{t\in [0,T]},  \eps>0,$ constructed above, 
	   existence of the probabilistically weak solution to  stochastic 2-dimensional Navier-Stokes  equation \eqref{eqt:ns} then follows by  the  standard martingale argument, as outlined in Theorem A.5 in \cite{dabrock2021existence}. 
	  
	 Applying the Yamada-Watanabe theorem \cite[Theorem 1.5]{kurtz2014yamada}, we can directly establish Theorem \ref{thm:spde} by demonstrating existence of probabilistically weak solution  and confirming pathwise uniqueness of solutions. The pathwise uniqueness for stochastic 2-dimensional Navier-Stokes equation \eqref{eqt:ns} can be attained using the same method employed in proving uniqueness for the linearized stochastic 2-dimensional Navier-Stokes equation \eqref{ptsobol3} in Lemma \ref{lem:linearunique}. Consequently, we omit the proof for  pathwise uniqueness here.
\end{proof}

Most of the technical proofs for the following lemmas concerning the uniform estimates rely on the same ideas. Therefore, we have given the
explicit details of some of these proofs, the proof of Lemma \ref{lem:uni01}, the proof of Lemma \ref{lem:uni12} and the proof of Lemma \ref{lem:12high}. In most of the other proofs, these details are then skipped.

Recall that we use $\mathbb{H}^j, j\in \mathbb{N},$ to denote the $L^2$-norm of $j$-th order derivatives for simplicity, i.e, 
\begin{align}
	\|f\|_{\mathbb{H}^j}\assign \|\nabla^{ j}f\|_{L^2(\mathbb{T}^2)}.\nonumber
\end{align}
Recall  that $\|f\|_{H^n}^2= \|f\|_{L^2}^2+ \sum_{j=1 }^{n}\|f\|_{\mathbb{H}^j}^2 , n\in \mathbb{N}$.  

\begin{lemma}\label{lem:uni01}
	For fixed $\eps\in(0,1),$ it holds $\mathbb{P}$-almost surely  that 
	\begin{align}
		\sup_{t\in [0,T]}	\|  v^{\eps}_t\|_{L^2}^2+2\int_0^T\| 	\|  v^{\eps}_t\|_{\mathbb{H}^1}^2\mathd t\leq 2\|  v_0\|_{L^2}^2.\nonumber
	\end{align}
\end{lemma}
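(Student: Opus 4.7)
The plan is a direct $L^2$ energy estimate via It\^o's formula, exploiting the smoothness of $v_t^{\eps}$ recorded in \eqref{regregv} together with the divergence-free structure of $K_\eps$ and of the transport coefficients $\sigma_k$. Thanks to \eqref{regregv}, every manipulation below (It\^o's formula in $L^2$, integration by parts, and evaluation of cross-terms) is justified pathwise.

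First I would rewrite \eqref{eqregula} in It\^o form. The Stratonovich-to-It\^o correction converts $-\sum_{k=1}^{d}(\sigma_k\cdot\nabla v_s^\eps)\circ\dif W_s^k$ into $-\sum_{k=1}^{d}(\sigma_k\cdot\nabla v_s^\eps)\dif W_s^k$ plus the drift correction $\tfrac{1}{2}\sum_{k=1}^{d}\sigma_k\cdot\nabla(\sigma_k\cdot\nabla v_s^\eps)\dif s$. Applying It\^o's formula to $F(v)=\|v\|_{L^2}^2$ then gives
\begin{align*}
\dif \|v_t^\eps\|_{L^2}^2 &= 2\<v_t^\eps,\Delta v_t^\eps\>\dif t - 2\<v_t^\eps, K_\eps*v_t^\eps \cdot\nabla v_t^\eps\>\dif t \\
&\quad + \sum_{k=1}^d \<v_t^\eps,\sigma_k\cdot\nabla(\sigma_k\cdot\nabla v_t^\eps)\>\dif t + \sum_{k=1}^d \|\sigma_k\cdot\nabla v_t^\eps\|_{L^2}^2\dif t \\
&\quad - 2\sum_{k=1}^d\<v_t^\eps,\sigma_k\cdot\nabla v_t^\eps\>\dif W_t^k.
\end{align*}

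Next I would dispose of each term. Integration by parts yields $2\<v_t^\eps,\Delta v_t^\eps\>=-2\|v_t^\eps\|_{\mathbb{H}^1}^2$. Because $K=\nabla^\perp G$ is divergence-free, so is $K_\eps*v_t^\eps$, hence $\<v_t^\eps,K_\eps*v_t^\eps\cdot\nabla v_t^\eps\> = \tfrac12\int(K_\eps*v_t^\eps)\cdot\nabla(v_t^\eps)^2\dif x = 0$. The martingale integrand also vanishes by the same device: $\<v_t^\eps,\sigma_k\cdot\nabla v_t^\eps\>=\tfrac12\int\sigma_k\cdot\nabla(v_t^\eps)^2\dif x=0$ since $\div\sigma_k=0$ (Assumption \ref{assumption2}). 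Finally, integrating by parts and again using $\div\sigma_k=0$, one has $\<v_t^\eps,\sigma_k\cdot\nabla(\sigma_k\cdot\nabla v_t^\eps)\> = -\|\sigma_k\cdot\nabla v_t^\eps\|_{L^2}^2$, so the It\^o correction and the Stratonovich-to-It\^o correction cancel each other exactly.

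Putting this together collapses the evolution to the deterministic identity
\begin{equation*}
\|v_t^\eps\|_{L^2}^2 + 2\int_0^t \|v_s^\eps\|_{\mathbb{H}^1}^2\dif s = \|v_0^\eps\|_{L^2}^2,\qquad t\in[0,T].
\end{equation*}
Since $v_0^\eps=v_0*J_\eps$ with $\|J_\eps\|_{L^1}=1$, Young's inequality gives $\|v_0^\eps\|_{L^2}\le\|v_0\|_{L^2}$, and the stated bound (with the harmless factor $2$) follows by taking the supremum in $t$. There is no real obstacle here; the only point worth care is verifying that each of the four cancellations (drift, martingale, and the pair It\^o/Stratonovich) actually hold, which in each case reduces to the divergence-free property of either $K$ or $\sigma_k$.
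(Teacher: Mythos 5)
Your proposal is correct and follows essentially the same route as the paper: It\^o's formula applied to the squared $L^2$-norm of the regularized solution, with the nonlinear and martingale integrands killed by the divergence-free structure of $K_\eps$ and $\sigma_k$. If anything you are slightly more explicit than the paper at one point: you spell out that the Stratonovich-to-It\^o drift correction $\tfrac12\sum_k\langle v^\eps,\sigma_k\cdot\nabla(\sigma_k\cdot\nabla v^\eps)\rangle$ cancels exactly with the It\^o quadratic-variation term $\sum_k\|\sigma_k\cdot\nabla v^\eps\|_{L^2}^2$ via integration by parts and $\div\sigma_k=0$, whereas the paper's displayed identity after ``Using It\^o's formula'' already has this cancellation silently performed.
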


\begin{proof}
	For fix $\eps\in(0,1),$
	through the regularity \eqref{regregv} of $(v^{  \eps}(t))_{t\in [0,T]},$ as in Lemma \ref{lem:reg}, we can obtain it holds $\mathbb{P}$-almost surely that  for all $t\in [0,T],x \in \mathbb{T}^2,$ 
		\begin{align}\label{x-regv}
			v_t^{\eps}(x) -   v_0^{\eps}(x) =&\int_0^t  \Delta  v^{\eps}_s(x)\mathd s -\int_0^t  K_{\eps}*  v^{\eps}_s(x) \cdot\nabla   v^{\eps}_s(x)\mathd s\nonumber\\&+\frac{1}{2}\sum_{k=1}^d\int_{0}^{t}\sigma_k\cdot \nabla \[\sigma_k\cdot \nabla   v^{\eps}_s(x)\]\mathd s- \sum_{k=1}^{d}\int_0^t  \sigma_k(x)\cdot\nabla   v^{\eps}_s(x) \mathd W_s^{k}.  
	\end{align}
	where the term $\frac{1}{2}\sum_{k=1}^d\int_{0}^{t}\sigma_k\cdot \nabla \[\sigma_k\cdot \nabla   v^{\eps}_s(x)\]\mathd s$ is tranformed from the Stratonovich integral.
	
	Using Ito's formula  to $(v_t^{\eps}(x))^2$ and integrating on $\mathbb{T}^2$  yields that 
	\begin{align}
		\mathd \|  v^{\eps}\|_{L^2}^2=&-2\|\nabla   v^{\eps}\|_{L^2}^2\mathd t-2\left\langle    v^{\eps}, K_{\eps}*  v^{\eps}\cdot \nabla   v^{\eps}\right\rangle \mathd t-2\sum_{k=1}^d\left\langle   v^{\eps} ,\sigma_k\cdot\nabla   v^{\eps}\right\rangle \mathd W_t^k,\quad \forall t\in [0,T].\nonumber
	\end{align}
	The divergence free  properties  of  $\{\sigma_k, k=1,\cdots, d\}$ and  $K_{\eps}$ 
	imply that 
	\begin{align}
		\left\langle    v^{\eps}, K_{\eps}*  v^{\eps}\cdot \nabla   v^{\eps}\right\rangle =-\left\langle  \nabla   v^{\eps}\cdot K_{\eps}*  v^{\eps},   v^{\eps}\right\rangle =0,\quad 
		\left\langle   v^{\eps},\sigma_k\cdot \nabla   v^{\eps}\right\rangle=- 	\left\langle \nabla   v^{\eps}\cdot \sigma_k,   v^{\eps}\right\rangle=0.\nonumber
	\end{align}
	The result then follows.
	\end{proof}
	
	\begin{lemma}\label{lem:uni12}
		For fixed $\eps\in(0,1),$ 	it holds $\mathbb{P}$-almost surely that for all $t\in[0,T],$
		\begin{align}
			\|  v^{\eps}_t\|_{\mathbb{H}^1}^2+\int_0^t\|  v^{\eps}_{s}\|_{\mathbb{H}^2}^2\mathd s \leq 	\|  v_0\|_{\mathbb{H}^1}^2+ C_{  v_0,\sigma} \int_0^t \|  v^{\eps}\|_{\mathbb{H}^1}^2 \mathd s+M_1(t), \quad\nonumber
		\end{align}
		where \begin{align*}
		M_1(t)=-2\sum_{k=1}^d \int_0^{t} \left\langle \nabla   v^{\eps}, \nabla \[\sigma_k \cdot \nabla   v^{\eps}\]\right\rangle dW^{k}_s
		\end{align*} is a  continuous local martingale with respect to the filtration $(\mathcal{F}^{W}_t)_{t\in[0,T]}$ generated by $W^k,k =1,\cdots,d.$
			Here $C_{  v_0,\sigma}$ denotes some positive constant depends on $\|  v_0\|_{{H}^3}$ and $\sigma_k, k=1,\cdots,d.$
		In particular, 
		\begin{align}\label{inq:lem12-1}
			\sup_{t\in [0,T]}\mathbb{E}\|  v^{\eps}_t\|_{\mathbb{H}^1}^2 +\mathbb{E}\int_0^T\|  v^{\eps}_t\|_{\mathbb{H}^2}^2\mathd t \leq C_{  v_0,\sigma}.\nonumber
		\end{align}
	\end{lemma}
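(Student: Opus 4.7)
The plan is to differentiate equation \eqref{x-regv} pointwise in $x$ (justified by the smoothness \eqref{regregv}), apply It\^o's formula to $|\nabla v^\eps|^2$, and integrate over $\mathbb{T}^2$. Writing $v=v^\eps$ for brevity and converting the Stratonovich integral to It\^o form produces
$$
\mathd \|v\|_{\mathbb{H}^1}^2 = -2\|v\|_{\mathbb{H}^2}^2\,\mathd t + T_1\,\mathd t + (T_2+T_3)\,\mathd t + \mathd M_1(t),
$$
with $T_1 = -2\langle\nabla v,\nabla(K_\eps*v\cdot\nabla v)\rangle$, $T_2=\sum_k\langle\nabla v,\nabla(\sigma_k\cdot\nabla[\sigma_k\cdot\nabla v])\rangle$ coming from the It\^o correction, and $T_3=\sum_k\|\nabla(\sigma_k\cdot\nabla v)\|_{L^2}^2$ the quadratic-variation contribution. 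The target is $T_1+T_2+T_3\leq \|v\|_{\mathbb{H}^2}^2 + C_{v_0,\sigma}\|v\|_{\mathbb{H}^1}^2$, so that a clean factor of $\|v\|_{\mathbb{H}^2}^2$ survives on the left after absorption.

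For the transport term $T_1$, expand the outer gradient: the contribution involving $(K_\eps*v)\cdot\nabla(\nabla v)$ equals, after grouping and one further integration by parts, $\int \div(K_\eps*v)\,|\nabla v|^2\,\mathd x$, which vanishes since $K_\eps*v$ is divergence-free. What remains is $T_1 = 2\int (K_\eps*v)\cdot\nabla v\,\Delta v\,\mathd x$. Because $K$ is a Calder\'on--Zygmund operator, the regularized Biot--Savart kernel $K_\eps=K*J_\eps$ is $L^4$-bounded uniformly in $\eps$, and combined with \eqref{bound} this yields $\|K_\eps*v\|_{L^4}\leq C\|v\|_{L^4}\leq C\|v_0\|_{L^\infty}$. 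The 2D Ladyzhenskaya inequality $\|\nabla v\|_{L^4}^2\leq C\|v\|_{\mathbb{H}^1}\|v\|_{\mathbb{H}^2}$ then gives $|T_1|\leq C\|v_0\|_{L^\infty}\|v\|_{\mathbb{H}^1}^{1/2}\|v\|_{\mathbb{H}^2}^{3/2}$, and Young's inequality with exponents $(4,4/3)$ delivers $|T_1|\leq \tfrac12\|v\|_{\mathbb{H}^2}^2 + C_{v_0}\|v\|_{\mathbb{H}^1}^2$.

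The noise pair $T_2+T_3$ is handled by the standard Stratonovich cancellation. Integration by parts in $T_2$ followed by the skew-symmetry $\langle g,\sigma_k\cdot\nabla f\rangle=-\langle f,\sigma_k\cdot\nabla g\rangle$ (valid since $\div\sigma_k=0$) rewrites $T_2 = \sum_k\langle \sigma_k\cdot\nabla v,\sigma_k\cdot\nabla\Delta v\rangle$. Commuting the Laplacian through produces $\sum_k\langle \sigma_k\cdot\nabla v,\Delta(\sigma_k\cdot\nabla v)\rangle = -T_3$ as the leading piece, exactly cancelling $T_3$, leaving only the commutator $[\Delta,\sigma_k\cdot\nabla]v = (\Delta\sigma_k)\cdot\nabla v + 2\sum_i(\partial_i\sigma_k)\cdot\partial_i\nabla v$, which is at most first order in $v$ with coefficients depending on $\sigma_k$ up to $C^2$. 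Hence $|T_2+T_3|\leq C_\sigma\|v\|_{\mathbb{H}^1}^2 + C_\sigma\|v\|_{\mathbb{H}^1}\|v\|_{\mathbb{H}^2} \leq \tfrac12\|v\|_{\mathbb{H}^2}^2 + C_\sigma\|v\|_{\mathbb{H}^1}^2$. Assembling the three bounds, absorbing the two $\tfrac12\|v\|_{\mathbb{H}^2}^2$ contributions into the $-2\|v\|_{\mathbb{H}^2}^2$ on the left, and integrating in time yields the pathwise inequality; $M_1$ is a continuous local martingale since \eqref{regregv} makes its quadratic variation pathwise finite, and the ``in particular'' estimate follows by taking expectations (after localization) and applying Gronwall.

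The main obstacle is $T_1$. Any estimate that uses only Sobolev norms of $v$---for instance, bounding $\|K_\eps*v\|_{L^\infty}$ by $\|v\|_{\mathbb{H}^1}$---yields a term like $\|v\|_{\mathbb{H}^1}^4$ after Young's inequality, which prevents the estimate from closing linearly in $\|v\|_{\mathbb{H}^1}^2$. The resolution is to exploit the $L^\infty$ bound $\|v^\eps\|_{L^\infty}\leq\|v_0\|_{L^\infty}$ preserved by the Lagrangian construction \eqref{constr}: replacing one power of Sobolev norm by the constant $\|v_0\|_{L^\infty}$ via the $L^4$--$L^4$ boundedness of Biot--Savart is precisely what makes the constant $C_{v_0,\sigma}$ depend only on $\|v_0\|_{H^3}$ (which dominates $\|v_0\|_{L^\infty}$ by Sobolev embedding) and on $\sigma_k$.
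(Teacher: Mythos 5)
Your proof is correct and follows the same Itô/energy-estimate skeleton as the paper, but the two interior sub-estimates are handled by genuinely different means, so it is worth recording the comparison. For the transport term you pass to $T_1 = 2\langle \Delta v, (K_\eps*v)\cdot\nabla v\rangle$ and then bound it through Calder\'on--Zygmund $L^4$-boundedness of the Biot--Savart operator plus the Ladyzhenskaya interpolation $\|\nabla v\|_{L^4}^2\lesssim\|v\|_{\mathbb{H}^1}\|v\|_{\mathbb{H}^2}$; the paper instead uses the more elementary Young convolution bound $\|K_\eps*v\|_{L^\infty}\le\|K\|_{L^1(\mathbb{T}^2)}\|v\|_{L^\infty}\le\|K\|_{L^1}\|v_0\|_{L^\infty}$ directly (the $1/|x|$ singularity being integrable on the compact torus), combined with H\"older and Young in the form $\|\Delta v\|_{L^2}\|\nabla v\|_{L^2}\cdot\text{const}\le\eta\|v\|_{\mathbb{H}^2}^2+C_\eta\|v\|_{\mathbb{H}^1}^2$. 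Both hinge on the maximum principle $\|v^\eps\|_{L^\infty}\le\|v_0\|_{L^\infty}$, which as you note is what lets the estimate close linearly in $\|v\|_{\mathbb{H}^1}^2$; the paper's route saves you from invoking singular-integral theory. A small remark on your derivation of $T_1$: the identity $T_1=2\langle\Delta v,(K_\eps*v)\cdot\nabla v\rangle$ is just a single integration by parts from $-2\langle\nabla v,\nabla(b\cdot\nabla v)\rangle$ and does not actually require $\div(K_\eps*v)=0$, so the intermediate cancellation step you describe is superfluous (though harmless). For the Stratonovich correction, you rewrite $T_2+T_3=-\sum_k\langle\sigma_k\cdot\nabla v,[\Delta,\sigma_k\cdot\nabla]v\rangle$ and absorb the resulting $\|v\|_{\mathbb{H}^1}\|v\|_{\mathbb{H}^2}$ cross-term by Young; the paper instead carries out one more round of integration by parts and chain-rule manipulations so that $2(J_4+J_5)$ contains \emph{no} $\mathbb{H}^2$-contribution at all, leaving $2(J_4+J_5)\le C_\sigma\int_0^t\|v^\eps\|_{\mathbb{H}^1}^2\,\mathrm{d}s$ and thus preserving the full dissipation. (In fact, the symmetric piece of your commutator $\sum_i\langle\sigma_k\cdot\nabla v,\partial_i\sigma_k\cdot\partial_i\nabla v\rangle$ can also be halved by one more integration by parts and a relabeling of indices, recovering the paper's purely lower-order bound.) Either way a strictly positive coefficient on $\int_0^t\|v^\eps\|_{\mathbb{H}^2}^2\,\mathrm{d}s$ survives, the localization with $\tau_R^\eps$ and Gronwall argument is the same, and the conclusion follows.
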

	
	\begin{proof}
		As in Lemma \ref{lem:uni01}  ,we will establish an uniform estimate for $\|   v^{\eps}_t\|_{\mathbb{H}^1}^2$ and $\int_0^t\|  v^{\eps}_{s}\|_{\mathbb{H}^2}^2\mathd s$ by utilizing It\^o's formula and taking advantage of the divergence-free properties of $\{\sigma_k, k=1,\cdots,d\}$ and  $K_{\eps}$. 
		Based on \eqref{x-regv} and the regularity \eqref{regregv} of $(v^{  \eps}(t))_{t\in [0,T]},$ through \cite[Theorem 3.3.3]{kunita1997stochastic}, we conclude that for $i\in\{1,2\},$ we have 	\begin{align*}
			\partial_iv_t^{\eps}(x) -   \partial_iv_0^{\eps}(x) =&\int_0^t  \partial_i\Delta  v^{\eps}_s(x)\mathd s -\int_0^t  \partial_i(K_{\eps}*  v^{\eps}_s(x)\cdot\nabla   v^{\eps}_s(x)) \mathd s\nonumber\\&+\frac{1}{2}\sum_{k=1}^d\int_{0}^{t}\partial_i\(\sigma_k\cdot \nabla \[\sigma_k\cdot \nabla   v^{\eps}_s(x)\]\)\mathd s- \sum_{k=1}^{d}\int_0^t  \partial_i(\sigma_k(x)\cdot\nabla   v^{\eps}_s(x)) \mathd W_s^{k}.  
		\end{align*}
	Next, by applying It\^o's formula to obtain $\|\nabla  v^{  \eps}\|_{L^2}^2$, we obtain the following equation
		\begin{equation}
			\|\nabla  v^{\eps}_t\|_{L^2}^2= 	\|\nabla  v^{\eps}_0\|_{L^2}^2+2\sum_{l=1}^5J_l,\label{5.2-0}
		\end{equation} 
		where $J_l$, $l=1,...,5$, are  
		\begin{align*}
			J_1=& \int_0^t\left\langle \nabla  v^{\eps} ,\nabla( \Delta   v^{\eps})  \right\rangle \mathd s = -\int_0^t\left\langle \nabla^2  v^{\eps} ,\nabla^2 v^{\eps}  \right\rangle\mathd s=-\int_{0}^{t}\|  v^{\eps}_{s}\|_{\mathbb{H}^2}^2\mathd s,
			\\J_2=& -\int_0^t\left\langle \nabla   v^{\eps}, \nabla [K_{\eps}*  v^{\eps}\cdot \nabla   v^{\eps}] \right\rangle\mathd s ,
			\quad \quad \quad \quad \quad J_3=-\sum_{k=1}^d \int_0^t \left\langle \nabla   v^{\eps}, \nabla  \[\sigma_k \cdot \nabla   v^{\eps}\]\right\rangle dW^{k}_s,
			\\J_4=&\frac{1}{2}\sum_{k=1}^d\int_0^t\left\langle \nabla  v^{\eps}, \nabla  \(\sigma_k \cdot \nabla \[\sigma_k\cdot \nabla   v^{\eps}\] \)\right\rangle \mathd s,
			\quad J_5=\frac{1}{2}\sum_{k=1}^d\int_0^t\left\langle \nabla  \[\sigma_k \cdot \nabla   v^{\eps}\] ,\nabla  \[\sigma_k \cdot \nabla   v^{\eps}\]\right\rangle \mathd s .
		\end{align*}
		Here we used integration by parts to get the second equality in $J_1.$ 
		
		Recalling that $K_{  \eps}$ is in $L^1(\mathbb{T}^2)$ and $\sup_{[0, T]}\|  v^{  \eps}_t\|_{L^{\infty}}\leq \|  v^{  \eps}_0\|_{L^{\infty}},$ we can now analyze the second term $J_2$ that involves interactions
		\begin{align*}
			J_2=& \int_0^t\left\langle \Delta  v^{\eps}, K_{\eps}*  v^{\eps}\cdot \nabla   v^{\eps}\right\rangle\mathd s
			\\\leq & \int_0^t \|\Delta v^{\eps}\|_{L^2}\|\nabla   v^{\eps}\|_{L^2}\|K_{\eps}*  v^{\eps}\|_{L^{\infty}}\mathd s 
			\\\leq & \eta\int_0^t \|\Delta v^{\eps}\|_{L^2}^2\mathd s +C_{\eta}\|  v_0\|_{L^{\infty}}^2\int_0^t\|K\|_{L^1}^2 \|  v^{\eps}\|_{\mathbb{H}^1}^2 \mathd s 
			\\\leq & \eta\int_0^t \|\Delta  v^{\eps}\|_{L^2}^2\mathd s +C_{  v_0,\eta}\int_0^t \|  v^{\eps}\|_{\mathbb{H}^1}^2 \mathd s,
		\end{align*}
	where $C_{  v_0,\eta}$ is a positive constant depends on $\|  v_0\|_{H^3},\eta$ and 
	 we applied integration by parts once again to establish the first equality,  H\"older's inequality and \eqref{bound} to derive the second inequality, and  Young's inequality to obtain the third inequality here.
		
		Moving on to analyze $J_4$, we find that
		\begin{align*}
			2J_4=&\sum_{k=1}^d\int_0^t\left\langle  \nabla  v^{\eps},(\nabla\sigma_k)^T \nabla\[\sigma_k\cdot \nabla   v^{\eps}\] \right\rangle + \left\langle \nabla  v^{\eps},\nabla^2  \[\sigma_k\cdot \nabla   v^{\eps}\] \sigma_k \right\rangle \mathd s
			\\=&\sum_{k=1}^d\int_0^t\left\langle  \nabla  v^{\eps},(\nabla\sigma_k)^T \nabla\[\sigma_k\cdot \nabla   v^{\eps}\] \right\rangle -\left\langle \sigma_k, \nabla^2 v^{\eps} \nabla\[\sigma_k\cdot \nabla   v^{\eps}\] \right\rangle \mathd s
			=: J_{4,1}+J_{4,2}.
		\end{align*}
		Here we used $\div \sigma_k=0, k=1,\cdots,d$ to get the second term $J_{4,2}$ at the last line and $\nabla\sigma_k$ denote $\begin{pmatrix}
				\partial_{1} \sigma_k^1 & \partial_{2} \sigma_k^1 \\ \partial_{1} \sigma_k^2 &\partial_{2} \sigma_k^2
			\end{pmatrix},$ $(\nabla\sigma_k)^T$ denote the transpose of $\nabla\sigma_k.$ 
		
		Furthermore, by the chain rule, we can notice that 
		\begin{align*}
			J_{4,2}= -2J_5+\sum_{k=1}^d\int_0^t\left\langle \nabla  v^{\eps},\nabla\sigma_k \nabla\[\sigma_k\cdot \nabla   v^{\eps}\] \right\rangle \mathd s.
		\end{align*}
		This implies that 
		\begin{align*}
			2(J_4+J_5)=&\sum_{k=1}^d\int_0^t\left\langle  \nabla  v^{\eps},(\nabla\sigma_k)^T \nabla\[\sigma_k\cdot \nabla   v^{\eps}\] \right\rangle \mathd s +\sum_{k=1}^d\int_0^t\left\langle \nabla  v^{\eps} , \nabla\sigma_k \nabla\[\sigma_k\cdot \nabla   v^{\eps}\] \right\rangle \mathd s\nonumber
			\\= &\sum_{k=1}^d\int_0^t\left\langle \nabla v^{\eps}, (\nabla\sigma_k)^T(\nabla\sigma_k)^T\nabla v^{\eps} \right\rangle +\|(\nabla\sigma_k)^T\nabla v^{\eps} \|^2_{L^2}  \mathd s\nonumber
			\\&+\sum_{k=1}^d\int_0^t\left\langle ( \sigma_k\cdot\nabla)(\nabla v^{\eps}\otimes(\nabla v^{\eps})^T),(\nabla\sigma_k)^T \right\rangle  \mathd s\nonumber,
		\end{align*}
		where we used the chain rule to get the last equality. 
		
		By making use of $\div \sigma_k=0, k=1,\cdots,d,$ again, we can derive
		\begin{align*}
			2(J_4+J_5)=& \sum_{k=1}^d\int_0^t\left\langle \nabla v^{\eps}, (\nabla\sigma_k)^T(\nabla\sigma_k)^T\nabla v^{\eps} \right\rangle +\|(\nabla\sigma_k)^T\nabla v^{\eps} \|^2_{L^2} \mathd s\nonumber
			\\&-\sum_{k=1}^d\int_0^t\left\langle (\nabla v^{\eps}\otimes(\nabla v^{\eps})^T),( \sigma_k\cdot\nabla)(\nabla\sigma_k)^T \right\rangle  \mathd s\nonumber.
		\end{align*}
	Applying H\"older's inequality, we then obtain the following result 
		\begin{align*}
			2(J_4+J_5)	
			\leq  &C_{\sigma} \int_0^t \| \nabla   v^{\eps}\|_{L^2}^2 \mathd s ,
		\end{align*}
		where  $C_{\sigma}$ is a positive constant depending on $\sigma_k, k=1,\cdots,d.$
		
	Based on the above estimates for $J_2, J_4, J_5,$ by selecting a sufficiently small value for $\eta$ in $J_2$ (We omit the dependence of $C_{\sigma,  v_0}$ on $\eta$ here.), we arrive at the following results.
	\begin{align}\label{ert:5.2-2}
			\|  v^{\eps}_t\|_{\mathbb{H}^1}^2& \leq\|  v^{\eps}_0\|_{\mathbb{H}^1}^2-\int_0^t \|  v^{\eps}\|_{\mathbb{H}^2}^2 \mathd s+ C_{\sigma,  v_0} \int_0^t \|  v^{\eps}\|_{\mathbb{H}^1}^2 \mathd s+M_1(t),
			\\&\leq C_{  v_0,\sigma} -\int_0^t \|  v^{\eps}\|_{\mathbb{H}^2}^2 \mathd s +M_1(t),
	\end{align} 
where we used Lemma \ref{lem:uni01} to get the last inequality.
	
	Define $\tau_R^{  \eps}\assign \inf\{t>0| \sup_{[0,T]}\|  v^{  \eps}_{t}\|_{C^2}\leq R\}\land T.$ Since for fixed $\eps\in(0,1), v^{  \eps}\in C([0,T];C^2)$, we have $\tau_R^{  \eps}\rightarrow T$ almost surely, as $R\rightarrow\infty.$ 
	Taking expectation, we have 
	\begin{align*}
		\mathbb{E}\|  v^{\eps}_{t\land\tau_R^{  \eps}}\|_{\mathbb{H}^1}^2 +\mathbb{E}\int_0^{t\land\tau_R^{  \eps}}\|  v^{\eps}_s\|_{\mathbb{H}^2}^2\mathd s \leq C_{  v_0,\sigma}
	\end{align*}
		Letting $R\rightarrow \infty,$
		the result  then follows.  
	\end{proof}
	
	\begin{lemma}\label{lem:12high}
		For each $p\geq2$, it holds that 
		\begin{align*}
			\sup_{t\in [0,T]}	\mathbb{E}\|  v^{  \eps}_t\|_{\mathbb{H}^1}^{p}+ \mathbb{E}\int_0^T \|  v^{  \eps}_t\|_{\mathbb{H}^1}^{p}
			\|  v^{  \eps}_t\|_{\mathbb{H}^2}^2 \mathd t\leq C_{  v_0,\sigma} ,\quad 	\mathbb{E}\sup_{t\in [0,T]} \|  v^{  \eps}_t\|_{\mathbb{H}^1}^2\leq C_{  v_0,\sigma}.
		\end{align*}
	Here $C_{  v_0,\sigma}$ denotes some positive constant depending on $\|  v_0\|_{{H}^3}$ and $\sigma_k, k=1,\cdots,d.$
	\end{lemma}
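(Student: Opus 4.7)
The plan is to bootstrap from the pathwise identity obtained in Lemma \ref{lem:uni12}, where
\[
\mathrm{d}\|v^{\eps}_t\|_{\mathbb{H}^1}^2 = -2\|v^{\eps}_t\|_{\mathbb{H}^2}^2\mathd t + b_t\,\mathd t + \mathrm{d}M_1(t),
\]
with $|b_t|\leq C_{v_0,\sigma}\|v^{\eps}_t\|_{\mathbb{H}^1}^2$ (this comes from the estimates of $J_2$, $J_4+J_5$ in the proof of Lemma \ref{lem:uni12}). The first step is to apply It\^o's formula to the function $x\mapsto x^{p/2}$ with $x=\|v^{\eps}_t\|_{\mathbb{H}^1}^2$. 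This requires computing the quadratic variation of $M_1$. Writing $\nabla[\sigma_k\cdot\nabla v^\eps]=(\nabla\sigma_k)^T\nabla v^\eps+\nabla^2 v^\eps\,\sigma_k$ and using integration by parts together with $\div\sigma_k=0$ to cancel the second contribution, one obtains
\[
\mathrm{d}[M_1]_t = 4\sum_{k=1}^d|\langle \nabla v^\eps,(\nabla\sigma_k)^T\nabla v^\eps\rangle|^2\mathd t\lesssim_\sigma \|v^{\eps}_t\|_{\mathbb{H}^1}^4\mathd t.
\]

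Having this, It\^o's formula yields, for any $p\geq 2$,
\[
\|v^{\eps}_t\|_{\mathbb{H}^1}^p + p\int_0^t \|v^{\eps}_s\|_{\mathbb{H}^1}^{p-2}\|v^{\eps}_s\|_{\mathbb{H}^2}^2\,\mathd s \leq \|v_0\|_{\mathbb{H}^1}^p + C_{p,v_0,\sigma}\int_0^t \|v^{\eps}_s\|_{\mathbb{H}^1}^p\,\mathd s + N_t^p,
\]
where $N_t^p=\frac{p}{2}\int_0^t \|v^{\eps}_s\|_{\mathbb{H}^1}^{p-2}\,\mathd M_1(s)$ is a continuous local martingale with quadratic variation bounded by a constant times $\int_0^t\|v^{\eps}_s\|_{\mathbb{H}^1}^{2p}\mathd s$. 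I then localize by the stopping times $\tau_R^\eps$ introduced at the end of the proof of Lemma \ref{lem:uni12} (recall $\tau_R^\eps\to T$ a.s.), take expectation to kill the martingale, and apply Gronwall's inequality together with Lemma \ref{lem:uni01} to conclude
\[
\sup_{t\in[0,T]}\mathbb{E}\|v^{\eps}_{t\wedge \tau_R^\eps}\|_{\mathbb{H}^1}^p + \mathbb{E}\int_0^{T\wedge\tau_R^\eps}\|v^{\eps}_s\|_{\mathbb{H}^1}^{p-2}\|v^{\eps}_s\|_{\mathbb{H}^2}^2\,\mathd s \leq C_{p,v_0,\sigma}.
\]
Sending $R\to\infty$ by Fatou/monotone convergence and applying the result with $p+2$ in place of $p$ yields the first claim.

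For the pathwise supremum bound, I return to the identity with $p=2$, take supremum over $t\in[0,T\wedge\tau_R^\eps]$, and apply the Burkholder-Davis-Gundy inequality to the martingale $M_1$: using $\mathrm{d}[M_1]_t\lesssim\|v^\eps_t\|_{\mathbb{H}^1}^4\,\mathd t$,
\[
\mathbb{E}\sup_{t\leq T\wedge\tau_R^\eps}|M_1(t)|\lesssim \mathbb{E}\Bigl(\int_0^{T\wedge\tau_R^\eps}\|v^\eps_s\|_{\mathbb{H}^1}^4\mathd s\Bigr)^{1/2}\leq \tfrac12\,\mathbb{E}\sup_{t\leq T\wedge\tau_R^\eps}\|v^\eps_t\|_{\mathbb{H}^1}^2 + C\,\mathbb{E}\int_0^T\|v^\eps_s\|_{\mathbb{H}^1}^2\mathd s,
\]
where in the last step I used the elementary inequality $(ab)^{1/2}\leq \tfrac12 a+\tfrac12 b$. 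Absorbing the first term on the right into the left-hand side and using Lemma \ref{lem:uni01} to control the remaining integral, then letting $R\to\infty$, produces $\mathbb{E}\sup_{t\in[0,T]}\|v^\eps_t\|_{\mathbb{H}^1}^2\leq C_{v_0,\sigma}$.

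The only delicate point I expect is verifying the quadratic variation bound $\mathrm{d}[M_1]_t\lesssim\|v^\eps_t\|_{\mathbb{H}^1}^4\,\mathd t$: without the cancellation from $\div\sigma_k=0$ the term $\langle \nabla v^\eps,\nabla^2 v^\eps\sigma_k\rangle$ would scale like $\|v^\eps\|_{\mathbb{H}^1}\|v^\eps\|_{\mathbb{H}^2}$, which is the wrong scaling for a Gronwall closure, so that integration by parts is the crux of the argument. Everything else is standard localization and moment bookkeeping.
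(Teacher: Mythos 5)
Your proposal is correct and follows essentially the same route as the paper's proof: start from the pathwise $\mathbb{H}^1$-inequality of Lemma \ref{lem:uni12}, apply It\^o to $x\mapsto x^{p/2}$, bound the quadratic variation of $M_1$ by $C_\sigma\|v^\eps\|_{\mathbb{H}^1}^4\mathd t$ after using $\div\sigma_k=0$ to cancel the $\langle\nabla v^\eps,\nabla^2 v^\eps\,\sigma_k\rangle$ contribution, localize by $\tau_R^\eps$, apply Gronwall, and conclude the supremum bound via Burkholder-Davis-Gundy for $p=2$. You correctly identify the cancellation in the quadratic variation as the crux, which is exactly the mechanism the paper exploits; you also make explicit the substitution $p\mapsto p+2$ needed to match the stated form of the integral term, a step the paper leaves implicit.
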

	\begin{proof}
		Using It\^o's formula for $\|w^{  \eps}_t\|_{\mathbb{H}^1}^2$ yields that
		\begin{equation*}
			\mathd \|w^{  \eps}_t\|_{\mathbb{H}^1}^{p}=a\|w^{  \eps}_t\|_{\mathbb{H}^1}^{2(\frac{p}{2}-1)}\mathd \|w^{  \eps}_t\|_{\mathbb{H}^1}^2+\frac{p}{4}(\frac{p}{2}-1)\|w^{  \eps}_t\|_{\mathbb{H}^1}^{2(\frac{p}{2}-2)} \[\mathd M_1,\mathd M_1\], \quad \forall p\geq 2.\nonumber
		\end{equation*}
		Notice that $\|w^{  \eps}_t\|_{\mathbb{H}^1}$ is nonnegative, based on  \eqref{ert:5.2-2}, we have 
		\begin{align*}
			\mathd \|w^{  \eps}_t\|_{\mathbb{H}^1}^{p}\leq& \(-\frac{p}{2}\|w^{  \eps}_t\|_{\mathbb{H}^1}^{2(\frac{p}{2}-1)}\|w^{  \eps}_t\|_{\mathbb{H}^2}^2+C_{  v_0,\sigma}\|w^{  \eps}_t\|_{\mathbb{H}^1}^{p}\)\mathd t \\&+a\|w^{  \eps}_t\|_{\mathbb{H}^1}^{2(\frac{p}{2}-1)}\mathd M_1+\frac{p}{4}(\frac{p}{2}-1)\|w^{  \eps}_t\|_{\mathbb{H}^1}^{2(\frac{p}{2}-2)} \[\mathd M_1,\mathd M_1\].
		\end{align*}
	where $C_{  v_0,\sigma}$ is a positive constant depending on $\|  v_0\|_{H^3}$ and $\sigma_k, k=1,\cdots,d,$ and $	\[\mathd M_1,\mathd M_1 \]_{\cdot}$ denotes the quadratic variation process for $M_1.$
	
		Recalling that the continuous local martingale $M_1=-2\sum_{k=1}^d \int_0^t \left\langle \nabla   v^{  \eps}, \nabla \[\sigma_k \cdot \nabla   v^{  \eps}\]\right\rangle dW^{k}_s,$ we notice that 
		\begin{align*}
			\[\mathd M_1,\mathd M_1 \]_{t}=4 \sum_{k=1}^d\left| \left\langle \nabla  v^{  \eps}, \nabla \[\sigma_k \cdot \nabla   v^{  \eps}\]\right\rangle\right|^2  \mathd t.
		\end{align*}

		Furthermore, employing the chain rule leads to
		\begin{align*}
			\left\langle \nabla  v^{  \eps}, \nabla\[\sigma_k \cdot \nabla   v^{  \eps}\]\right\rangle=&\left\langle \nabla v^{  \eps}, \nabla^2   v^{  \eps}\sigma_k\right\rangle+\left\langle \nabla v^{  \eps},( \nabla\sigma_k)^T  \nabla   v^{  \eps}\right\rangle\nonumber
			\\=&-\frac{1}{2}\left\langle  \sigma_k , \nabla\[\tr( \nabla v^{  \eps}\otimes ( \nabla v^{  \eps})^T)\]\right\rangle+\left\langle \nabla v^{  \eps},( \nabla\sigma_k)^T  \nabla   v^{  \eps}\right\rangle,
		\end{align*} 
	 where $\tr A$ denotes the trace of matrix $A.$
		By applying the divergence free properties of $\{\sigma_k,k=1,\cdots, d\}$, we can observe that the first term is $0.$ 
		Consequently, we obtain
		\begin{align*}
			\left\langle \nabla  v^{  \eps}, \nabla\[\sigma_k \cdot \nabla   v^{  \eps}\]\right\rangle&=\left\langle \nabla v^{  \eps},( \nabla\sigma_k)^T  \nabla   v^{  \eps}\right\rangle,
			\\ &\leq \|\nabla \sigma_k\|_{L^{\infty}}\|  v^{  \eps}\|_{\mathbb{H}^1}^2 ,
		\end{align*}
		where we used H\"older's inequality to get the second inequality.
		We then conclude that
		\begin{align}
		\[\mathd M_1,\mathd M_1 \]_{t}\leq C_\sigma \|  v^{  \eps}_t\|_{\mathbb{H}^1}^4\mathd t.\label{estm1}
		\end{align}
		
		Therefore, under the condition   $\sigma_k\in C^{\infty},$ 
		we obtain the following inequality
		\begin{equation}\label{est5.3-1}
			\mathd \|w^{  \eps}_t\|_{\mathbb{H}^1}^{p}\leq \(-\frac{p}{2}\|w^{  \eps}_t\|_{\mathbb{H}^1}^{2(\frac{p}{2}-1)}\|w^{  \eps}_t\|_{\mathbb{H}^2}^2+C_{  v_0,\sigma}\|w^{  \eps}_t\|_{\mathbb{H}^1}^{p}\)\mathd t +\frac{p}{2}\|w^{  \eps}_t\|_{\mathbb{H}^1}^{2(\frac{p}{2}-1)}\mathd M_1.
		\end{equation}
	We now define $\tau_R^{  \eps}\assign \inf\{t>0|  \sup_{[0,T]}\|   v^{  \eps}_{t}\|_{H^1}<R\}\land T.$ Since for fixed $\eps\in(0,1),  v^{  \eps}\in C([0,T];C^1)$, we have $\tau_R^{  \eps}\rightarrow T$ almost surely, as $R\rightarrow\infty.$ 
		 We then obtain the following results by taking  expectation and applying Gronwall's inequality.
		$$\mathbb{E}\|w_{t\land\tau_R^{  \eps}}\|_{\mathbb{H}^1}^{p} \leq C_{  v_0,\sigma},\quad \forall p\geq 2,$$
		and
		\begin{align*}
			\mathbb{E}\int_{0}^{t\land\tau_R^{  \eps}}\frac{p}{2}\|w^{  \eps}_s\|_{\mathbb{H}^1}^{2(\frac{p}{2}-1)}\|w^{  \eps}_s\|_{\mathbb{H}^2}^2ds \leq \|w_0\|_{\mathbb{H}^1}^{p} +C_{  v_0,\sigma}\mathbb{E}\int_{0}^{T}\|w^{  \eps}_s\|_{\mathbb{H}^1}^{p}\mathd s,\quad \forall p\geq 2. 
		\end{align*}
	Let $R\rightarrow \infty,$ we  have
	
	\begin{equation} \label{est:5.3-2}
		\sup_{t\in [0,T]}	\mathbb{E}\|  v^{  \eps}_t\|_{\mathbb{H}^1}^{p}+\frac{p}{2}\mathbb{E}\int_0^T \|  v^{  \eps}_t\|_{\mathbb{H}^1}^{p-2}
		\|  v^{  \eps}_t\|_{\mathbb{H}^2}^2 \mathd t\leq C_{  v_0,\sigma} ,\quad \forall p\geq 2. 
	\end{equation}
		Furthermore, under \eqref{est5.3-1} and \eqref{est:5.3-2}, we choose $p=2$ and have
		\begin{align*}
			\mathbb{E}\sup_{t\in [0,T]}\|w^{  \eps}_t\|_{\mathbb{H}^1}^2\leq C_{  v_0,\sigma}+\mathbb{E}\sup_{t\in [0,T]}M_1.
		\end{align*}
		We apply Burkholder-Davis-Gundy inequality and further find 
		\begin{align*}
			\mathbb{E}\sup_{t\in [0,T]}M_1\leq& C_{  v_0,\sigma}+C \mathbb{E} \(\int_0^T \[\mathd M_1,\mathd M_1\]  \)^{\frac{1}{2}}
			\\\leq& C_{  v_0,\sigma}+C_{\sigma} \mathbb{E} \(\int_0^T\|w_t\|_{\mathbb{H}^1}^{4}\mathd t\)^{\frac{1}{2}}
			\leq C_{  v_0,\sigma}.
		\end{align*}
		In the above inequalities, we used the estimate \eqref{estm1} to derive the first inequality in the last line and the estimate \eqref{est:5.3-2} to derive the second inequality in the last line. The proof is then completed.
	\end{proof}
	
	\begin{lemma}\label{lem:23}
	For fixed $\eps\in(0,1),$ 	it holds $\mathbb{P}$-almost surely that for all $t\in[0,T],$
		\begin{align*}
			\|  v^{  \eps}_t\|_{\mathbb{H}^2}^2+\int_0^t\|  v^{  \eps}_{s}\|_{\mathbb{H}^3}^2\mathd s \leq 	\|  v_0\|_{\mathbb{H}^2}^2+ C_{  v_0,\sigma}\int_0^t \(1+\|  v^{  \eps}_s\|_{\mathbb{H}^2}^2\)\(1+\|  v^{  \eps}_s\|_{\mathbb{H}^1}^2\)\mathd s +M_2(t), 
		\end{align*}
		where \begin{align*}
			M_2(\cdot)=-2\sum_{k=1}^d \int_0^{\cdot} \left\langle \nabla^{2}  v^{  \eps}, \nabla^{2}\[\sigma_k \cdot \nabla   v^{  \eps}\]\right\rangle dW^{k}_s
		\end{align*}  is a continuous local martingale  with respect to the filtration $(\mathcal{F}^{W}_t)_{t\in[0,T]}$ generated by $W^k,k =1,\cdots,d.$ 
      Here $C_{  v_0,\sigma}$ denotes some positive constant depends on $\|  v_0\|_{{H}^3}$ and $\sigma_k, k=1,\cdots,d.$
		In particular,
		\begin{align*}
			\sup_{t\in [0,T]}\mathbb{E}\|  v^{  \eps}_t\|_{\mathbb{H}^2}^2 +\mathbb{E}\int_0^T\|  v^{  \eps}_t\|_{\mathbb{H}^3}^2\mathd t \leq C_{  v_0,\sigma}.
		\end{align*}
	\end{lemma}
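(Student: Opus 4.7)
The strategy mirrors that of Lemma \ref{lem:uni12}, but one derivative higher. Using the regularity \eqref{regregv} of $v^\eps$ together with a twofold application of Kunita's theorem (\cite[Theorem 3.3.3]{kunita1997stochastic}) to differentiate the pointwise identity \eqref{x-regv} in $x$, and then applying It\^o's formula to $\|\nabla^2 v^\eps_t\|_{L^2}^2=\|v^\eps_t\|_{\mathbb{H}^2}^2$, I would obtain a decomposition
\begin{align*}
\|v^\eps_t\|_{\mathbb{H}^2}^2 = \|v_0\|_{\mathbb{H}^2}^2 + 2\sum_{\ell=1}^5 J_\ell,
\end{align*}
in which $J_1=\int_0^t\langle\nabla^2 v^\eps,\nabla^2\Delta v^\eps\rangle ds$ is the viscous term, $J_2=-\int_0^t\langle\nabla^2 v^\eps,\nabla^2(K_\eps*v^\eps\cdot\nabla v^\eps)\rangle ds$ is the nonlinear transport contribution, $J_3=\tfrac12 M_2(t)$ is the stochastic integral giving rise to the claimed local martingale, and $J_4, J_5$ are respectively the Stratonovich-to-It\^o correction and the quadratic-variation piece stemming from the transport noise. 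Integrating by parts in $J_1$ immediately yields the good dissipative term $-\int_0^t\|v^\eps_s\|_{\mathbb{H}^3}^2 ds$.

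The contribution $J_4+J_5$ is controlled exactly as in Lemma \ref{lem:uni12}. Writing out $\nabla^2(\sigma_k\cdot\nabla(\sigma_k\cdot\nabla v^\eps))$ and $|\nabla^2(\sigma_k\cdot\nabla v^\eps)|^2$ and exploiting $\div\sigma_k=0$ to symmetrize and integrate by parts, the only potentially dangerous pieces of order $\|\sigma_k\cdot\nabla^3 v^\eps\|_{L^2}^2$ in $J_5$ cancel with structurally identical terms coming out of $J_4$, leaving a residual estimate of the form $2(J_4+J_5)\le C_\sigma\int_0^t\bigl(\|v^\eps_s\|_{\mathbb{H}^2}^2+\|v^\eps_s\|_{\mathbb{H}^1}^2\bigr) ds$.

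The main obstacle is $J_2$. Setting $u^\eps\assign K_\eps*v^\eps$, which is divergence-free since $K_\eps=\nabla^\perp G_\eps$, the top-order transport piece vanishes by skew-symmetry: $\int_{\mathbb{T}^2}(u^\eps\cdot\nabla\Delta v^\eps)\,\Delta v^\eps\,dx=\tfrac12\int_{\mathbb{T}^2}u^\eps\cdot\nabla|\Delta v^\eps|^2\,dx=0$. Expanding $\Delta(u^\eps\cdot\nabla v^\eps)$, this reduces $-2\langle\Delta v^\eps,\Delta(u^\eps\cdot\nabla v^\eps)\rangle$ to
\begin{align*}
-2\int_{\mathbb{T}^2}\Delta v^\eps\,(\Delta u^\eps\cdot\nabla v^\eps)\,dx-4\int_{\mathbb{T}^2}\Delta v^\eps\,(\nabla u^\eps:\nabla^2 v^\eps)\,dx,
\end{align*}
both of which I would estimate by 2D Gagliardo-Nirenberg together with the elliptic gain $\|\nabla^{k+1}u^\eps\|_{L^2}\lesssim\|v^\eps\|_{\mathbb{H}^k}$ (uniform in $\eps$, from $u^\eps=\nabla^\perp(G_\eps*v^\eps)$). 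Typically, $|\int\Delta v^\eps(\Delta u^\eps\cdot\nabla v^\eps)\,dx|\lesssim\|v^\eps\|_{\mathbb{H}^2}^{1/2}\|v^\eps\|_{\mathbb{H}^3}^{1/2}\cdot\|v^\eps\|_{\mathbb{H}^1}\cdot\|v^\eps\|_{\mathbb{H}^1}^{1/2}\|v^\eps\|_{\mathbb{H}^2}^{1/2}$, and analogously for the second piece. Young's inequality then absorbs a small multiple of $\|v^\eps_s\|_{\mathbb{H}^3}^2$ into the LHS against the dissipation from $J_1$ and leaves a residual bounded by $C_{v_0,\sigma}(1+\|v^\eps_s\|_{\mathbb{H}^1}^2)(1+\|v^\eps_s\|_{\mathbb{H}^2}^2)$, which is exactly the form claimed.

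Finally, for the integrated bound, I would introduce the stopping times $\tau^\eps_R\assign\inf\{t>0:\|v^\eps_t\|_{H^2}>R\}\wedge T$, which satisfy $\tau^\eps_R\uparrow T$ a.s.\ by \eqref{regregv}. Taking $\mathbb{E}$ of the stopped inequality kills $M_2$ and yields a deterministic inequality amenable to Gronwall's lemma; the weight $1+\|v^\eps_s\|_{\mathbb{H}^1}^2$ is integrable in expectation with the right moments thanks to Lemma \ref{lem:12high} (which provides $\mathbb{E}\sup_{t\in[0,T]}\|v^\eps_t\|_{\mathbb{H}^1}^p\le C_{v_0,\sigma}$ for all $p\ge 2$), so either a deterministic Gronwall after Cauchy-Schwarz in the product $\|v^\eps\|_{\mathbb{H}^1}^2\|v^\eps\|_{\mathbb{H}^2}^2$, or the stochastic Gronwall inequality \cite[Corollary 5.4]{geiss2021sharp}, closes the estimate. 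Monotone convergence as $R\to\infty$ then produces $\sup_t\mathbb{E}\|v^\eps_t\|_{\mathbb{H}^2}^2+\mathbb{E}\int_0^T\|v^\eps_s\|_{\mathbb{H}^3}^2 ds\le C_{v_0,\sigma}$.
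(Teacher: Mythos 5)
Your proposal is correct but handles the nonlinear term $J_2$ via a genuinely different mechanism than the paper. Where you exploit that $u^\eps=K_\eps * v^\eps$ is divergence-free to kill the top-order transport piece $\int u^\eps\cdot\nabla\Delta v^\eps\,\Delta v^\eps\,dx$ by skew-symmetry, and then handle the remaining commutator terms by Gagliardo--Nirenberg interpolation together with the Calder\'on--Zygmund elliptic gain $\|\nabla^{k+1}u^\eps\|_{L^2}\lesssim\|v^\eps\|_{\mathbb{H}^k}$, the paper instead leans on the representation $K_\eps=\div V^\eps$ with $V_{\alpha\beta}\in L^\infty$. After integrating by parts to convert $\nabla^2$ into $\Delta$, they write $K_\eps*\partial_i v^\eps\cdot\nabla v^\eps=\sum_{m,n}V^\eps_{m,n}*\partial_m\partial_i v^\eps\,\partial_n v^\eps$, i.e.\ they trade the singular $K_\eps$ for a bounded matrix $V^\eps$ at the cost of one extra derivative on $v^\eps$, and then close the estimate with plain H\"older $\|\cdot\|_{L^2}\|V_{m,n}\|_{L^\infty}\|\cdot\|_{L^1}\|\cdot\|_{L^2}$ and $\|f\|_{L^1}\lesssim\|f\|_{L^2}$ by compactness of $\mathbb{T}^2$; neither interpolation nor CZ theory enters. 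The paper's route is deliberately uniform with their use of $K=\div V$ throughout (in particular in the relative-entropy part), and is technologically lighter; yours is the classical Navier--Stokes energy cascade and equally valid. One small caveat on your interpolation step: as stated, the exponent count for the first residual term produces $\|v^\eps\|_{\mathbb{H}^3}^{1/2}\|v^\eps\|_{\mathbb{H}^2}\|v^\eps\|_{\mathbb{H}^1}^{3/2}$, which after Young gives $\|v^\eps\|_{\mathbb{H}^2}^2\|v^\eps\|_{\mathbb{H}^1}^3$ rather than the precise claimed polynomial; a slightly different splitting (e.g.\ $\|\Delta v^\eps\|_{L^2}\|\Delta u^\eps\|_{L^4}\|\nabla v^\eps\|_{L^4}$, avoiding $\mathbb{H}^3$ entirely on that term and landing on $\|v^\eps\|_{\mathbb{H}^2}^2\|v^\eps\|_{\mathbb{H}^1}$) is needed to hit exactly the bound $(1+\|v^\eps\|_{\mathbb{H}^2}^2)(1+\|v^\eps\|_{\mathbb{H}^1}^2)$. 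The treatment of $J_4+J_5$ and the stopping-time/Gronwall closure coincide with the paper's.
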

	\begin{proof} 
    	As in Lemma \ref{lem:uni12}, we used It\^o's formula and obtian  
		\begin{align*}
			\|\nabla^2  v^{  \eps}_t\|_{L^2}^2	=	 \|\nabla^2  v^{  \eps}_0\|_{L^2}^2+ 2\sum_{l=1}^5J_{l},
		\end{align*}
		where $J_{l}$ are defined by 
		\begin{align*}
			J_1=&\int_0^t\left\langle \nabla^2   v^{  \eps}, \nabla^2\Delta    v^{  \eps}\right\rangle\mathd s = -\int_0^t\| v^{  \eps} \|_{\mathbb{H}^3}^2 \mathd s ,
			\\J_2=&	-\int_0^t\left\langle \nabla^2v^{  \eps}, \nabla^2 [K_{  \eps}*  v^{  \eps}\cdot \nabla   v^{  \eps}] \right\rangle\mathd s ,
			\quad \quad  \quad \quad \quad J_3= -\sum_{k=1}^d \int_0^t \left\langle \nabla^2  v^{  \eps}, \nabla^2\[\sigma_k \cdot \nabla   v^{  \eps}\]\right\rangle dW^{k}_s,
			\\J_4=&\frac{1}{2}\sum_{k=1}^d\int_0^t\left\langle \nabla^2  v^{  \eps}, \nabla^2\(\sigma_k \cdot \nabla \[\sigma_k\cdot \nabla   v^{  \eps}\] \)\right\rangle \mathd s ,
			\quad J_5=\frac{1}{2}\sum_{k=1}^d\int_0^t\left\langle \nabla^2 \[\sigma_k \cdot \nabla   v^{  \eps}\] ,\nabla^2\[\sigma_k \cdot \nabla   v^{  \eps}\]\right\rangle \mathd s .
		\end{align*}
		Here we used integration by parts to get the second term in $J_1.$ 
	For simplicity, we only show the estimate for the interaction part $J_2,$ which is
		the major defference from Lemma \ref{lem:uni12}. Using the chain rule and integration by parts, we have 
		\begin{align*}
			J_2=&	-\int_0^t\left\langle \nabla^2 v^{  \eps}, \nabla^2[K_{  \eps}*  v^{  \eps}\cdot \nabla   v^{  \eps}] \right\rangle\mathd s \\=&\sum_{i=1}^{2}\int_0^t\left\langle \partial_i\Delta  v^{  \eps}, K_{  \eps}*\partial_i  v^{  \eps}\cdot \nabla   v^{  \eps} \right\rangle\mathd s +\int_0^t\left\langle \nabla(\Delta  v^{  \eps}), \nabla^2v^{  \eps}K_{  \eps}*  v^{  \eps} \right\rangle\mathd s .
		\end{align*}

		Recall that $K_{\eps}=\div (V*J_{  \eps}),$ where $V$ is a $2\times2$ matrix and $V_{\alpha\beta}\in L^{\infty}, \forall \alpha,\beta=1,2,$
	we have 
		\begin{align*}
			K_{  \eps}*\partial_i  v^{  \eps}\cdot \nabla   v^{  \eps}=
			\sum_{m,n=1}^2 V^{  \eps}_{m,n}*\partial_{m}\partial_i v^{  \eps} \partial_n  v^{  \eps}. 
		\end{align*}
	We express $J_2$ as follows
			\begin{align*}
			J_2=&\sum_{m,n,i=1}^2\int_0^t\left\langle \partial_i\Delta  v^{\eps }, V^{  \eps}_{m,n}*\partial_{m}\partial_iv^{  \eps} \partial_n  v^{  \eps} \right\rangle\mathd s +\int_0^t\left\langle \nabla(\Delta  v^{  \eps}), \nabla^2v^{  \eps}K_{  \eps}*  v^{  \eps}\right\rangle\mathd s .	
		\end{align*}
			Applying Hölder's inequality and Young's inequality, we obtain
		\begin{align*}
			J_2\leq &\sum_{m,n,i=1}^2\int_0^t\|\partial_i\Delta  v^{  \eps}\|_{L^2}\|V_{m,n}\|_{L^{\infty}}\|\partial_{m}\partial_i  v^{  \eps}\|_{L^1}\|\partial_n  v^{  \eps}\|_{L^2}\mathd s 
			\\&	+\int_0^t\|\nabla(\Delta  v^{  \eps})\|_{L^2}\|K\|_{L^1}\|  v^{  \eps}\|_{L^{\infty}}\|\nabla^2v^{  \eps}\|_{L^2}\mathd s\\ \leq& \eta\int_0^t\|v^{  \eps}\|_{\mathbb{H}^3}^2 \mathd s+ C_{\eta}\int_0^t\|v^{  \eps}\|_{\mathbb{H}^2}^2\(\|v^{  \eps}\|_{\mathbb{H}^1}^2+\|  v^{  \eps}\|_{L^{\infty}}^2\)\mathd s. 
		\end{align*}
		
		The remainder of the argument is analogous to that in Lemma \ref{lem:uni12}. Therefore, the proof will not be reproduced here.
	
	\end{proof}
	
	The next lemma is  in a similar form to Lemma \ref{lem:12high}, but it deals with higher regularities.
	\begin{lemma}\label{lem:23higher}
		It holds that 
		\begin{align*}
			\sup_{t\in [0,T]}	\mathbb{E}\|  v^{  \eps}_t\|_{\mathbb{H}^2}^2\|  v^{  \eps}_t\|_{\mathbb{H}^1}^{2}+\mathbb{E}\int_0^T \|  v^{  \eps}_t\|_{\mathbb{H}^1}^{2}
			\|  v^{  \eps}_t\|_{\mathbb{H}^3}^2 \mathd t+\mathbb{E}\int_0^T \|  v^{  \eps}_t\|_{\mathbb{H}^2}^4\mathd t  \leq  C_{  v_0,\sigma}.
		\end{align*}
	 Here $C_{  v_0,\sigma}$ denotes some positive constant depends on $\|  v_0\|_{{H}^3}$ and $\sigma_k, k=1,\cdots,d.$
	\end{lemma}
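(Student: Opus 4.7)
The plan is to apply Itô's product formula to $Z_t^\eps \assign \|v_t^\eps\|_{\mathbb{H}^1}^2 \|v_t^\eps\|_{\mathbb{H}^2}^2$, using the differential forms obtained in Lemma \ref{lem:uni12} and Lemma \ref{lem:23} as inputs. Writing schematically $d\|v^\eps\|_{\mathbb{H}^1}^2 = A_1\, dt + dM_1$ and $d\|v^\eps\|_{\mathbb{H}^2}^2 = A_2\, dt + dM_2$, where
\begin{equation*}
A_1 \leq -\|v^\eps\|_{\mathbb{H}^2}^2 + C_{v_0,\sigma}\|v^\eps\|_{\mathbb{H}^1}^2, \qquad A_2 \leq -\|v^\eps\|_{\mathbb{H}^3}^2 + C_{v_0,\sigma}(1+\|v^\eps\|_{\mathbb{H}^2}^2)(1+\|v^\eps\|_{\mathbb{H}^1}^2),
\end{equation*}
the product rule will give $dZ_t^\eps = \|v^\eps\|_{\mathbb{H}^2}^2\, dA_1 + \|v^\eps\|_{\mathbb{H}^1}^2\, dA_2 + d[M_1,M_2]_t + \text{local martingale}$. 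The two dissipative terms $-\|v^\eps\|_{\mathbb{H}^2}^4$ and $-\|v^\eps\|_{\mathbb{H}^1}^2\|v^\eps\|_{\mathbb{H}^3}^2$ arise naturally on the right, and moving them to the left produces exactly the two integrals appearing in the target estimate.

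Next I would handle the polynomial ``bad'' contributions. The worst term is $\|v^\eps\|_{\mathbb{H}^1}^4\|v^\eps\|_{\mathbb{H}^2}^2$, which I treat via Young's inequality as $\eta\|v^\eps\|_{\mathbb{H}^2}^4 + C_\eta \|v^\eps\|_{\mathbb{H}^1}^8$; choosing $\eta$ small absorbs the first piece into the LHS, while the uniform moment bound $\sup_t \mathbb{E}\|v^\eps_t\|_{\mathbb{H}^1}^8 \leq C_{v_0,\sigma}$ from Lemma \ref{lem:12high} controls the second. The remaining cross terms of the form $\|v^\eps\|_{\mathbb{H}^1}^2 \|v^\eps\|_{\mathbb{H}^2}^2$ are kept as such and will be absorbed by Gronwall's inequality on $\mathbb{E} Z_t^\eps$.

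The main obstacle will be the cross quadratic variation $d[M_1,M_2]_t = 4\sum_k \langle \nabla v^\eps,\nabla[\sigma_k\cdot\nabla v^\eps]\rangle\,\langle \nabla^2 v^\eps,\nabla^2[\sigma_k\cdot\nabla v^\eps]\rangle\,dt$. The first bracket is already controlled by $C_\sigma\|v^\eps\|_{\mathbb{H}^1}^2$ as in \eqref{estm1}. For the second, expanding $\nabla^2[\sigma_k\cdot\nabla v^\eps]$ by the Leibniz rule produces a leading term $\langle \nabla^2 v^\eps,\sigma_k\cdot\nabla^3 v^\eps\rangle$, which after rewriting as $\tfrac12\int \sigma_k\cdot\nabla|\nabla^2 v^\eps|^2\,dx$ and using integration by parts together with $\div\sigma_k=0$ (Assumption \ref{assumption2}) vanishes identically. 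The surviving lower-order terms are bounded by $C_\sigma(\|v^\eps\|_{\mathbb{H}^2}^2+\|v^\eps\|_{\mathbb{H}^1}\|v^\eps\|_{\mathbb{H}^2})$. Altogether $|d[M_1,M_2]_t|\leq C_\sigma\|v^\eps\|_{\mathbb{H}^1}^2(\|v^\eps\|_{\mathbb{H}^2}^2+\|v^\eps\|_{\mathbb{H}^1}^2)\,dt$, which is of the same polynomial form already treated in the previous step.

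Finally, I would introduce the stopping time $\tau_R^\eps\assign \inf\{t>0:\|v_t^\eps\|_{H^2}>R\}\wedge T$ (well-defined with $\tau_R^\eps\to T$ a.s. by \eqref{regregv}), take expectation up to $t\wedge\tau_R^\eps$ to kill the local martingale parts, and apply Gronwall's inequality to the resulting closed inequality
\begin{equation*}
\mathbb{E} Z_{t\wedge\tau_R^\eps}^\eps + \tfrac12\,\mathbb{E}\int_0^{t\wedge\tau_R^\eps}\left(\|v_s^\eps\|_{\mathbb{H}^1}^2\|v_s^\eps\|_{\mathbb{H}^3}^2 + \|v_s^\eps\|_{\mathbb{H}^2}^4\right)ds \leq C_{v_0,\sigma}\left(1 + \int_0^t \mathbb{E} Z_s^\eps\, ds\right),
\end{equation*}
using the uniform $\mathbb{H}^1$-moment bounds from Lemma \ref{lem:12high} to close the RHS. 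Sending $R\to\infty$ via monotone convergence yields the desired conclusion.
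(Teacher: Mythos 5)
The paper does not write out a proof of Lemma~\ref{lem:23higher}; it only says the result ``is in a similar form to Lemma~\ref{lem:12high}.'' Your proposal is a correct expansion of exactly that hint: applying It\^o's product rule to $Z^\eps_t=\|v^\eps_t\|_{\mathbb{H}^1}^2\|v^\eps_t\|_{\mathbb{H}^2}^2$, feeding in the drift--diffusion decompositions from Lemmas~\ref{lem:uni12} and~\ref{lem:23}, is the natural analogue of the power-function It\^o argument of Lemma~\ref{lem:12high}, and it produces both dissipative integrals and the sup bound at once. The genuinely new ingredient relative to Lemma~\ref{lem:12high} is the cross quadratic variation $[M_1,M_2]$, and your treatment of it is correct: the top-order term $\langle \nabla^2 v^\eps,\sigma_k\cdot\nabla^3 v^\eps\rangle$ vanishes after integration by parts using $\div\sigma_k=0$, and the remainder is of the polynomial order $C_\sigma\|v^\eps\|_{\mathbb{H}^1}^2(\|v^\eps\|_{\mathbb{H}^1}^2+\|v^\eps\|_{\mathbb{H}^2}^2)$ already covered by your Young/Gronwall step together with the $p=8$ moment bound of Lemma~\ref{lem:12high}. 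Two cosmetic remarks: you wrote $\|v^\eps\|_{\mathbb{H}^2}^2\,dA_1$ where you mean $\|v^\eps\|_{\mathbb{H}^2}^2\,A_1\,dt$; and since the interpolation $\|v\|_{\mathbb{H}^2}^2\leq\|v\|_{\mathbb{H}^1}\|v\|_{\mathbb{H}^3}$ (one integration by parts plus Cauchy--Schwarz) gives $\|v^\eps\|_{\mathbb{H}^2}^4\leq\|v^\eps\|_{\mathbb{H}^1}^2\|v^\eps\|_{\mathbb{H}^3}^2$, the third integral in the statement actually follows for free from the second, though carrying the $-\|v^\eps\|_{\mathbb{H}^2}^4$ dissipation explicitly as you do is what lets you absorb the Young-inequality error $\eta\|v^\eps\|_{\mathbb{H}^2}^4$ cleanly.
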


			Our last uniform estimates concern on $\mathbb{H}^3$ and $\mathbb{H}^4$-norms. 
			\begin{lemma}\label{lem:34}
			For fixed $\eps\in(0,1),$ 	it holds $\mathbb{P}$-almost surely that for all $t\in[0,T],$
			\begin{align*}
				\|  v^{  \eps}_t\|_{\mathbb{H}^3}^2+\int_0^t\|  v^{  \eps}_{s}\|_{\mathbb{H}^4}^2\mathd s \leq &	\|  v_0\|_{\mathbb{H}^3}^2+ C_{  v_0,\sigma}\int_0^t \(\|  v^{  \eps}_s\|_{\mathbb{H}^1}^2+\|  v^{  \eps}_s\|_{\mathbb{H}^2}^2+\|  v^{  \eps}_s\|_{\mathbb{H}^3}^2\)\mathd s\\&+\int_0^t\(\|  v^{  \eps}_s\|_{\mathbb{H}^2}^4+\|  v^{  \eps}_s\|_{\mathbb{H}^3}^2\|  v^{  \eps}_s\|_{\mathbb{H}^1}^2\)\mathd s +M_3(t),
			\end{align*}
			where \begin{align*}
				M_3(t)=-2\sum_{k=1}^d \int_0^{t} \left\langle \nabla^{3}  v^{  \eps}, \nabla^{3}\[\sigma_k \cdot \nabla   v^{  \eps}\]\right\rangle dW^{k}_s
			\end{align*} is a continuous local martingale  with respect to the filtration $(\mathcal{F}^{W}_t)_{t\in[0,T]}$ generated by $W^k, k =1,\cdots,d.$ 
			Here $C_{  v_0,\sigma}$ denotes some positive constant depending on $\|  v_0\|_{{H}^3}$ and $\sigma_k, k=1,\cdots,d.$
			In particular,
				\begin{align*}
					\sup_{t\in [0,T]}\mathbb{E}\|  v^{  \eps}_t\|_{\mathbb{H}^3}^2 +\mathbb{E}\int_0^T\|  v^{  \eps}_t\|_{\mathbb{H}^4}^2\mathd t \leq C_{  v_0,\sigma}. 
				\end{align*}
			\end{lemma}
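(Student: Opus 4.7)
The plan is to follow exactly the same scheme as in Lemmas \ref{lem:uni12} and \ref{lem:23}, but now applying It\^o's formula to $\|\nabla^{3}  v^{\eps}_t\|_{L^2}^2$. Differentiating thrice in space the mild equation \eqref{x-regv} (justified by the regularity \eqref{regregv} together with Kunita's theorem, as before), one obtains
\begin{equation*}
\|\nabla^{3}  v^{\eps}_t\|_{L^2}^2=\|\nabla^{3}  v^{\eps}_0\|_{L^2}^2+2\sum_{l=1}^{5}J_l,
\end{equation*}
where $J_1=-\int_0^t \|  v^{\eps}_s\|_{\mathbb{H}^4}^2\mathd s$ (after one integration by parts), $J_3=M_3(t)$ is the martingale coming from the It\^o differential of the Stratonovich noise, $J_2$ is the interaction term $-\int_0^t\langle\nabla^{3} v^{\eps},\nabla^{3}[K_{\eps}*  v^{\eps}\cdot\nabla   v^{\eps}]\rangle\mathd s$, and $J_4+J_5$ collects the corrector plus quadratic variation contributions from the transport noise.

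The treatment of $J_4+J_5$ is a direct transcription of the argument in Lemma \ref{lem:uni12}: using $\div\sigma_k=0$ and repeated integration by parts, all the worst terms (those containing $\nabla^{4} v^{\eps}$) cancel pairwise between $J_4$ and $J_5$, leaving only expressions of the form $\langle \nabla^{3} v^{\eps}, P_k(\sigma_k,\nabla\sigma_k,\ldots,\nabla^{3}\sigma_k)\nabla^{3} v^{\eps}\rangle$, which by H\"older's inequality are bounded by $C_\sigma\| v^{\eps}\|_{\mathbb{H}^3}^2$. The main obstacle is therefore $J_2$. To deal with it I would first expand by Leibniz,
\begin{equation*}
\nabla^{3}[K_{\eps}*  v^{\eps}\cdot\nabla   v^{\eps}]=\sum_{k=0}^{3}\binom{3}{k}\nabla^{k}(K_{\eps}*  v^{\eps})\cdot\nabla^{4-k}  v^{\eps},
\end{equation*}
and treat the four resulting terms separately.

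For the term with $k=0$, i.e.\ $\langle\nabla^{3} v^{\eps},(K_{\eps}* v^{\eps})\cdot\nabla^{4}  v^{\eps}\rangle$, one integration by parts (using $\div(K_{\eps}* v^{\eps})=0$) moves one derivative back to $\nabla^{3} v^{\eps}$; then H\"older, $\|K_{\eps}* v^{\eps}\|_{L^\infty}\lesssim \| v^{\eps}\|_{L^\infty}\|K\|_{L^1}\lesssim\| v_0\|_{L^\infty}$, and Young's inequality give a contribution $\eta\| v^{\eps}\|_{\mathbb{H}^4}^2+C_{\eta, v_0}\| v^{\eps}\|_{\mathbb{H}^3}^2$. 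The intermediate terms $k=1,2$ are handled by H\"older together with the interpolation/Sobolev embeddings $H^{1}\hookrightarrow L^4$ on $\mathbb{T}^2$ and the smoothing of $K_{\eps}*$ by one derivative; they produce bounds of the form $\| v^{\eps}\|_{\mathbb{H}^2}^4+\| v^{\eps}\|_{\mathbb{H}^1}^2\| v^{\eps}\|_{\mathbb{H}^3}^2$, which are exactly the nonlinear terms written in the statement. The hardest piece is $k=3$, namely $\langle\nabla^{3} v^{\eps},\nabla^{3}(K_{\eps}* v^{\eps})\cdot\nabla  v^{\eps}\rangle$: here the singularity of $K$ would be visible if one simply estimated $\nabla^{3}(K_{\eps}* v^{\eps})$ in $L^p$. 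To avoid this, one uses $K_{\eps}=\div V^{\eps}$ with $V_{\alpha\beta}^{\eps}\in L^\infty$ uniformly in $\eps$ (as in the proof of Lemma \ref{lem:23}), integrating by parts to move one derivative from $\nabla^{3}(K_{\eps}* v^{\eps})$ either onto $\nabla  v^{\eps}$ or onto $\nabla^{3} v^{\eps}$; this reduces the term to $\langle\partial\nabla^{3} v^{\eps}, V^{\eps}*\nabla^{3}  v^{\eps}\cdot\nabla v^{\eps}\rangle$ plus lower-order pieces, and Young's inequality once more absorbs $\eta\| v^{\eps}\|_{\mathbb{H}^4}^2$ into $J_1$.

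Collecting everything, choosing $\eta$ small enough to absorb the $\| v^{\eps}\|_{\mathbb{H}^4}^2$ contributions into $J_1$, one obtains the pathwise bound stated in the lemma. The final averaged estimate $\sup_{[0,T]}\mathbb{E}\| v^{\eps}_t\|_{\mathbb{H}^3}^2+\mathbb{E}\int_0^T\| v^{\eps}_t\|_{\mathbb{H}^4}^2\mathd t\le C_{ v_0,\sigma}$ is then deduced by the same stopping-time argument as in Lemma \ref{lem:23}: localize with $\tau_R^{\eps}\assign\inf\{t:\sup_{[0,t]}\| v_t^{\eps}\|_{C^3}\ge R\}\wedge T$, take expectation to kill $M_3$, apply Gronwall's inequality using the already-established bounds of Lemma \ref{lem:23higher} to control $\mathbb{E}\int_0^T\| v^{\eps}\|_{\mathbb{H}^2}^4+\| v^{\eps}\|_{\mathbb{H}^1}^2\| v^{\eps}\|_{\mathbb{H}^3}^2\mathd t$, and finally let $R\to\infty$ using $\tau_R^{\eps}\to T$ a.s.
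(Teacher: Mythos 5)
Your proposal follows essentially the same scheme as the paper's proof: Itô's formula for $\|\nabla^3 v^\eps\|_{L^2}^2$, the $J_1,\dots,J_5$ decomposition with the noise corrector and quadratic-variation pieces canceling at highest order as in Lemma~\ref{lem:uni12}, the key use of the representation $K_\eps=\div V^\eps$ with $V^\eps\in L^\infty$ uniformly in $\eps$ to tame the singular convolutions, H\"older and Young to absorb the $\|v^\eps\|_{\mathbb{H}^4}^2$ pieces into $J_1$, and finally localization via a stopping time, expectation, and Gronwall using Lemma~\ref{lem:23higher} to close the estimate.

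The only organizational difference is in the handling of $J_2$: the paper first integrates by parts once (pairing $\nabla^4 v^\eps$ against $\nabla^2[K_\eps * v^\eps\cdot\nabla v^\eps]$) and then distributes, so the highest derivative always lands on $\nabla^4 v^\eps$, whereas you apply Leibniz directly to $\nabla^3[\dots]$ and integrate by parts selectively for $k=0,3$. Your $k=1,2$ terms therefore look like $\langle\nabla^3 v^\eps,\nabla(K_\eps*v^\eps)\cdot\nabla^3 v^\eps\rangle$ rather than the paper's $\langle\partial_i\partial_j\Delta v^\eps, K_\eps*\partial_i v^\eps\cdot\partial_j\nabla v^\eps\rangle$; to land exactly on the stated $\|v^\eps\|_{\mathbb{H}^2}^4$ and $\|v^\eps\|_{\mathbb{H}^1}^2\|v^\eps\|_{\mathbb{H}^3}^2$ form you do need the Gagliardo--Nirenberg/$H^1\hookrightarrow L^4$ interpolation you alluded to, which introduces one more step compared to the paper's decomposition. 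This is a bookkeeping difference, not a substantive one, and either route is fine.

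One small remark: with $\div(K_\eps*v^\eps)=0$, the $k=0$ Leibniz term $\langle\nabla^3 v^\eps,(K_\eps*v^\eps)\cdot\nabla\nabla^3 v^\eps\rangle$ in fact vanishes identically (it equals $\tfrac{1}{2}\langle K_\eps*v^\eps,\nabla|\nabla^3 v^\eps|^2\rangle=0$), so the $\eta\|v^\eps\|_{\mathbb{H}^4}^2+C\|v^\eps\|_{\mathbb{H}^3}^2$ bound you wrote for it is correct but not needed; the paper avoids this observation by the alternative integration by parts and therefore also ends up with a (harmless) non-vanishing $J_{2,1}$.
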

			\begin{proof}
			Similar to the previous lemmas, we will evolve $\|\nabla^{3} v^{  \eps}\|_{L^2}^2.$ 
			\begin{align*}
					\|\nabla^{3} v^{  \eps}_t\|_{L^2}^2=\|\nabla^{3} v^{  \eps}_0\|_{L^2}^2+2\sum_{l=1}^5J_{l},
				\end{align*}
			with 
				\begin{align*}
					J_1=&\int_0^t\left\langle \nabla^{3} v^{  \eps}, \nabla^{3} \Delta    v^{  \eps}\right\rangle\mathd s = - \int_0^t\|  v^{  \eps}_{s}\|_{\mathbb{H}^4}^2 \mathd s ,
					\\J_2=&	-\int_0^t\left\langle \nabla^{3}  v^{  \eps}, \nabla^{3}[K_{  \eps}*  v^{  \eps}\cdot \nabla   v^{  \eps}] \right\rangle\mathd s ,
					\quad \quad  \quad \quad \quad J_3= -\sum_{k=1}^d \int_0^t \left\langle \nabla^{3}  v^{  \eps},\nabla^{3}\[\sigma_k \cdot \nabla   v^{  \eps}\]\right\rangle dW^{k}_s,
			\\J_4=&\frac{1}{2}\sum_{k=1}^d\int_0^t\left\langle \nabla^{3}  v^{  \eps}, \nabla^{3}\(\sigma_k \cdot \nabla \[\sigma_k\cdot \nabla   v^{  \eps}\] \)\right\rangle \mathd s, 
				\quad J_5=\frac{1}{2}\sum_{k=1}^d\int_0^t\left\langle \nabla^{3}\[\sigma_k \cdot \nabla   v^{  \eps}\] ,\nabla^{3}\[\sigma_k \cdot \nabla   v^{  \eps}\]\right\rangle \mathd s .
				\end{align*}
			Here we used integration by parts to get the second term in $J_1.$
			
			For simplicity, we only present the estimate for the interaction part $J_2,$ which is the main difference compared to Lemma \ref{lem:uni12}. We have
				\begin{align*}
				J_2=&-\int_{0}^{t}\left\langle \nabla^{3}   v^{  \eps},\nabla^{3} [K_{  \eps}*  v^{  \eps}\cdot \nabla   v^{  \eps}] \right\rangle\mathd s\\=& \int_0^t\left\langle \nabla^{2}  ( \Delta v^{  \eps}), (\nabla^3   v^{  \eps})^TK_{  \eps}*  v^{  \eps}\right\rangle\mathd s +\sum_{i,j=1}^{2}\int_0^t\left\langle  \partial_i\partial_j\Delta  v^{  \eps}, K_{  \eps}* \partial_i v^{  \eps}\cdot  \partial_j\nabla   v^{  \eps} \right\rangle\mathd s 
				\\&+\sum_{i,j=1}^{2}\int_0^t\left\langle   \partial_i\partial_j\Delta  v^{  \eps}, K_{  \eps}*\partial_jv^{  \eps} \cdot  \partial_i\nabla   v^{  \eps} \right\rangle\mathd s +\sum_{i,j=1}^{2}\int_0^t\left\langle   \partial_i\partial_j\Delta  v^{  \eps}, K_{  \eps}*\partial_i\partial_j v^{  \eps}\cdot \nabla   v^{\eps} \right\rangle\mathd s 
					\\=:&J_{2,1}+\sum_{i,j=1}^{2}J_{2,2}^{i,j}+\sum_{i,j=1}^{2}J_{2,3}^{i,j}+\sum_{i,j=1}^{2}J_{2,4}^{i,j}.
				\end{align*}
			where we used integration by parts and chain rule to get the second equality.
				The first term $J_{2,1}$ can be controlled by H\"older's inequality easily, 
				\begin{align*}
					J_{2,1}\leq &\int_0^t\| \nabla^{2}  ( \Delta v^{  \eps})\|_{L^2}\|K\|_{L^1}\|  v^{  \eps}\|_{L^{\infty}}\| \nabla^{3}   v^{  \eps}\|_{L^2}\mathd s \nonumber
				\\\leq &\eta \int_0^t\|v^{  \eps}\|_{\mathbb{H}^4}^2\mathd s+ C_{\eta,  v_0}\int_0^t\| v^{  \eps}\|_{\mathbb{H}^3}^2\mathd s,
				\end{align*}
			where we use Young's inequality and \eqref{bound} to get the second inequality.
				For fixed $i,j\in\{1,2\},$ the terms $J_{2,2}^{i,j}, J_{2,3}^{i,j}, J_{2,4}^{i,j}$ can be controlled in a similar manner. We provide a detailed explanation for the case of $J_{2,2}^{i,j}$ here.
				
				Recall that $K^{  \eps}_{j'}= \sum_{i'}\partial_{i'} V^{  \eps}_{i'j'}$ and 
				\begin{equation}\label{est:5.6-1}
					\|V^{  \eps}_{i'j'}*f\|_{L^{\infty}}\leq \|V^{  \eps}_{i'j'}\|_{L^{\infty}}\|f\|_{L^1}\lesssim \|V_{i'j'}\|_{L^{\infty}} \|f\|_{L^2},
				\end{equation}
			where we used H\"older's inequality  and the compactness of torus $\mathbb{T}^2$ to get the second inequality,
				we then find 
				\begin{align*}
					J_{2,2}^{i,j}=&\sum_{i',j'=1}^2\int_0^t\left\langle \partial_i\partial_j\Delta  v^{  \eps}, V^{  \eps}_{i'j'}*\partial_{i'}\partial_i v^{  \eps} \partial_{j'}\partial_j  v^{  \eps} \right\rangle\mathd s\nonumber
					\\\leq & \sum_{i',j'=1}^2\int_0^t\|\partial_i\partial_j\Delta  v^{  \eps}\|_{L^2}\|V_{i'j'}\|_{L^{\infty}}\|\partial_{i'}\partial_i  v^{  \eps}\|_{L^2}\|\partial_{j'}\partial_j   v^{  \eps}\|_{L^2}\mathd s \nonumber
					\\\leq &\eta \int_0^t\|  v^{  \eps}\|_{\mathbb{H}^4}^2\mathd s+ C_{\eta}\int_0^t\|v^{  \eps}\|_{\mathbb{H}^2}^4\mathd s .
				\end{align*}
			where we used H\"older's inequality and estimate \eqref{est:5.6-1} to get the second inequality and Young's inequality to get the last inequality.
				For  $J_{2,3}^{i,j}$ and $J_{2,4}^{i,j}$, we have 
				\begin{align*}
				J_{2,3}^{i,j}\leq &\eta \int_0^t\|  v^{  \eps}\|_{\mathbb{H}^4}^2\mathd s+ C_{\eta}\int_0^t\|  v^{  \eps}\|_{\mathbb{H}^2}^4\mathd s ,
				\end{align*}
				\begin{align*}
					J_{2,4}^{i,j}\leq \eta \int_0^t\| v^{  \eps}\|_{\mathbb{H}^4}^2\mathd s+ C_{\eta}\int_0^t\| v^{  \eps}\|_{\mathbb{H}^3}^2\|v^{  \eps}\|_{\mathbb{H}^1}^2\mathd s .
				\end{align*}
				The remainder of the argument is analogous to that in Lemma \ref{lem:uni12}. Therefore, the proof will not be reproduced here.
			\end{proof}

	Similar to	Lemma \ref{lem:12high}, we can also deduce the following result.
			\begin{lemma}\label{suph2}
			It holds that 
			\begin{align*}
			&\mathbb{E}[\sup_{t\in [0,T ]}\|  v^{  \eps}_t\|_{\mathbb{H}^2}^2]\leq C_{  v_0,\sigma}.
			\end{align*}
		Here $C_{  v_0,\sigma}$ denotes some positive constant depending on $\|  v_0\|_{{H}^3}$ and $\sigma_k, k=1,\cdots,d.$
		
		\end{lemma}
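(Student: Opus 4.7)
The plan is to start from the pathwise estimate in Lemma~\ref{lem:23}, take the supremum over $t\in[0,T]$, and then take expectation. The deterministic integrand has already been controlled in expectation: Lemma~\ref{lem:uni12} and Lemma~\ref{lem:23higher} give $\mathbb{E}\int_0^T(1+\|v^{\eps}_s\|_{\mathbb{H}^2}^2)(1+\|v^{\eps}_s\|_{\mathbb{H}^1}^2)\mathd s\leq C_{v_0,\sigma}$, so only the stochastic integral $M_2$ requires real work. As in the proof of Lemma~\ref{lem:12high}, I would first localize by the stopping time $\tau_R^{\eps}\assign\inf\{t>0:\sup_{[0,t]}\|v^{\eps}_s\|_{C^2}>R\}\wedge T$ (which converges to $T$ a.s.\ as $R\to\infty$ by the regularity \eqref{regregv}) so that $M_2(\cdot\wedge\tau_R^{\eps})$ is a genuine martingale, and remove the localization at the end via monotone convergence.

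The key computation is the analogue of the cancellation observed in Lemma~\ref{lem:12high}. Expanding $\nabla^2[\sigma_k\cdot\nabla v^{\eps}]$ by the Leibniz rule produces one top-order term of the form $\sigma_k\cdot\nabla(\nabla^2 v^{\eps})$ plus terms of order at most $2$ in $v^{\eps}$ with derivatives falling on $\sigma_k$. Pairing the top-order term against $\nabla^2 v^{\eps}$ yields $\int \sigma_k\cdot\nabla(|\nabla^2 v^{\eps}|^2)/2$, which vanishes after integration by parts thanks to $\div\sigma_k=0$ (cf.\ Assumption~\ref{assumption2}). The remaining contributions to $\langle\nabla^2 v^{\eps},\nabla^2[\sigma_k\cdot\nabla v^{\eps}]\rangle$ are each bounded by a multiple of $\|v^{\eps}\|_{\mathbb{H}^2}^2$ with a constant depending on $\|\sigma_k\|_{C^2}$. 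Consequently
\begin{align*}
[M_2,M_2]_t\leq C_{\sigma}\int_0^t\|v^{\eps}_s\|_{\mathbb{H}^2}^4\mathd s.
\end{align*}

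Applying the Burkholder--Davis--Gundy inequality and Young's inequality then gives
\begin{align*}
\mathbb{E}\sup_{t\in[0,T\wedge\tau_R^{\eps}]}|M_2(t)|&\leq C\,\mathbb{E}[M_2,M_2]_{T\wedge\tau_R^{\eps}}^{1/2}\leq C_{\sigma}\,\mathbb{E}\Bigl(\sup_{s\in[0,T\wedge\tau_R^{\eps}]}\|v^{\eps}_s\|_{\mathbb{H}^2}^2\int_0^{T\wedge\tau_R^{\eps}}\|v^{\eps}_s\|_{\mathbb{H}^2}^2\mathd s\Bigr)^{1/2}\\
&\leq\tfrac{1}{4}\mathbb{E}\sup_{s\in[0,T\wedge\tau_R^{\eps}]}\|v^{\eps}_s\|_{\mathbb{H}^2}^2+C_{\sigma}\,\mathbb{E}\int_0^T\|v^{\eps}_s\|_{\mathbb{H}^2}^2\mathd s,
\end{align*}
and the last integral is bounded by $C_{v_0,\sigma}$ via Lemma~\ref{lem:23}. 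Taking $\sup$ and expectation in the estimate of Lemma~\ref{lem:23}, the $\tfrac14$-term can be absorbed into the left-hand side, yielding a bound uniform in $R$; letting $R\to\infty$ completes the proof.

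The main obstacle is the BDG step, which is only workable after the cancellation above reduces $[M_2,M_2]_t$ from an expression that formally involves $\nabla^3 v^{\eps}$ (and would therefore require an $\mathbb{H}^3$-norm under the square root) to the $\mathbb{H}^2$-quartic bound that can be split by Young's inequality against $\sup_{[0,T]}\|v^{\eps}\|_{\mathbb{H}^2}^2$.
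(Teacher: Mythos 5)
Your proposal is correct and matches the paper's intent: the paper omits the proof of Lemma~\ref{suph2}, stating only that it follows ``similar to Lemma~\ref{lem:12high},'' and your argument is exactly the $\mathbb{H}^2$-analogue of that proof (localization, the $\div\sigma_k=0$ cancellation that reduces $[M_2,M_2]_t$ to an $\mathbb{H}^2$-quartic bound, BDG, Young, absorption, and letting $R\to\infty$). One small overstatement in your closing remark: the BDG step would actually still go through without the cancellation, since even the naive bound $[M_2,M_2]_t\lesssim\int_0^t\|v^{\eps}_s\|_{\mathbb{H}^2}^2\|v^{\eps}_s\|_{H^3}^2\,\mathd s$ splits under Young's inequality against $\sup_{[0,T]}\|v^{\eps}\|_{\mathbb{H}^2}^2$, leaving $\mathbb{E}\int_0^T\|v^{\eps}_s\|_{H^3}^2\,\mathd s$, which is controlled by Lemma~\ref{lem:23}; the cancellation is cleaner and mirrors the paper's estimate \eqref{estm1}, but it is not strictly needed here.
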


We now proceed with the proof of Lemma \ref{lem:reg}. In this proof, we leverage the representation of stochastic 2-dimensional Navier-Stokes equation (\ref{eqt:ns}) and exploit the properties of (stochastic) integrals to establish the regularity property outlined in Lemma \ref{lem:reg}.
\begin{proof}[Proof of Lemma \ref{lem:reg}]
The following proof comprises two steps.

Step 1:  In this step, we first establish that 
\begin{align*}
	(  v_t)_{t\in[0,T]}\in C([0,T],H^{2}(\mathbb{T}^2))\quad \mathbb{P}-a.s.,
\end{align*}by utilizing the representation of stochastic 2-dimensional Navier-Stokes equation \eqref{eqt:ns} and the properties of (stochastic) integrals. 

Recall that $\sigma_k, k=1,\cdots,d,$ is smooth given in Assumption \ref{assumption2} and the regularity of $(  v_t)_{t\in[0,T]}$ given in Definition \ref{defnsw}, i.e. $(  v_t)_{t\in[0,T]}$ is a a continuous $L^2(\mathbb{T}^2)$ valued $(\mathcal{G}_t)_{t\in [0,T]}$-adapted stochastic process $(  v_t)_{t\in [0,T]}$  satisfying $\mathbb{E} \int_0^T \|  v_t\|_{H^4}^2\mathd t <\infty,$ by Lemma \ref{lemma triebel}, we have

	\begin{align*}
	&\mathbb{E}	\int_0^T  \|\Delta  v_s\|_{H^2}^2\mathd s <\infty,
	\\&\mathbb{E}	\int_0^T  \|K*  v_s\cdot \nabla   v_s\|_{H^2}^2\mathd s <\infty,
	\\&\mathbb{E}	\int_0^T  \|\sigma_k\cdot\nabla\(\sigma_k\cdot\nabla  v_s\)\|_{H^2}^2\mathd s <\infty,
	\\&\mathbb{E}	\int_0^T  \| \sigma_k\cdot\nabla   v_s\|_{H^2}^2\mathd s <\infty,
	\end{align*}
	We then introduce a continuous $H^{2}$ valued  process $(V_1(t))_{t\in[0,T]}$ defined as follows:
\begin{align*}
	V_1(t)\assign\int_0^t  \Delta  v_s\mathd s -\int_0^t  K*  v_s\cdot \nabla   v_s\mathd s+\frac{1}{2} \sum_{k=1}^{d}\sigma_k\cdot\nabla\(\sigma_k\cdot\nabla  v_s\)\mathd s- \sum_{k=1}^{d}\int_0^t  \sigma_k\cdot\nabla   v_s \mathd W_s^{k} ,
\end{align*}
where the integral with respect to $\mathd s$ is in the sense of Bochner integral and stochastic integral is in the sense of  Hilbert space valued ($H^{2}(\mathbb{T}^2)$) stochastic integral defined in  \cite[Section 2]{liu2015stochastic}.

We aim to show that it holds $\mathbb{P}$-almost surely $V_1=  v$ in the sense of  $C([0,T];L^2(\mathbb{T}^2)).$  Consequently, it holds $\mathbb{P}$-almost surely that $$  v\in C([0,T];H^{2}(\mathbb{T}^2)).$$  

By the properties of Bochner integral and stochastic integral as outlined in \cite[Lemma A.2.2 and Lemma 2.4.1]{liu2015stochastic}, (We regard $\left\langle e_m, \cdot\right\rangle$ as a bounded linear functional on $L^2(\mathbb{T}^2)$ here.) it holds $\mathbb{P}$-almost surely that for each $e_m$ of  the Fourier basis $\{e_m\assign e^{imx}, m\in \mathbb{Z}^2\}$ and $ t\in[0,T],$ 
\begin{align}
	\left\langle V_1(t),e_m\right\rangle&=\left\langle	\int_0^t  \Delta  v_s\mathd s -\int_0^t  K*  v_s \cdot\nabla   v_s\mathd s+\frac{1}{2} \sum_{k=1}^{d}\sigma_k\cdot\nabla\(\sigma_k\cdot\nabla  v_s\)\mathd s- \sum_{k=1}^{d}\int_0^t  \sigma_k\cdot\nabla   v_s \mathd W_s^{k} ,e_m\right\rangle\nonumber\\&=\int_0^t \left\langle   \Delta  v_s,e_m\right\rangle \mathd s -\int_0^t\left\langle    K*  v_s \cdot\nabla  v_s,e_m\right\rangle \mathd s\nonumber
	\\&+\frac{1}{2}\sum_{k=1}^{d}\int_0^t\left\langle \sigma_k\cdot\nabla(\sigma_k\cdot\nabla  v_s),e_m\right\rangle \mathd s- \sum_{k=1}^{d}\int_0^t  \left\langle    \sigma_k\cdot\nabla  v_s,e_m\right\rangle \mathd W_s^{k}.\nonumber
\end{align}
In addition, from Definition \ref{defnsw}, we deduce that it holds $\mathbb{P}$-almost surely that for each $e_m$ of  the Fourier basis $\{e_m\assign e^{imx}, m\in \mathbb{Z}^2\}$ and $ t\in[0,T],$ 
\begin{align}\label{representns}
	\left\langle    v_t-  v_0,e_m\right\rangle&=\int_0^t \left\langle   \Delta  v_s,e_m\right\rangle \mathd s-\int_0^t\left\langle    K*  v_s \cdot\nabla  v_s,e_m\right\rangle \mathd s\nonumber
	\\&+\frac{1}{2}\sum_{k=1}^{d}\int_0^t\left\langle \sigma_k\cdot\nabla(\sigma_k\cdot\nabla  v_s),e_m\right\rangle \mathd s- \sum_{k=1}^{d}\int_0^t  \left\langle   \sigma_k\cdot\nabla  v_s,e_m\right\rangle \mathd W_s^{k}.
\end{align}
Here we expressed the Stratonovich integral into It\^o's form and $\frac{1}{2}\sum_{m\in \mathbb{Z}^2}\int_0^t\left\langle \sigma_k\cdot\nabla(\sigma_k\cdot\nabla  v_s),e_m\right\rangle \mathd s$ is the term tranformed from the Stratonovich integral.
Consequently,
it holds $\mathbb{P}$-almost surely that 	$\forall t\in[0,T],$
$	  v_t -   v_0$
is equivalent with $V_1(t)$
in the $L^2(\mathbb{T}^2)$ sense. We then have $$(  v_t)_{t\in[0,T]}\in C([0,T],H^{2}(\mathbb{T}^2))\quad \mathbb{P}-a.s..$$
By Sobolev embedding theorem, we then deduce that 
\begin{align}\label{v-continoustx}
	(  v_t)_{t\in[0,T]}\in C([0,T],C(\mathbb{T}^2))\quad \mathbb{P}-a.s..
\end{align}
Consequently, by  the regularity of $(  v_t)_{t\in[0,T]}$ given in Definition \ref{defnsw}, i.e.  for all $t\in [0,T]$,  $$\|  v_t\|_{L^{\infty}(\mathbb{T}^2)}\leq \|  v_0\|_{L^{\infty}(\mathbb{T}^2)}\quad\mathbb{P}-a.s.,$$   $$ \underset{x\in \mathbb{T}^2 }{ess\inf}   v_t\geq  \underset{x\in \mathbb{T}^2 }{ess\inf}   v_0\quad\mathbb{P}-a.s.,$$ we have that it holds $\mathbb{P}$-almost surely that 
\begin{align*}
	\underset{x\in \mathbb{T}^2 }{\sup}  v_t\leq \underset{x\in \mathbb{T}^2 }{\sup}  v_0,\quad    \underset{x\in \mathbb{T}^2 }{\inf}   v_t\geq  \underset{x\in \mathbb{T}^2 }{\inf}   v_0,\quad \forall t\in[0,T].
\end{align*}

Step2: Subsequently, based on \eqref{v-continoustx}, we then deduce \eqref{eq:re2} in Lemma \ref{lem:reg} by utilizing the representation of stochastic 2-dimensional Navier-Stokes equation \eqref{eqt:ns} again in the second step.

By Sobolev embedding theorem, we conclude that 
\begin{align*}
 \mathbb{E} \int_0^T \|  v_t\|_{C^2}^2\mathd t\lesssim\mathbb{E} \int_0^T \|  v_t\|_{H^4}^2\mathd t<\infty.
\end{align*}
We then  have
\begin{align*}
	\Delta  v +K*  v\cdot \nabla   v++\frac{1}{2} \sum_{k=1}^{d}\sigma_k\cdot\nabla\(\sigma_k\cdot\nabla  v\)&\in L^2([0,T]\times\Omega,\mathd t\otimes\mathbb{P},C(\mathbb{T}^2)),
\end{align*}
and
\begin{align*}
\sigma_k\cdot\nabla   v &\in L^2([0,T]\times\Omega,\mathd t\otimes\mathbb{P},C^1(\mathbb{T}^2)),\quad \forall k=1,\cdots,d.
\end{align*}
We then define $V_2(t,x)$ as follows
\begin{align*}
	V_2(t,x)\assign&\int_0^t  \Delta  v_s(x)\mathd s -\int_0^t  K*  v_s(x)\cdot \nabla   v_s(x)\mathd s
	\\&+\frac{1}{2} \sum_{k=1}^{d}\sigma_k(x)\cdot\nabla\(\sigma_k(x)\cdot\nabla  v_s(x)\)\mathd s- \sum_{k=1}^{d}\int_0^t  \sigma_k(x)\cdot\nabla   v_s(x) \mathd W_s^{k} .
\end{align*}
Through Theorem 3.1.1 in \cite{kunita1997stochastic}, we find that $V_2(t,x)$ is a continuous $C(\mathbb{T}^2)$ valued process. We further deduce that $V_2\in C([0,T];L^2(\mathbb{T}^2))\quad \mathbb{P}-a.s..$ 

 We now show that show that $V_2(\cdot)=  v(\cdot)-  v_0, \mathbb{P}-a.s.$ in the sense of $C([0,T],L^2(\mathbb{T}^2)).$ Since $(  v_t)_{t\in[0,T]}\in C([0,T],L^2)$, by \eqref{representns}, we can express $  v_t$ as follows.
 \begin{align*}
 	v_t-  v_0&=\sum_{m\in \mathbb{Z}^2}	\left\langle    v_t-  v_0,e_m\right\rangle e_m
 	\\&=\sum_{m\in \mathbb{Z}^2}\int_0^t \left\langle   \Delta  v_s,e_m\right\rangle \mathd se_m -\sum_{m\in \mathbb{Z}^2}\int_0^t\left\langle    K*  v_s \cdot\nabla  v_s,e_m\right\rangle \mathd se_m
 	\\&+\frac{1}{2}\sum_{k=1}^{d}\sum_{m\in \mathbb{Z}^2}\int_0^t\left\langle \sigma_k\cdot\nabla(\sigma_k\cdot\nabla  v_s),e_m\right\rangle \mathd se_m- \sum_{k=1}^{d}\sum_{m\in \mathbb{Z}^2}\int_0^t  \left\langle   \sigma_k\cdot\nabla  v_s,e_m\right\rangle \mathd W_s^{k}e_m.
 \end{align*}
By (stochastic) Fubini theorem, we have 
\begin{align*}
	v_t-  v_0&=\sum_{m\in \mathbb{Z}^2}	\left\langle    v_t-  v_0,e_m\right\rangle e_m
	\\&=\sum_{m\in \mathbb{Z}^2}\left\langle  V_2(t),e_m\right\rangle e_m.
\end{align*}
Then, $V_2(\cdot)=  v(\cdot)-  v_0,\mathbb{P}-a.s.$ in the sense of $C([0,T],L^2(\mathbb{T}^2)).$
Recall that $$(v_t)_{t\in[0,T]}\in C([0,T],C(\mathbb{T}^2))\quad  \mathbb{P}-a.s.,$$ and $$(V_2(t))_{t\in[0,T]}\in C([0,T],C(\mathbb{T}^2))\quad \mathbb{P}-a.s.,$$  Consequently, it holds  $\mathbb{P}$-almost surely that $\forall t\in[0,T], x\in\mathbb{T}^2,$
\begin{align*}
	v_t(x)-  v_0(x)=V_2(t,x).
\end{align*}
\end{proof}
\newpage
\appendix\section{Well-posedness of particle system}\label{appendix:vortex} 

In this section, we aim to prove well-posedness of \eqref{eqt:vortex} by similar methods  in \cite{fontbona2007paths} and \cite{flandoli2011full}. The basic idea is  to show that  the singularity of the kernel will never  be visited almost surely.  Specifically, we first consider the regularized vortex systems and show that   $X_i(t)\neq X_j(t)$ for all $t\in [0,T]$ and $i\neq j$ almost surely by a uniform estimate in regularizing parameter.
\setcounter{equation}{0}
\renewcommand{\theequation}{A.\arabic{equation}}

\begin{proof}[Proof of Proposition \ref{thm:vortex}]
	The proof consists of three steps.
	
	Step 1: Our goal  is to show well-posedness of \eqref{eqt:vortex} with random initial data.  To complete this, we first construct the solution to the vortex systems with regularized interaction kernels and the initial data satisfying a strictly  positive minimal mutual distance $\eta>0,$ i.e. $$\underset{i,j=1,\cdots,N}{\min}|X_i(0)-X_j(0)|>\eta,$$
	in this step.

	Recall  that  $G(x)$ is the Green function induced by Laplace operator  on $\mathbb{T}^{2}$.  Let $G_{\eps}$  be some symmetric smooth periodic funtion such that 
	$G_{\eps}(x)=G(x)$ for $|x|>\eps$ and 
	\begin{equation*}
		C_1\log(|x|\vee \eps)-C_3\leq G_{\eps} \leq  C_2\log(|x|\vee \eps)+C_3,\quad 
		|\nabla G_{\eps}|\leq C_4\frac{1}{(|x|\vee \eps)},\quad|\nabla^{ 2}G_{\eps}|\leq C_4 \frac{1}{(|x|\vee \eps)^2},
	\end{equation*}
	where $C_i$, $i=1,2,3,4,$ are   positive universal  constants and $  \eps\in(0,1).$  We now consider the regularized systems  \eqref {eqt:apppro} with deterministic initial data $x^N=(x_1,\cdots,x_N)\in \mathbb{T}^{2N}$:
	\begin{equation}\label{eqt:apppro}
		X_{i,\eps}(t)=x^N_i+ \frac{1}{N}\sum_{j\neq i}\int_0^t \xi_j\nabla^{\perp}G_{\eps}(X_{i,\eps}(s)-X_{j,\eps}(s))\mathd s+\sqrt{2}B^i_t+\sum_{k=1}^d\int_0^t\sigma_k(X_{i,\eps}(s))\circ\mathd W^k_s.
	\end{equation} 
	We define $(\varphi^{\eps}_t(x^N))_{t\in[0,T]}$ as the unique probabilistically strong solution to the regularized vortex system \eqref{eqt:apppro} with deterministic initial data $x^N\in \mathbb{T}^{2N}.$ The regularity properties of $\nabla^{\perp}G_{\eps}$ and standard results on stochastic flow \cite[Theorem 4.6.5]{kunita1997stochastic} imply existence of a continuous version of the two parameter processes $(t,x^N)\rightarrow \varphi^{\eps}_t(x^N):[0,T]\times\mathbb{T}^{2N}\rightarrow\mathbb{T}^{2N},$ which is moreover continuously  differentiable in $x^N.$ 
	
	 We now enlarge the probability space. 
	For fixed $\eta>0$, we define the following subset of torus $\mathbb{T}^{2N}$ \begin{align*}
		\mathbb{T}^{2N}_{\eta}\assign\{x^N=(x_1,\cdots,x_N)\in\mathbb{T}^{2N}:\underset{i,j=1,\cdots,N}{\min}|x_i-x_j|>\eta\}
	\end{align*}  and let $l^{\eta}(\mathd x^N)$ be the normalized Lebesgue measure on $\mathbb{T}^{2N}_{\eta},$ i.e.\begin{align}\label{eqt:leta}
		l^{\eta}(d x^N)\assign\frac{1}{\int_{T_{\eta}^{2\mathbb{N}}}1\mathd x^N}I_{\mathbb{T}^{2N}_{\eta}}(x^N)\mathd x^N.
	\end{align}
	Let the space $\Omega_{1}\assign\mathbb{T}^{2N}\times\Omega$ be endowed with the natural complete product $\sigma$-algebra  with respect to the product probability measure on $\Omega_{1},$
	\begin{align*}
		\mathbb{P}^{\eta} (\mathd x^N,\mathd\omega)\assign	l^{\eta}(\mathd x^N)\times\mathbb{P}(\mathd \omega).
	\end{align*} We write $\mathbb{E}^{\eta}$ for the associated expectation and $(\bar{\mathcal{F}}_t)_{t\in[0,T]}$  stands for the normal filtration generated by $(\mathcal{F}_t\otimes \mathbb{B}(\mathbb{T}^{2N}))_{t\in[0,T]}.$ 
	For each $\eps>0,$ consider now on  $\Omega_{1}$ the process $\{  \bar{X}^{\eps,N}_t,t\in[0,T]\}$ defined as 
	\begin{align}\label{eqt:barx}
		\bar{X}^{\eps,N}_t(x^N,\omega)&\assign \varphi^{\eps}_t(x^N)(\omega),\quad \forall t\in[0,T].
	\end{align}
	and the $(\bar{\mathcal{F}}_t)_{t\in[0,T]}$ stopping time
	\begin{equation}\label{eqt:bartau}
		\bar{\tau}_{\eps}=\inf\{t\in [0,T], \exists i\neq j, |  \bar{X}_{\eps,i}(t)-  \bar{X}_{\eps,j}(t)|\leq \eps\}.
	\end{equation}
	Notice that the random variable $  \bar{X}^{\eps,N}_0(x^N,\omega)=x^N$ has the law $l^{\eta}$ defined in \eqref{eqt:leta}.
	
	Step 2:  We provide uniform estimates in Lemma \ref{lem:vortexito} and Lemma \ref{lem:zxcvbnn} which are uniform in the regularizing parameter in this step.
	
	More precisely, as in \cite{flandoli2011full}, we first show a uniform estimate on the singular potential $	\left| 	\mathbb{E}^{\eta}	\Phi_{\eps}(    \bar{X}^{\eps,N}_{t\wedge \bar{\tau}_{\eps}})\right| $  with respect to $\eps,t,$ i.e. \eqref{eqt:uniforpoten}.
	Due to the introduction of $\{B_i,i=1,\cdots,N\},$ we have to deal with the term evolving $\Delta G_{  \eps}$ (see $K_1^{\eps}(t)$ below), which does not appear in \cite{flandoli2011full}. As a result, we can only obtain a uniform estimate for $	\left| 	\mathbb{E}^{\eta}	\Phi_{\eps}(    \bar{X}^{\eps,N}_{t\wedge \bar{\tau}_{\eps}})\right| ,$ rather than $\mathbb{E}^{\eta}	\sup_{[0, T]}\Phi_{\eps}(    \bar{X}^{\eps,N}_{t})$ in \cite{flandoli2011full}. However, it is sufficient for us to arrive at the final result.

	\begin{lemma}\label{lem:vortexito}For each $t\in [0,T]$ and $\eps \in(0,\eta),$ we have 
		\begin{equation}
			\left| 	\mathbb{E}^{\eta}	\Phi_{\eps}(    \bar{X}^{\eps,N}_{t\wedge 	\bar{\tau}_{\eps}})\right| \leq C_{N,T,\sigma,\eta} +C_N\mathbb{E}^{\eta}\int_0^{t\wedge \tau_{\eps}}\phi(    \bar{X}^{\eps,N}_{s})\mathd s,\nonumber
		\end{equation}
		where $\Phi_{\eps}(x^N)=\sum_{i\neq j,i,j=1,\cdots,N}G_{\eps}(x_{i}-x_{j}),\quad x^N\in \mathbb{T}^{2N},$ and $\phi:\mathbb{T}^{2N}\rightarrow \mathbb{R}^{+}$  is defined by 
		\begin{equation}\label{parti-phi}
			\phi(x^N)=\sum_{i\neq j\neq l,i,j,l=1,\cdots,N}\frac{1}{|x_i-x_l||x_i-x_j|},\quad x^N\in \mathbb{T}^{2N}.
		\end{equation}  
		Here $C_{N}$  denotes  a positive constant depending on $N,$ $C_{N,T,\sigma,\eta}$ denotes a positive constant depending on $N,T,\eta,$ $\sigma_k, k=1,\cdots,d$
	\end{lemma}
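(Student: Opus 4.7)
The plan is to apply It\^o's formula to $\Phi_{\eps}(\bar X^{\eps,N}_{t\wedge\bar\tau_\eps})$ and read off the contributing terms. First I would rewrite the Stratonovich system \eqref{eqt:apppro} in It\^o form; this introduces the drift $\tfrac12\sum_{k=1}^d(\sigma_k\cdot\nabla)\sigma_k(\bar X_{i,\eps})$ in each particle's equation, which is smooth and bounded by $C_\sigma$. It\^o's formula then decomposes $\Phi_\eps(\bar X^{\eps,N}_t)-\Phi_\eps(\bar X^{\eps,N}_0)$ into four pieces: $(K_1^\eps)$ the Laplacian contribution $\sum_i\Delta_{x_i}\Phi_\eps$ from $\sqrt2\,B^i$; $(K_2^\eps)$ the singular interaction drift; $(K_3^\eps)$ the It\^o correction plus quadratic-variation terms coming from the common noise $\sigma_k\circ\mathrm dW^k$; and $(K_4^\eps)$ a sum of martingale terms driven by $B^i$ and $W^k$. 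Taking $\mathbb{E}^\eta$ after stopping at $\bar\tau_\eps$ annihilates $(K_4^\eps)$ because on $[0,\bar\tau_\eps]$ all pairwise distances exceed $\eps$, so $\nabla G_\eps$ and $\sigma_k$ are bounded (possibly depending on $\eps$), making each stochastic integral a true martingale.

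The crucial observation is that
\begin{equation*}
\nabla_{x_k}\Phi_\eps(x^N)=2\sum_{l\ne k}\nabla G_\eps(x_k-x_l),
\end{equation*}
using the symmetry $G_\eps(-x)=G_\eps(x)$ and the antisymmetry of $\nabla G_\eps$. Hence the interaction drift $(K_2^\eps)$ reads
\begin{equation*}
K_2^\eps(s)=\frac{2}{N}\sum_{i}\sum_{\substack{j\ne i\\ l\ne i}}\xi_j\,\nabla^\perp G_\eps(\bar X_{i,\eps}-\bar X_{j,\eps})\cdot\nabla G_\eps(\bar X_{i,\eps}-\bar X_{l,\eps}).
\end{equation*}
For $l=j$ the integrand vanishes \emph{pointwise} by the orthogonality $\nabla^\perp f\cdot\nabla f\equiv0$. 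For $l\ne j$ I use $|\nabla G_\eps(x)|\le C/|x|$ (valid uniformly in $\eps$) together with the compact support of $\mathcal{L}(\xi)$ from Assumption \ref{initial}, which gives $|\xi_j|\le M$ a.s., to bound $|K_2^\eps(s)|\le C_N\,\phi(\bar X^{\eps,N}_s)$ with $\phi$ as in \eqref{parti-phi}. After integration this yields exactly the right-hand side term $C_N\mathbb E^\eta\int_0^{t\wedge\bar\tau_\eps}\phi(\bar X^{\eps,N}_s)\,\mathrm ds$.

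The remaining terms are all uniformly bounded in $\eps$ on the truncated time interval. For $(K_1^\eps)$, on $\{s<\bar\tau_\eps\}$ all distances exceed $\eps$, hence $G_\eps\equiv G$ there, and since $-\Delta G=\delta_0-c$ on $\mathbb T^2$, the function $\Delta G(x_i-x_j)$ is bounded by a constant $C$ on the relevant region; summing over the $O(N^2)$ index pairs contributes at most $C\,N^2 T$. For $(K_3^\eps)$, using $\sigma_k\in C^\infty$ (Assumption \ref{assumption2}), the It\^o correction and the cross-variation terms are controlled by $\|\sigma_k\|_{C^2}^2\cdot\|\nabla^2\Phi_\eps\|$; invoking the same observation that on $[0,\bar\tau_\eps]$ one has $|\nabla^2 G_\eps|\le C$ away from the diagonal, this yields a bound $C_{N,T,\sigma}$. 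The initial term $\mathbb E^\eta[\Phi_\eps(\bar X^{\eps,N}_0)]$ is bounded by $C_{N,\eta}$ since the law $l^\eta$ is supported on configurations with minimal distance $>\eta>\eps$, on which $|G_\eps|=|G|\le C|\log\eta|+C$. Collecting these estimates yields the claim with $C_{N,T,\sigma,\eta}$ absorbing the initial bound, $(K_1^\eps)$ and $(K_3^\eps)$ contributions.

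The main obstacle is the bookkeeping in $(K_3^\eps)$: one must verify that the mixed second derivatives $\nabla_{x_i}\nabla_{x_j}\Phi_\eps$ appearing in the cross-variation (through the common driver $W^k$ shared by all particles) only involve pairs $(i,j)$ with $i\ne j$ through $\nabla^2 G_\eps(\bar X_{i,\eps}-\bar X_{j,\eps})$, which is bounded on $[0,\bar\tau_\eps]$ by the constant $C/\eps^2$ only trivially but in fact by the universal constant coming from $\Delta G\in L^\infty(\mathbb T^2\setminus\{0\})$ and from smoothness of mixed partials of $G$ away from the origin. Everything else amounts to carefully tracking constants depending on $N$, $T$, $\sigma$ and the initial spread $\eta$.
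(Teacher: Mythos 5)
Your framework — apply It\^o's formula after converting to It\^o form, identify the Laplacian term, the singular interaction term, the common-noise terms, and the martingale, stop at $\bar\tau_\eps$, take $\mathbb E^\eta$ — matches the paper's, and your handling of the initial term, of $K_1^\eps$, and of the interaction term $K_2^\eps$ (via the pointwise orthogonality $\nabla^\perp G_\eps\cdot\nabla G_\eps\equiv 0$ and the bound $|\nabla G_\eps|\lesssim(|x|\vee\eps)^{-1}$) is essentially the paper's argument. Your treatment of $K_1^\eps$ is in fact slightly more careful than the paper's, since on $\mathbb T^2$ the Green function satisfies $\Delta G\equiv\text{const}\ne 0$ away from the origin rather than $\Delta G\equiv 0$; this only shifts the constant $C_{N,T,\sigma,\eta}$.

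There is, however, a genuine gap in your treatment of the common-noise contribution (your merged $K_3^\eps$; the paper's $K_3^\eps+K_4^\eps$). You assert that $\nabla^2 G_\eps$ is ``bounded on $[0,\bar\tau_\eps]$ ... by the universal constant coming from $\Delta G\in L^\infty(\mathbb T^2\setminus\{0\})$ and from smoothness of mixed partials of $G$ away from the origin.'' This is false: on $[0,\bar\tau_\eps]$ the mutual distances exceed $\eps$ but can still be arbitrarily small, and the individual entries of $\nabla^2 G(x)$ blow up like $|x|^{-2}$ near $x=0$ (only the \emph{trace} $\Delta G$ is bounded, by cancellation between the diagonal entries). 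So the only $\eps$-uniform bound your reasoning provides is $C/\eps^2$, which does not close the estimate.

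The point you are missing is the cancellation that arises precisely because the transport noise $W^k$ is \emph{common} to all particles. Using $\nabla_{x_i}\Phi_\eps=2\sum_{l\ne i}\nabla G_\eps(x_i-x_l)$ and $\nabla_{x_i}\nabla_{x_j}\Phi_\eps=-2\nabla^2 G_\eps(x_i-x_j)$ for $i\ne j$, and then symmetrizing in the pair $(i,j)$, the Stratonovich correction drift becomes a sum over pairs $i\ne j$ of
\begin{equation*}
\nabla G_\eps(\bar X_{i,\eps}-\bar X_{j,\eps})\cdot\bigl[(\sigma_k\cdot\nabla)\sigma_k(\bar X_{i,\eps})-(\sigma_k\cdot\nabla)\sigma_k(\bar X_{j,\eps})\bigr],
\end{equation*}
while the second-order It\^o term from $W^k$ collapses to a sum over pairs $i\ne j$ of
\begin{equation*}
\nabla^2 G_\eps(\bar X_{i,\eps}-\bar X_{j,\eps}):\bigl[\sigma_k(\bar X_{i,\eps})-\sigma_k(\bar X_{j,\eps})\bigr]\otimes\bigl[\sigma_k(\bar X_{i,\eps})-\sigma_k(\bar X_{j,\eps})\bigr].
\end{equation*}
Since $\sigma_k\in C^\infty$, the bracketed differences are bounded by $C_\sigma|\bar X_{i,\eps}-\bar X_{j,\eps}|$ and $C_\sigma|\bar X_{i,\eps}-\bar X_{j,\eps}|^2$ respectively, which exactly cancel the singularities $|\nabla G_\eps|\lesssim(|x|\vee\eps)^{-1}$ and $|\nabla^2 G_\eps|\lesssim(|x|\vee\eps)^{-2}$, yielding the uniform bound $C_{N,T,\sigma}$. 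Without exhibiting these pairwise differences, the estimate for the common-noise terms does not hold uniformly in $\eps$.
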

	\begin{proof}

			By Ito's formula, we have
			\begin{equation}
				\Phi_{\eps}(    \bar{X}^{\eps,N}_{t}) =  \Phi_{\eps}(    \bar{X}^{\eps,N}_0)+K_1^{\eps}(t)+K_2^{\eps}(t)+K_3^{\eps}(t)+K_4^{\eps}(t)+M_t^{\eps},\nonumber
			\end{equation}
			where  $M_t^{\eps}$ is a continuous martingale with zero expectation and $K_i$, $i=1,2,3,4$, denote 
			\begin{align*}
				K_1^{\eps}(t)=&\sum_{i\neq j,i,j=1,\cdots,N }\int_{0}^{t}\Delta G_{\eps}\(    \bar{X}_{i,\eps}(s)-    \bar{X}_{j,\eps}(s)\)\mathd s,\nonumber
				\\ K_2^{\eps}(t)=&\frac{1}{N} \int_{0}^{t}\sum_{i \neq j,i,j=1,\cdots,N}\nabla G_{\eps}\(    \bar{X}_{i,\eps}(s)-    \bar{X}_{j,\eps}(s)\)
				\\&\cdot \left[\sum_{i'\neq i}\xi_{i'}\nabla^{\perp}G_{\eps}\(    \bar{X}_{i,\eps}(s)-    \bar{X}_{i',\eps}(s)\)- \sum_{j'\neq j}\xi_{j'}\nabla^{\perp}G_{\eps}\(    \bar{X}_{j,\eps}(s)-    \bar{X}_{j',\eps}(s)\)\right]\mathd s ,\nonumber
				\\K_3^{\eps}(t)=&\frac{1}{2} \sum_{i\neq j,i,j=1,\cdots,N} \sum_{k=1}^d \int_0^t \(\sigma_k(    \bar{X}_{i,\eps}(s))\cdot\nabla \sigma_k(    \bar{X}_{i,\eps}(s))-\sigma_k(    \bar{X}_{j,\eps}(s))\cdot\nabla \sigma_k(    \bar{X}_{j,\eps}(s))\)
				\\&\cdot \nabla G_{\eps}\(    \bar{X}_{i,\eps}(s)-    \bar{X}_{j,\eps}(s)\)\mathd s,\nonumber
				\\K_4^{\eps}(t)=&\frac{1}{2} \sum_{i\neq j,i,j=1,\cdots,N} \sum_{k=1}^d \int_{0}^{t}\nabla^{2 }G_{\eps}\(    \bar{X}{i,\eps}(s)-    \bar{X}_{j,\eps}(s)\)
				\\&\cdot 
				\left[ \( \sigma_k(    \bar{X}_{i,\eps}(s))-\sigma_k(    \bar{X}_{j,\eps}(s))\)\otimes \(\sigma_k(    \bar{X}_{i,\eps}(s))-\sigma_k(    \bar{X}_{j,\eps}(s)) \)\right] \mathd s.\nonumber\end{align*}
			
			Recall $  \bar{X}^{\eps,N}_0$ has the law $l^{\eta}$ defined in \eqref{eqt:leta}, based on the definition of $l^{\eta},$	we first find that there exists a positive constant $C_{\eta}$ depends on $\eta$ such that
			\begin{align*}
				\bigg|\mathbb{E}^{\eta}\Phi_{\eps}(    \bar{X}^{\eps,N}_0)\bigg|\leq C_{\eta},\quad \forall \eps \in(0,\eta).
			\end{align*}
			Since  $\Delta G(x)=0$ for any $x\neq 0$,  we have $K_1^{\eps}(t\wedge\bar{\tau}_{\eps})=0.$  As to  $K_2^{\eps}(t)$, we find 
			
			\begin{align*}
				|K_2^{\eps}(t\wedge \bar{\tau}_{\eps})|\leq &C_N\int_{0}^{t\wedge \bar{\tau}_{\eps}}\sum_{i\neq j\neq  i',i,j,i'=1,\cdots,N}|\xi_{i'}|\(|    \bar{X}_{i,\eps}(s)-    \bar{X}_{j,\eps}(s)|\vee \eps\)^{-1}\(|    \bar{X}_{i,\eps}(s)-    \bar{X}_{i',\eps}(s)|\vee \eps\)^{-1}\mathd s.
			\end{align*}
			Here we use the fact that $\nabla^{\perp}G_{\eps}\cdot \nabla G_{\eps}=0$ and  $|\nabla G_{\eps}|\lesssim\frac{1}{(|x|\vee \eps)}.$
			Since $\{\xi_i\}$ are uniformly bounded, (for simplicity, we  omit the dependence of constants on $\mathcal{L}(\xi),$) we have 
			\begin{align*}
				\mathbb{E}^{\eta} |	K_2^{\eps}(t\wedge 	\bar{\tau}_{\eps})|\leq C_N  \mathbb{E}^{\eta}\int_{0}^{t\wedge \bar{\tau}_{\eps}} \phi(    \bar{X}^{\eps,N}(s))\mathd s,
			\end{align*}
			where $\phi$ is defined by  $$\phi:     x^N\in \mathbb{T}^{2N}\rightarrow\phi(    x^N)=\sum_{i\neq j\neq l}\frac{1}{|x_i-x_l||x_i-x_j|}.$$ 
			
			Furthermore, since the vector fields $\{\sigma_k,k\in 1, \cdots, d\}$ are smooth on torus $\mathbb{T}^2,$ we obtain 
			\begin{align*}
				\sup_{\eps>0}	|K_3^{\eps}(t)|\leq&C_{\sigma}\sup_{\eps>0} \sum_{i\neq j,i,j=1,\cdots,N}\int_0^T \(|    \bar{X}_{i,\eps}(s)-    \bar{X}_{j,\eps}(s)|\vee \eps\)^{-1}\left|     \bar{X}_{i,\eps}(s)-    \bar{X}_{j,\eps}(s) \right| \mathd s\leq C_{N,T,\sigma},
				\\\sup_{\eps>0}	|K_4^{\eps}(t)|\leq&C_{\sigma}\sup_{\eps>0} \sum_{i\neq j,i,j=1,\cdots,N}\int_0^T \(|    \bar{X}_{i,\eps}(s)-    \bar{X}_{j,\eps}(s)|\vee \eps\)^{-2}\left|     \bar{X}_{i,\eps}(s)-    \bar{X}_{j,\eps}(s) \right|^2 \mathd s\leq C_{N,T,\sigma}.
			\end{align*}
	
		Therefore, we conclude that 
		\begin{equation*}
			\left| 	\mathbb{E}^{\eta}	\Phi_{\eps}(    \bar{X}^{\eps,N}_{t\wedge 	\bar{\tau}_{\eps}})\right| \leq  C_{N,T,\sigma,\eta}+C_N\mathbb{E}^{\eta}\int_0^{t\wedge	\bar{\tau}_{\eps}}\phi(    \bar{X}^{\eps,N}(s))\mathd s,
		\end{equation*}
		where $C_N$ is a universal positive  constant depending on $N$ and $C_{N,T,\sigma,\eta}$ is a universal positive constant depending on $T,N,\eta, \{\sigma_k,k =1, \cdots, d\}.$
	\end{proof}
	
	\begin{lemma}\label{lem:zxcvbnn}For $\phi$ defined in \eqref{parti-phi}, each $t\in [0,T]$ and $\eps\in(0,\eta),$ it holds that 
		\begin{equation}
			\mathbb{E}^{\eta}\int_0^{t\wedge 	\bar{\tau}_{\eps}}\phi(    \bar{X}^{\eps,N}(s))\mathd s\leq C_{N,T}.\nonumber
		\end{equation}
		Here $C_{N,T}$ is a positive constant depending on $N,T.$ 
		
		In particular, \begin{align}\label{eqt:uniforpoten}
			\left| 	\mathbb{E}^{\eta}	\Phi_{\eps}(    \bar{X}^{\eps,N}_{t\wedge 	\bar{\tau}_{\eps}})\right| \leq C_{N,T,\sigma,\eta},\quad \forall t\in[0,T],\forall   \eps\in(0,\eta).
		\end{align}
		Here $C_{N,T,\sigma}$ is a positive constant depending on $N,T,\eta$  and $\{\sigma_k,k=1,\cdots,d\}.$ 
	\end{lemma}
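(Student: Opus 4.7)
The plan is to leverage the invariance of the normalized Lebesgue measure on $\mathbb{T}^{2N}$ under the stochastic flow of the regularized system \eqref{eqt:apppro}. The crucial observation is that the interaction drift $\frac{1}{N}\sum_{j\neq i} \xi_j \nabla^{\perp} G_{\eps}(x_i - x_j)$ is divergence-free in the variable $x_i$ (since $\nabla \cdot \nabla^{\perp} = 0$), and the transport vector fields $\sigma_k$ are divergence-free by Assumption \ref{assumption2}. Consequently, the Fokker--Planck operator $L^\ast$ associated with $\bar{X}^{\eps,N}$ annihilates the constant function $1$, so the (normalized) Lebesgue measure on $\mathbb{T}^{2N}$ is an invariant measure uniformly in $\eps$.

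I would proceed in four steps. First, for fixed $\eps \in (0,\eta)$ the coefficients of \eqref{eqt:apppro} are smooth, so the conditional law of $\bar{X}^{\eps,N}(s)$ admits a smooth density $\rho_s^{\eps,N}$ on $\mathbb{T}^{2N}$ solving
\begin{equation*}
\partial_s \rho_s^{\eps,N} = L^{\ast} \rho_s^{\eps,N}, \qquad \rho_0^{\eps,N}(x^N) = \frac{1}{\mathrm{Vol}(\mathbb{T}^{2N}_{\eta})} \mathbf{1}_{\mathbb{T}^{2N}_{\eta}}(x^N) \le \frac{1}{\mathrm{Vol}(\mathbb{T}^{2N}_{\eta})} =: C_{\eta}.
\end{equation*}
Second, since $L^\ast 1 = 0$, the parabolic maximum principle applied to $C_{\eta} - \rho_s^{\eps,N}$ yields $\rho_s^{\eps,N}(x^N) \le C_{\eta}$ for all $(s,x^N) \in [0,T] \times \mathbb{T}^{2N}$, with a bound independent of $\eps$.

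Third, I would use this density bound to estimate
\begin{equation*}
\mathbb{E}^{\eta}\bigl[\phi(\bar{X}^{\eps,N}(s))\bigr] = \int_{\mathbb{T}^{2N}} \phi(x^N)\, \rho_s^{\eps,N}(x^N)\, \mathd x^N \le C_{\eta} \int_{\mathbb{T}^{2N}} \phi(x^N)\, \mathd x^N.
\end{equation*}
The last integral is finite because each of the at most $N^3$ summands in $\phi$ reduces, after integrating out the irrelevant variables, to an integral of the form $\int_{\mathbb{T}^{2}}\int_{\mathbb{T}^{2}} \frac{1}{|x_i-x_l|}\frac{1}{|x_i-x_j|}\, \mathd x_j\, \mathd x_l$, and $\int_{\mathbb{T}^{2}} |y|^{-1}\, \mathd y < \infty$ in two dimensions. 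Integrating in $s$ over $[0, t\wedge \bar{\tau}_{\eps}] \subset [0,T]$ yields
\begin{equation*}
\mathbb{E}^{\eta} \int_{0}^{t \wedge \bar{\tau}_{\eps}} \phi(\bar{X}^{\eps,N}(s))\, \mathd s \le T \cdot C_{N,\eta} = C_{N,T,\eta},
\end{equation*}
and combining this estimate with Lemma \ref{lem:vortexito} delivers the ``in particular'' statement \eqref{eqt:uniforpoten}.

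The main technical hurdle is ensuring that the $L^\infty$ density bound on $\rho_s^{\eps,N}$ is genuinely uniform in $\eps \in (0,\eta)$. The cleanest route is to bypass the maximum-principle machinery altogether and argue directly: the Markov semigroup associated with \eqref{eqt:apppro} is self-adjoint with respect to Lebesgue measure in its symmetric part and has divergence-free first-order part, hence preserves the (normalized) Lebesgue measure on $\mathbb{T}^{2N}$; the pushforward of $\rho_0^{\eps,N}$ by this measure-preserving flow inherits the same $L^\infty$ bound by duality against the constant function. This avoids any $\eps$-dependent constants and completes the proof.
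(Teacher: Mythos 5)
Your final paragraph (the ``cleanest route'') essentially recovers the paper's argument, but the paper formulates it pathwise rather than via the Markov semigroup: by \cite[Theorem 4.3.2]{kunita1997stochastic}, for each fixed $\omega$ the stochastic flow $x^N \mapsto \varphi^\eps_s(x^N)(\omega)$ is Lebesgue-measure preserving on $\mathbb{T}^{2N}$ because the interaction drift and all $\sigma_k$ are divergence-free, so
\begin{align*}
\mathbb{E}^\eta\int_0^{t\wedge\bar\tau_\eps}\phi\bigl(\bar X^{\eps,N}_s\bigr)\,\mathd s
= \mathbb{E}\int_0^{t\wedge\bar\tau_\eps}\int_{\mathbb{T}^{2N}}\phi\bigl(\varphi^\eps_s(x^N)\bigr)\,l^\eta(\mathd x^N)\,\mathd s
\lesssim_\eta \mathbb{E}\int_0^T\int_{\mathbb{T}^{2N}}\phi(y^N)\,\mathd y^N\,\mathd s,
\end{align*}
and the proof concludes by the integrability of $\phi$ on the torus. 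Your first three steps reach the same uniform density bound via Fokker--Planck plus maximum principle, but with some imprecision you would need to fix: the deterministic forward equation $\partial_s\rho^{\eps,N}_s = L^*\rho^{\eps,N}_s$ does not describe ``the conditional law'' as stated. Conditioning on the common noise $W$ gives a \emph{stochastic} Fokker--Planck equation driven by $W$, while the unconditional law requires first averaging over the random intensities $\xi^N$ (which enter the drift) and over $W$ (which contributes the Stratonovich correction and an extra second-order term to the generator). One can check that these corrections still satisfy $L^*1=0$ precisely because the drift and the $\sigma_k$ are divergence-free, so your argument is repairable, but the pathwise change of variables sidesteps the whole issue at once, which is why the paper does it that way. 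A small bookkeeping point: the constant genuinely depends on $\eta$ through $l^\eta(\mathd x^N)\le \mathrm{Vol}(\mathbb{T}^{2N}_\eta)^{-1}\mathd x^N$, as you track correctly; the lemma's statement writes $C_{N,T}$, but the proof's $\lesssim$ carries this $\eta$-dependence.
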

	\begin{proof}
		
		\begin{align*}
			\mathbb{E}^{\eta}\int_0^{t\wedge 	\bar{\tau}_{\eps}}\phi(    \bar{X}^{\eps,N}_{s})\mathd s=&\mathbb{E}\int_0^{t\wedge 	\bar{\tau}_{\eps}}\int_{T^{2\mathbb{N}}}\phi(    \varphi^{\eps}_s(x^N)) l^{\eta}(\mathd x^N)\mathd s
			\\\lesssim&\mathbb{E}\int_0^t\int_{\mathbb{T}^{2N}}\phi(\varphi^{\eps}_s(x^N))\mathd x^N\mathd s.\nonumber
		\end{align*}
		where we used \eqref{eqt:leta} to get the second inequality.
		Since the coefficients of \eqref{eqt:approvortex}  are divergence free,   $\varphi^{\eps}$ is   Lebesgue measure preserving, see \cite[Theorem 4.3.2]{kunita1997stochastic}. We then arrive at 
		\begin{align*}
			\mathbb{E}^{\eta}\int_0^{t\wedge 	\bar{\tau}_{\eps}}\phi(    \bar{X}^{\eps,N}_{s})\mathd s&\lesssim \mathbb{E}\int_0^t\int_{\mathbb{T}^{2N}}\phi(y^N)\mathd y^N\mathd s\leq C_{N,T},
		\end{align*} 
		where we used  the fact that $\phi$ is an integrable function on  torus $\mathbb{T}^{2N}$ to establish the final inequality.
	\end{proof}
Step 3:	Based on the uniform estimate for $	\left| 	\mathbb{E}^{\eta}	\Phi_{\eps}(    \bar{X}^{\eps,N}_{t\wedge 	\bar{\tau}_{\eps}})\right| ,$ we conduct the analysis  for the potential $\mathbb{E}^{\eta}	\Phi_{\eps}(    \bar{X}^{\eps,N}_{t\wedge \tau_{\eps}})$ and establish well-posedness of the particle system \eqref{eqt:vortex} following  the similar idea  in \cite{fontbona2007paths}  in this step.
	
	We decompose the expectation of the potential energy $\mathbb{E}^{\eta} \Phi_{\eps}(    \bar{X}^{\eps,N}_{T\wedge \bar{\tau}_{\eps}})$ as follows:
	\begin{align*}
		\mathbb{E} ^{\eta}\Phi_{\eps}(    \bar{X}^{\eps,N}_{T\wedge \bar{\tau}_{\eps}})= \mathbb{E}^{\eta}\[ \Phi_{\eps}(    \bar{X}^{\eps,N}_{ \bar{\tau}_{\eps}})I_{\bar{\tau}_{\eps}\leq T}\]+\mathbb{E}^{\eta}\[ \Phi_{\eps}(    \bar{X}^{\eps,N}_{T})I_{\bar{\tau}_{\eps}> T}\]\nonumber.
	\end{align*}
 Notice that  $\bar{\tau}_\eps\leq T$ means that there exists random indexs $i'\neq j'$  that $|    \bar{X}_{\eps,i'}(\bar{\tau}_{\eps})-    \bar{X}_{\eps,j'}(\bar{\tau}_{\eps})|=\eps$, we then have 
\begin{align*}
\mathbb{E}^{\eta} \Phi_{\eps}(    \bar{X}^{\eps,N}_{ \bar{\tau}_{\eps}\wedge T}) &=\mathbb{E}^{\eta}G_{\eps}(    \bar{X}^{i'}_{\bar{\tau}_{\eps}}-    \bar{X}_{\bar{\tau}_{\eps}}^{j'})I_{\bar{\tau}_{\eps}\leq T}+\mathbb{E}^{\eta}\sum_{i\neq j,i\neq i',j\neq j' i,j=1,\cdots,N}G_{\eps}(    \bar{X}^{i}_t-    \bar{X}_t^{j})I_{\bar{\tau}_{\eps}\leq T}
\\&+\mathbb{E}^{\eta}\sum_{i\neq j, i,j=1,\cdots,N}G_{\eps}(    \bar{X}^{i}_t-    \bar{X}_t^{j})I_{\bar{\tau}_{\eps}>  T},
\end{align*}
	Due to the fact  that $G_{\eps} \leq  C\log(|x|\vee \eps)+C$  and the compactness of $\mathbb{T}^2$,  we notice \begin{align*}
		\mathbb{E}^{\eta}\sum_{i\neq j,i\neq i',j\neq j' i,j=1,\cdots,N}G_{\eps}(    \bar{X}^{i}_t-    \bar{X}_t^{j})I_{\bar{\tau}_{\eps}\leq T}+\mathbb{E}^{\eta}\sum_{i\neq j, i,j=1,\cdots,N}G_{\eps}(    \bar{X}^{i}_t-    \bar{X}_t^{j})I_{\bar{\tau}_{\eps}>  T}\leq C
	\end{align*}
where $C$ is a positive constant. 

	Using the fact that $G_{\eps} \leq  C\log(|x|\vee \eps)+C$ again and the fact that $|    \bar{X}_{\eps,i'}(\bar{\tau}_{\eps})-    \bar{X}_{\eps,j'}(\bar{\tau}_{\eps})|=\eps,$
	 we then have \begin{align*}
	 	\mathbb{E}^{\eta} \Phi_{\eps}(    \bar{X}^{\eps,N}_{	\bar{\tau}_{\eps}\wedge T}) &\leq C\log\eps\mathbb{P}^{\eta}(\bar{\tau}_{\eps}\leq T)+C,
	 \end{align*}
	 where $C$ is a positive constant. 

	By Lemma \ref{lem:vortexito} and Lemma \ref{lem:zxcvbnn}, we know
	  \begin{align*}
		\left| 	\mathbb{E}^{\eta}	\Phi_{\eps}(    \bar{X}^{\eps,N}_{t\wedge \bar{\tau}_{\eps}})\right| \leq C_{N,T,\sigma,\eta},\quad \forall t\in[0,T],\quad\forall \eps\in(0,\eta),
	\end{align*} 
we then have
\begin{align*}
		-C_{N,T,\sigma,\eta}\leq \mathbb{E}^{\eta} \Phi_{\eps}(    \bar{X}^{\eps,N}_{ \bar{\tau}_{\eps}\wedge T}) \leq C\log\eps\mathbb{P}^{\eta}(\bar{\tau}_{\eps}\leq  T)+C.
\end{align*}
Let $\eps\downarrow 0,$ we have
	\begin{align*}
		\mathbb{P}^{\eta}(\bar{\tau}_{\eps}\leq T)\leq  \frac{C_{N,T,\sigma,\eta}}{-\log\eps}\rightarrow 0,
	\end{align*}
Recall that $	\mathbb{P}^{\eta} (\mathd x^N,\mathd\omega)=	l^{\eta}(\mathd x^N)\times\mathbb{P}(\mathd \omega) $ is a probability measure on $\Omega_{1}=\mathbb{T}^{2N}\times\Omega$, we then conclude that
\begin{align*}
\mathbb{P}(\lim_{\eps\rightarrow 0 }\bar{\tau}_{\eps}\leq T)=0\quad l^{\eta}(\mathd x^N)-a.s..
\end{align*}
Consequently, $\eta>0$ being arbitrary, we then deduce that
\begin{align}\label{inte}
	\mathbb{P}(\lim_{\eps\rightarrow 0 }\bar{\tau}_{\eps}\leq T)=0,\quad a.e..
\end{align}

	Recall  that $(X^{\eps}_t)_{t\in[0,T]}$ is the unique probabilistically strong solution to the regularized vortex system \eqref{eqt:apppro} with random initial data $X^N(0)$  of law $\mathcal{L}^{\otimes N}(X(0))$ given in Assumption \ref{initial} satisfying $X^{\eps}_t=\varphi^{\eps}_t(X^N(0)),$ where $(\varphi^{\eps}_t(x^N))_{t\in[0,T]}$ is the unique probabilistically strong solution to the regularized vortex system \eqref{eqt:apppro} with deterministic initial data $x^N,$ and  the stopping time  $\tau_{\eps}$:
\begin{equation}
	\tau_{\eps}=\inf\{t\in [0,T], \exists i\neq j, |X_{\eps,i}(t)-X_{\eps,j}(t)|\leq \eps\}\nonumber.
\end{equation}
Integrating \eqref{inte} over $\mathbb{T}^{2N}$ with respect to $\bar{\rho}_0^{\otimes N}\mathd x^N,$ where $\bar{\rho_0}$ is the density of $\mathcal{L}(X(0))$ given in Assumption \ref{initial},
we conclude that 
\begin{align*}
	\mathbb{P}(\lim_{\eps\rightarrow 0 }\tau_{\eps}\leq T)=0.
\end{align*}
It is then straightforward to find that $  X$ defined by 
\begin{align*}
	  X^N(t)= X^{\eps,N}(t),\quad \text{ for  }  t\leq \tau_{\eps}, \quad \eps>0,
\end{align*}
is the desired unique probabilistically strong solution to \eqref{eqt:vortex}.
\end{proof}
\section{Disintegration}\label{sec:disinte}
We  show a disintegration result including a short proof for the sake of completeness.
\setcounter{equation}{0}
\renewcommand{\theequation}{B.\arabic{equation}}
\begin{lemma}\label{lem:disinte}
	Under Assumption \ref{assumptiona},  for fixed $k\geq 1,$ given a continuous $\mathbb{T}^{2k}$ valued $(\mathcal{F}_{t})_{t\in[0,T]}$- adapted stochastic process $\{  Z_k(t),t\in[0,T]\},$  for the random measures on $\mathbb{T}^{2k},$ \footnote{The random measure $$\mathcal{L}(Z_k(t)| \mathcal{F}^{\xi^k,W}_T)(\mathd x^k)$$  on $\mathbb{T}^{2k}$ can be written as $Q(\mathd x^k, z^k, y)|_{(z^k,y)=(\xi^k,W)}.$ Here $Q(\mathd x^k, z^k,y)$ is a function of two variables $(z^k, y)\in \mathbb{R}^k\times C^{\otimes d}([0,T];\mathbb{T}) $ and $B\in \mathbb{B}(\mathbb{T}^{2k}),$ such that  $Q(B, z^k,y)$ is $ \mathbb{B}( \mathbb{R}^k)\otimes \mathbb{B}(C^{\otimes d}([0,T];\mathbb{T}))$-measurable in  $(z^k, y)$ for fixed $B$ and a measure in $B$ for fixed $(z^k, y).$ We use $$\mathcal{L}(Z_k(t)| \mathcal{F}^{\xi^k,W}_T)(\mathd x^k,z^k,  W(\omega))$$ to denote $Q(\mathd x^k, z^k, y)|_{y=W}.$ }
	\begin{align*}
		\rho_t^Z(\mathd x^k, z^k,\omega)\assign\mathcal{L}((  Z_k(t))| \mathcal{F}^{\xi^k,W}_T)(\mathd x^k,z^k,W(   \omega)),\quad t\in[0,T],
	\end{align*}
	where $(z^k,\omega)$ belongs to  the extended probability space $(\mathbb{R}^k\otimes\Omega,\overline{\mathbb{B}(\mathbb{R}^k)\otimes \mathcal{F}},\mathcal{L}(\xi^k)\otimes\mathbb{P})$ and $\xi^k$ denotes intensities $(\xi_1,\cdots,\xi_k),$ we have
	\begin{enumerate}
		\item  For every $\varphi\in C(\mathbb{T}^{2k}),$ 
			$\{	\left\langle \varphi , \rho_t^Z\right\rangle ,t\in[0,T]\}$ is a continuous $(\overline{\mathbb{B}(\mathbb{R}^k)\otimes \mathcal{F}^{W}_{t}})_{t\in[0,T]}$ adapted process on the extended probability space $(\mathbb{R}^k\otimes\Omega,\overline{\mathbb{B}(\mathbb{R}^k)\otimes \mathcal{F}},\mathcal{L}(\xi^k)\otimes\mathbb{P})$.
	In particular, $$(\rho_t^Z)_{t\in[0,T]} \in C([0,T];\mathcal{P}(\mathbb{T}^{2k}))\quad \mathcal{L}(\xi^k)\otimes\mathbb{P}-a.s..$$ 
		
		\item It holds that
		\begin{align*}
			\mathcal{L}(  Z_k(t),\xi^k)| \mathcal{F}^{W}_T)(\mathd x^k,\mathd z^k,\omega)=\rho_t^Z(\mathd x^k, z^k,\omega)\mathcal{L}(\xi^k)(\mathd z^k),\quad \forall t\in[0,T],\quad \mathbb{P}-a.s..
		\end{align*}
	\end{enumerate}

	  Given a $\sigma$-algebra $\mathcal{G}$ on $\mathbb{R}^k\otimes\Omega,$ the $\sigma$-algebra  $\overline{\mathcal{G}}$  means the completion of $\sigma$-algebra $\mathcal{G}$  with respect to $\mathcal{L}(\xi^k)\otimes\mathbb{P}.$  
	See relevant definitions for product space in Notations.
\end{lemma}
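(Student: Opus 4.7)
The two claims have different flavors: (2) is an identity of probability measures that reduces, via the tower property, to the independence $\xi^k\perp\mathcal{F}^W_T$ from Assumption \ref{assumptiona}; (1) is a regularity statement for the kernel $\rho^Z_t$, which follows from the filtration identity of Lemma \ref{lem:con3} together with standard tools for regular conditional probabilities on the compact Polish space $\mathbb{T}^{2k}$. I would treat the two parts in that order.

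\paragraph{Proof of (2).} I would test the claimed identity against arbitrary bounded measurable $f:\mathbb{T}^{2k}\times\mathbb{R}^k\to\mathbb{R}$ and any bounded $\mathcal{F}^W_T$-measurable $H$. Writing $\rho^Z_t(\cdot,z^k,\omega) = Q(\cdot,z^k,W(\omega))$ for the underlying measurable kernel $Q$, the tower property yields
\[
\mathbb{E}[f(Z_k(t),\xi^k)H] \;=\; \mathbb{E}\bigg[H\int_{\mathbb{T}^{2k}}f(x^k,\xi^k)\,Q(\mathrm{d}x^k,\xi^k,W)\bigg],
\]
since $Q(\cdot,\xi^k,W)$ is a regular version of $\mathcal{L}(Z_k(t)\mid\mathcal{F}^{\xi^k,W}_T)$ and $\xi^k$ is $\mathcal{F}^{\xi^k,W}_T$-measurable. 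The integrand on the right is a bounded measurable function of $(\xi^k,W)$ only. The crucial input is the independence $\xi^k\perp\mathcal{F}^W_T$ from Assumption \ref{assumptiona}, which via Fubini on the product measure $\mathcal{L}(\xi^k)\otimes\mathbb{P}$ rewrites the right-hand side as
\[
\mathbb{E}\bigg[H\int_{\mathbb{R}^k}\!\int_{\mathbb{T}^{2k}}f(x^k,z^k)\,\rho^Z_t(\mathrm{d}x^k,z^k,\omega)\,\mathcal{L}(\xi^k)(\mathrm{d}z^k)\bigg].
\]
Arbitrariness of $f$ and $H$, together with the a.s.\ uniqueness of regular conditional probabilities on the Polish space $\mathbb{T}^{2k}\times\mathbb{R}^k$, yields (2).

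\paragraph{Proof of (1).} Adaptedness is immediate from Lemma \ref{lem:con3} applied to $h_t:=\varphi(Z_k(t))$: it produces $\mathbb{E}[\varphi(Z_k(t))\mid\mathcal{F}^{\xi^k,W}_T] = \mathbb{E}[\varphi(Z_k(t))\mid\mathcal{F}^{\xi^k,W}_t]$ $\mathbb{P}$-a.s., which on the extended space is $\overline{\mathbb{B}(\mathbb{R}^k)\otimes\mathcal{F}^W_t}$-measurable. For continuity I would fix a countable family $\{\varphi_n\}$ dense in $C(\mathbb{T}^{2k})$ (for instance trigonometric polynomials) and analyze $M^n_t:=\langle\varphi_n,\rho^Z_t\rangle$ separately for each $n$. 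Since $Z_k\in C([0,T];\mathbb{T}^{2k})$ and $\varphi_n$ is uniformly continuous on the compact torus, the sample path $t\mapsto\varphi_n(Z_k(t))$ is uniformly continuous with some random modulus $\omega^n$ bounded by $2\|\varphi_n\|_\infty$. A conditional Jensen inequality combined with conditional dominated convergence then gives
\[
|M^n_t-M^n_s|\leq \mathbb{E}[\omega^n(|t-s|)\mid\mathcal{F}^{\xi^k,W}_T]\longrightarrow 0\qquad\text{a.s.\ as }|t-s|\to 0,
\]
yielding continuity of $M^n_\cdot$ on a full-measure event $A_n$. On the intersection $\bigcap_n A_n$, which is still of full measure, $\langle\varphi_n,\rho^Z_\cdot\rangle$ is continuous for every $n$; since $\mathcal{P}(\mathbb{T}^{2k})$ is a compact metric space for the weak topology and weak convergence is characterized by testing against any countable dense family, one obtains $\rho^Z_\cdot\in C([0,T];\mathcal{P}(\mathbb{T}^{2k}))$ on that event, and continuity of $\langle\varphi,\rho^Z_\cdot\rangle$ for every $\varphi\in C(\mathbb{T}^{2k})$ follows by approximation.

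\paragraph{Main obstacle.} The subtlest point is producing a single version of $\rho^Z_\cdot$ whose sample paths are simultaneously continuous for all test functions, rather than merely one-test-function-at-a-time. The compactness of $\mathbb{T}^{2k}$, which makes $\mathcal{P}(\mathbb{T}^{2k})$ compact and second countable, is essential for reducing joint continuity to continuity of a countable collection of real-valued processes. Care is still required to ensure that the common null set excludes only a genuinely $(\mathcal{L}(\xi^k)\otimes\mathbb{P})$-negligible set of $(z^k,\omega)$; I handle this by working throughout with the fixed regular-conditional-probability kernel $Q$ and only modifying the resulting scalar functions $M^n$ on null sets.
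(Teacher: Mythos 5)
Your proof is correct and follows essentially the same route as the paper: part (2) rests on the tower property together with the independence $\xi^k\perp\mathcal{F}^W_T$ (you test against general $f$ and $H$, the paper uses a monotone-class reduction to products $f_1(x^k)f_2(z^k)$ — equivalent), and part (1)'s adaptedness rests on the conditional-independence identity $\mathbb{E}[\varphi(Z_k(t))\mid\mathcal{F}^{\xi^k,W}_T]=\mathbb{E}[\varphi(Z_k(t))\mid\mathcal{F}^{\xi^k,W}_t]$ (you invoke Lemma~\ref{lem:con3}, the paper re-derives it in-line) transferred to the extended space via $\xi^k\perp W$ and Fubini. Where the paper simply asserts that continuity of $\rho^Z_\cdot$ follows from continuity of $Z_k$, your countable-dense-family plus conditional dominated-convergence argument spells out the detail the paper leaves implicit.
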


\begin{proof}[Proof of Lemma \ref{lem:disinte}]
	We first prove (1).
	The continuity of $\{	\left\langle \varphi , \rho_t^Z\right\rangle ,t\in[0,T]\}$  and $\{	\rho_t^Z,t\in[0,T]\}$  is obtained by the continuity of $Z_k(\cdot).$
	We mainly prove the adaptness of $\{	\left\langle \varphi , \rho_t^Z\right\rangle ,t\in[0,T]\}.$ For fixed $t\in[0,T],$
	using  the fact that $W_k(t+\cdot)-W_k(t) \perp \mathcal{F}_t , \forall k\in\{1,\cdots,d\},$ we have $\mathcal{F}_t\perp \mathcal{F}^{\xi^k,W}_T|\mathcal{F}^{\xi^k,W}_t,$  which implies that  
	\begin{align}\label{2.1}
		\mathbb{E}[ \varphi(Z_k(t)) |\mathcal{F}^{\xi^k,W}_T]=\mathbb{E}[ \varphi(Z_k(t)) | \mathcal{F}^{\xi^k,W}_t],\quad \mathbb{P}-a.s..
	\end{align} 
	We then have \footnote{We use $\tilde{\mathbb{E}}[\cdot]$ to denote taking expectation under $\mathcal{L}(\xi^k)\otimes\mathbb{P}.$ }
	\begin{align*}
		&\tilde{\mathbb{E}}\bigg[\bigg|\left\langle \varphi , \mathcal{L}((Z_k(t))| \mathcal{F}^{\xi^k,W}_T)(\mathd x^k,z^k,W(   \omega))\right\rangle-	\left\langle \varphi , \mathcal{L}((Z_k(t))| \mathcal{F}^{\xi^k,W}_t)(\mathd x^k,z^k,W|_{[0,t]}(   \omega))\right\rangle\bigg|\bigg]
		\\=&\int_{\Omega}\int_{\mathbb{R}^k}\bigg[\bigg|\left\langle \varphi , \mathcal{L}((Z_k(t))| \mathcal{F}^{\xi^k,W}_T)(\mathd x^k,z^k,W(   \omega))\right\rangle\\&-	\left\langle \varphi , \mathcal{L}((Z_k(t))| \mathcal{F}^{\xi^k,W}_t)(\mathd x^k,z^k,W|_{[0,t]}(   \omega))\right\rangle\bigg|\bigg]\mathcal{L}(\xi^k)(\mathd z^k)\mathd \mathbb{P}
		\\=&\int_{\Omega}\bigg[\bigg|	\left\langle \varphi , \mathcal{L}((Z_k(t))| \mathcal{F}^{\xi^k,W}_T)(\mathd x^k,\xi^k(\omega),W(   \omega))\right\rangle-	\left\langle \varphi , \mathcal{L}((Z_k(t))| \mathcal{F}^{\xi^k,W}_t)(\mathd x^k,\xi^k(\omega),W|_{[0,t]}(   \omega))\right\rangle\bigg|\bigg]\mathd \mathbb{P}
		\\=&\mathbb{E}\bigg[\bigg|\mathbb{E}[ \varphi(Z_k(t)) |\mathcal{F}^{\xi^k,W}_T]-\mathbb{E}[ \varphi(Z_k(t)) | \mathcal{F}^{\xi^k,W}_t]\bigg|\bigg]=0,
	\end{align*}
	where we used the independence $\xi^k\perp \mathcal{F}^{W}_T$ to get the second inequality and \eqref{2.1} to get the forth equality. Since $\left\langle \varphi , \mathcal{L}((Z_k(t))| \mathcal{F}^{\xi^k,W}_t)(\mathd x^k,z^k,W|_{[0,t]}(   \omega))\right\rangle$ is $\overline{\mathbb{B}(\mathbb{R}^k)\otimes \mathcal{F}^{W}_{t}}$ measurable, $$\left\langle \varphi , \rho_t^Z\right\rangle=	\left\langle \varphi , \mathcal{L}((Z_k(t))| \mathcal{F}^{\xi^k,W}_T)(\mathd x^k,z^k,W(   \omega))\right\rangle$$ is $\overline{\mathbb{B}(\mathbb{R}^k)\otimes \mathcal{F}^{W}_{t}}$ measurable.

	We now prove (2).
	By the monotone class theorem, it is sufficient for us to prove that 
	for any bounded $\mathbb{B}(\mathbb{T}^{2k})$ measurable function $f_1(x^k)$ and bounded $\mathbb{B}(\mathbb{R}^{k})$ measurable function  $f_2(z^k),$  it holds $\mathbb{P}$-almost surely that
	\begin{align*}
		&\int_{\mathbb{R}^k}\int_{\mathbb{T}^{2k}}f_1(x^k)f_2(z^k)\mathcal{L}(Z_k(t),\xi^k)| \mathcal{F}^{W}_T)(\mathd x^k,\mathd z^k,W(   \omega))\\=&\int_{\mathbb{R}^k}\int_{\mathbb{T}^{2k}}f_1(x^k)f_2(z^k)\mathcal{L}((Z_k(t))| \mathcal{F}^{\xi^k,W}_T)(\mathd x^k,z^k,W(   \omega))\mathcal{L}(\xi^k)(\mathd z^k).
	\end{align*}
	By the properties of conditional expectation, we have 
	\begin{align*}
		&\int_{\mathbb{R}^k}\int_{\mathbb{T}^{2k}}f_1(x^k)f_2(z^k)\mathcal{L}(Z_k(t),\xi^k)| \mathcal{F}^{W}_T)(\mathd x^k,\mathd z^k,W(   \omega))\\=
		&\mathbb{E}\[f_1(Z_k(t))f_2(\xi^k)|\mathcal{F}^{W}_T\]
		\\=&\mathbb{E}\[\mathbb{E}\[f_1(Z_k(t))f_2(\xi^k)|\mathcal{F}^{W}_T\]|\mathcal{F}_T^{W}\vee\mathcal{F}^{\xi ^k}\]
		\\=&\mathbb{E}\[\int_{\mathbb{T}^{2}}f_1(x^k)f_2(\xi^k)\mathcal{L}((Z_k(t))| \mathcal{F}^{\xi^k,W}_T)(\mathd x^k,\xi^k(\omega),W(   \omega))|\mathcal{F}^{W}_T\],
	\end{align*}
	Due to the independence  $\xi^k\perp\mathcal{F}^{W}_T$ given in Assumption \ref{assumptiona},
	\begin{align*}
		&\mathbb{E}\[\int_{\mathbb{T}^{2}}f_1(x^k)f_2(\xi^k)\mathcal{L}((Z_k(t))| \mathcal{F}^{W}_T)(\mathd x^k,\xi^k(\omega),W(   \omega))|\mathcal{F}^{W}_T\]
		\\&=	\int_{\mathbb{R}^k}\int_{\mathbb{T}^{2k}}f_1(x^k)f_2(z^k)\mathcal{L}((Z_k(t))| \mathcal{F}^{\xi^k,W}_T)(\mathd x^k,z^k,W(   \omega))\mathcal{L}(\xi^k)(\mathd z^k).
	\end{align*}
	
	The proof is then completed.
	
\end{proof}
\paragraph{\textbf{Acknowledgement:}}The authors would like to thanks Rongchan Zhu and Zhenfu Wang for quite useful discussion and advice.

\bibliographystyle{alpha}
\bibliography{vortexgeneral}

\end{document}